\renewcommand{\headrulewidth}{0.0pt}}
\numberwithin{equation}{section}
\theoremstyle{plain}
\newtheorem{thm}{Theorem}[section]
\newtheorem{prop}[thm]{Proposition}
\newtheorem{cor}[thm]{Corollary}
\newtheorem{lem}[thm]{Lemma}
\newtheorem*{thmA}{Theorem A}
\newtheorem*{thmB}{Theorem B}
\theoremstyle{definition}
\newtheorem{de}[thm]{Definition}
\newtheorem*{notation}{Notation}
\newtheorem*{acknowledgment}{Acknowledgments}
\theoremstyle{remark}
\newtheorem{rem}[thm]{Remark}
\newtheorem{eg}[thm]{Example}
\newcommand{\setlr}[2]{\left\{#1 \mathrel{}\middle|\mathrel{} #2\right\}}
\title{A proof of the Naito--Sagaki conjecture via the branching rule for $\imath$quantum groups}
\author[1]{Satoshi Naito}
\author[2]{Yujin Suzuki}
\author[3]{Hideya Watanabe}
\date{}
\keywords{non-Levi-type branching rule, Littelmann path model, Kashiwara crystals, quantum symmetric pairs.}
\subjclass[2020]{Primary 05E10; Secondary 17B10, 17B37.}
\address[S. Naito]{Department of Mathematics, Institute of Science Tokyo, 2-12-1 Oh-okayama, Meguro-ku, Tokyo, 152-8551, Japan}
\email{naito@math.titech.ac.jp}
\address[Y. Suzuki]{Mizuho-DL Financial Technology Co., Ltd., 2-4-1 Kojimachi, Chiyoda-ku, Tokyo, 102-0083, Japan}
\email{yujin-suzuki@fintec.co.jp}
\address[H. Watanabe]{College of Science, Rikkyo University, 3-34-1 Nishi-Ikebukuro, Toshima-ku, Tokyo, 171-8501, Japan}
\email{watanabehideya@gmail.com}
\begin{document}

\maketitle

\fancypagestyle{mypagestyle2}{
\lhead[]{}
\rhead[]{}
\chead[]{}
\lfoot[]{}
\rfoot[]{}
\cfoot[\textcolor{white}{} \\ \footnotesize \thepage \,\quad\quad\quad\quad\quad\quad\quad\quad\,]{\textcolor{white}{} \\ \footnotesize \,\,\quad\quad\quad\quad\quad\quad\quad\quad \thepage}
\renewcommand{\headrulewidth}{0.0pt}}
\thispagestyle{mypagestyle2}

\begin{abstract}
	The Naito--Sagaki conjecture asserts that the branching rule for the 
	restriction of finite-dimensional, irreducible polynomial representations of
	$GL_{2n}(\mathbb{C})$ to $Sp_{2n}(\mathbb{C})$ amounts to the enumeration of certain ``rational paths'' satisfying specific conditions. 
	This conjecture can be thought of as a non-Levi type analog of the Levi type branching 
	rule, stated in terms of the path model due to Littelmann, and was proved combinatorially in 2018 by Schumann--Torres. 
	In this paper, we give a new proof of the Naito--Sagaki conjecture independently of Schumann--Torres, using the branching rule based on the crystal basis theory for 
	$\imath$quantum groups of type $A\mathrm{II}_{2n-1}$.
	Here, note that $\imath$quantum groups are certain coideal subalgebras of a quantized universal enveloping algebra 
	obtained by $q$-deforming symmetric pairs, and also regarded as a generalization of quantized universal enveloping algebras; 
	these were defined by Letzter in 1999, and since then their representation theory has become an active area of research.
	The main ingredients of our approach are certain combinatorial operations, 
	such as promotion operators and Kashiwara operators, 
	which are well-suited to the representation theory of complex semisimple Lie algebras. 
\end{abstract}

\tableofcontents

\section{Introduction}

The purpose of this paper is to give a new proof
of the branching rule for the restriction of irreducible highest weight representations of general linear Lie algebras to symplectic Lie algebras, 
conjectured by Naito and Sagaki in \cite{NS}.

Let $I$ be the set of vertices of the 
Dynkin diagram of type $A_{r}$,
and $\mathfrak{g}$ the general linear Lie algebra of rank $r$.
In Lie theory,
it is a fundamental problem to describe
how
each irreducible highest weight
$\mathfrak{g}$-module $L(\lambda)$
decomposes
when we restrict the action of $\mathfrak{g}$ to
its certain subalgebra $\widehat{\mathfrak{g}}$,
where $\lambda$ is a dominant weight in the weight lattice $P$ of $\mathfrak{g}$. 
In \cite{L1} and \cite{L2},
Littelmann answered this question in terms of ``rational paths'' lying in 
the real form $P_{\mathbb{R}} = \mathbb{R} \otimes_{\mathbb{Z}} P$
of the dual space of the Cartan subalgebra of $\mathfrak{g}$ 
when $\widehat{\mathfrak{g}}$ is a Levi subalgebra of $\mathfrak{g}$.

Here, let us briefly review Littelmann's theory of the path model. 
Throughout this paper,
a piecewise-linear, continuous map
$\pi : [0,1]_{\mathbb{R}} \to P_{\mathbb{R}}$ is called a \textit{path}
if $\pi(0)=0$ and $\pi(1) \in P$; 
the set of all paths lying in $P_{\mathbb{R}}$ is denoted by $\mathbb{P}$.
For $i \in I$,
Littelmann defined certain operators on $\mathbb{P}$,
called \textit{root operators}, 
which arise from the action of the Chevalley generators of $\mathfrak{g}$.
Let $\mathbb{P}(\lambda)$ denote 
the set of all paths obtained by applying root operators to the
straight-line path connecting the origin and
a dominant integral weight $\lambda \in P$.
Littelmann discovered that the set $\mathbb{P}(\lambda)$, equipped 
with the root operators, 
is deeply connected to the representation $L(\lambda)$.
Let $\widehat{\mathfrak{g}}_J$
be the Levi subalgebra associated with a subset $J \subset I$,
and $\widehat{P}^J_{\mathbb{R}}$ the real form of the dual space of the Cartan subalgebra of $\widehat{\mathfrak{g}}_J$. 
Then, for each path $\pi \in \mathbb{P}(\lambda)$,
a new path $\widehat{\pi}^J : [0,1]_{\mathbb{R}} \to \widehat{P}^J_{\mathbb{R}}$
is defined by the composition with the natural restriction map
$P_{\mathbb{R}} \to \widehat{P}_{\mathbb{R}}^J$; 
the path $\widehat{\pi}^J$
is said to be \textit{$\widehat{\mathfrak{g}}_J$-dominant}
if $\langle\widehat{\pi}^J,\widehat{h}_j^J\rangle \geq 0$ 
for all simple coroots 
$\widehat{h}_j^J$, $j \in J$, of $\widehat{\mathfrak{g}}_J$. 
Littelmann proved that as $\widehat{\mathfrak{g}}_J$-modules, 
\begin{equation}
	\mathsf{Res}^{\mathfrak{g}}_{\widehat{\mathfrak{g}}_J}\,L(\lambda) \cong
		\bigoplus_{\substack{\pi \in \mathbb{P}(\lambda) \\ \widehat{\pi}^J \, : \, \text{$\widehat{\mathfrak{g}}_J$-dominant}}} \widehat{L}^J\left(\widehat{\pi}^J(1)\right) \label{Levi}
\end{equation}
for all dominant integral weight $\lambda$,
where $\widehat{L}^J(\mu)$ denotes the irreducible highest weight $\widehat{\mathfrak{g}}_J$-module of highest weight $\mu$ ; 
to be precise, Littelmann obtained the decomposition (\ref{Levi}) in the more general setting where 
$\mathfrak{g}$ is a 
symmetrizable Kac--Moody algebra. 

Now, a natural question is whether it is possible to generalize the Levi type subalgebra 
$\widehat{\mathfrak{g}}_J$ to a non-Levi type subalgebra. 
One answer to this question is the
\textit{Naito--Sagaki conjecture}.
In the following, let $r$ be an odd number $r=2n-1$,
and $\widehat{\mathfrak{g}}$ the fixed-point subalgebra
under the non-trivial diagram automorphism of $I$.
In this case,
$\widehat{\mathfrak{g}}$ is isomorphic to
$\mathfrak{sp}_{2n}(\mathbb{C})$.
Let $\widehat{I}$ be the set of vertices of the 
Dynkin diagram of type $C_n$,
and $\widehat{P}_{\mathbb{R}}$ the real form of the dual space of 
the Cartan subalgebra of $\widehat{\mathfrak{g}}$.
Also, let $\mathbb{B}(\lambda)$ denote the set of paths
obtained by applying root operators to
a certain ``rational path'' whose endpoint is a non-negative
dominant integral weight
$\lambda \in P^{+}_{\geq 0}$, defined in Section 3.3.
For each $\pi \in \mathbb{B}(\lambda)$,
a new path $\widehat{\pi} : [0,1]_{\mathbb{R}} \to \widehat{P}_{\mathbb{R}}$
is defined by the composition with the natural restriction map
$P_{\mathbb{R}} \to \widehat{P}_{\mathbb{R}}$
in the same way as above.
We say that $\widehat{\pi}$ is
\textit{$\widehat{\mathfrak{g}}$-dominant}
if $\langle\widehat{\pi},\widehat{h}_i\rangle \geq 0$ 
for all simple coroots $\widehat{h}_i$, $i \in \widehat{I}$, 
of $\widehat{\mathfrak{g}}$.
The following is the main result of this paper, which is 
known as the Naito--Sagaki conjecture.
\begin{thmA}[$=$ Theorem \ref{NS2}]
	We have the following decomposition of $L(\lambda)$ as
	a $\widehat{\mathfrak{g}}$-module: 
	\begin{equation}
		\mathsf{Res}^{\mathfrak{g}}_{\widehat{\mathfrak{g}}}\,L(\lambda) \cong
			\bigoplus_{\substack{\pi \in \mathbb{B}(\lambda) \\ \widehat{\pi} \, : \, \text{$\widehat{\mathfrak{g}}$-$\mathrm{dominant}$}}} \widehat{L}\left(\widehat{\pi}(1)\right), 
	\end{equation}
	where $\widehat{L}(\mu)$ denotes the irreducible highest weight
	$\widehat{\mathfrak{g}}$-module of highest weight $\mu \in \widehat{P}^{+}$. 
\end{thmA}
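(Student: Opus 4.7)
The plan is to derive Theorem A from the $\imath$quantum group branching rule of type $\mathrm{AII}_{2n-1}$, and then identify its parametrization of multiplicities with the Naito--Sagaki paths. Both the $\imath$highest weight vectors in the crystal of $L(\lambda)$ and the $\widehat{\mathfrak{g}}$-dominant elements of $\mathbb{B}(\lambda)$ count the same $\widehat{\mathfrak{g}}$-irreducible summands of $\mathsf{Res}^{\mathfrak{g}}_{\widehat{\mathfrak{g}}}\,L(\lambda)$, so it suffices to construct a weight-preserving bijection between them, one which sends an $\imath$highest weight vector of $\imath$weight $\mu \in \widehat{P}^{+}$ to a path $\pi \in \mathbb{B}(\lambda)$ with $\widehat{\pi}(1) = \mu$.

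First, I would invoke the crystal basis theory of $\imath$quantum groups: restricting $L(\lambda)$ along the coideal subalgebra of type $\mathrm{AII}_{2n-1}$ yields a branching decomposition indexed by elements of the crystal of $L(\lambda)$ that are annihilated by the $\imath$Kashiwara raising operators. These operators are built from the ordinary Kashiwara operators $\widetilde{e}_i, \widetilde{f}_i$ ($i\in I$) together with the combinatorial incarnation of the diagram automorphism $\sigma : I \to I$, which at the path level will be realized by a promotion-type operator acting on $\mathbb{P}(\lambda)$.

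Second, I would transport this $\imath$crystal picture to the Naito--Sagaki model. Both $\mathbb{B}(\lambda)$ and the ordinary crystal $\mathbb{P}(\lambda)$ are generated by applying root operators to suitable paths --- a rational generating path in the former case and the straight-line highest weight path in the latter --- and a promotion-compatible comparison between these two generations pins down a natural bijection between $\mathbb{B}(\lambda)$ and the crystal of $L(\lambda)$ under which the restriction map $P_{\mathbb{R}} \to \widehat{P}_{\mathbb{R}}$ carries the crystal weight to $\widehat{\pi}(1)$. This reduces Theorem A to translating the $\imath$highest weight condition into a condition on paths in $\mathbb{B}(\lambda)$.

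The crux of the argument is the third step: showing that under this bijection, the $\imath$highest weight condition on a crystal element coincides with the $\widehat{\mathfrak{g}}$-dominance condition $\langle\widehat{\pi},\widehat{h}_i\rangle \geq 0$ for every $i\in\widehat{I}$. For each such $i$ one has to analyze the vanishing of the $\imath$raising operator in terms of the minima of paired string functions along $\pi$, and verify that after the folding $P_{\mathbb{R}} \to \widehat{P}_{\mathbb{R}}$ this is exactly the non-negativity of $\langle\widehat{\pi},\widehat{h}_i\rangle$ on $[0,1]$. Weight-matching is then automatic, since the $\imath$weight is by construction the image of the ordinary weight under $P \to \widehat{P}$, so the $\widehat{\mathfrak{g}}$-summand extracted from an $\imath$highest weight vector is precisely $\widehat{L}(\widehat{\pi}(1))$. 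The main obstacle lies here: unlike Littelmann's Levi-type proof, where dominance reduces to a single classical minimum condition per simple coroot, the $\imath$Kashiwara operators of type $\mathrm{AII}_{2n-1}$ interlace $\widetilde{e}_i$ with $\widetilde{e}_{\sigma(i)}$, so the vanishing condition must be unraveled pair by pair while keeping careful track of the rational turning points that define $\mathbb{B}(\lambda)$. I expect the promotion operator to do the heaviest lifting at this stage, since it simultaneously realizes $\sigma$ on the crystal and on the paths, providing the common framework in which the two dominance conditions can be directly compared.
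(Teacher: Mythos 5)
Your outline correctly identifies the paper's starting point: invoke the type $\mathrm{AII}_{2n-1}$ $\imath$quantum group branching rule, and then match its multiplicity parametrization against the Naito--Sagaki one. But the proposal has two genuine gaps.

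First, you treat the $\imath$highest weight condition and the $\widehat{\mathfrak{g}}$-dominance condition as two conditions on the same crystal element that should ``coincide'' after unraveling the $\imath$Kashiwara operators. They cannot coincide in this sense, because the coideal subalgebra $\mathbf{U}^{\imath}$ comes from a Lie subalgebra $\mathfrak{k}\subset\mathfrak{sl}_{2n}(\mathbb{C})$ which is \emph{not} the fixed-point subalgebra $\widehat{\mathfrak{g}}$; both are abstractly isomorphic to $\mathfrak{sp}_{2n}(\mathbb{C})$ but are distinct, non-conjugate subalgebras, with different weight lattices $\widetilde{P}$ versus $\widehat{P}$ and different Chevalley generators. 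The paper handles this by first invoking Dynkin's theorem (Proposition \ref{Dynkin}) to show that the two restrictions have the same multiplicities, and then, in Theorem B, by constructing an explicit weight-preserving bijection
\(\mathsf{\Phi}:\mathsf{SST}_{2n}^{\widehat{\mathfrak{g}}\text{-}\mathrm{dom}}(\lambda)\to \mathsf{SST}_{2n}^{\mathfrak{k}\text{-}\mathrm{hw}}(\lambda)\).
You cannot avoid constructing such a bijection: there is nothing forcing a given crystal element to be simultaneously an $\imath$highest weight vector for $\mathbf{U}^{\imath}$ and to give a $\widehat{\mathfrak{g}}$-dominant path, and indeed the paper's $\mathsf{\Phi}$ genuinely permutes the crystal.

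Second, the heuristic that $\sigma$ ``is realized by a promotion-type operator acting on $\mathbb{P}(\lambda)$'' points in the wrong direction. The finite diagram automorphism $\sigma$ of type $\mathrm{A}_{2n-1}$ is realized on $\mathsf{SST}_{2n}(\lambda)$ by the Sch\"utzenberger (evacuation) involution, not by the cyclic promotion $\mathsf{pr}=\mathsf{pr}_{1,2n}$, and neither of those is what the paper uses. The bijections are specific compositions $\mathsf{\Phi}=\mathsf{\Phi}_1$ and $\mathsf{\Psi}=\mathsf{\Psi}_1$ of the partial promotions $\mathsf{pr}_{a,b}$, chosen to be compatible with the recursive structure of the recording algorithm $\mathsf{P}^{\mathrm{AII}}$. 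Proving that $\mathsf{\Phi}$ lands in $\mathsf{SST}_{2n}^{\mathfrak{k}\text{-}\mathrm{hw}}(\lambda)$ and that $\mathsf{\Phi}^{-1}$ lands in $\mathsf{SST}_{2n}^{\widehat{\mathfrak{g}}\text{-}\mathrm{dom}}(\lambda)$ requires the local-global machinery of Sections 6--7 (a base case $n=2$ characterized by direct tableau analysis, and an induction on $n$ using restriction operators $\mathsf{Res}_{a,b;c,d}$, Lemma \ref{gLGP}, and Lemma \ref{kLGP}), none of which falls out of a generic comparison of ``paired string functions along $\pi$.'' Also note the characterization of $\imath$highest weight elements here is combinatorial, via the recording map $\mathsf{P}^{\mathrm{AII}}$ of \cite{Wat1}, not via vanishing of ``$\imath$Kashiwara raising operators'' --- that cleaner picture is not currently available for type $\mathrm{AII}$.
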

The Naito--Sagaki conjecture was proposed in \cite{NS}. 
After some special cases were verified in \cite{NS} and \cite{T}, 
a complete proof was finally provided by Schumann--Torres \cite{ST}. 
Their strategy for the proof was to compare in a purely combinatorial way the set of $\widehat{\mathfrak{g}}$-dominant paths with Sundaram's branching model in \cite{Su}; recently, a bijection between Sundaram's branching model and Kwon's one in \cite{Kwo2} has been given by Kumar--Torres \cite{kt}. 
In addition, \cite{Mun} has given another combinatorial proof of the 
Naito--Sagaki conjecture, which does not rely on either \cite{ST} or this paper. 

In this paper, 
we prove the Naito--Sagaki conjecture using yet another branching model established in \cite{Wat1} by the third author based on the representation theory of
\textit{$\imath$quantum groups of type $A\mathrm{II}$}. 
One of the advantages of proving Theorem A by this method
is that by employing other types of $\imath$quantum groups,
we can expect further results similar to the decomposition (\ref{Levi}). 

Note that $\imath$quantum groups
of type $A\mathrm{II}$ arise by $q$-deforming 
\textit{symmetric pairs}; 
a symmetric pair of type $A\mathrm{II}$ is the pair
$(\mathfrak{sl}_{2n}(\mathbb{C}),\mathfrak{k})$
consisting of the complex special linear Lie algebra of rank $2n-1$
and its certain subalgebra that is isomorphic to
$\mathfrak{sp}_{2n}(\mathbb{C})$.
In order to construct a $q$-analog of the homogeneous space $GL_{2n}(\mathbb{C})/{Sp_{2n}(\mathbb{C})}$,
Noumi \cite{Nou} defined a certain subalgebra $\mathbf{U}^{\imath}$
of $\mathbf{U}$
as a $q$-deformation of $U(\mathfrak{k})$,
where $\mathbf{U}$ is the 
quantized universal enveloping algebra associated with $\mathfrak{g}$, 
introduced by Drinfeld and Jimbo.
The algebra $\mathbf{U}^{\imath}$ was generalized by Letzter \cite{Let} and Kolb \cite{Kol}
to other types of symmetric pairs as well,
and is now referred to as \textit{$\imath$quantum groups of type $A\mathrm{II}$}. 
See \cite{Wan} for the development of the theory of quantum symmetric pairs and its applications to various branches. 

Although the algebra $\mathbf{U}^{\imath}$ can be thought of as 
a $q$-deformation of $U(\mathfrak{k})$,
unlike the ordinary quantum group $\mathbf{U}_q(\mathfrak{k})$,
$\mathbf{U}^{\imath}$ has a natural embedding
into $\mathbf{U}$.
By focusing on this property,
a new branching rule for $\mathfrak{g}$ and $\mathfrak{k}$
has recently been obtained in \cite{Wat1}.
Based on the new branching rule, we can show that
Theorem A is equivalent to the following theorem
(see Sections 3.3 and 5.3 for more details). 
\begin{thmB}[$=$ Theorem \ref{KeyProposition}]
	Let $\lambda$ be a partition of length at most $2n$.
	Then, there exist two bijections: 
	\begin{equation}
		\begin{aligned}
			\mathsf{\Phi} &: \mathsf{SST}_{2n}^{\text{$\widehat{\mathfrak{g}}$-$\mathrm{dom}$}}(\lambda) \stackrel{\sim}{\longrightarrow} \mathsf{SST}_{2n}^{\text{$\mathfrak{k}$-$\mathrm{hw}$}}(\lambda), &
			\mathsf{\Psi} &: \mathsf{SST}_{2n}^{\text{$\widehat{\mathfrak{g}}$-$\mathrm{dom}$}}(\lambda) \stackrel{\sim}{\longrightarrow} \mathsf{SST}_{2n}^{\text{$\mathfrak{k}$-$\mathrm{lw}$}}(\lambda)
		\end{aligned}
	\end{equation}
	such that for each $T \in \mathsf{SST}_{2n}^{\text{$\widehat{\mathfrak{g}}$-$\mathrm{dom}$}}(\lambda)$, the following equalities hold: 
	\begin{equation}
		\begin{aligned}
			& \mathsf{wt}_{\widehat{\mathfrak{g}}}(T) = \mathsf{wt}_{\mathfrak{k}}\left(\mathsf{\Phi}(T)\right), &
			& \mathsf{wt}_{\widehat{\mathfrak{g}}}(T) = \mathsf{wt}_{\mathfrak{k}}\left(\mathsf{\Psi}(T)\right).
		\end{aligned}
	\end{equation}
\end{thmB}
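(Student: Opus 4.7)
The strategy is to construct $\mathsf{\Phi}$ and $\mathsf{\Psi}$ as explicit combinatorial bijections, using the $\mathfrak{gl}_{2n}$-crystal structure on $\mathsf{SST}_{2n}(\lambda)$ together with the $\imath$-crystal structure induced by $\mathbf{U}^{\imath}$ from \cite{Wat1}. The first step is to translate each of the three sets into purely combinatorial conditions on tableaux: $\mathsf{SST}_{2n}^{\text{$\widehat{\mathfrak{g}}$-$\mathrm{dom}$}}(\lambda)$ should become a positivity condition on partial sums along the path $\pi_T$ associated to $T$ (encoding $\widehat{\mathfrak{g}}$-dominance), while $\mathsf{SST}_{2n}^{\text{$\mathfrak{k}$-$\mathrm{hw}$}}(\lambda)$ (resp. $\mathsf{SST}_{2n}^{\text{$\mathfrak{k}$-$\mathrm{lw}$}}(\lambda)$) should become the set of common kernels of all $\imath$-Kashiwara raising (resp. lowering) operators $\tilde{e}_i^{\imath}$ (resp. $\tilde{f}_i^{\imath}$) for $i \in \widehat{I}$.

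Next, I would construct $\mathsf{\Phi}$ inductively. Given $T \in \mathsf{SST}_{2n}^{\text{$\widehat{\mathfrak{g}}$-$\mathrm{dom}$}}(\lambda)$ that is not already $\mathfrak{k}$-highest weight, I would exhibit a canonical move — assembled from ordinary $\mathfrak{g}$-Kashiwara operators $\tilde{e}_i, \tilde{f}_i$ and promotion operators $\mathrm{pr}$ — that (i) preserves $\widehat{\mathfrak{g}}$-dominance, (ii) preserves $\mathsf{wt}_{\widehat{\mathfrak{g}}}$, and (iii) strictly decreases a non-negative statistic measuring failure of the $\mathfrak{k}$-highest weight condition (for instance, $\sum_{i \in \widehat{I}} \varepsilon_i^{\imath}(T)$). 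Iteration then yields $\mathsf{\Phi}(T) \in \mathsf{SST}_{2n}^{\text{$\mathfrak{k}$-$\mathrm{hw}$}}(\lambda)$ with $\mathsf{wt}_{\widehat{\mathfrak{g}}}(T) = \mathsf{wt}_{\mathfrak{k}}(\mathsf{\Phi}(T))$. I would define $\mathsf{\Psi}$ dually, swapping raising with lowering operators (or, equivalently, by conjugating $\mathsf{\Phi}$ with the Sch\"utzenberger/Lusztig involution). Bijectivity would then follow from the construction of explicit inverses that undo each canonical move step by step, relying on the invertibility of the individual Kashiwara and promotion operators.

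The main obstacle would be verifying that the canonical move has all three required properties simultaneously: $\widehat{\mathfrak{g}}$-dominance is a cumulative, path-wise condition, whereas the $\mathfrak{k}$-highest weight condition is a local, $\imath$-crystal condition, and the two viewpoints must be reconciled. Concretely, one needs a precise commutation analysis between the promotion operators — which arise naturally from the diagram automorphism of type $\mathrm{A}_{2n-1}$ whose fixed-point subalgebra is $\widehat{\mathfrak{g}}$ — and the $\imath$-Kashiwara operators, so that each canonical move traverses a fiber of the $\imath$-crystal projection while keeping the associated path $\widehat{\mathfrak{g}}$-dominant throughout. Establishing this interplay should be the technical heart of the proof, after which Theorem B, and hence Theorem A, will follow.
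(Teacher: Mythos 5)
Your high-level tools (promotion operators, Kashiwara operators, rank reduction) are the same as the paper's, but your construction scheme is genuinely different and has a gap that I don't think you can close as stated.

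The paper does not build $\mathsf{\Phi}$ by iterating a monovariant-decreasing "canonical move" until a $\mathfrak{k}$-highest weight tableau is reached. Instead, $\mathsf{\Phi}$ and $\mathsf{\Psi}$ are \emph{fixed} compositions of promotion operators $\mathsf{pr}_{a,b}$ depending only on $n$ (defined by an explicit recursion $\mathsf{\Phi}_n, \mathsf{\Phi}_{n-1},\dots,\mathsf{\Phi}_1$), and bijectivity of $\mathsf{\Phi} : \mathsf{SST}_{2n}(\lambda)\to\mathsf{SST}_{2n}(\lambda)$ is automatic. The real work is showing the image of the dominant set is precisely the $\mathfrak{k}$-highest weight set, which the paper does by (a) giving explicit tableau-shape characterizations of all three sets in the $n=2$ case (Lemmas \ref{6.1}, \ref{characterization of k-highest}, \ref{characterization of k-lowest}) and checking $\mathsf{pr}_{1,4}$, $\mathsf{pr}_{2,4}\circ\mathsf{pr}_{2,3}$ by inspection, and (b) proving local-global lemmas (Lemmas \ref{gLGP}, \ref{kLGP}) showing that both $\widehat{\mathfrak{g}}$-dominance and the $\mathfrak{k}$-highest/lowest weight condition can be detected on rank-$2$ restrictions, so the higher-rank statement glues from the $\mathfrak{sp}_4$ case. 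Your requirement (i), that each iteration step preserve $\widehat{\mathfrak{g}}$-dominance throughout, is not something the paper's map satisfies at intermediate stages, and there is no evidence such a move exists; you would need to construct it and verify all three properties simultaneously, which is precisely the content you leave unaddressed. Also, you invoke $\imath$-Kashiwara operators $\tilde{e}_i^{\imath}, \tilde{f}_i^{\imath}$ and a statistic $\sum_i\varepsilon_i^{\imath}(T)$; no such operators are available in this framework (the $\mathfrak{k}$-highest weight condition is defined via $\mathsf{P}^{\mathrm{AII}}$ and the $\mathsf{suc}$ algorithm, not via an $\imath$-crystal structure), so your termination statistic is not well-defined without substantial extra foundational work. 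Finally, obtaining $\mathsf{\Psi}$ by conjugating $\mathsf{\Phi}$ with the Sch\"utzenberger involution is not what the paper does and would itself require proof; the paper defines $\mathsf{\Psi}$ by a parallel explicit recursion and verifies it separately.
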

In this paper,
we prove Theorem B; 
Theorem A immediately follows from Theorem B (see Section 5.3). 

This paper is organized as follows.
In Section 2, we recall some standard combinatorial objects, 
such as partitions, semistandard tableaux, and words, 
which will be used in the subsequent sections.
In Section 3,
we restate the Naito--Sagaki conjecture precisely
and compute some examples.
In Section 4,
we recall the definition and some elementary properties of
quantized universal enveloping algebras, and then briefly review 
the theory of crystal bases.
In Section 5,
we present $\imath$quantum groups and the branching rule for them.
In Sections 6 and 7,
we provide the proof of Theorem B in the cases
$n=2$ and $n\geq3$, respectively.

\begin{acknowledgment}
	The authors would like to express their gratitude to the anonymous referee for valuable comments and suggestions.
	S.N. was partly supported by JSPS Grant-in-Aid for Scientific Research (C) JP21K03198.
	H.W. was partly supported by JSPS Grant-in-Aid for Young Scientists JP24K16903.
\end{acknowledgment}

\begin{notation}
	Throughout this paper, the following notation is used. 
	\begin{itemize}
		\item Let $\mathbb{N}$ and $\mathbb{Z}_{\geq0}$ denote the set of all positive integers and the set of all non-negative integers, respectively.
		\item For $a,b \in \mathbb{R}$ with $a \leq b$,
		let $[a,b]$ and $[a,b]_{\mathbb{R}}$ denote the intervals
		$\setlr{n\in\mathbb{N}}{a \leq n \leq b}$ and
		$\setlr{t\in\mathbb{R}}{a \leq t \leq b}$, respectively.
	\end{itemize}
\end{notation}

\bigskip

\section{Some combinatorial objects}
In this section,
we recall some standard combinatorial objects.
For more details,
refer to \cite{F}.

\subsection{Partitions and Young diagrams}
A weakly decreasing sequence of non-negative integers
$\lambda=\left(\lambda_1,\lambda_2,\dots\right)$
is called a \textit{partition}
if $\lambda_k=0$
for all but finitely many $k \in \mathbb{N}$; 
we denote by $\mathsf{Par}$ the set of all partitions. 
The partition whose parts are all zero is denoted by
$\emptyset=(0,0,\dots)$. 
For $\lambda \in \mathsf{Par}$,
we set 
\begin{equation}
	\begin{aligned}
		 & \ell(\lambda) := \sharp \setlr{k \in \mathbb{N}}{\lambda_k \neq 0},    & 
		 & |\lambda| := \lambda_1 + \lambda_2 + \dots + \lambda_{\ell(\lambda)}.
	\end{aligned}
\end{equation}
For $m \in \mathbb{N}$,
let $\mathsf{Par}_{\leq m}$ denote 
the set of partitions $\lambda$ such that $\ell(\lambda) \leq m$.
For $\lambda,\mu \in \mathsf{Par}$,
we write $\lambda \supset \mu$ if $\lambda_k \geq \mu_k$
holds for all $k \in \mathbb{N}$.

Throughout this paper,
a finite subset of $\mathbb{N} \times \mathbb{N}$
is referred to as a \textit{diagram}.

Hereafter, elements of a diagram will be referred to as \textit{boxes}.
For two boxes $(x_1,y_1)$ and $(x_2,y_2)$,
we say that $(x_1,y_1)$ is farther to the right 
than $(x_2,y_2)$ if $x_1 \geq x_2$,
and that $(x_1,y_1)$ is lower 
than $(x_2,y_2)$ if $y_1 \geq y_2$; 
in the example above, 
$(6,5)$ is lower than $(6,4)$.

Given a partition $\lambda \in \mathsf{Par}$,
the diagram $D(\lambda)$ is defined as follows: 
\begin{equation}
	D(\lambda) := \setlr{ (x,y) \in \mathbb{N}\times\mathbb{N}}{\begin{aligned} & 1 \leq y \leq \ell(\lambda), \\ & 1 \leq x \leq \lambda_y \end{aligned}}; 
\end{equation}
we call the diagram $D(\lambda)$ a \textit{Young diagram}.

\subsection{Tableaux}
For a diagram $D$,
a map $T : D \to \mathbb{N}$ is called a \textit{tableau}.
The diagram $D$ is called the \textit{shape} of the tableau $T$,
and elements of the image of $D$ under the map $T$ are referred to as \textit{entries} of $T$.
A tableau of shape $\emptyset$ is denoted by $\emptyset$.
Also, for $(x,y) \in D$,
the image of $(x,y)$ under $T$ is denoted by $T(x,y)$.
For $m \in \mathbb{N}$,
we define $T[m] = \sharp \, T^{-1}(m) = \sharp \setlr{(x,y) \in D}{T(x,y) = m}$.
A tableau $T$ of shape $D$ is called a \textit{semistandard tableau}
if for every $(x,y) \in D$,
the following conditions are satisfied: 
\begin{itemize}
	\item if $(x+1,y) \in D$, then $T(x,y) \leq T(x+1,y)$; 
	\item if $(x,y+1) \in D$, then $T(x,y) < T(x,y+1)$.
\end{itemize}
Let $m \in \mathbb{N}$.
We denote by $\mathsf{SST}(\lambda)$ the set of semistandard tableaux of shape
$\lambda = D(\lambda) \in \mathsf{Par}$, and by $\mathsf{SST}_m(\lambda)$ 
the set of semistandard tableaux
of shape $\lambda \in \mathsf{Par}$
whose entries are all less than or equal to $m$. 
We set
\begin{equation}
	\begin{aligned}
		 & \mathsf{SST} := \bigsqcup_{\lambda \in \mathsf{Par}} \mathsf{SST}(\lambda),      & 
		 & \mathsf{SST}_m := \bigsqcup_{\lambda \in \mathsf{Par}} \mathsf{SST}_m(\lambda).
	\end{aligned}
\end{equation}
We say that a semistandard tableau $T$ is \textit{symplectic}
if $T(1,y) \geq 2y-1$ for all $y$
(see \cite[Section 4]{Kin}).
We denote by $\mathsf{SpT}(\lambda)$ the set of symplectic tableaux of shape $\lambda \in \mathsf{Par}$, 
and by $\mathsf{SpT}_{m}(\lambda)$ 
the set of symplectic tableaux of shape $\lambda \in \mathsf{Par}$
whose entries are all less than
or equal to $m$.
We set
\begin{equation}
	\begin{aligned}
		 & \mathsf{SpT} := \bigsqcup_{\lambda \in \mathsf{Par}} \mathsf{SpT}(\lambda),      & 
		 & \mathsf{SpT}_m := \bigsqcup_{\lambda \in \mathsf{Par}} \mathsf{SpT}_m(\lambda).
	\end{aligned}
\end{equation}

Let $T \in \mathsf{SST}$ be a semistandard tableau, and
$m$ a positive integer.
Then, a new semistandard tableau $m \rightarrow T$
is defined by 
\textit{Schensted's insertion algorithm}
(see \cite[Chapter 7.1]{BS}).
We define $\varpi_l$
to be the partition whose parts are all $1$
and whose length is $\ell(\varpi_l)=l$.
For $T \in \mathsf{SST}(\varpi_l)$ and $S \in \mathsf{SST}$,
we define $T * S$ to be the semistandard tableau given as: 
\begin{equation}
	T(1,l) \rightarrow T(1,l-1) \rightarrow \cdots \rightarrow T(1,2) \rightarrow T(1,1) \rightarrow S.
\end{equation}

\subsection{Words}
Let $\mathsf{W}$ be the set of
all finite sequences of positive integers,
and refer to its elements as \textit{words}.
For $m \in \mathbb{N}$ and a word $w$,
let $w[m]$ denote the number of occurrences of $m$ in $w$.
Also, we denote by $\mathsf{W}_m$ the set of words consisting of those elements
that are less than or equal to $m$. 
The equivalence relation $\cong_{\mathsf{K}}$ on $\mathsf{W}$
generated by the following relation is called the
\textit{Knuth equivalence relation}:
\begin{itemize}
	\item if $x < y \leq z$, then $\cdots\,y\,x\,z\,\cdots \cong_{\mathsf{K}} \cdots\,y\,z\,x\,\cdots$; 
	\item if $x \leq y < z$, then $\cdots\,x\,z\,y\,\cdots \cong_{\mathsf{K}} \cdots\,z\,x\,y\,\cdots$.
\end{itemize}
Let $D$ be a diagram and $T$ a tableau of shape $D$.
A word $w_r(T)$, called the \textit{row word} of $T$,
is the word obtained by reading the entries of $T$
from the bottom row to the top row, and from left to right in each row. 
Similarly, a word $w_c(T)$, called the \textit{inverse column word} of $T$,
is the word obtained by reading the entries of $T$
from the rightmost column to the leftmost column, and from top to bottom in each column. 
It is well-known (see \cite[Part I, Chapter 2, Theorem]{F}) that for each word $w$,
there exist a unique tableau $T\in\mathsf{SST}$ such that
$w \cong_{\mathsf{K}} w_r(T)$. 

\bigskip

\section{Naito--Sagaki conjecture}

\subsection{General linear Lie algebras}
Let $n \geq 2$ be an integer.
For $1 \leq i, j \leq 2n$, let 
$E_{i,j}:=\left(\delta_{ki}\delta_{lj}\right)_{1\leq k,l\leq2n}$
be the corresponding matrix unit.
We write $\mathfrak{g}:=\mathfrak{gl}_{2n}(\mathbb{C})$,
and set
\begin{equation}
	\begin{aligned}
		 & P^{\vee} :=\sum_{i=1}^{2n}\mathbb{Z}E_{i,i},                                                           & 
		 & P :=\mathrm{Hom}_{\mathbb{Z}}(P^{\vee},\mathbb{Z}); 
	\end{aligned}
\end{equation}
let $\langle\cdot,\cdot\rangle : P \times P^{\vee} \to \mathbb{Z}$ denote the 
canonical pairing. 
Also, we set 
\begin{equation}
	\begin{aligned}
		 & P^{\vee}_{\mathbb{R}}:= \mathbb{R}\otimes_{\mathbb{Z}}P^{\vee}=\sum_{i=1}^{2n}\mathbb{R}E_{ii},                      & 
		 & P_{\mathbb{R}} :=\mathbb{R}\otimes_{\mathbb{Z}}P=\mathrm{Hom}_{\mathbb{R}}(P^{\vee}_{\mathbb{R}},\mathbb{R}). & 
	\end{aligned}
\end{equation}
We define
$\varepsilon_i \in P_{\mathbb{R}}$, $1\leq i \leq 2n$, 
by
$\langle\varepsilon_i,E_{jj}\rangle=\delta_{ij}$.
Let $I$ be the set of vertices of the Dynkin diagram
of type $A_{2n-1}$:
\vspace{10pt}
\begin{center}
	\begin{tikzpicture}[x=5mm,y=5mm]
		\draw (0,-4)--(2,-4);
		\draw (2,-4)--(4,-4);
		\draw (4,-4)--(6,-4);
		\draw (6,-4)--(8,-4);
		\draw[dashed,line width=0.5pt] (8,-4)--(10,-4);
		\draw (10,-4)--(12,-4);
		\draw (12,-4)--(14,-4);
		\draw[fill=white] (0,-4) circle[radius=3.5pt];
		\draw[fill=white] (2,-4) circle[radius=3.5pt];
		\draw[fill=white] (4,-4) circle[radius=3.5pt];
		\draw[fill=white] (6,-4) circle[radius=3.5pt];
		\draw[fill=white] (12,-4) circle[radius=3.5pt];
		\draw[fill=white] (14,-4) circle[radius=3.5pt];
		\node (l) at (0,-4.7) {\scriptsize $1$};
		\node (m) at (2,-4.7) {\scriptsize $2$};
		\node (n) at (4,-4.7) {\scriptsize $3$};
		\node (o) at (6,-4.7) {\scriptsize $4$};
		\node (p) at (12,-4.7) {\scriptsize $2n-2$};
		\node (q) at (14,-4.7) {\scriptsize $2n-1$};
	\end{tikzpicture}
\end{center}
We denote the corresponding Cartan matrix by
$\mathsf{C}=\left(c_{ij}\right)_{i,j\in I}$. 

Let us define the elements (called the Chevalley generators) 
$e_i,f_i,h_i \in \mathfrak{g}$, $i \in  I$, by
$e_i := E_{i+1,i},\,\,f_i := E_{i,i+1},\,\,h_i := E_{i,i} - E_{i+1,i+1}$.
The Lie subalgebra of $\mathfrak{g}$ generated by 
$e_i,f_i,h_i \in \mathfrak{g}$, $i \in  I$, 
is the complex special linear Lie algebra
$\mathfrak{sl}_{2n}(\mathbb{C})$.
Also, 
we define $\alpha_i$ for $i \in I$ by
$\alpha_i := \varepsilon_i - \varepsilon_{i+1}$,
and set
\begin{equation}
	\begin{aligned}
		 & P^+ := \left\{\lambda \in P\mid\langle\lambda,h_i\rangle \geq 0, \,\,i \in  I\right\},                                                    \\
		 & P^+_{\geq0} := \left\{\lambda \in P\mid\langle\lambda,h_i\rangle \geq 0, \,\,i \in  I,\,\,\langle\lambda,E_{2n,2n}\rangle \geq 0\right\}.
	\end{aligned}
\end{equation}
Let $L(\lambda)$
denote the irreducible highest weight $\mathfrak{g}$-module of
highest weight $\lambda \in P^+$.

\subsection{Fixed point subalgebra $\widehat{\mathfrak{g}}$}
Let
$\sigma :  I \to  I$
be the following diagram automorphism:
\begin{center}
	\begin{tikzpicture}[x=5mm,y=5mm]
		\node (x) at (-1,-1) {\textcolor{blue}{$\sigma$}};
		\draw (0,0)--(2,0);
		\draw (2,0)--(4,0);
		\draw (4,0)--(6,0);
		\draw (6,0)--(8,0);
		\draw (10,0)--(12,0);
		\draw (12,0)--(14,-1);
		\draw (0,-2)--(2,-2);
		\draw (2,-2)--(4,-2);
		\draw (4,-2)--(6,-2);
		\draw (6,-2)--(8,-2);
		\draw (10,-2)--(12,-2);
		\draw (12,-2)--(14,-1);
		\draw[dashed,line width=0.5pt] (8,0)--(10,0);
		\draw[dashed,line width=0.5pt] (8,-2)--(10,-2);
		\draw[fill=white] (0,0) circle[radius=3.5pt];
		\draw[fill=white] (2,0) circle[radius=3.5pt];
		\draw[fill=white] (4,0) circle[radius=3.5pt];
		\draw[fill=white] (6,0) circle[radius=3.5pt];
		\draw[fill=white] (12,0) circle[radius=3.5pt];
		\draw[fill=white] (14,-1) circle[radius=3.5pt];
		\draw[fill=white] (0,-2) circle[radius=3.5pt];
		\draw[fill=white] (2,-2) circle[radius=3.5pt];
		\draw[fill=white] (4,-2) circle[radius=3.5pt];
		\draw[fill=white] (6,-2) circle[radius=3.5pt];
		\draw[fill=white] (12,-2) circle[radius=3.5pt];
		\node (a) at (0,0.7) {$\scriptstyle 1$};
		\node (b) at (2,0.7) {$\scriptstyle 2$};
		\node (c) at (4,0.7) {$\scriptstyle 3$};
		\node (d) at (6,0.7) {$\scriptstyle 4$};
		\node (e) at (12,0.7) {$\scriptstyle n-1$};
		\node (f) at (14,-0.3) {$\scriptstyle n$};
		\node (g) at (0,-2.7) {$\scriptstyle 2n-1$};
		\node (h) at (2,-2.7) {$\scriptstyle 2n-2$};
		\node (i) at (4,-2.7) {$\scriptstyle 2n-3$};
		\node (j) at (6,-2.7) {$\scriptstyle 2n-4$};
		\node (k) at (12,-2.7) {$\scriptstyle n+1$};
		\draw[<->,color=blue] (0,-0.3) to[bend right] (0,-1.7);
		\draw[<->,color=blue] (2,-0.3) to[bend right] (2,-1.7);
		\draw[<->,color=blue] (4,-0.3) to[bend right] (4,-1.7);
		\draw[<->,color=blue] (6,-0.3) to[bend right] (6,-1.7);
		\draw[<->,color=blue] (12,-0.3) to[bend right] (12,-1.7);
		\node (X) at (14.1,-1) {};
		\draw[->,color=blue] (X) to[out=30,in=-30,loop] ();
	\end{tikzpicture}
\end{center}
The assignment  
$e_i\mapsto e_{\sigma(i)},\,\,f_i\mapsto f_{\sigma(i)},\,\,h_i\mapsto h_{\sigma(i)}$ for $i \in I$ 
induces a Lie algebra automorphism of $\mathfrak{sl}_{2n}(\mathbb{C})$; 
we use the same symbol $\sigma$ 
for this Lie algebra automorphism, 
and set
$\widehat{\mathfrak{g}} :=\left\{x \in \mathfrak{sl}_{2n}(\mathbb{C})\mid\sigma(x)=x\right\}$.
Let $\widehat{ I}$ be the set of vertices of
the Dynkin diagram of type $C_{n}$:
\begin{center}
	\begin{tikzpicture}[x=5mm,y=5mm]
		\draw (0,-4)--(2,-4);
		\draw (2,-4)--(4,-4);
		\draw (4,-4)--(6,-4);
		\draw (6,-4)--(8,-4);
		\draw[dashed,line width=0.5pt] (8,-4)--(10,-4);
		\draw (10,-4)--(12,-4);
		\draw[double distance=2pt] (12,-4)--node{$<$}(14,-4);
		\draw[fill=white] (0,-4) circle[radius=3.5pt];
		\draw[fill=white] (2,-4) circle[radius=3.5pt];
		\draw[fill=white] (4,-4) circle[radius=3.5pt];
		\draw[fill=white] (6,-4) circle[radius=3.5pt];
		\draw[fill=white] (12,-4) circle[radius=3.5pt];
		\draw[fill=white] (14,-4) circle[radius=3.5pt];
		\node (l) at (0,-4.7) {$\scriptstyle 1$};
		\node (m) at (2,-4.7) {$\scriptstyle 2$};
		\node (n) at (4,-4.7) {$\scriptstyle 3$};
		\node (o) at (6,-4.7) {$\scriptstyle 4$};
		\node (p) at (12,-4.7) {$\scriptstyle n-1$};
		\node (q) at (14,-4.7) {$\scriptstyle n$};
	\end{tikzpicture}
\end{center}
We define elements 
$\widehat{e}_i,\widehat{f}_i,\widehat{h}_i \in \widehat{\mathfrak{g}}$, $i \in \widehat{I}$
by:
\begin{equation}
	\begin{aligned}
		        & \widehat{e}_i :=
		\left\{
		\begin{aligned}
			 & e_i + e_{\sigma(i)} &  & \text{if } 1 \leq i < n, \\
			 & e_n                 &  & \text{if } i = n,
		\end{aligned}
		\right. & 
		        & \widehat{f}_i :=
		\left\{
		\begin{aligned}
			 & f_i + f_{\sigma(i)} &  & \text{if } 1 \leq i < n, \\
			 & f_n                 &  & \text{if } i = n,
		\end{aligned}
		\right.                    \\
		        & \widehat{h}_i :=
		\left\{
		\begin{aligned}
			 & h_i + h_{\sigma(i)} &  & \text{if } 1 \leq i < n, \\
			 & h_n                 &  & \text{if } i = n.
		\end{aligned}
		\right. & 
	\end{aligned}
\end{equation}
The Lie subalgebra
$\widehat{\mathfrak{g}}$ is isomorphic to the symplectic Lie algebra $\mathfrak{sp}_{2n}(\mathbb{C})$,
with $\widehat{e}_i,\widehat{f}_i,\widehat{h}_i \in \widehat{\mathfrak{g}}$, $i \in \widehat{I}$, the Chevalley generators.
We set
\begin{equation}
	\begin{aligned}
		 & \widehat{P}^{\vee} :=\sum_{i \in \widehat{ I}}\mathbb{Z}\widehat{h}_i,                                                                                     & 
		 & \widehat{P} :=\mathrm{Hom}_{\mathbb{Z}}(\widehat{P}^{\vee},\mathbb{Z}),                                                                                    & 
		 & \widehat{P}^{+} := \left\{\mu \in \widehat{P} \mathrel{}\middle|\mathrel{} \langle\mu,\widehat{h}_i\rangle \geq 0\,\,(i \in \widehat{ I})\right\}. & 
	\end{aligned}
\end{equation}
Also, we set 
\begin{equation}
	\begin{aligned}
		 & \widehat{P}^{\vee}_{\mathbb{R}} :=\mathbb{R}\otimes_{\mathbb{Z}}\widehat{P}^{\vee} =\sum_{i \in \widehat{I}}\mathbb{R}\widehat{h}_i,                                & 
		 & \widehat{P}_{\mathbb{R}} :=\mathbb{R}\otimes_{\mathbb{Z}}\widehat{P} =\mathrm{Hom}_{\mathbb{R}}(\widehat{P}^{\vee}_{\mathbb{R}},\mathbb{R}). & 
	\end{aligned}
\end{equation}
The inclusion 
$\widehat{P}^{\vee}_{\mathbb{R}} \hookrightarrow P^{\vee}_{\mathbb{R}}$
induces the natural restriction map
$\widehat{\cdot} : P_{\mathbb{R}} \to \widehat{P}_{\mathbb{R}}$.
Let $\overline{i} := 2n-i+1$ for $1 \leq i \leq 2n$; 
it is easy to verify that $\widehat{\varepsilon}_{\overline{i}} = -\widehat{\varepsilon}_i$.
For each $1 \leq i \leq n-1$,
we denote by $\widehat{\mathfrak{g}}_i$
the Lie subalgebra of $\widehat{\mathfrak{g}}$
generated by $\widehat{e}_{i},\widehat{f}_{i},\widehat{e}_{2\widehat{\varepsilon}_i},\widehat{f}_{2\widehat{\varepsilon}_i}$,
where $\widehat{e}_{2\widehat{\varepsilon}_i}$ and
$\widehat{f}_{2\widehat{\varepsilon}_i}$ are the root vectors for the roots $\pm2\widehat{\varepsilon}_i$, respectively.
Note that each $\widehat{\mathfrak{g}}_i$ is isomorphic to $\mathfrak{sp}_4(\mathbb{C})$, and 
$\widehat{\mathfrak{g}}$ is generated as a Lie algebra by $\widehat{\mathfrak{g}}_1 , \widehat{\mathfrak{g}}_2 , \ldots , \widehat{\mathfrak{g}}_{n-1}$:
\begin{center}
	\begin{tikzpicture}[x=5mm,y=5mm]
		\draw (0,0)--(2,0);
		\draw (2,0)--(4,0);
		\draw (4,0)--(6,0);
		\draw (6,0)--(8,0);
		\draw (10,0)--(12,0);
		\draw (12,0)--(12,-2);
		\draw (0,-4)--(2,-4);
		\draw (2,-4)--(4,-4);
		\draw (4,-4)--(6,-4);
		\draw (6,-4)--(8,-4);
		\draw (10,-4)--(12,-4);
		\draw (12,-4)--(12,-2);
		\draw[dotted, line width=0.8pt] (0,0)--(0,-2);
		\draw[dotted, line width=0.8pt] (0,-4)--(0,-2);
		\draw[dotted, line width=0.8pt] (2,0)--(2,-2);
		\draw[dotted, line width=0.8pt] (2,-4)--(2,-2);
		\draw[dotted, line width=0.8pt] (4,0)--(4,-2);
		\draw[dotted, line width=0.8pt] (4,-4)--(4,-2);
		\draw[dotted, line width=0.8pt] (6,0)--(6,-2);
		\draw[dotted, line width=0.8pt] (6,-4)--(6,-2);
		\draw[dashed,line width=0.5pt] (8,0)--(10,0);
		\draw[dashed,line width=0.5pt] (8,-4)--(10,-4);
		\draw[fill=white] (0,0) circle[radius=3.5pt];
		\draw[fill=white] (2,0) circle[radius=3.5pt];
		\draw[fill=white] (4,0) circle[radius=3.5pt];
		\draw[fill=white] (6,0) circle[radius=3.5pt];
		\draw[fill=white] (12,0) circle[radius=3.5pt];
		\draw[fill=white] (12,-2) circle[radius=3.5pt];
		\draw[fill=white] (0,-4) circle[radius=3.5pt];
		\draw[fill=white] (2,-4) circle[radius=3.5pt];
		\draw[fill=white] (4,-4) circle[radius=3.5pt];
		\draw[fill=white] (6,-4) circle[radius=3.5pt];
		\draw[fill=white] (12,-4) circle[radius=3.5pt];
		\draw[dotted, fill=white, line width=0.8pt] (0,-2) circle[radius=3.5pt];
		\draw[dotted, fill=white, line width=0.8pt] (2,-2) circle[radius=3.5pt];
		\draw[dotted, fill=white, line width=0.8pt] (4,-2) circle[radius=3.5pt];
		\draw[dotted, fill=white, line width=0.8pt] (6,-2) circle[radius=3.5pt];
		\node (a) at (0,0.7) {$\scriptstyle 1$};
		\node (b) at (2,0.7) {$\scriptstyle 2$};
		\node (c) at (4,0.7) {$\scriptstyle 3$};
		\node (d) at (6,0.7) {$\scriptstyle 4$};
		\node (e) at (12,0.7) {$\scriptstyle n-1$};
		\node (f) at (13,-2) {$\scriptstyle n$};
		\node (g) at (0,-4.7) {$\scriptstyle 2n-1$};
		\node (h) at (2,-4.7) {$\scriptstyle 2n-2$};
		\node (i) at (4,-4.7) {$\scriptstyle 2n-3$};
		\node (j) at (6,-4.7) {$\scriptstyle 2n-4$};
		\node (k) at (12,-4.7) {$\scriptstyle n+1$};
		\draw[dashed, color=blue, line width=1pt] (0,-2) circle[x radius=0.7,y radius=3];
		\draw[dashed, color=blue, line width=1pt] (2,-2) circle[x radius=0.7,y radius=3];
		\draw[dashed, color=blue, line width=1pt] (4,-2) circle[x radius=0.7,y radius=3];
		\draw[dashed, color=blue, line width=1pt] (6,-2) circle[x radius=0.7,y radius=3];
		\draw[dashed, color=blue, line width=1pt] (12,-2) circle[x radius=0.7,y radius=3];
		\node at (0,1.9) {\textcolor{blue}{$\widehat{\mathfrak{g}}_1$}};
		\node at (2,1.9) {\textcolor{blue}{$\widehat{\mathfrak{g}}_2$}};
		\node at (4,1.9) {\textcolor{blue}{$\widehat{\mathfrak{g}}_3$}};
		\node at (6,1.9) {\textcolor{blue}{$\widehat{\mathfrak{g}}_4$}};
		\node at (12,1.9) {\textcolor{blue}{$\widehat{\mathfrak{g}}_{n-1}$}};
	\end{tikzpicture}
\end{center}
Let $\widehat{L}(\mu)$ denote the irreducible highest weight $\widehat{\mathfrak{g}}$-module of
highest weight $\mu \in \widehat{P}^+$.

\subsection{Statement of the Naito--Sagaki conjecture}
In this subsection,
we restate the Naito--Sagaki conjecture precisely; 
for more details, 
refer to \cite{NS}.
The sets
$P^{+}_{\geq0}$ and $\widehat{P}^+$ have the following descriptions:
\begin{equation}
	\begin{aligned}
		P^{+}_{\geq0}       & = \left\{\lambda_1\varepsilon_1 + \cdots + \lambda_{2n}\varepsilon_{2n} \in P\mid\lambda_1 \geq \cdots \geq \lambda_{2n} \geq 0\right\},                                       \\
		\widehat{P}^{+} & = \left\{\mu_1\widehat{\varepsilon}_1 + \cdots + \mu_{n}\widehat{\varepsilon}_{n} \in \widehat{P} \mathrel{}\middle|\mathrel{} \mu_1 \geq \cdots \geq \mu_{n} \geq 0\right\}.
	\end{aligned}
\end{equation}
Therefore, we can identify these sets with
$\mathsf{Par}_{\leq 2n}$ and $\mathsf{Par}_{\leq n}$,
respectively.
From now on, we regard elements of
$P^{+}_{\geq0}$ and $\widehat{P}^+$ as partitions or Young diagrams.

Let us fix a Young diagram $\lambda \in P^{+}_{\geq0}$, 
and set $N := |\lambda|$.
Divide the interval $[0,1]_{\mathbb{R}}$ into $N$ subintervals of the same length,
as $0=t_0<t_1<\dots<t_N=1$.
For each $T \in \mathsf{SST}_{2n}(\lambda)$,
let the inverse column word of $T$ be $w_c(T)=i_1\,i_2\,\cdots\,i_N$,
and define a piecewise-linear, continuous map
$\pi_T : [0,1]_{\mathbb{R}} \to P_{\mathbb{R}}$ as follows:
\begin{equation}
	\begin{aligned}
		 & \pi_T(t) := \sum_{k=1}^{j-1} \varepsilon_{i_k} + N(t-t_{j-1}) \varepsilon_{i_j}, & 
		 & t_{j-1} \leq t \leq t_j, \,\, 1 \leq j \leq N.
	\end{aligned}
\end{equation}
Let $\mathbb{B}(\lambda)$ denote 
the set of all piecewise-linear, continuous maps obtained in this way.
Also, for $\pi_T \in \mathbb{B}(\lambda)$,
define
$\widehat{\pi}_T : [0,1]_{\mathbb{R}} \to \widehat{P}_{\mathbb{R}}$
by: $\widehat{\pi}_T(t) = \widehat{\pi_T(t)}$,
where the right-hand side is the image of $\pi_T(t)$ under
$\widehat{\cdot} : P_{\mathbb{R}} \to \widehat{P}_{\mathbb{R}}$.
In other words,
$\widehat{\pi}_T$ is the piecewise-linear, continuous map defined as follows:
\begin{equation}
	\begin{aligned}
		 & \widehat{\pi}_T(t) = \sum_{k=1}^{j-1} \widehat{\varepsilon}_{i_k} + N(t-t_{j-1}) \widehat{\varepsilon}_{i_j}, & 
		 & t_{j-1} \leq t \leq t_j, \,\, 1 \leq j \leq N.
	\end{aligned}
\end{equation}
We denote by $\widehat{\mathbb{B}}(\widehat{\lambda})$ 
the set of all piecewise-linear, continuous maps
obtained in this manner. 
For $\widehat{\pi}_T \in \widehat{\mathbb{B}}(\widehat{\lambda})$,
we set
$\mathsf{wt}_{\widehat{\mathfrak{g}}}(\widehat{\pi}_T) = \mathsf{wt}_{\widehat{\mathfrak{g}}}(T) := \widehat{\pi}_T(1)$.
A path $\pi_T \in \mathbb{B}(\lambda)$ is said to be
\textit{$\widehat{\mathfrak{g}}$-dominant}
if, for all $i \in \widehat{ I}$ and $t \in [0,1]_{\mathbb{R}}$, the inequality 
$\langle\widehat{\pi}_T(t),\widehat{h}_i\rangle \geq 0$ holds. 
Let $\mathsf{SST}^{\text{$\widehat{\mathfrak{g}}$-$\mathrm{dom}$}}(\lambda)$
denote the set of those tableaux $T \in \mathsf{SST}_{2n}(\lambda)$
for which $\pi_T$ is $\widehat{\mathfrak{g}}$-dominant.
\begin{rem}
	A path
	$\pi_T \in \mathbb{B}(\lambda)$ is $\widehat{\mathfrak{g}}$-dominant
	if and only if
	the image of $\widehat{\pi}_T$ is contained in $\widehat{C}$,
	where $\widehat{C}$ is the (closure of the) fundamental Weyl chamber given by: 
	\begin{equation}
		\widehat{C} :=
		\left\{
		x_1\widehat{\varepsilon}_1 + \cdots + x_n\widehat{\varepsilon}_n \in \widehat{P}_{\mathbb{R}}
		\mathrel{}\middle|\mathrel{}
		x_1 \geq \cdots \geq x_n \geq 0
		\right\}.
	\end{equation}
\end{rem}
The 
\textit{Naito--Sagaki conjecture} is stated as follows. 
\begin{thm}[{\cite[Conjecture 2.5.3]{NS}} $=$ {\cite[Theorem 1]{ST}}]\label{NS2}
	We have the following decomposition of 
	$\mathsf{Res}^{\mathfrak{g}}_{\widehat{\mathfrak{g}}}\,L(\lambda)$ as a $\widehat{\mathfrak{g}}$-module:
	\begin{equation}
		\mathsf{Res}^{\mathfrak{g}}_{\widehat{\mathfrak{g}}}\,L(\lambda) \cong
		\bigoplus_{\substack{\pi \in \mathbb{B}(\lambda) \\ \widehat{\pi} \, : \, \text{$\widehat{\mathfrak{g}}$-$\mathrm{dominant}$}}} \widehat{L}\left(\mathsf{wt}_{\widehat{\mathfrak{g}}}(\widehat{\pi}_T)\right).
	\end{equation}
	Equivalently, for all $\mu \in \widehat{P}^+$,
	the following equality holds:
	\begin{equation}
		\left[\mathsf{Res}^{\mathfrak{g}}_{\widehat{\mathfrak{g}}}\,L(\lambda) : \widehat{L}(\mu)\right]
		=
		\sharp
		\left\{
		T \in \mathsf{SST}_{2n}^{\text{$\widehat{\mathfrak{g}}$-$\mathrm{dom}$}}(\lambda)
		\mathrel{}\middle|\mathrel{}
		\mathsf{wt}_{\widehat{\mathfrak{g}}}(T) = \mu
		\right\}. \label{3.12}
	\end{equation}
\end{thm}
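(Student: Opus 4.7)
The plan is to deduce Theorem \ref{NS2} from Theorem B (= Theorem \ref{KeyProposition}) via the branching rule for the $\imath$quantum group $\mathbf{U}^{\imath}$ of type $\mathrm{AII}_{2n-1}$ established by the third author in \cite{Wat1}. The crystal-basis theory for $\mathbf{U}^{\imath}$ equips $\mathsf{SST}_{2n}(\lambda)$ with the structure of a $\mathbf{U}^{\imath}$-crystal, and the resulting branching rule asserts that the multiplicity $[\mathsf{Res}^{\mathfrak{g}}_{\widehat{\mathfrak{g}}} L(\lambda) : \widehat{L}(\mu)]$ equals the number of $\mathfrak{k}$-highest-weight tableaux in $\mathsf{SST}_{2n}(\lambda)$ of $\mathfrak{k}$-weight $\mu$. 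Combined with the multiplicity formula (\ref{3.12}), this reduces the theorem to exhibiting a weight-preserving bijection $\mathsf{SST}_{2n}^{\text{$\widehat{\mathfrak{g}}$-$\mathrm{dom}$}}(\lambda) \stackrel{\sim}{\longrightarrow} \mathsf{SST}_{2n}^{\text{$\mathfrak{k}$-$\mathrm{hw}$}}(\lambda)$, which is precisely the map $\Phi$ of Theorem B; the dual statement for $\Psi$ provides an equivalent route through lowest-weight vectors.

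To construct $\Phi$ (and its lowest-weight counterpart $\Psi$), the approach is to combine promotion operators on $\mathsf{SST}_{2n}$ with Kashiwara root operators for the ambient $\mathfrak{g}$-crystal---the ``combinatorial operations well-suited to the representation theory of complex semisimple Lie algebras'' singled out in the abstract. The goal is to design $\Phi$ so that it translates the global Littelmann-path inequality $\langle \widehat{\pi}_T(t), \widehat{h}_i \rangle \geq 0$, required for all $t \in [0,1]_{\mathbb{R}}$ and all $i \in \widehat{I}$, into the local algebraic condition that $\Phi(T)$ is annihilated by every raising operator of $\mathbf{U}^{\imath}$. Weight compatibility $\mathsf{wt}_{\widehat{\mathfrak{g}}}(T) = \mathsf{wt}_{\mathfrak{k}}(\Phi(T))$ should then follow directly from the definitions, since both weights ultimately depend only on the multiset of entries of $T$ after identification under the involution $i \leftrightarrow \overline{i} = 2n-i+1$ that implements $\widehat{\varepsilon}_{\overline{i}} = -\widehat{\varepsilon}_i$.

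The main obstacle is precisely the mismatch between these two descriptions: $\widehat{\mathfrak{g}}$-dominance is a \emph{global} constraint on the entire path $\widehat{\pi}_T$, whereas the $\mathfrak{k}$-highest-weight condition is a \emph{local} algebraic property of the tableau $T$. The plan for overcoming this is to argue inductively on $n$, matching the organization of the paper. For the base case $n = 2$, I would work directly with $\mathsf{SST}_4(\lambda)$ and exploit the explicit structure of the $\mathfrak{sp}_4$-crystal to hand-craft the bijection; this is what Section 6 carries out. For $n \geq 3$, the plan (Section 7) is to exploit the fact that $\widehat{\mathfrak{g}}$ is generated by the rank-two subalgebras $\widehat{\mathfrak{g}}_i \cong \mathfrak{sp}_4$ for $1 \leq i \leq n-1$, so that verifying both the dominance condition and the highest-weight condition reduces, simple direction by simple direction, to the $n = 2$ construction. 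The delicate point is to show that the promotion operators used to implement $\Phi$ intertwine correctly with the embeddings $\widehat{\mathfrak{g}}_i \hookrightarrow \widehat{\mathfrak{g}}$ and with Schensted's insertion on the tableaux $T * S$ introduced in Section 2.2; this is where the bulk of the combinatorial work is expected to lie.
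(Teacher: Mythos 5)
Your overall plan --- deducing Theorem \ref{NS2} from Theorem B via the quantum Littlewood--Richardson rule of type $\mathrm{AII}$, constructing $\Phi$ from promotion operators, and reducing the higher-rank case to the rank-two base case --- is the paper's approach in outline. But your first paragraph contains a genuine logical gap: you assert that the $\imath$quantum-group branching rule directly computes $[\mathsf{Res}^{\mathfrak{g}}_{\widehat{\mathfrak{g}}}\,L(\lambda) : \widehat{L}(\mu)]$, and this is not what it computes. The $\imath$quantum group $\mathbf{U}^{\imath}$ is a $q$-deformation of $U(\mathfrak{k})$, where $\mathfrak{k}$ is the Letzter--Noumi subalgebra generated by $h_i,e_i,f_i$ for $i$ odd together with the twisted generators $b_j,a_j$ for $j$ even. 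This $\mathfrak{k}$, while abstractly isomorphic to $\mathfrak{sp}_{2n}(\mathbb{C})$, is a \emph{different} embedded copy of $\mathfrak{sp}_{2n}$ inside $\mathfrak{sl}_{2n}(\mathbb{C})$ than the diagram-fold subalgebra $\widehat{\mathfrak{g}}$. The Watanabe branching rule, combined with the classical-weight-module comparison in Theorem \ref{thm5.1}(4), therefore gives $[\mathsf{Res}^{\mathfrak{g}}_{\mathfrak{k}}\,L(\lambda) : \widetilde{L}(\mu)]$, not the $\widehat{\mathfrak{g}}$-multiplicity. The paper bridges the two via Dynkin's theorem (Proposition \ref{Dynkin}): two embeddings $\iota_1,\iota_2 : \mathfrak{sp}_{2n} \hookrightarrow \mathfrak{sl}_{2n}$ whose restrictions of the vector representation are isomorphic induce identical branching for \emph{every} $\mathfrak{sl}_{2n}$-module, and applying this to $\widehat{\mathfrak{g}}$ and $\mathfrak{k}$ yields the key equality (\ref{5.17}). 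Without this step, or some substitute for it, your chain of identifications does not close, and Theorem B alone does not imply Theorem \ref{NS2}.

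Two smaller inaccuracies. You describe $\Psi$ as ``an equivalent route through lowest-weight vectors,'' but in the paper it is an auxiliary device needed to prove that $\Phi$ is a bijection: in the inductive descent from rank $n$ to rank $n-1$ the restricted composite $\Phi_2$ turns out to equal $\Psi_1$ for $\mathfrak{gl}_{2(n-1)}$ with shifted indices, so the highest-weight and lowest-weight constructions must be carried in tandem. Also, weight compatibility $\mathsf{wt}_{\widehat{\mathfrak{g}}}(T)=\mathsf{wt}_{\mathfrak{k}}(\Phi(T))$ is not as automatic as you suggest: $\mathsf{wt}_{\widehat{\mathfrak{g}}}$ pairs entry $i$ with $\overline{i}=2n-i+1$, whereas $\mathsf{wt}_{\mathfrak{k}}$ pairs consecutive entries $2k-1$ and $2k$ with alternating sign, so the formulas are genuinely different and their agreement under $\Phi$ requires its own check (the paper reduces it to the single-box case using linearity and the explicit form of $\Phi$ as a composite of promotions).
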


\bigskip

\section{Quantized universal enveloping algebras and Kashiwara crystals}
In this section,
we review the representation theory of
(Drinfeld--Jimbo) quantum groups
and the theory of crystal, or canonical bases, due to Lusztig \cite{Lu1} and Kashiwara \cite{Kas2}; 
as for the notation and some basic results, we mainly use those in \cite{Kas}, \cite{J}, \cite{HK}, or \cite{Lu2}. 
Throughout this paper,
we use the following notation:
\begin{itemize}
	\item $\mathbf{A} = \mathbb{C}[q,q^{-1}]$,
	\item $\mathbf{A}_{1} = \setlr{f \in \mathbb{C}(q)}{\text{$f$ is regular at $q=1$}}$,
	\item $\mathbf{A}_{\infty} = \setlr{f \in \mathbb{C}(q)}{\text{$f$ is regular at $q=\infty$}}$.
\end{itemize}

\subsection{Quantized universal enveloping algebra of $\mathfrak{g}$}
Let $\mathbf{U}$ denote the $\mathbb{C}(q)$-algebra generated by 
$K_h$, $h \in P^{\vee}$, and $E_i, F_i$, $i \in I$, 
subject to the following relations:
\begin{multline}
	\left\{
	\begin{aligned}
		 & K_0 = 1, \,\,\,\,\,\,\,\, K_h K_{h'} = K_{h+h'},                                                                                                         \\
		 & K_h E_i K_{h}^{-1} = q^{\langle\alpha_i,h\rangle}E_i, \,\,\,\,\,\,\,\, K_h F_i K_{h}^{-1} = q^{-\langle\alpha_i,h\rangle}F_i,                            \\
		 & \left[E_i,F_j\right] = \delta_{ij}\frac{K_{h_i} - K_{h_i}^{-1}}{q-q^{-1}},                                                                               \\
		 & \sum_{r+s=1-c_{ij}} (-1)^r E_i^{(r)}E_jE_i^{(s)} = 0, \,\,\,\,\,\,\,\, \sum_{r+s=1-c_{ij}} (-1)^r F_i^{(r)}F_jF_i^{(s)} = 0 \,\,\,\,\,\,\,\, \text{if $i \neq j$}, \\
	\end{aligned}
	\right.
\end{multline}
where $E_i^{(k)}$ and $F_i^{(k)}$ denote the 
\textit{$q$-divided powers}:
\begin{equation}
	\begin{aligned}
		 & E_i^{(k)} := \frac{E_i^k}{[k]!},       & 
		 & F_i^{(k)} := \frac{F_i^k}{[k]!},       & 
		 & [k]! := \prod_{l=0}^k [l],             & 
		 & [l] := \frac{q^l - q^{-l}}{q-q^{-1}}.
	\end{aligned}
\end{equation}
The $\mathbb{C}(q)$-algebra $\mathbf{U}$ is a $q$-analog of the universal enveloping algebra of $\mathfrak{g}$.
For each $i \in  I$,
we denote by $\mathbf{U}_i$ the $\mathbb{C}(q)$-subalgebra of $\mathbf{U}$
generated by $K_{h_i},E_i,F_i$; 
$\mathbf{U}_i$ is a $q$-deformation of
the universal enveloping algebra of $\mathfrak{sl}_2(\mathbb{C})$.

For $\lambda \in P^+_{\geq0}$,
we define the finite-dimensional, irreducible highest weight 
$\mathbf{U}$-module $L_q(\lambda)$, 
with \textit{highest weight vector} $\eta_{\lambda}$, 
as follows:
\begin{equation}
	\begin{aligned}
		 & L_q(\lambda) := \mathbf{U} \left/ \left(\sum_{i \in  I} \mathbf{U}E_i + \sum_{i \in  I} \mathbf{U}F_i^{\langle\lambda,h_i\rangle+1} + \sum_{h \in P^{\vee}} \mathbf{U}\left(K_h-q^{\langle\lambda,h\rangle}\right) \right)\right., & 
		 & \eta_{\lambda} := \overline{1}.
	\end{aligned}
\end{equation}
In the classical limit as $q\to1$,
$L_q(\lambda)$ becomes $L(\lambda)$.
In principle, the structure of $L_q(\lambda)$
can be controlled by stacking the information about 
$\mathsf{Res}^{\mathbf{U}}_{\mathbf{U}_i}L_q(\lambda)$ for $i \in I$, 
as formulated by the theory of \textit{crystal bases}; 
see the next subsection.

\subsection{Crystal bases}
Let $\xi$ be a weight vector of $L_q(\lambda)$.
By applying the representation theory of $\mathfrak{sl}_2(\mathbb{C})$
to $\mathsf{Res}^{\mathbf{U}}_{\mathbf{U}_i}L_q(\lambda)$ for each $i \in I$,
we can show that
there exists a sequence of weight vectors
$\xi_n$, $n \in \mathbb{Z}_{\geq0}$, 
all of which are killed by the action of $E_i$,
such that $\xi$ can be uniquely written as: 
\begin{equation}
	\begin{aligned}
		 & \xi = \sum_{n\, \geq \,\max\{0,-\langle\mathsf{wt}(\xi),h_i\rangle\}} F_i^{(n)}\xi_n,                   & 
		 & \sharp \left\{n \in \mathbb{Z}_{\geq0}\mathrel{}\middle|\mathrel{} \xi_n \neq 0\right\} < +\infty.
	\end{aligned}
\end{equation}
Using this presentation of $\xi$, we define 
$\widetilde{E}_i\xi,\widetilde{F}_i\xi$ as follows:
\begin{equation}
	\widetilde{E}_i\xi = \sum_{n\, \geq \,\max\{0,-\langle\mathsf{wt}(\xi),h_i\rangle\}} F_i^{(n-1)}\xi_n, \,\,\,\,\,\,\,\,
	\widetilde{F}_i\xi = \sum_{n\, \geq \,\max\{0,-\langle\mathsf{wt}(\xi),h_i\rangle\}} F_i^{(n+1)}\xi_n.
\end{equation}
The operators
$\widetilde{E}_i,\widetilde{F}_i$
defined above on each weight space are extended
$\mathbb{C}(q)$-linearly
to the whole of $L_q(\lambda)$,
and the extended operators are called
the \textit{Kashiwara operators}.
Using the operators $\widetilde{F}_i$, $i \in I$, we define
\begin{equation}
	\mathcal{L}(\lambda) := \sum_{r \in \mathbb{Z}_{\geq0}} \sum_{i_1,\ldots,i_r \in  I} \mathbf{A}_{\infty} \widetilde{F}_{i_1}\cdots\widetilde{F}_{i_r}\eta_{\lambda}.
\end{equation}
It is well-known that
$\mathcal{L}(\lambda)$
is closed under the action of $\widetilde{E}_i$ and $\widetilde{F}_i$ for $i \in I$.
Therefore, the action of $\widetilde{E}_i$ and $\widetilde{F}_i$ is
also induced on the
$\mathbb{C}$-vector space
$\mathcal{L}(\lambda)/{q^{-1}\mathcal{L}(\lambda)}$.
We set
\begin{equation}
	\mathcal{B}(\lambda) :=
	\left\{
	\widetilde{F}_{i_1}\cdots\widetilde{F}_{i_r} \eta_{\lambda} \,\,\,\, \mathrm{mod}\,\,q^{-1}\mathcal{L}(\lambda)
	\mathrel{}\middle|\mathrel{}
	r \in \mathbb{Z}_{\geq0},\,\,i_1,\dots,i_r \in  I
	\right\}\,\,\backslash\,\,\{0\}.
\end{equation}
The set $\mathcal{B}(\lambda)$ is called the
\textit{crystal basis} of $L_q(\lambda)$.
The set
$\mathcal{B}(\lambda) \cup \{0\}$
is also closed under the action of
$\widetilde{E}_i$ and $\widetilde{F}_i$ for $i \in I$. 
The crystal basis 
$\mathcal{B}(\lambda)$
aggregates the information about 
$\mathsf{Res}^{\mathbf{U}}_{\mathbf{U}_i}L_q(\lambda)$ for $i \in I$, 
and contains all the representation-theoretic
information about both
$L(\lambda)$ and $L_q(\lambda)$.

The $\mathbb{C}(q)$-algebra
$\mathbf{U}$
admits an involution as a
$\mathbb{C}$-algebra $\overline{\,\cdot\,}$
given by: 
\begin{equation}
	\begin{aligned}
		 & \overline{q} = q^{-1},     & 
		 & \overline{K_h} = K_h^{-1} &
		 & (h \in P^{\vee}),            & 
		 & \overline{E_i} = E_i,      & 
		 & \overline{F_i} = F_i      & 
		 & (i \in  I).
	\end{aligned}
\end{equation}
Similarly, there exists a unique $\mathbb{C}$-linear involution
$\overline{\,\cdot\,}$ on $L_q(\lambda)$ satisfying
$\overline{u\xi} = \overline{u} \cdot \overline{\xi},\,\,u \in \mathbf{U}, \xi \in L_q(\lambda)$.
Also, we set
\begin{equation}
	_{\mathbf{A}} L_q(\lambda) := \sum_{r \in \mathbb{Z}_{\geq0}} \sum_{i_1,\ldots,i_r \in  I} \sum_{n_1,\ldots,n_r \in \mathbb{Z}_{\geq0}} \mathbf{A} F_{i_1}^{(n_1)} \cdots F_{i_r}^{(n_r)} \eta_{\lambda}.
\end{equation}
It is known that the natural
$\mathbb{C}$-linear map
$G^{-1} :\, _{\mathbf{A}} L_q(\lambda) \cap \mathcal{L}(\lambda) \cap \overline{\mathcal{L}(\lambda)} \to \mathcal{L}(\lambda)/{q^{-1}\mathcal{L}(\lambda)}$
is an isomorphism.
We define $G(b)$ as the preimage of $b \in \mathcal{B}(\lambda)$
under the isomorphism $G^{-1}$; 
the set 
$\mathbf{B}(\lambda) := \left\{G(b)\mathrel{}\middle|\mathrel{}b \in \mathcal{B}(\lambda)\right\}$
is called the \textit{global crystal basis} of $L_q(\lambda)$.
Note that 
$\mathbf{B}(\lambda)$ is a $\mathbb{C}(q)$-basis of $L_q(\lambda)$ consisting of weight vectors, while $\mathcal{B}(\lambda)$ is a $\mathbb{C}$-basis of $\mathcal{L}(\lambda)/{q^{-1}\mathcal{L}(\lambda)}$. 

\subsection{Kashiwara crystals}
The notion of a \textit{crystal} is introduced by 
axiomatizing the combinatorial properties of
$\mathcal{B}(\lambda)$. 
Here, for the sake of simplicity,
we restrict our attention to a certain subclass of crystals, 
known as \textit{seminormal crystals}. 
We consider a tuple
$(\mathcal{B},\mathsf{wt},\{\widetilde{e}_i\}_{i \in  I},\{\widetilde{f}_i\}_{i \in  I})$
consisting of the following: 
\begin{itemize}
	\item a set $\mathcal{B}$, together with an element $0$ that is not in $\mathcal{B}$; 
	\item a map $\mathsf{wt} : \mathcal{B} \to P$; 
	\item operators $\widetilde{e}_i,\widetilde{f}_i : \mathcal{B} \to \mathcal{B} \sqcup \{0\}$ indexed by $i \in  I$.
\end{itemize}
For each $b\in\mathcal{B}$, we set
\begin{equation}
	\begin{aligned}
		 & \varepsilon_i(b) := \max\setlr{k \in \mathbb{Z}_{\geq0}}{\widetilde{e}_i^{k}b \neq 0}, & 
		 & \varphi_i(b) := \max\setlr{k \in \mathbb{Z}_{\geq0}}{\widetilde{f}_i^{k}b \neq 0}.
	\end{aligned}
\end{equation}
We call $(\mathcal{B},\mathsf{wt},\{\widetilde{e}_i\}_{i \in  I},\{\widetilde{f}_i\}_{i \in  I})$
a \textit{$\mathfrak{g}$-seminormal crystal}
if, for every $i \in  I$ and $b \in \mathcal{B}$,
the following conditions are satisfied:
\begin{equation}
	\begin{aligned}
		 & \varphi_i(b) = \varepsilon_i(b) + \langle\mathsf{wt}(b),h_i\rangle, \\
		 & 
		\left\{
		\begin{aligned}
			 & \mathsf{wt}(\widetilde{e}_ib) = \mathsf{wt}(b) + \alpha_i &  & \text{if $\widetilde{e}_ib \neq 0$},  \\
			 & \mathsf{wt}(\widetilde{f}_ib) = \mathsf{wt}(b) - \alpha_i &  & \text{if $\widetilde{f}_ib \neq 0$},
		\end{aligned}
		\right.&
		 & 
		\left\{
		\begin{aligned}
			 & \widetilde{f}_i\widetilde{e}_ib = b &  & \text{if $\widetilde{e}_ib \neq 0$},  \\
			 & \widetilde{e}_i\widetilde{f}_ib = b &  & \text{if $\widetilde{f}_ib \neq 0$}.
		\end{aligned}
		\right.
	\end{aligned}
\end{equation}
Throughout this paper,
we write $\widetilde{e}_i^{\mathrm{max}}b := \widetilde{e}_i^{\varepsilon_i(b)}b$,
$\widetilde{f}_i^{\mathrm{max}}b := \widetilde{e}_i^{\varphi_i(b)}b$.
Given two $\mathfrak{g}$-seminormal crystals $\mathcal{B}_1$ and $\mathcal{B}_2$,
a new $\mathfrak{g}$-seminormal crystal $\mathcal{B}_1 \otimes \mathcal{B}_2$,
referred to as their \textit{tensor product}, is defined as follows: 
\begin{itemize}
	\item $\mathcal{B}_1 \otimes \mathcal{B}_2 := \mathcal{B}_1 \times \mathcal{B}_2$ (Cartesian product of sets).
	\item We denote a tuple $(b_1,b_2)$ for $b_1 \in \mathcal{B}_1$ and $b_2 \in \mathcal{B}_2$ by $b_1 \otimes b_2$. 
	\item We set
	      \begin{equation}
		      \begin{aligned}
			       & \widetilde{e}_i(b_1 \otimes b_2):=
			      \left\{
			      \begin{aligned}
				       & (\widetilde{e}_ib_1) \otimes b_2 &  & \text{if $\varepsilon_i(b_1) > \varphi_i(b_2)$},     \\
				       & b_1 \otimes (\widetilde{e}_ib_2) &  & \text{if $\varepsilon_i(b_1) \leq \varphi_i(b_2)$},
			      \end{aligned}
			      \right.
			      \\
			       & \widetilde{f}_i(b_1 \otimes b_2):=
			      \left\{
			      \begin{aligned}
				       & (\widetilde{f}_ib_1) \otimes b_2 &  & \text{if $\varepsilon_i(b_1) \geq \varphi_i(b_2)$}, \\
				       & b_1 \otimes (\widetilde{f}_ib_2) &  & \text{if $\varepsilon_i(b_1) < \varphi_i(b_2)$}.
			      \end{aligned}
			      \right.
		      \end{aligned}\label{4.13}
	      \end{equation}
\end{itemize}
\begin{rem}
	In this paper, we adopt the same convention 
	for the tensor product rule (\ref{4.13}) for crystals 
	as that in \cite{BS}; 
	this convention is 
	opposite to that in \cite{Kas}, \cite{HK}, and \cite{J}.
\end{rem}

The following are typical examples of $\mathfrak{g}$-seminormal crystals.

\begin{eg}[{\cite[Section 4.2]{Kas}}]
	For each $b \in \mathcal{B}(\lambda)$,
	let us denote by $\mathsf{wt}(b)$ the weight of $G(b)$ as an element of $L_q(\lambda)$.
	Then,
	$(\mathcal{B}(\lambda),\mathsf{wt},\{\widetilde{E}_i\}_{i \in  I},\{\widetilde{F}_i\}_{i \in  I})$ is a
	$\mathfrak{g}$-seminormal crystal.
\end{eg}

\begin{eg}[{\cite[Example 2.34]{BS}}]
	For $w \in \mathsf{W}_{2n}$,
	we set 
	$\mathsf{wt}(w) := w[1]\varepsilon_1 + \dots + w[2n]\varepsilon_{2n}$.
	Also, 
	for each $i \in  I$,
	we define
	$\widetilde{e}_i\,w$ and $\widetilde{f}_i\,w$ by the following procedure:
	\begin{enumerate}
		\item[(1)] Remove all letters from $w$ except for ``$i$'' and ``$i+1$''.
		\item[(2)] From the word obtained in (1), remove the subword ``$i+1\,\,i$'' whenever it appears, and transform it into a word of the form ``$i\,\cdots\,i\,\,i+1\,\cdots\,i+1$''.
		\item[(3)] If the word obtained in (2) does not contain ``$i+1$'', set $\widetilde{e}_i\,w :=0$. If ``$i$'' is not present, set $\widetilde{f}_i\,w :=0$.
		\item[(4)] If the word obtained in (2) contains ``$i+1$'', focus on its leftmost occurrence, and define $\widetilde{e}_i\,w$ as the word obtained by changing the corresponding letter in $w$ to ``$i$''.
		      If ``$i$'' is present, focus on its rightmost occurrence, and define $\widetilde{f}_i\,w$ as the word obtained by changing the corresponding letter in $w$ to ``$i+1$''. 
	\end{enumerate}
	Then,
	$(\mathsf{W}_{2n},\mathsf{wt},\{\widetilde{e}_i\}_{i \in  I},\{\widetilde{f}_i\}_{i \in  I})$ is a
	$\mathfrak{g}$-seminormal crystal.
\end{eg}

\begin{eg}[{\cite[Proposition 3.1]{BS}}]
	By the map 
	$T \mapsto w_r(T)$,
	we can embed
	$\mathsf{SST}_{2n}(\lambda)$
	into $\mathsf{W}_{2n}$.
	It is known that the image of $\mathsf{SST}_{2n}(\lambda)$
	under this embedding is closed under the actions of
	$\widetilde{e}_i$ and $\widetilde{f}_i$.
	Therefore, by identifying $\mathsf{SST}_{2n}(\lambda)$ with its image
        under the embedding, 
	we can give $\mathsf{SST}_{2n}(\lambda)$ 
	a structure of $\mathfrak{g}$-seminormal crystal. 
\end{eg}

\begin{rem}
	If $T \in \mathsf{SST}_{2n}(\lambda)$, then we have $\mathsf{wt}(T) = T[1]\varepsilon_1 + \cdots + T[2n]\varepsilon_{2n}$. 
	Hence, the definition of $\mathsf{wt}_{\widehat{\mathfrak{g}}}$
	given above can be rewritten as: $\mathsf{wt}_{\widehat{\mathfrak{g}}}(T) = \widehat{\mathsf{wt}(T)}$,
	where the right-hand side is the image of $\mathsf{wt}(T)$
	under the restriction map $\widehat{\cdot}$ .
	Furthermore, recalling that $\widehat{\varepsilon}_{\overline{i}} = - \widehat{\varepsilon}_i$,
	we can also transform it as follows: 
	\begin{equation}
		\mathsf{wt}_{\widehat{\mathfrak{g}}}(T) = (T[1]-T[\overline{1}])\widehat{\varepsilon}_1 + \cdots + (T[n]-T[\overline{n}])\widehat{\varepsilon}_n. \label{4.14}
	\end{equation}
\end{rem}
Let $\mathcal{B}_1$ and $\mathcal{B}_2$ be $\mathfrak{g}$-seminormal crystals.
We say that $\mathcal{B}_1$ and $\mathcal{B}_2$ are \textit{isomorphic} as crystals if there exists a bijection
$\psi : \mathcal{B}_1 \sqcup \{0\} \to \mathcal{B}_2 \sqcup \{0\}$
that satisfies the following conditions: 
\begin{itemize}
	\item $\psi(0)=0$;
	\item for each $i \in  I, b \in \mathcal{B}_1$, $\mathsf{wt}(b) = \mathsf{wt}(\psi(b))$;
	\item for each $i \in  I$, $\psi \circ \widetilde{e}_i = \widetilde{e}_i \circ \psi,\,\,\psi \circ \widetilde{f}_i = \widetilde{f}_i \circ \psi$.
\end{itemize}
Then, the following proposition holds.
\begin{prop}[{\cite[Theorem 7.3.6]{HK}}]
	$\mathcal{B}(\lambda)$ and $\mathsf{SST}_{2n}(\lambda)$ are isomorphic as $\mathfrak{g}$-seminormal crystals.
\end{prop}
In particular, we can parametrize the elements of $\mathcal{B}(\lambda)$ as: 
$\mathcal{B}(\lambda) = \setlr{b_T}{T \in \mathsf{SST}_{2n}(\lambda)}$.
For later use, we record the following propositions.
\begin{prop}[{\cite[Proposition 2.32]{BS}}]
	Let $\mathcal{B}_1$, $\mathcal{B}_2$, and $\mathcal{B}_3$ be $\mathfrak{g}$-seminormal crystals.
	Then, the correspondence $(b_1 \otimes b_2) \otimes b_3 \mapsto b_1 \otimes (b_2 \otimes b_3)$
	gives an isomorphism of $\mathfrak{g}$-seminormal crystals
	$\left(\mathcal{B}_1 \otimes \mathcal{B}_2\right) \otimes \mathcal{B}_3 \stackrel{\sim}{\longrightarrow} \mathcal{B}_1 \otimes \left(\mathcal{B}_2 \otimes \mathcal{B}_3\right)$.
\end{prop}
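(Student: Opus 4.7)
The plan is to verify the three conditions defining a crystal isomorphism for the bijection $\psi : (b_1 \otimes b_2) \otimes b_3 \mapsto b_1 \otimes (b_2 \otimes b_3)$, which is clearly a set-theoretic bijection since both sides are in natural bijection with the Cartesian product $\mathcal{B}_1 \times \mathcal{B}_2 \times \mathcal{B}_3$. Weight preservation follows by a short induction from the seminormality relation $\varphi_i(b) - \varepsilon_i(b) = \langle \mathsf{wt}(b), h_i \rangle$ combined with (\ref{4.13}), which together yield $\mathsf{wt}(b_1 \otimes b_2) = \mathsf{wt}(b_1) + \mathsf{wt}(b_2)$; associativity of the total weight is then automatic.

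The substantial step is to check that $\widetilde{e}_i$ and $\widetilde{f}_i$ commute with $\psi$ for each $i \in I$. First I would derive, by iterating (\ref{4.13}) and invoking seminormality, the standard tensor-product formulas
\[
\varepsilon_i(b_1 \otimes b_2) = \max\bigl(\varepsilon_i(b_2),\ \varepsilon_i(b_1) - \langle \mathsf{wt}(b_2), h_i \rangle\bigr),
\]
\[
\varphi_i(b_1 \otimes b_2) = \max\bigl(\varphi_i(b_1),\ \varphi_i(b_2) + \langle \mathsf{wt}(b_1), h_i \rangle\bigr).
\]
The cleanest conceptual tool is then the signature, or bracketing, rule: attach to each factor $b_j$ the word $-^{\varphi_i(b_j)} +^{\varepsilon_i(b_j)}$, concatenate these words from left to right along the factors of the tensor product, and iteratively delete adjacent $+-$ substrings until the resulting word has the reduced form $-^a +^b$. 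Using (\ref{4.13}) together with the displayed formulas, one verifies that $\widetilde{e}_i$ acts on the factor owning the leftmost surviving $+$ (and returns $0$ if no $+$ survives), while $\widetilde{f}_i$ acts on the factor owning the rightmost surviving $-$. Since this recipe depends only on the linear sequence $(b_1, b_2, b_3)$ of factors and not on how the tensor product is bracketed, it assigns the same element of $\mathcal{B}_1 \times \mathcal{B}_2 \times \mathcal{B}_3$ regardless of whether the grouping is $(b_1 \otimes b_2) \otimes b_3$ or $b_1 \otimes (b_2 \otimes b_3)$, and hence $\psi$ intertwines both Kashiwara operators.

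The main obstacle is the bookkeeping needed to establish the base case, namely that the two-factor rule (\ref{4.13}) agrees with the signature prescription. This requires a case analysis on the comparison of $\varepsilon_i(b_1)$ with $\varphi_i(b_2)$, with particular care devoted to the asymmetric treatment of equality in (\ref{4.13}) (where $\widetilde{e}_i$ breaks a tie to the right factor while $\widetilde{f}_i$ breaks it to the left), and to the boundary situations in which the targeted factor would be annihilated. Once this base case is dispatched, the three-factor compatibility follows formally from the left-to-right nature of the signature rule, completing the proof.
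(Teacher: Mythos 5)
Your signature-rule argument is correct and is precisely the standard proof of associativity for crystal tensor products under the convention used here (tie-breaking to the right for $\widetilde{e}_i$, to the left for $\widetilde{f}_i$); this is also what the cited source \cite[Proposition 2.32]{BS} does. One small point: $\mathsf{wt}(b_1 \otimes b_2) := \mathsf{wt}(b_1) + \mathsf{wt}(b_2)$ is part of the \emph{definition} of the tensor-product crystal rather than something derivable from (\ref{4.13}) alone (which only specifies the operators), so associativity of the weight map is immediate and needs no separate argument.
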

\begin{rem}
	Throughout this paper,
	we identify $\left(\mathcal{B}_1 \otimes \mathcal{B}_2\right) \otimes \mathcal{B}_3$ with
	$\mathcal{B}_1 \otimes \left(\mathcal{B}_2 \otimes \mathcal{B}_3\right)$, and write $b_1 \otimes (b_2 \otimes b_3)$ and $(b_1 \otimes b_2) \otimes b_3$ simply as $b_1 \otimes b_2 \otimes b_3$.
\end{rem}
\begin{prop}[see {\cite[Chapter 8]{BS}}]\label{Pieri}
	Let $\lambda \in \mathsf{Par}_{\leq2n}$, and $1 \leq l \leq 2n$ an integer.
	Then, the correspondence $T \otimes S \mapsto T*S$ gives the following isomorphism of $\mathfrak{g}$-seminormal crystals:
	\begin{equation}
		\mathsf{SST}_{2n}(\varpi_l) \otimes \mathsf{SST}_{2n}(\lambda) \stackrel{\sim}{\longrightarrow}
		\bigsqcup_{\lambda \underset{\mathrm{vert}}{\subset} \mu ,\,\, |\mu/{\lambda}| = l} \mathsf{SST}_{2n}(\mu). 
	\end{equation}
\end{prop}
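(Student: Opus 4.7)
The plan is to realize both sides of the asserted isomorphism inside the word crystal $\mathsf{W}_{2n}$ and then transport the isomorphism through Knuth equivalence. Recall that $\mathsf{SST}_{2n}(\mu)$ carries its $\mathfrak{g}$-seminormal crystal structure via the row-word embedding $U \mapsto w_r(U) \in \mathsf{W}_{2n}$; moreover, under the tensor product rule (\ref{4.13}), the assignment $T \otimes S \mapsto w_r(T) \cdot w_r(S)$ realizes $\mathsf{SST}_{2n}(\varpi_l) \otimes \mathsf{SST}_{2n}(\lambda)$ as a subcrystal of $\mathsf{W}_{2n}$, since the tensor product rule is designed to match the word-crystal action on concatenations.

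First, I would establish the underlying set-theoretic bijection. Since $T \in \mathsf{SST}_{2n}(\varpi_l)$ is a single column with strictly increasing entries $T(1,1) < \cdots < T(1,l)$, the tableau $T*S$ is formed by column-inserting these entries into $S$ in increasing order. An induction on $l$ shows that successive column insertions of a strictly increasing sequence always add boxes in distinct rows, so $T*S$ has some shape $\mu$ satisfying $\lambda \underset{\mathrm{vert}}{\subset} \mu$ and $|\mu/\lambda| = l$. Conversely, given $U \in \mathsf{SST}_{2n}(\mu)$ with $\mu/\lambda$ a vertical strip of size $l$, iterated reverse column-bumping at the boxes of $\mu/\lambda$, processed from the rightmost column to the leftmost and from bottom to top within each column, uniquely recovers the pair $(T, S)$.

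Second, I would prove the Knuth-equivalence identity
\begin{equation}
w_r(T) \cdot w_r(S) \equiv_{\mathsf{K}} w_r(T * S)
\end{equation}
by induction on $l$. This reduces to the single-step identity $a \cdot w_r(V) \equiv_{\mathsf{K}} w_r(a \to V)$ for an arbitrary $V \in \mathsf{SST}$ and a positive integer $a$, which one verifies directly by following the bumping path of $a$ across consecutive columns and applying the Knuth relations at each bump, using the bottom-to-top row-word convention of the excerpt. Since the Kashiwara operators on $\mathsf{W}_{2n}$ are invariant under $\equiv_{\mathsf{K}}$ (a standard fact about the word crystal), this identity implies that the bijection $T \otimes S \mapsto T * S$ commutes with $\widetilde{e}_i$ and $\widetilde{f}_i$ for all $i \in I$, and clearly preserves $\mathsf{wt}$; hence it is an isomorphism of $\mathfrak{g}$-seminormal crystals.

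The main obstacle is the careful bookkeeping needed in the Knuth-equivalence step: tracing the positions of the bumped entries in the bottom-to-top row word and exhibiting the explicit sequence of Knuth moves that transform $a \cdot w_r(V)$ into $w_r(a \to V)$ requires a case analysis depending on how far the bumping path travels before terminating. In addition, the direction of concatenation in $w_r(T) \cdot w_r(S)$ must be matched carefully against the tensor product rule (\ref{4.13}) as adopted in the excerpt (following \cite{BS}); the whole argument collapses if these conventions are inconsistent, although both compatibilities are standard within the Bump--Schilling framework.
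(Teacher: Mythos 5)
Since the paper cites~\cite[Chapter~8]{BS} without reproducing an argument, there is no in-text proof to compare against; your route --- embedding both sides in the word crystal $\mathsf{W}_{2n}$ via row words, proving $w_r(T)\cdot w_r(S)\equiv_{\mathsf{K}} w_r(T*S)$, and transporting the crystal structure through the compatibility of Knuth equivalence with the Kashiwara operators --- is the standard one and is in the spirit of the cited source. The single-step identity $a\cdot w_r(V)\equiv_{\mathsf{K}} w_r(a\to V)$ for column insertion is correct, and the tensor-product rule~(4.13) is indeed the convention under which concatenation of words realizes $\otimes$.

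There is, however, a concrete error in the set-theoretic bijectivity step. With the paper's convention $T*S = T(1,l)\to\cdots\to T(1,1)\to S$, the entries of $T$ are column-inserted in \emph{increasing} order $T(1,1)<\cdots<T(1,l)$. By the column-bumping lemma the newly created boxes appear in strictly increasing rows and weakly decreasing columns, so the last box added lies in the lowest row, equivalently in the \emph{leftmost} column of $\mu/\lambda$. The inverse map must therefore reverse-bump starting from the leftmost column (bottom row) and proceed rightward --- the opposite of the order you wrote. Your rightmost-first order can fail outright: take $\lambda=(2,1)$ with $S(1,1)=1$, $S(2,1)=2$, $S(1,2)=3$, and let $T$ be the column with entries $2,3$; then $T*S$ has shape $\mu=(2,2,1)$, and reverse-bumping the box at $(2,2)$ before the box at $(1,3)$ returns the outputs $3,3$, which do not form a strictly increasing column, so the procedure does not even land in $\mathsf{SST}_{2n}(\varpi_2)\otimes\mathsf{SST}_{2n}(\lambda)$. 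Once that order is reversed, the rest of the argument goes through.
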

With the above as background,
a significant portion of the study of
$L(\lambda)$ and $L_q(\lambda)$ for $\lambda \in P^{+}_{\geq 0}$ is reduced to
the combinatorics of tableaux and paths
through the crystal basis $\mathcal{B}(\lambda)$.
Indeed, the Naito--Sagaki conjecture is one example of such phenomena; 
other typical examples include the classical Littlewood--Richardson rule.

\bigskip

\section{Branching rule for $\imath$quantum groups of type $A\mathrm{II}$}
In this section,
we recall some properties of $\imath$quantum groups of type $A\mathrm{II}$, which we need in the proof of Theorem B, 
and also explain how to deduce Theorem A from Theorem B.
For more details on $\imath$quantum groups, please refer to
\cite{Wat1} and \cite{Wat2}; 
see also 
\cite{Wan} for a concise summary of
the progress in the study of $\imath$quantum groups.

\subsection{$\imath$quantum groups of type $A\mathrm{II}$}
Let $I_{\bullet}$ denote the
set of odd elements in $I$, and
$I_{\circ}$ the
set of even elements in $I$:
\vspace{10pt}
\begin{center}
	\begin{tikzpicture}[x=5mm,y=5mm]
		\draw (0,-4)--(2,-4);
		\draw (2,-4)--(4,-4);
		\draw (4,-4)--(6,-4);
		\draw (6,-4)--(8,-4);
		\draw[dashed,line width=0.5pt] (8,-4)--(10,-4);
		\draw (10,-4)--(12,-4);
		\draw (12,-4)--(14,-4);
		\draw[fill=black] (0,-4) circle[radius=3.5pt];
		\draw[fill=white] (2,-4) circle[radius=3.5pt];
		\draw[fill=black] (4,-4) circle[radius=3.5pt];
		\draw[fill=white] (6,-4) circle[radius=3.5pt];
		\draw[fill=white] (12,-4) circle[radius=3.5pt];
		\draw[fill=black] (14,-4) circle[radius=3.5pt];
		\node (l) at (0,-4.7) {\scriptsize $1$};
		\node (m) at (2,-4.7) {\scriptsize $2$};
		\node (n) at (4,-4.7) {\scriptsize $3$};
		\node (o) at (6,-4.7) {\scriptsize $4$};
		\node (p) at (12,-4.7) {\scriptsize $2n-2$};
		\node (q) at (14,-4.7) {\scriptsize $2n-1$};
	\end{tikzpicture}
\end{center}
We set $b_j := f_j - \left[e_{j-1},[e_{j+1},e_{j}]\right], \,\,a_j := -\left[f_{j-1},[f_{j+1},b_j]\right]$ for $j \in I_{\circ}$, 
and let $\mathfrak{k}$ denote the Lie subalgebra of $\mathfrak{g}$ 
generated by $h_i,e_i,f_i$, $i \in I_{\bullet}$, and $b_j,a_j$, $j \in  I_{\circ}$. 
In the following, 
let $\widetilde{ I}$ denote the set of vertices of
the Dynkin diagram of type $C_n$, enumerated as follows: 
\vspace{10pt}
\begin{center}
	\begin{tikzpicture}[x=5mm,y=5mm]
		\draw (0,-4)--(2,-4);
		\draw (2,-4)--(4,-4);
		\draw (4,-4)--(6,-4);
		\draw (6,-4)--(8,-4);
		\draw[dashed,line width=0.5pt] (8,-4)--(10,-4);
		\draw (10,-4)--(12,-4);
		\draw[double distance=2pt] (12,-4)--node{$<$}(14,-4);
		\draw[fill=white] (0,-4) circle[radius=3.5pt];
		\draw[fill=white] (2,-4) circle[radius=3.5pt];
		\draw[fill=white] (4,-4) circle[radius=3.5pt];
		\draw[fill=white] (6,-4) circle[radius=3.5pt];
		\draw[fill=white] (12,-4) circle[radius=3.5pt];
		\draw[fill=white] (14,-4) circle[radius=3.5pt];
		\node (l) at (0,-4.7) {\scriptsize $1$};
		\node (m) at (2,-4.7) {\scriptsize $2$};
		\node (n) at (4,-4.7) {\scriptsize $3$};
		\node (o) at (6,-4.7) {\scriptsize $4$};
		\node (p) at (12,-4.7) {\scriptsize $n-1$};
		\node (q) at (14,-4.7) {\scriptsize $n$};
	\end{tikzpicture}
\end{center}
Also,
we define
$x_i,y_i,z_i$, $i \in \widetilde{I}$, 
by
\[
	\begin{aligned}
		 & x_i
		:=
		\left\{
		\begin{aligned}
			 & b_{2i} &  & \text{if} \,\, 1 \leq i < n \,\, \text{and $i$ is even},  \\
			 & a_{2i} &  & \text{if} \,\, 1 \leq i < n \,\, \text{and $i$ is odd},  \\
			 & e_{2n-1} &  & \text{if} \,\, i=n \,\, \text{and $n$ is even},  \\
			 & f_{2n-1} &  & \text{if} \,\, i=n \,\, \text{and $n$ is odd}, 
		\end{aligned}
		\right. \,\,\,\,\,\,\,\,
		y_i
		:=
		\left\{
		\begin{aligned}
			 & a_{2i} &  & \text{if} \,\, 1 \leq i < n \,\, \text{and $i$ is even},  \\
			 & b_{2i} &  & \text{if} \,\, 1 \leq i < n \,\, \text{and $i$ is odd},  \\
			 & f_{2n-1} &  & \text{if} \,\, i=n \,\, \text{and $n$ is even},  \\
			 & e_{2n-1} &  & \text{if} \,\, i=n \,\, \text{and $n$ is odd}, 
		\end{aligned}
		\right. \\
		 & z_i
		:=
		\left\{
		\begin{aligned}
			 & (-1)^i(h_{2i-1}+h_{2i+1}) &  & \text{if} \,\, 1 \leq i < n, \\
			 & (-1)^nh_{2n-1}            &  & \text{if} \,\, i=n.
		\end{aligned}
		\right.
	\end{aligned}
\]
It is known that $\mathfrak{k}$ and $\mathfrak{sp}_{2n}(\mathbb{C})$ are isomorphic as Lie algebras, with 
$x_i,y_i,z_i$, $i \in \widetilde{I}$, 
the Chevalley generators of $\mathfrak{k}$.
For each $1 \leq i \leq n-1$,
we denote by $\mathfrak{k}_i$
the Lie subalgebra of $\mathfrak{k}$
generated by $h_{2i\pm1},e_{2i\pm1},f_{2i\pm1},b_{2i},a_{2i}$; 
each $\mathfrak{k}_i$ is isomorphic to $\mathfrak{sp}_4(\mathbb{C})$, and
$\mathfrak{k}$ is generated as a Lie algebra by $\mathfrak{k}_1$, $\mathfrak{k}_2$, ..., $\mathfrak{k}_{n-1}$: 
\begin{center}
	\begin{tikzpicture}[x=5mm,y=5mm]
		\draw (0,-4)--(2,-4);
		\draw (2,-4)--(4,-4);
		\draw (4,-4)--(6,-4);
		\draw (6,-4)--(8,-4);
		\draw (8,-4)--(10,-4);
		\draw[dashed,line width=0.5pt] (14,-4)--(16,-4);
		\draw (10,-4)--(12,-4);
		\draw (12,-4)--(14,-4);
		\draw (16,-4)--(18,-4);
		\draw (18,-4)--(20,-4);
		\draw (20,-4)--(22,-4);
		\draw[fill=black] (0,-4) circle[radius=3.5pt];
		\draw[fill=white] (2,-4) circle[radius=3.5pt];
		\draw[fill=black] (4,-4) circle[radius=3.5pt];
		\draw[fill=white] (6,-4) circle[radius=3.5pt];
		\draw[fill=black] (8,-4) circle[radius=3.5pt];
		\draw[fill=white] (10,-4) circle[radius=3.5pt];
		\draw[fill=black] (12,-4) circle[radius=3.5pt];
		\draw[fill=black] (18,-4) circle[radius=3.5pt];
		\draw[fill=white] (20,-4) circle[radius=3.5pt];
		\draw[fill=black] (22,-4) circle[radius=3.5pt];
		\node (l) at (0,-4.7) {\scriptsize $1$};
		\node (m) at (2,-4.7) {\scriptsize $2$};
		\node (n) at (4,-4.7) {\scriptsize $3$};
		\node (o) at (6,-4.7) {\scriptsize $4$};
		\node (p) at (8,-4.7) {\scriptsize $5$};
		\node (q) at (10,-4.7) {\scriptsize $6$};
		\node at (12,-4.7) {\scriptsize $7$};
		\node at (18,-4.7) {\scriptsize $2n-3$};
		\node at (20,-4.7) {\scriptsize $2n-2$};
		\node at (22,-4.7) {\scriptsize $2n-1$};
		\node at (2,-2.2) {\textcolor{blue}{$\mathfrak{k}_1$}};
		\node at (6,-2.2) {\textcolor{blue}{$\mathfrak{k}_2$}};
		\node at (10,-2.2) {\textcolor{blue}{$\mathfrak{k}_3$}};
		\node at (20,-2.2) {\textcolor{blue}{$\mathfrak{k}_{n-1}$}};
		\draw[dashed, color=blue, line width=1pt] (2,-4) circle[x radius=3,y radius=1];
		\draw[dashed, color=blue, line width=1pt] (6,-4) circle[x radius=3,y radius=1];
		\draw[dashed, color=blue, line width=1pt] (10,-4) circle[x radius=3,y radius=1];
		\draw[dashed, color=blue, line width=1pt] (20,-4) circle[x radius=3,y radius=1];
	\end{tikzpicture}
\end{center}
In addition, we define $\widetilde{\varpi}_i,\widetilde{\varepsilon}_i$, $i \in \widetilde{I}$, 
by $\langle\widetilde{\varpi}_i,z_j\rangle=\delta_{ij}$,
$\widetilde{\varpi}_i = \widetilde{\varepsilon}_1 + \cdots + \widetilde{\varepsilon}_i$,
and set
\begin{equation}
	\begin{aligned}
		& \widetilde{P}^{\vee} := \sum_{i \in \widetilde{I}} \mathbb{Z}\widetilde{z}_i, &
		& \widetilde{P} := \mathrm{Hom}_{\mathbb{Z}}(\widetilde{P}^{\vee},\mathbb{Z}), &
		& \widetilde{P}^+ := \setlr{\mu \in \widetilde{P}}{\langle\mu,\widetilde{z}_i\rangle\geq0, \,\,i \in \widetilde{I}}.
	\end{aligned}
\end{equation}
Since $\widetilde{P}^+$ has the following description: 
\begin{equation}
	\widetilde{P}^{+} = \left\{\mu_1\widetilde{\varepsilon}_1 + \cdots + \mu_{n}\widetilde{\varepsilon}_{n} \in \widetilde{P} \mathrel{}\middle|\mathrel{} \mu_1 \geq \cdots \geq \mu_{n} \geq 0\right\}, 
\end{equation}
we can also identify $\widetilde{P}^+$ with $\mathsf{Par}_{\leq n}$. 
The inclusion 
$\widetilde{P}^{\vee} \hookrightarrow P^{\vee}$
induces the natural restriction map
$\widetilde{\cdot} : P \to \widetilde{P}$.
For $\mu \in \widetilde{P}^+$, let $\widetilde{L}(\mu)$ denote the irreducible highest weight $\mathfrak{k}$-module of highest weight $\mu$. 

The \textit{$\imath$quantum group $\mathbf{U}^{\imath}$ of type $A\mathrm{II}$} defined as below is a 
$q$-analog of the universal enveloping algebra of
$\mathfrak{k}$; 
however,
$\mathbf{U}^{\imath}$ is entirely 
different from the (Drinfeld--Jimbo) quantum group
$\mathbf{U}_q(\mathfrak{k})$.
We set
\begin{equation}
	\begin{aligned}
	  & B_j := F_j - q\left[E_{j-1},[E_{j+1},E_{j}]_{q^{-1}}\right]_{q^{-1}} K_{h_j}^{-1}, \\
		& A_j := -q^{-1}\left[F_{j-1},[F_{j+1},B_j]_q\right]_q,
	\end{aligned}
	\quad (j \in  I_{\circ}), 
\end{equation}
where $[\cdot,\cdot]_{q^{\pm1}}$ denotes the $q$-commutator given by
$[x,y]_{q^{\pm1}} := xy - q^{\pm1}yx $ for $x, y \in \mathbf{U}$. 
Let $\mathbf{U}^{\imath}$ denote
the $\mathbb{C}(q)$-subalgebra of $\mathbf{U}$ generated by
$K_{h_i},E_i,F_i$, $i \in  I_{\bullet}$, and $B_j,A_j$, $j \in I_{\circ}$. 
We call $\mathbf{U}^{\imath}$ the \textit{$\imath$quantum group of type $A\mathrm{II}_{2n-1}$}. 
The $\mathbb{C}(q)$-algebra $\mathbf{U}^{\imath}$ admits an \textit{$\imath$-analog} of the Chevalley generators, 
given as follows:
\[
	\begin{aligned}
		& X_i
		:=
		\left\{
		\begin{aligned}
			 & B_{2i} &  & \text{if} \,\, 1 \leq i < n \,\, \text{and $i$ is even},  \\
			 & A_{2i} &  & \text{if} \,\, 1 \leq i < n \,\, \text{and $i$ is odd},  \\
			 & E_{2n-1} &  & \text{if} \,\, i=n \,\, \text{and $n$ is even},  \\
			 & F_{2n-1} &  & \text{if} \,\, i=n \,\, \text{and $n$ is odd}, 
		\end{aligned}
		\right. \,\,\,\,\,\,\,\,
		Y_i
		:=
		\left\{
		\begin{aligned}
			 & A_{2i} &  & \text{if} \,\, 1 \leq i < n \,\, \text{and $i$ is even},  \\
			 & B_{2i} &  & \text{if} \,\, 1 \leq i < n \,\, \text{and $i$ is odd},  \\
			 & F_{2n-1} &  & \text{if} \,\, i=n \,\, \text{and $n$ is even},  \\
			 & E_{2n-1} &  & \text{if} \,\, i=n \,\, \text{and $n$ is odd}, 
		\end{aligned}
		\right. \\
		 & Z_i
		:=
		\left\{
		\begin{aligned}
			 & (-1)^i \frac{K_{h_{2i-1}+h_{2i+1}}-K_{h_{2i-1}+h_{2i+1}}^{-1}}{q-q^{-1}} &  & \text{if} \,\, 1 \leq i < n, \\
			 & (-1)^n \frac{K_{h_{2n-1}}-K_{h_{2n-1}}^{-1}}{q-q^{-1}}                   &  & \text{if} \,\, i=n.
		\end{aligned}
		\right.
	\end{aligned}
\]
For each $1 \leq i \leq n-1$,
we denote by $\mathbf{U}^{\imath}_i$
the $\mathbb{C}(q)$-subalgebra of $\mathbf{U}^{\imath}$
generated by $K_{h_{2i\pm1}},E_{2i\pm1},F_{2i\pm1},B_{2i},A_{2i}$; 
each $\mathbf{U}^{\imath}_i$ is isomorphic to $\mathbf{U}^{\imath}$ in the case $n=2$,
and 
$\mathbf{U}^{\imath}$ is generated as a $\mathbb{C}(q)$-algebra by $\mathbf{U}^{\imath}_1$, $\mathbf{U}^{\imath}_2$, ..., $\mathbf{U}^{\imath}_{n-1}$. 

For $\mu \in \widetilde{P}^+$,
we define the (finite-dimensional)
irreducible highest weight $\mathbf{U}^{\imath}$-module
$\widetilde{L}^{\imath}(\mu)$ of highest weight $\mu$ as follows:
\begin{equation}
	\widetilde{L}^{\imath}(\mu) := \mathbf{U}^{\imath}
	\left/ \left(\sum_{i \in \widetilde{ I}} \mathbf{U}^{\imath}X_i + \sum_{i \in \widetilde{ I}} \mathbf{U}^{\imath}Y_i^{\langle\mu,w_i\rangle+1} + \sum_{i \in \widetilde{ I}} \mathbf{U}^{\imath}\left(Z_i-q^{\langle\mu,z_i\rangle}\right) \right)\right..
\end{equation}
In the classical limit as $q\to1$,
$\widetilde{L}^{\imath}(\mu)$ becomes $\widetilde{L}(\mu)$.
We summarize the relation 
between irreducible highest weight representation of
some classical (universal enveloping) algebras and their $q$-analogs
in Figure \ref{fig4} below. 

Generally speaking,
the representation theory of $\imath$quantum groups
is much more difficult than that of ordinary quantum groups.
However,
in the case of type $A\mathrm{II}$,
the finite-dimensional
irreducible representations are classified
by Molev \cite{M}.
Moreover,
for those modules
(called \textit{classical weight modules})
on which the actions of $K_{h_i}$, $i \in I_{\bullet}$, 
are simultaneously diagonalizable, with all the eigenvalues
$\in 1+(q-1)\mathbf{A}_1$, we know the following.
\begin{thm}[see {\cite[Proposition 3.1.4, Corollaries 4.3.2 and 4.3.4]{Wat2}}]\label{thm5.1}
	For finite-dimensional classical weight modules,
	the following hold.
	\begin{enumerate}
		\item[(1)] For $\lambda \in P^{+}_{\geq 0}$, $\mathsf{Res}^{\mathbf{U}}_{\mathbf{U}^{\imath}}\,L_q(\lambda)$ is a finite-dimensional classical weight module.
		\item[(2)] Every finite-dimensional classical weight module over $\mathbf{U}^{\imath}$ is completely reducible.
		\item[(3)] $\widetilde{L}^{\imath}(\mu)$, $\mu \in \widetilde{P}^+$, forms a complete set of representatives for the equivalence classes of irreducible finite-dimensional classical weight modules over $\mathbf{U}^{\imath}$.
		\item[(4)] For all $\lambda \in P^+_{\geq0}$ and $\mu \in \widetilde{P}^+$, $\left[\mathsf{Res}^{\mathfrak{g}}_{\mathfrak{k}}\,L(\lambda) : \widetilde{L}(\mu) \right] = \left[\mathsf{Res}^{\mathbf{U}}_{\mathbf{U}^{\imath}}\,L_q(\lambda) : \widetilde{L}^{\imath}(\mu) \right]$.
	\end{enumerate}
\end{thm}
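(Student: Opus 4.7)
The plan is to handle the four statements in order, with the central tool being specialization at $q=1$ via an $\mathbf{A}_1$-integral form of $\mathbf{U}^{\imath}$. First, part (1) is a direct computation: $L_q(\lambda)$ decomposes into finite-dimensional $\mathbf{U}$-weight spaces $L_q(\lambda)_\mu$, on each of which $K_h$ acts by the scalar $q^{\langle\mu,h\rangle}$. In particular, for every $i \in I_{\bullet}$ the operator $K_{h_i}$ is diagonalizable with eigenvalues in $q^{\mathbb{Z}} \subset 1+(q-1)\mathbf{A}_1$, so the $\mathbf{U}$-weight decomposition already witnesses the classical weight property for $\mathbf{U}^{\imath}$.

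For (2) and (3), I would construct an $\mathbf{A}_1$-subalgebra $\mathbf{U}^{\imath}_{\mathbf{A}_1} \subset \mathbf{U}^{\imath}$ generated by suitable $\imath$-divided powers of the $X_i$ and $Y_i$, by the $Z_i$, and by elements of the form $(K_h-1)/(q-1)$ for $h$ in the relevant sublattice of $P^{\vee}$, and then verify that the specialization $\mathbf{U}^{\imath}_{\mathbf{A}_1}/(q-1)\mathbf{U}^{\imath}_{\mathbf{A}_1}$ is naturally isomorphic to $U(\mathfrak{k})$. The crucial point is that the $\imath$-Serre type relations among the $X_i, Y_i$ degenerate at $q=1$ to the ordinary Serre relations for $\mathfrak{k}$, which follows from the explicit formulas for $B_j, A_j$ together with the vanishing of the $q$-correction terms at $q=1$. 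Given any finite-dimensional classical weight module $M$, the classical weight hypothesis allows the construction of a $\mathbf{U}^{\imath}_{\mathbf{A}_1}$-stable lattice $M_{\mathbf{A}_1} \subset M$ whose specialization at $q=1$ is a finite-dimensional $U(\mathfrak{k})$-module of the same $\mathbb{C}$-dimension. Weyl's theorem for $\mathfrak{k} \cong \mathfrak{sp}_{2n}(\mathbb{C})$ then gives complete reducibility of the specialization, which I would lift back to $M$ by a standard deformation-of-idempotents argument, yielding (2). For (3), existence of a highest weight vector in any irreducible $M$ follows from finite-dimensionality together with the commutation relations among $X_i, Y_i, Z_i$, and one identifies the resulting cyclic module with $\widetilde{L}^{\imath}(\mu)$ by matching dimensions, after specialization, with the irreducible $U(\mathfrak{k})$-module $\widetilde{L}(\mu)$.

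Once (1)--(3) are in hand, part (4) becomes a character comparison: by (2) and (3), $\mathsf{Res}^{\mathbf{U}}_{\mathbf{U}^{\imath}}L_q(\lambda)$ decomposes as a direct sum of copies of the $\widetilde{L}^{\imath}(\mu)$, and taking the integral form of $L_q(\lambda)$ from Section 4.2 after base change to $\mathbf{A}_1$ (whose specialization at $q=1$ recovers $L(\lambda)$ as a $\mathfrak{g}$-module) shows that this decomposition degenerates to the decomposition of $\mathsf{Res}^{\mathfrak{g}}_{\mathfrak{k}}L(\lambda)$ into $\widetilde{L}(\mu)$'s, matching multiplicities on both sides. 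The main obstacle I anticipate is the identification $\mathbf{U}^{\imath}_{\mathbf{A}_1}/(q-1)\mathbf{U}^{\imath}_{\mathbf{A}_1} \cong U(\mathfrak{k})$: one must verify that the $\imath$-Serre relations, which involve $q$-brackets with nontrivial coefficients, degenerate cleanly to the classical Serre relations defining $\mathfrak{k}$ without introducing spurious relations in the limit, and that the lattice $M_{\mathbf{A}_1}$ really can be chosen with the desired $\mathbf{U}^{\imath}_{\mathbf{A}_1}$-stability. This careful bookkeeping near $q=1$ is precisely why the theorem is restricted to \emph{classical} weight modules; for $\mathbf{U}^{\imath}$-modules whose $K_{h_i}$-eigenvalues lie outside $1+(q-1)\mathbf{A}_1$, the specialization procedure breaks down, and neither complete reducibility nor the classification in (3) survives at that level of generality.
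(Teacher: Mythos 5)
This theorem is not proved in the paper; it is quoted directly from \cite[Proposition 3.1.4, Corollaries 4.3.2 and 4.3.4]{Wat2}, so there is no proof here to compare against line by line. That said, your sketch reflects the standard classical-limit strategy that one would expect such a reference to use: passing to an $\mathbf{A}_1$-integral form of $\mathbf{U}^{\imath}$, specializing at $q=1$ to recover $U(\mathfrak{k})$, and transporting Weyl's complete reducibility and the highest-weight classification for $\mathfrak{sp}_{2n}(\mathbb{C})$ back to $\mathbf{U}^{\imath}$. Your identification of the bottleneck (that the classical-weight hypothesis, i.e.\ eigenvalues in $1+(q-1)\mathbf{A}_1$, is exactly what makes the $\mathbf{A}_1$-lattice and its specialization well-behaved) is the right diagnostic; it explains why the result is stated only for this class of modules.

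One small simplification you are missing: over $\mathbf{A}_1$ there is no need for $\imath$-divided powers, since $[k]!$ specializes to $k!\neq 0$ at $q=1$ and is therefore already a unit in $\mathbf{A}_1$; the $\mathbf{A}_1$-subalgebra generated by the generators $X_i$, $Y_i$, $Z_i$ themselves (and the $(K_h-1)/(q-1)$) already contains whatever divided powers you might want. The genuinely nontrivial points you flag (degeneration of the $\imath$-Serre relations to the classical ones without spurious extra relations at $q=1$, and matching $\widetilde{L}^{\imath}(\mu)$ with $\widetilde{L}(\mu)$ by highest weight and dimension after specialization) are exactly the technical content carried by the cited corollaries. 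As a proof proposal this is a sound outline, but it is an outline; the cited source is where these verifications are actually carried out.
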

Moreover,
in \cite{Wat1},
an algorithm for computing combinatorially the multiplicity
$\left[\mathsf{Res}^{\mathbf{U}}_{\mathbf{U}^{\imath}}\,L_q(\lambda) : \widetilde{L}^{\imath}(\mu) \right]$
is obtained; 
this is the \textit{quantum Littlewood--Richardson rule of type $A\mathrm{II}$},
which will be explained in the next subsection.
\begin{figure}[ht]
	\begin{center}
		\begin{tikzpicture}[x=11mm,y=11mm]
			\fill[color=black!30] (3.2,-3.5) to[out=100,in=10](-4.5,2.5)--(-4.5,-3.5)--cycle;
			\fill[color=black!30] (1,3.2) to[out=-75,in=150](5.3,-3)--(5.3,3.2)--cycle;
			
			\draw[line width=1.5pt] (3.2,-3.5) to[out=100,in=10](-4.5,2.5);
			\draw[line width=1.5pt] (1,3.2) to[out=-75,in=150](5.3,-3);
			\node[fill=white] at (-2.85,-3.25) {\textbf{Classical objects}};
			\node[fill=white] at (4.4,2.95) {\textbf{$q$-analogs}};
			
			\node (a) at (0,0) {$U(\mathfrak{g})$};
			\node[rotate=-60] at (-0.4,0.65) {$\curvearrowleft$};
			\node (i) at (-0.8,1.3) {$L(\lambda)$};
			\node (b) at (1,-1.5) {$U(\mathfrak{k})$};
			\node[rotate=-60] at (1.4,-2.15) {$\curvearrowright$};
			\node (f) at (1.8,-2.8) {$\widetilde{L}(\mu)$};
			\node (c) at (-1,-1.5) {$U(\widehat{\mathfrak{g}})$};
			\node at (-1.75,-1.5) {$\curvearrowleft$};
			\node (g) at (-2.5,-1.5) {$\widehat{L}(\mu)$};
			\node (d) at (0,-3) {$U(\mathfrak{sp}_{2n})$};
			\node[rotate=120] at (0.5,-0.75) {$\subset$};
			\node[rotate=60] at (-0.5,-0.75) {$\subset$};
			\node[rotate=60] at (0.5,-2.25) {$\cong$};
			\node[rotate=-60] at (-0.5,-2.25) {$\cong$};
			
			\node (d) at (3,1.5) {$\mathbf{U}$};
			\node[rotate=-60] at (2.6,2.15) {$\curvearrowleft$};
			\node (j) at (2.2,2.8) {$L_q(\lambda)$};
			\node (e) at (4,0) {$\mathbf{U}^{\imath}$};
			\node[rotate=-60] at (4.4,-0.65) {$\curvearrowright$};
			\node (h) at (4.8,-1.3) {$\widetilde{L}^{\imath}(\mu)$};
			\node[rotate=120] at (3.5,0.75) {$\subset$};
			
			\draw[->] decorate[decoration=snake] {(a)--(d)};
			\draw[->] decorate[decoration=snake] {(b)--(e)};
			\draw[->] decorate[decoration=snake] {(i)--(j)};
			\draw[->] decorate[decoration=snake] {(f)--(h)};
			
			\node[rotate=26] at (0.5,2.35) {$q$-deformation};
			\node[rotate=26] at (1.4,1.15) {$q$-deformation};
			\node[rotate=26] at (2.7,-1.1) {$q$-deformation};
			\node[rotate=26] at (3.4,-2.4) {$q$-deformation};
		\end{tikzpicture}
	\end{center}
	\caption{The relation between (universal enveloping) algebras and their modules that have appeared so far.} \label{fig4}
\end{figure}
\subsection{Quantum Littlewood--Richardson rule of type $A\mathrm{II}$}
In this subsection, we explain the \textit{quantum Littlewood--Richardson rule},
which gives a combinatorial description of 
$\mathsf{Res}^{\mathbf{U}}_{\mathbf{U}^{\imath}}\,L_q(\lambda)$. 
Now, we present an algorithm 
for describing the branching rule.

\begin{de}[{\cite[Definition 2.5.1]{Wat1}}]
	Let $S \in \mathsf{SST}_{2n}$ be a semistandard tableau
	consisting of a single column,
	and let $l$ be its length 
	(i.e., the shape of $S$ is $\varpi_l$).
	Also, we denote by $S'$
	the tableau obtained from $S$ by removing the box that contains the entry $S(1,l)$,
	and by $S''$ the tableau obtained from $S$ by removing both of the box containing $S(1,l)$
	and the box containing $S(1,l-1)$.
	We define a set $\mathsf{rem}(S)$ inductively on $l$ 
	by the following recurrence relation:
	\[
		\mathsf{rem}(S)=
		\left\{
		\begin{aligned}
			& \emptyset & & \text{if $l=0$ or $l=1$}, \\
			& \mathsf{rem}(S'') \sqcup \left\{S(1,l-1),S(1,l)\right\} & & \begin{aligned} &\text{if $S(1,l)$ is even, $S(1,l)-1=S(1,l-1)$,} \\ &\text{and $S(1,l) < 2l - \sharp \, \mathsf{rem}(S'') - 1$}, \end{aligned} \\
			& \mathsf{rem}(S') & & \text{otherwise}.
		\end{aligned}
		\right.
  \]
	We denote by $\mathsf{red}(S)$
	the tableau obtained from $S$ by removing the boxes whose entry belongs to $\mathsf{rem}(S)$.
\end{de}
\begin{eg}\label{5.3}
	Since $4 < 2\times3 - \sharp \, \emptyset - 1$ and $2 < 2\times2 - \sharp \, \emptyset - 1$, we have
	\begin{center}
		\begin{tikzpicture}[x=5mm,y=5mm]
			\draw (0,0)--(0,3)--(1,3)--(1,0)--cycle;
			\draw (0,1)--(1,1);
			\draw (0,2)--(1,2);
			\node at (0.5,0.5) {4};
			\node at (0.5,1.5) {3};
			\node at (0.5,2.5) {2};
			\node at (-0.25,1.5) {$\mathsf{rem}\left(\begin{aligned} \\ \,\,\,\,\,\,\,\,\, \\ \\ \end{aligned}\right)$};

      \node at (2.3,1.4) {$=$};
			\node at (4.1,1.5) {$\left\{3,4\right\},$};
		\end{tikzpicture}
		\begin{tikzpicture}[x=5mm,y=5mm]
			\draw (0,0)--(0,3)--(1,3)--(1,0)--cycle;
			\draw (0,1)--(1,1);
			\draw (0,2)--(1,2);
			\node at (0.5,0.5) {4};
			\node at (0.5,1.5) {3};
			\node at (0.5,2.5) {2};
			\node at (-0.1,1.5) {$\mathsf{red}\left(\begin{aligned} \\ \,\,\,\,\,\,\,\,\, \\ \\ \end{aligned}\right)$};

      \node at (2.3,1.4) {$=$};

			\draw (3,1)--(3,2)--(4,2)--(4,1)--cycle;
			\node at (3.5,1.5) {2};
			\node at (4.4,1.2) {,};
		\end{tikzpicture}\\
		\begin{tikzpicture}[x=5mm,y=5mm]
			\draw (0,0)--(0,3)--(1,3)--(1,0)--cycle;
			\draw (0,1)--(1,1);
			\draw (0,2)--(1,2);
			\node at (0.5,0.5) {4};
			\node at (0.5,1.5) {2};
			\node at (0.5,2.5) {1};
			\node at (-0.25,1.5) {$\mathsf{rem}\left(\begin{aligned} \\ \,\,\,\,\,\,\,\,\, \\ \\ \end{aligned}\right)$};

      \node at (2.3,1.4) {$=$};

			\draw (5.4,0.5)--(5.4,2.5)--(6.4,2.5)--(6.4,0.5)--cycle;
			\draw (5.4,1.5)--(6.4,1.5);
			\node at (5.9,1) {2};
			\node at (5.9,2) {1};
			\node at (5.15,1.5) {$\mathsf{rem}\left(\begin{aligned} \\ \,\,\,\,\,\,\,\,\, \\ \\ \end{aligned}\right)$};

			\node at (7.7,1.4) {$=$};
			\node at (9.5,1.5) {$\left\{1,2\right\},$};
		\end{tikzpicture}
		\begin{tikzpicture}[x=5mm,y=5mm]
			\draw (0,0)--(0,3)--(1,3)--(1,0)--cycle;
			\draw (0,1)--(1,1);
			\draw (0,2)--(1,2);
			\node at (0.5,0.5) {4};
			\node at (0.5,1.5) {2};
			\node at (0.5,2.5) {1};
			\node at (-0.1,1.5) {$\mathsf{red}\left(\begin{aligned} \\ \,\,\,\,\,\,\,\,\, \\ \\ \end{aligned}\right)$};

      \node at (2.3,1.4) {$=$};

			\draw (3,1)--(3,2)--(4,2)--(4,1)--cycle;
			\node at (3.5,1.5) {4};
			\node at (4.4,1.2) {.};
		\end{tikzpicture}
	\end{center}
\end{eg}
\begin{de}[{\cite[Definition 2.6.1]{Wat1}}]
	Let $S \in \mathsf{SST}_{2n}$. Let $S_{1}$ denote 
	the first column of $S$, and $S_{\geq2}$ the rest of the tableau $S$. 
	We define $\mathsf{suc}(S)$ by
	\begin{equation}
		\mathsf{suc}(S) := \mathsf{red}(S_1) * S_{\geq2}.
	\end{equation}
\end{de}
\begin{eg}\label{5.5}
	By Example \ref{5.3}, we have 
	\begin{center}
		\begin{tikzpicture}[x=5mm,y=5mm]
			\draw (0,0)--(0,3)--(2,3)--(2,1)--(1,1)--(1,0)--cycle;
			\draw (0,1)--(1,1);
			\draw (0,2)--(2,2);
			\draw (1,1)--(1,3);
			\node at (0.5,0.5) {4};
			\node at (0.5,1.5) {2};
			\node at (0.5,2.5) {1};
			\node at (1.5,1.5) {3};
			\node at (1.5,2.5) {2};
			\node at (0.35,1.5) {$\mathsf{suc}\left(\begin{aligned} \\ \,\,\,\,\,\,\,\,\,\,\,\,\,\,\,\, \\ \\ \end{aligned}\right)$};

      \node at (3.3,1.4) {$=$};

			\draw (6,0)--(6,3)--(7,3)--(7,0)--cycle;
			\draw (6,1)--(7,1);
			\draw (6,2)--(7,2);
			\node at (6.5,0.5) {4};
			\node at (6.5,1.5) {2};
			\node at (6.5,2.5) {1};
			\node at (5.9,1.5) {$\mathsf{red}\left(\begin{aligned} \\ \,\,\,\,\,\,\,\,\, \\ \\ \end{aligned}\right)$};

			\node at (8.25,1.5) {$*$};
			
			\draw (8.9,0.5)--(8.9,2.5)--(9.9,2.5)--(9.9,0.5)--cycle;
			\draw (8.9,1.5)--(9.9,1.5);
			\node at (9.4,1) {3};
			\node at (9.4,2) {2};

			\node at (10.7,1.4) {$=$};

			\draw (11.5,0)--(11.5,3)--(12.5,3)--(12.5,0)--cycle;
			\draw (11.5,1)--(12.5,1);
			\draw (11.5,2)--(12.5,2);
			\node at (12,0.5) {4};
			\node at (12,1.5) {3};
			\node at (12,2.5) {2};
			\node at (12.9,1.2) {.};
		\end{tikzpicture}
	\end{center}
\end{eg}
The following proposition ensures that
the procedure above will terminate after a finite number of steps.
\begin{prop}[{\cite[Lemma 2.6.4]{Wat1}}]
	Let $S \in \mathsf{SST}_{2n}(\lambda)$, and let $\mu$ be 
	the shape of $\mathsf{suc}(S)$.
	Then, $\mu \subset \lambda$, and $\lambda/{\mu}$ is vertical strip.
	Moreover, $\lambda=\mu$ if and only if $S=\mathsf{suc}(S)$.
\end{prop}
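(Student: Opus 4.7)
The plan is to apply the Pieri rule (Proposition \ref{Pieri}) to pin down the shape $\mu$ up to a vertical strip, and then use the recursive definition of $\mathsf{rem}$ to verify that this strip lies within $\lambda$. Write $\nu$ for the shape of $S_{\geq 2}$, so that $\nu_i = \lambda_i - 1$ for $1 \leq i \leq \ell(\lambda)$ and $\nu_i = 0$ otherwise. Because entries are added to $\mathsf{rem}$ only in pairs, $|\mathsf{rem}(S_1)|$ is even, and $\mathsf{red}(S_1)$ has shape $\varpi_k$ with $k = \ell(\lambda) - |\mathsf{rem}(S_1)|$. Proposition \ref{Pieri} then yields $\nu \underset{\mathrm{vert}}{\subset} \mu$ with $|\mu/\nu| = k$, so $\mu_i \in \{\nu_i, \nu_i + 1\}$ for every $i$. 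In particular $\mu_i \leq \nu_i + 1 = \lambda_i$ for $i \leq \ell(\lambda)$, so proving $\mu \subset \lambda$ reduces to ruling out $\mu_i = 1$ for $i > \ell(\lambda)$.

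I would prove this remaining claim by induction on $|\mathsf{rem}(S_1)|$. The base case $|\mathsf{rem}(S_1)| = 0$ gives $\mathsf{red}(S_1) = S_1$, and a direct column-by-column verification of the insertion algorithm shows $S_1 * S_{\geq 2} = S$; hence $\mu = \lambda$, so the claim holds trivially. For the inductive step, unwind the recursion to write $\mathsf{rem}(S_1) = \mathsf{rem}(S_1'') \sqcup \{S(1, \ell(\lambda) - 1), S(1, \ell(\lambda))\}$, noting that $\mathsf{red}(S_1) = \mathsf{red}(S_1'')$, so that $\mathsf{suc}(S) = \mathsf{red}(S_1'') * S_{\geq 2}$. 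The arithmetic constraint $S(1, \ell(\lambda)) < 2\ell(\lambda) - |\mathsf{rem}(S_1'')| - 1$, which is equivalent to $S(1, \ell(\lambda)) \leq \ell(\lambda) + k$, combined with the semistandard inequality $S(1, i) \geq i$, yields a sharp bound on the surviving entries of $\mathsf{red}(S_1)$ that controls how deep each bumping path can descend during the column insertion into $S_{\geq 2}$.

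The main obstacle will be the bookkeeping in this inductive step: one must trace, for each entry of $\mathsf{red}(S_1)$ inserted in increasing order from top to bottom, the bumping path through successive columns of $S_{\geq 2}$ and verify that the terminal box always lies in a row $\leq \ell(\lambda)$. The arithmetic bound above, together with the strict column-length decrease across $S$, should drive this after a case analysis on whether a bumping chain reaches the final row of the ambient shape. Once $\mu \subset \lambda$ is established, the coordinate-wise containment $\mu_i \in \{\lambda_i - 1, \lambda_i\}$ already shows that $\lambda / \mu$ has at most one box per row, i.e., is a vertical strip. Finally, for the equivalence $\lambda = \mu \iff S = \mathsf{suc}(S)$: one direction is immediate from equality of underlying tableaux; conversely, $\mu = \lambda$ forces $|\mu/\nu| = \ell(\lambda)$, so $|\mathsf{rem}(S_1)| = 0$, hence $\mathsf{red}(S_1) = S_1$ and $\mathsf{suc}(S) = S_1 * S_{\geq 2} = S$ by the base-case identity.
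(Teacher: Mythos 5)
The paper does not prove this statement; it is cited from \cite[Lemma 2.6.4]{Wat1}, so I assess your argument directly. Your opening paragraph is correct: writing $\nu$ for the shape of $S_{\geq 2}$, Proposition \ref{Pieri} gives $\nu \underset{\mathrm{vert}}{\subset} \mu$ with $|\mu/\nu| = k = \ell(\lambda) - \sharp\,\mathsf{rem}(S_1)$, so $\mu_i \in \{\nu_i,\nu_i+1\}$ for every $i$, and the whole statement (containment, vertical strip, and the final equivalence) reduces to showing $\ell(\mu) \leq \ell(\lambda)$. But the induction you propose to close that gap is not correctly set up. The decomposition $\mathsf{rem}(S_1) = \mathsf{rem}(S_1'') \sqcup \{S(1,\ell(\lambda)-1), S(1,\ell(\lambda))\}$ holds only when the recursion defining $\mathsf{rem}$ removes the bottom pair of $S_1$; in the ``otherwise'' branch one has $\mathsf{rem}(S_1) = \mathsf{rem}(S_1')$, and the removed pairs sit strictly higher in the column (for instance $S_1 = (1,2,5)$ has $\mathsf{rem}(S_1) = \{1,2\}$, a pair in rows $1$--$2$). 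In that case neither your decomposition nor the arithmetic constraint $S(1,\ell(\lambda)) \leq \ell(\lambda) + k$ is available, so your inductive step treats only one of the two cases. Even within the treated case you defer as ``bookkeeping'' the crucial point, namely why bounding a single removed entry controls the terminal rows of all $k$ bumping chains; that is precisely the content of the lemma, not a detail.

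A cleaner route sidesteps both issues. Column insertion satisfies $w_r(m \to T) \equiv_{\mathsf{K}} m\,w_r(T)$, so $w_r(\mathsf{suc}(S)) = w_r(\mathsf{red}(S_1) * S_{\geq 2}) \equiv_{\mathsf{K}} w_r(\mathsf{red}(S_1))\,w_r(S_{\geq 2})$, and similarly $w_r(S_1)\,w_r(S_{\geq 2}) \equiv_{\mathsf{K}} w_r(S_1 * S_{\geq 2}) = w_r(S)$ by your base-case identity. Since $\mathsf{red}(S_1)$ arises from $S_1$ by deleting boxes, $w_r(\mathsf{red}(S_1))$ is a subword of $w_r(S_1)$, hence $w_r(\mathsf{red}(S_1))\,w_r(S_{\geq 2})$ is a subword of $w_r(S_1)\,w_r(S_{\geq 2})$. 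The length of the longest strictly decreasing subsequence of a word is a Knuth-class invariant equal to the number of rows of the associated tableau, and it is monotone non-increasing under passing to a subword; therefore $\ell(\mu) \leq \ell(\lambda)$, with no case analysis and no bumping-path tracking at all. Your first and last paragraphs then carry through unchanged.
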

The following proposition tells us exactly when the procedure above stops. 
\begin{prop}[{\cite[Corollary 4.3.8]{Wat1}}]\label{2024}
	For $S \in \mathsf{SST}_{2n}$, the following are equivalent:
	\begin{enumerate}
		\item[(1)] $S$ is a symplectic tableau; 
		\item[(2)] $\mathsf{suc}(S)=S$.
	\end{enumerate}
\end{prop}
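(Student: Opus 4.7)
The plan is to reduce Proposition~\ref{2024} to a purely combinatorial statement about the first column $S_1$, and then verify that statement by induction on the column's length.

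\textbf{Reduction.} Write $c := S_1$ with entries $c_1 < c_2 < \cdots < c_l$ (top to bottom), where $l := \ell(c)$. By the preceding shape-preservation proposition, $S = \mathsf{suc}(S)$ is equivalent to the shape of $\mathsf{suc}(S)$ coinciding with that of $S$, which happens exactly when no boxes disappear from $S_1$, i.e., $\mathsf{rem}(c) = \emptyset$. Conversely, if $\mathsf{rem}(c) = \emptyset$ then $\mathsf{red}(c) = c$, so $\mathsf{suc}(S) = c * S_{\ge 2}$; by the standard column-insertion reconstruction (every semistandard $S$ equals the column insertion of its first column into the remaining columns), this equals $S$. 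Thus it suffices to prove
\[
\mathsf{rem}(c) = \emptyset \iff c_y \ge 2y - 1 \text{ for all } 1 \le y \le l,
\]
which is exactly the symplectic condition on $S$.

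\textbf{Inductive argument.} I argue by induction on $l$; the cases $l \le 1$ are trivial. For ``symplectic $\Rightarrow \mathsf{rem}(c) = \emptyset$'', assume $c_y \ge 2y - 1$ for all $y$. Then the firing condition at the bottom demands $c_l < 2l - \sharp\mathsf{rem}(c'') - 1 \le 2l - 1$, contradicting $c_l \ge 2l - 1$. Hence the bottom condition fails, $\mathsf{rem}(c) = \mathsf{rem}(c')$, and since $c'$ remains symplectic, the inductive hypothesis gives $\mathsf{rem}(c') = \emptyset$. For the contrapositive, assume $c$ is non-symplectic and pick the smallest $y^*$ with $c_{y^*} < 2y^* - 1$. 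If the bottom firing condition holds then $\mathsf{rem}(c) \neq \emptyset$; otherwise $\mathsf{rem}(c) = \mathsf{rem}(c')$. When $y^* < l$, the column $c'$ is still non-symplectic at $y^*$ and induction yields $\mathsf{rem}(c') \neq \emptyset$. When $y^* = l$, minimality of $y^*$ together with the strict increase $c_{l-1} < c_l < 2l - 1$ forces $c_{l-1} = 2l - 3$ and $c_l = 2l - 2$, so firing conditions (a) and (b) hold; moreover $c''$ is still symplectic, and the inductive hypothesis gives $\mathsf{rem}(c'') = \emptyset$, which makes condition (c) reduce to $c_l < 2l - 1$ and hence hold as well. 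The bottom condition therefore does fire, contradicting our assumption and ruling out $y^* = l$.

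The main obstacle is the contrapositive direction with $y^* = l$: one has to extract the exact arithmetic relations $c_{l-1} = 2l - 3$ and $c_l = 2l - 2$ from a handful of strict inequalities, and then invoke the inductive hypothesis on the strictly shorter symplectic column $c''$ to verify condition (c) before concluding that the bottom condition fires. This delicate interplay between the recursive definition of $\mathsf{rem}$ and the symplectic bound $2y - 1$ is where the most care is required.
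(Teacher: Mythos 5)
Your proof is correct. The paper does not itself prove this proposition---it is quoted directly from \cite[Corollary 4.3.8]{Wat1}---so there is no in-paper argument to compare against; judging by where that result sits in Watanabe's paper (alongside the representation-theoretic statements quoted in Theorem~\ref{thm5.1}), the original derivation presumably goes through the module theory of $\mathbf{U}^{\imath}$, which makes your self-contained combinatorial induction a genuinely more elementary route. Two small remarks on the details. First, the appeal to the reconstruction identity $S_1 * S_{\geq 2} = S$ in your reduction step is not actually needed: since $\mathsf{suc}(S) = \mathsf{red}(S_1) * S_{\geq 2}$ has exactly $\sharp\,\mathsf{rem}(S_1)$ fewer boxes than $S$, the shape-containment assertion of \cite[Lemma 2.6.4]{Wat1} already forces $\mathsf{suc}(S) = S$ if and only if $\mathsf{rem}(S_1) = \emptyset$, so the box count alone gives both directions of the reduction (the reconstruction identity is of course true, but it is a heavier tool than you need here). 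Second, in the $y^* = l$ case you apply the forward implication of the inductive hypothesis to the shorter column $c''$ while proving the backward implication at $c$; this is legitimate precisely because you run the induction on the full biconditional, and indeed the inequalities $2l-3 \leq c_{l-1} < c_l \leq 2l-2$ pin down $c_{l-1} = 2l-3$ and $c_l = 2l-2$, after which $\mathsf{rem}(c'') = \emptyset$ makes all three firing clauses hold and yields the contradiction you seek.
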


\begin{de}[{\cite[Definition 3.1.1]{Wat1}}]
	Let $S \in \mathsf{SST}_{2n}$,
	and let $N$ be a non-negative integer satisfying $\mathsf{suc}^{N+1}(S) = \mathsf{suc}^{N}(S)$.
	We set $\mathsf{P}^{A\mathrm{II}}(S) := \mathsf{suc}^{N}(S)$.
	Also, 
	let $\mathsf{Q}^{A\mathrm{II}}(S)$ be the tableau
	that records the process of transformations from $S$ to $\mathsf{P}^{\mathrm{II}}(S)$.
	Namely, let the shapes of $S$, $\mathsf{suc}(S)$, $\mathsf{suc}^2(S)$, $\mathsf{suc}^3(S)$, ..., $\mathsf{suc}^N(S)$
	be $\lambda^0$, $\lambda^1$, $\lambda^2$, $\lambda^3$, ...,  $\lambda^N$, respectively, 
	and define $\mathsf{Q}^{A\mathrm{II}}(S)$
	as the tableau obtained by placing the entry $j$ in $\lambda^{j-1}/\lambda^j$ for $j=1,2,3$, ..., $N$. 
	For $\lambda \in \mathsf{Par}_{\leq2n}$ and $\mu \in \mathsf{Par}_{\leq n}$,
	we set
	\begin{equation}
		\mathsf{Rec}_{2n}(\lambda/{\mu}) :=
		\setlr{\mathsf{Q}^{A\mathrm{II}}(S)}{\begin{aligned} & S \in \mathsf{SST}_{2n}(\lambda), \\ & \text{the shape of $\mathsf{P}^{A\mathrm{II}}(S)$ is $\mu$}\end{aligned}}.
	\end{equation}
\end{de}
\begin{eg}\label{5.9}
	By Examples \ref{5.3} and \ref{5.5}, we see that 
	\begin{center}
		\begin{tikzpicture}[x=5mm,y=5mm]
			\draw (0,0)--(0,3)--(2,3)--(2,1)--(1,1)--(1,0)--cycle;
			\draw (0,1)--(1,1);
			\draw (0,2)--(2,2);
			\draw (1,1)--(1,3);
			\node at (0.5,0.5) {4};
			\node at (0.5,1.5) {2};
			\node at (0.5,2.5) {1};
			\node at (1.5,1.5) {3};
			\node at (1.5,2.5) {2};
			\node at (0.15,1.5) {$\mathsf{P}^{A\mathrm{II}}\left(\begin{aligned} \\ \,\,\,\,\,\,\,\,\,\,\,\,\,\,\,\, \\ \\ \end{aligned}\right)$};

      \node at (3.3,1.4) {$=$};

			\draw (4.1,1)--(4.1,2)--(5.1,2)--(5.1,1)--cycle;
			\node at (4.6,1.5) {2};
			\node at (5.5,1.2) {,};
		\end{tikzpicture}
		\begin{tikzpicture}[x=5mm,y=5mm]
			\draw (0,0)--(0,3)--(2,3)--(2,1)--(1,1)--(1,0)--cycle;
			\draw (0,1)--(1,1);
			\draw (0,2)--(2,2);
			\draw (1,1)--(1,3);
			\node at (0.5,0.5) {4};
			\node at (0.5,1.5) {2};
			\node at (0.5,2.5) {1};
			\node at (1.5,1.5) {3};
			\node at (1.5,2.5) {2};
			\node at (0.15,1.5) {$\mathsf{Q}^{A\mathrm{II}}\left(\begin{aligned} \\ \,\,\,\,\,\,\,\,\,\,\,\,\,\,\,\, \\ \\ \end{aligned}\right)$};

      \node at (3.3,1.4) {$=$};

			\draw (4.1,0)--(4.1,2)--(5.1,2)--(5.1,3)--(6.1,3)--(6.1,1)--(5.1,1)--(5.1,0)--cycle;
			\draw (4.1,1)--(5.1,1);
			\draw (5.1,1)--(5.1,3);
			\draw (4.1,2)--(6.1,2);
			\node at (4.6,0.5) {2};
			\node at (4.6,1.5) {2};
			\node at (5.6,1.5) {1};
			\node at (5.6,2.5) {1};
			\node at (6.5,1.2) {.};
		\end{tikzpicture}
	\end{center}
\end{eg}
\begin{thm}[{\cite[Theorem 3.1.4]{Wat1}}]
	Let $\lambda \in \mathsf{Par}_{\leq 2n}$.
	Then, the correspondence $S \mapsto \left(\mathsf{P}^{A\mathrm{II}}(S),\mathsf{Q}^{A\mathrm{II}}(S)\right)$
	gives the following bijection:
	\begin{equation}
		\mathsf{LR}^{A\mathrm{II}} :
		\mathsf{SST}_{2n}(\lambda) \stackrel{\sim}{\longrightarrow}
		\bigsqcup_{\mu \in \mathsf{Par}_{\leq n}}
		\mathsf{SpT}_{2n}(\mu) \times \mathsf{Rec}_{2n}(\lambda/{\mu}).
	\end{equation}
\end{thm}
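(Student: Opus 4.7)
The plan is to construct an explicit two-sided inverse to $\mathsf{LR}^{\mathrm{AII}}$ by reversing, one step at a time, the iteration defining $\mathsf{P}^{\mathrm{AII}}$ and $\mathsf{Q}^{\mathrm{AII}}$.

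First, establish well-definedness: by the proposition recorded immediately before Proposition \ref{2024}, each application of $\mathsf{suc}$ either fixes the input or strictly reduces its shape by a non-trivial vertical strip, so the iteration terminates in finitely many steps. By Proposition \ref{2024}, the fixed points of $\mathsf{suc}$ are precisely the symplectic tableaux, so $\mathsf{P}^{\mathrm{AII}}(S)$ lies in $\mathsf{SpT}_{2n}(\mu)$ for some $\mu$; since a symplectic tableau with entries $\leq 2n$ satisfies $T(1,y)\geq 2y-1$, one has $\mu\in\mathsf{Par}_{\leq n}$. The recording tableau $\mathsf{Q}^{\mathrm{AII}}(S)$ places entry $j$ in the vertical strip $\lambda^{j-1}/\lambda^{j}$ by construction, so it belongs to $\mathsf{Rec}_{2n}(\lambda/\mu)$.

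The heart of the proof is the following reversibility lemma: for any $T\in\mathsf{SST}_{2n}$ of shape $\mu$ and any partition $\nu$ with $\mu\subset\nu$ and $\nu/\mu$ a vertical strip, there exists a unique $S\in\mathsf{SST}_{2n}(\nu)$ satisfying $\mathsf{suc}(S)=T$. To prove it, let $\rho$ be the partition obtained from $\nu$ by deleting its first column. A direct check using the vertical-strip condition on $\nu/\mu$ shows that $\rho\subset\mu$ and $\mu/\rho$ is itself a vertical strip of size $\ell(\nu)-|\nu/\mu|$. By the inverse of the column Pieri rule (Proposition \ref{Pieri}), the tableau $T$ uniquely decomposes as $C*R$ with $C$ a column of length $\ell(\nu)-|\nu/\mu|$ and $R$ of shape $\rho$; these must coincide with $\mathsf{red}(S_{1})$ and $S_{\geq 2}$ for any candidate $S$. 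It then remains to reconstruct $S_{1}$ from $C$ by reinserting $|\nu/\mu|$ further entries organized into consecutive pairs $(2k-1,2k)$. Granting the uniqueness of this reconstruction, the theorem follows by iteration: given $(P,Q)$ with largest entry $N$, read the chain of shapes $\lambda^{N}=\mathrm{shape}(P)\subset\cdots\subset\lambda^{0}=\lambda$ off $Q$, and apply the lemma successively to produce $S^{j-1}$ from $S^{j}$ and $\lambda^{j-1}/\lambda^{j}$; the final output $S=S^{0}$ is the desired preimage of $(P,Q)$.

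The main obstacle will be showing that the first-column reconstruction step is well-defined, i.e., that the consecutive pairs $(2k-1,2k)$ to be reinserted into $C$ are uniquely determined by $C$ together with the target length $\ell(\nu)$. The sharpness of the length condition $S_{1}(1,l)<2l-\sharp\,\mathsf{rem}(S_{1}'')-1$ in the definition of $\mathsf{rem}$ is exactly what makes this rigidity possible: read in reverse, it says that each removed pair must be the unique pair whose reinsertion, processed from top to bottom, barely violates the symplectic bound $S_{1}(1,y)\geq 2y-1$ at its prescribed position. A careful induction on the target length, together with Proposition \ref{2024} to handle the terminal symplectic case, should then pin down both the values and the positions of the reinserted pairs, completing the proof of the reversibility lemma and hence of the bijection.
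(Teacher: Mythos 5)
First, note that the paper does not actually prove this statement: it is imported verbatim from Watanabe's preprint \cite{Wat1} (Theorem 3.1.4 there), and only the definitions of $\mathsf{P}^{\mathrm{AII}}$, $\mathsf{Q}^{\mathrm{AII}}$, $\mathsf{Rec}_{2n}$ and the surrounding lemmas are reproduced. So there is no in-paper proof to compare with, and your argument must stand on its own. Your overall plan, namely inverting $\mathsf{suc}$ one step at a time via the Pieri decomposition $T=C*R$ with $C$ a column of length $\ell(\nu)-|\nu/\mu|$ and $R$ of shape $\rho$ (the first column of $\nu$ deleted), is the right framework, and the computation that $\rho\subset\mu$ with $\mu/\rho$ a vertical strip of size $\ell(\nu)-|\nu/\mu|$ is correct.

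However, the reversibility lemma on which everything hangs is false as you state it, and this breaks both directions. Already $T=\emptyset$, $\nu=(1)$ has no preimage, since $\mathsf{suc}$ of a one-box column fixes it, and more generally $|\nu/\mu|$ must be even because $\mathsf{rem}$ removes pairs. Restricting to even strips does not repair the existence half: take $n=4$, $T$ the two-box column $(7,8)$, $\mu=(1,1)$, and $\nu=(1^{8})$. The only $S\in\mathsf{SST}_{8}(\nu)$ is the column $(1,2,\dots,8)$, and one computes $\mathsf{red}((1,\dots,8))=\emptyset\neq(7,8)$, so no preimage exists. This matters because your surjectivity argument applies the lemma along the shape chain read off an arbitrary $Q\in\mathsf{Rec}_{2n}(\lambda/\mu)$ starting from an arbitrary $P\in\mathsf{SpT}_{2n}(\mu)$. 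Since $\mathsf{Rec}_{2n}(\lambda/\mu)$ is defined as the set of recording tableaux that actually occur, each $Q$ in it is realized by some $S'$, but there is no a priori reason the columns your backward iteration extracts from a \emph{different} symplectic tableau $P$ will be extendable at every step. Whether the iterated inverse is defined must be shown to depend only on the recorded shape chain and not on the entries of the intermediate tableaux; this product-structure statement is precisely the non-trivial content of the theorem, and the proposal presupposes it rather than proving it. The uniqueness of the column reconstruction (needed for injectivity) is likewise only gestured at: you yourself call it the main obstacle and offer a heuristic about the inequality $S_{1}(1,l)<2l-\sharp\,\mathsf{rem}(S_{1}'')-1$ rather than the claimed induction. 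So both existence-along-a-chain and uniqueness remain open, and the key missing ingredient is a correct characterization of which columns $C$ (together with a target length) admit an extension with $\mathsf{red}=C$, proved in a form robust enough to propagate along the shape chain recorded by $Q$.
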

Let $\mathbb{C}(q)\mathsf{Rec}_{2n}(\lambda/{\mu})$
denote the $\mathbb{C}(q)$-vector space
with basis $\mathsf{Rec}_{2n}(\lambda/{\mu})$.
In \cite{Wat1},
the desired branching rule is described by ``melting'' the bijection above.
\begin{thm}[{\cite[Corollary 7.2.2]{Wat1}}]
	For all $\mu \in \widetilde{P}^+$,
	there exists a $\mathbb{C}(q)$-basis
	\begin{equation}
		\mathbf{B}^{\imath}(\mu) := \setlr{G^{\imath}(b_S^{\imath})}{S \in \mathsf{SpT}_{2n}(\mu)}
	\end{equation}
	of $\widetilde{L}^{\imath}(\mu)$
	such that the following holds:
	for all $\lambda \in P^+_{\geq0}$,
	there exists an isomorphism of $\mathbf{U}^{\imath}$-modules: 
	\begin{equation}\label{LR}
		\mathsf{LR}^{A\mathrm{II}} : \mathsf{Res}^{\mathbf{U}}_{\mathbf{U}^{\imath}}\,L_q(\lambda)
		\stackrel{\sim}{\longrightarrow}
		\bigoplus_{\mu \in \widetilde{P}^+} \widetilde{L}^{\imath}(\mu) \otimes \mathbb{C}(q)\mathsf{Rec}_{2n}(\lambda/{\mu})
	\end{equation}
	that sends $G(b_S)$ to $G^{\imath}\left(b^{\imath}_{\mathsf{P}^{}(S)}\right) \otimes \mathsf{Q}^{}(S)$
	at $q=\infty$.
\end{thm}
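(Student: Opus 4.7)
The plan is to combine three ingredients: the combinatorial bijection of the preceding theorem (matching $\mathsf{SST}_{2n}(\lambda)$ with $\bigsqcup_\mu \mathsf{SpT}_{2n}(\mu) \times \mathsf{Rec}_{2n}(\lambda/\mu)$), the complete reducibility of classical weight $\mathbf{U}^\imath$-modules from Theorem \ref{thm5.1}, and an $\imath$-analog of Kashiwara's global basis theory. The strategy breaks into three stages: (a) upgrading the combinatorial bijection to a character identity and hence to an abstract isomorphism of $\mathbf{U}^\imath$-modules, (b) constructing the $\imath$-canonical basis $\mathbf{B}^\imath(\mu)$, and (c) verifying that $\mathsf{LR}^{\mathrm{AII}}$ matches the two global bases modulo $q^{-1}$.

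For (a), King's tableau model identifies $\mathsf{SpT}_{2n}(\mu)$ with a weight basis of $\widetilde{L}(\mu)$, so via Theorem \ref{thm5.1}(4) the same numerical formula holds for $\widetilde{L}^\imath(\mu)$; refining the bijection of the preceding theorem to preserve weights would then yield the character identity $\mathrm{ch}\,L_q(\lambda) = \sum_\mu \mathrm{ch}\,\widetilde{L}^\imath(\mu) \cdot |\mathsf{Rec}_{2n}(\lambda/\mu)|$, which combined with parts (1)--(3) of Theorem \ref{thm5.1} forces the abstract decomposition on the right-hand side of (\ref{LR}). For (b), I would follow a Bao--Wang-style program: introduce an $\imath$-bar involution on $\mathbf{U}^\imath$ and on $\widetilde{L}^\imath(\mu)$, fix an $\mathbf{A}$-integral form ${}_{\mathbf{A}}\widetilde{L}^\imath(\mu)$, and construct an $\imath$-crystal lattice $\mathcal{L}^\imath(\mu)$ such that $\mathcal{L}^\imath(\mu)/q^{-1}\mathcal{L}^\imath(\mu)$ carries a $\mathbb{C}$-basis indexed by $\mathsf{SpT}_{2n}(\mu)$; here Proposition \ref{2024} (characterizing symplectic tableaux as fixed points of $\mathsf{suc}$) supplies the combinatorial backbone of this indexing. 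Lifting through the triple intersection ${}_{\mathbf{A}}\widetilde{L}^\imath(\mu) \cap \mathcal{L}^\imath(\mu) \cap \overline{\mathcal{L}^\imath(\mu)}$ then produces $G^\imath(b^\imath_S)$ for each $S \in \mathsf{SpT}_{2n}(\mu)$.

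For (c), I would argue by induction on $|\lambda|$. The base case $\lambda = \varpi_l$ is handled directly: the algorithms $\mathsf{rem}, \mathsf{red}$ reduce a single-column tableau $S$ to a symplectic column $\mathsf{P}^{\mathrm{AII}}(S)$ together with the recording data $\mathsf{Q}^{\mathrm{AII}}(S)$, and the identification at $q=\infty$ would follow from a direct computation of the $B_j, A_j$ actions on $G(b_S)$. The inductive step would factor $\mathsf{SST}_{2n}(\lambda)$ through an iterated tensor product of column crystals via Proposition \ref{Pieri}, reducing the general case to an analysis of how $\mathsf{LR}^{\mathrm{AII}}$ interacts with column insertion. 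The main obstacle is precisely this inductive step: showing that the column-by-column operation $\mathsf{suc}$ mirrors, at the level of global bases modulo $q^{-1}$, the $\mathbf{U}^\imath$-module decomposition of $\mathsf{Res}^{\mathbf{U}}_{\mathbf{U}^\imath} L_q(\lambda)$. The asymmetry between the ordinary bar involution on $\mathbf{U}$ and the $\imath$-bar on $\mathbf{U}^\imath$ — concretely visible in the $q$-shifts inside the definitions of $B_j$ and $A_j$ — is the principal source of technical difficulty and is what necessitates the passage through an $\imath$-integral form rather than a direct transport of the ordinary global basis.
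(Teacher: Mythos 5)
Your proposal identifies the correct ingredients but inverts the logical order of the actual proof in \cite{Wat1}, and the inversion leaves a genuine gap. The paper's own treatment of this theorem is a citation to \cite[Corollary 7.2.2]{Wat1}, immediately followed by the proposition that reveals the real engine: the operation $\mathsf{suc}$ lifts to a $\mathbf{U}^{\imath}$-module homomorphism $\mathsf{suc} : \mathsf{Res}^{\mathbf{U}}_{\mathbf{U}^{\imath}}\,L_q(\lambda) \to \bigoplus_{\lambda' \underset{\mathrm{vert}}{\subset} \mu} \mathsf{Res}^{\mathbf{U}}_{\mathbf{U}^{\imath}}\,L_q(\mu)$ sending $G(b_S)$ to $G(b_{\mathsf{suc}(S)})$ at $q=\infty$ (\cite[Definition 6.3.3]{Wat1}). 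In \cite{Wat1} this morphism is established \emph{first}; the basis $\mathbf{B}^{\imath}(\mu)$ is then \emph{constructed} by iterating $\mathsf{suc}$ until it stabilizes and using the characterization (Proposition \ref{2024}) that the fixed points are exactly the symplectic tableaux, so that the stable images of the ordinary global basis vectors supply $G^{\imath}(b^{\imath}_S)$ for $S \in \mathsf{SpT}_{2n}(\mu)$. You propose the reverse: build $\mathbf{B}^{\imath}(\mu)$ independently via a Bao--Wang-style $\imath$-bar involution and a triple intersection with an $\imath$-crystal lattice, and only afterwards check $\mathsf{suc}$-compatibility. Two problems follow. First, you explicitly defer exactly the point that drives the whole proof --- ``showing that the column-by-column operation $\mathsf{suc}$ mirrors, at the level of global bases modulo $q^{-1}$, the $\mathbf{U}^{\imath}$-module decomposition'' --- calling it ``the principal source of technical difficulty'' without offering a mechanism to overcome it; this is not a loose end but the theorem's content. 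Second, the $\imath$-crystal lattice you invoke in step (b) is not available off-the-shelf for type $\mathrm{AII}$: $\imath$-canonical bases exist (Bao--Wang), but a Kashiwara-style $\imath$-crystal lattice theory with an $\mathbf{A}_{\infty}$-lattice stable under $\imath$-analogs of Kashiwara operators, whose reduction mod $q^{-1}$ is indexed by $\mathsf{SpT}_{2n}(\mu)$, is itself a nontrivial construction whose provision is precisely the output of the $\mathsf{suc}$-morphism machinery, not an input you can assume. Step (a) (character identity plus complete reducibility gives the abstract decomposition) is sound, but it is also the easy part; the theorem's force is the canonical choice of isomorphism and basis, which your outline does not supply.
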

In \cite{Wat1}, the isomorphism 
(\ref{LR}) is constructed based on the following. 
\begin{prop}[{\cite[Definition 6.3.3]{Wat1}}]
	We set $\lambda' := (\lambda_1-1,\ldots,\lambda_{\ell(\lambda)}-1)$.
	Then, there exists a $\mathbf{U}^{\imath}$-module homomorphism: 
	\begin{equation}
		\mathsf{suc} : \mathsf{Res}^{\mathbf{U}}_{\mathbf{U}^{\imath}}\,L_q(\lambda)
		\to \bigoplus_{\lambda' \underset{\mathrm{vert}}{\subset} \mu} \mathsf{Res}^{\mathbf{U}}_{\mathbf{U}^{\imath}}\,L_q(\mu)
	\end{equation}
	that sends $G(b_S)$ to $G(b_{\mathsf{suc}(S)})$ at $q=\infty$.
\end{prop}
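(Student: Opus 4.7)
The plan is to build $\mathsf{suc}$ by reducing the problem to the single-column case. By the Pieri rule (Proposition \ref{Pieri}), there is a $\mathbf{U}$-linear (hence $\mathbf{U}^{\imath}$-linear) embedding $L_q(\lambda) \hookrightarrow L_q(\varpi_l) \otimes L_q(\lambda')$, where $l = \ell(\lambda)$, that at $q = \infty$ sends $G(b_S)$ to $G(b_{S_1}) \otimes G(b_{S_{\geq 2}})$, with $S_1$ the first column of $S$ and $S_{\geq 2}$ the rest. Likewise, a second application of Pieri yields a surjection
\begin{equation*}
\bigoplus_{k \geq 0} L_q(\varpi_{l-2k}) \otimes L_q(\lambda') \twoheadrightarrow \bigoplus_{\lambda' \underset{\mathrm{vert}}{\subset} \mu} L_q(\mu)
\end{equation*}
under which a tensor of crystal basis elements is sent to the $*$-product. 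Thus it suffices to construct a $\mathbf{U}^{\imath}$-module homomorphism
\begin{equation*}
\pi : L_q(\varpi_l) \to \bigoplus_{k \geq 0} L_q(\varpi_{l-2k})
\end{equation*}
that sends $G(b_C)$ to $G(b_{\mathsf{red}(C)})$ at $q = \infty$ for every column tableau $C$; the homomorphism $\mathsf{suc}$ is then defined as the composition of the Pieri embedding, $\pi \otimes \mathrm{id}$, and the Pieri surjection, and tracking crystal bases through each stage yields $G(b_S) \mapsto G(b_{\mathsf{red}(S_1) * S_{\geq 2}}) = G(b_{\mathsf{suc}(S)})$ at $q = \infty$.

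For the construction of $\pi$, I would exploit the identification $L_q(\varpi_l) \cong \bigwedge_q^l V$ with $V = L_q(\varpi_1)$. Classically, the $\mathfrak{sp}_{2n}$-invariant symplectic form on $V$ gives an equivariant decomposition $\bigwedge^l V \cong \bigoplus_k \widetilde{L}(\widetilde{\varpi}_{l-2k})$ via iterated contraction, together with an equivariant realization of each summand as a submodule of $\bigwedge^{l-2k} V$. By Theorem \ref{thm5.1}, the analogous decomposition holds for $\mathbf{U}^{\imath}$: $\mathsf{Res}^{\mathbf{U}}_{\mathbf{U}^{\imath}}\,L_q(\varpi_l) \cong \bigoplus_k \widetilde{L}^{\imath}(\widetilde{\varpi}_{l-2k})$, and each $\widetilde{L}^{\imath}(\widetilde{\varpi}_{l-2k})$ embeds canonically inside $\mathsf{Res}^{\mathbf{U}}_{\mathbf{U}^{\imath}}\,L_q(\varpi_{l-2k})$. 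One defines $\pi$ as the direct sum of the projections onto each irreducible $\imath$-summand, composed with these canonical embeddings. To match the combinatorics, I would trace the recursion defining $\mathsf{rem}$: the pair $(S(1,l-1), S(1,l)) = (2i, 2i+1)$, an even entry directly preceded by its odd predecessor, corresponds precisely to an elementary quantum contraction by the symplectic form, and the inequality $S(1,l) < 2l - \sharp\,\mathsf{rem}(S'') - 1$ should distinguish those column crystal basis vectors lying outside the kernel of this contraction at $q = \infty$ from those lying inside.

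The main obstacle is the $q = \infty$ compatibility in the construction of $\pi$: one must prove that the $\mathbf{U}^{\imath}$-equivariant projection of $G(b_C)$ onto the summand indexed by $l - 2k$ coincides, modulo $q^{-1}\mathcal{L}$, with $G(b_{\mathsf{red}(C)})$ rather than some other crystal basis vector. This requires an explicit description of the $\imath$-canonical basis of each $\widetilde{L}^{\imath}(\widetilde{\varpi}_{l-2k})$ and a careful analysis of how the bar involution on $\mathbf{U}^{\imath}$-modules interacts with quantum contraction by the symplectic form. The third, subtle inequality in the recursion for $\mathsf{rem}$ is the most delicate point: it encodes the condition preventing over-contraction, and a rigorous verification would likely proceed by induction on $l$, combining the tensor product rule (\ref{4.13}) for crystal bases with the asymptotic behavior of the quasi-$K$-matrix for $\mathbf{U}^{\imath}$ as $q \to \infty$.
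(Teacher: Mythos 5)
The statement you are asked to prove is not proved in this paper: it is quoted verbatim with the citation \cite[Definition 6.3.3]{Wat1} and used as a black box in Section~5.2 (one of several results recalled from \cite{Wat1} on the way to setting up Theorem~\ref{KeyProposition}). So there is no in-paper argument to compare against; one can only judge whether your plan would plausibly reconstruct the argument of \cite{Wat1}.

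As a plan it is reasonable, but it is not a proof, and the gap is exactly where you flag it. The reduction via the Pieri rule is sound: $L_q(\lambda)$ embeds $\mathbf{U}$-equivariantly into $L_q(\varpi_l)\otimes L_q(\lambda')$ (take the summand $\mu=\lambda$ in Proposition~\ref{Pieri}), and the surjection back to the direct sum over $\mu\supset_{\mathrm{vert}}\lambda'$ is again Pieri; both are $\mathbf{U}^{\imath}$-linear because $\mathbf{U}^{\imath}\subset\mathbf{U}$. The existence of some $\mathbf{U}^{\imath}$-homomorphism $\pi\colon L_q(\varpi_l)\to\bigoplus_k L_q(\varpi_{l-2k})$ splitting off each $\widetilde{L}^{\imath}(\widetilde{\varpi}_{l-2k})$ also follows abstractly from complete reducibility (Theorem~\ref{thm5.1}(2),(3)) plus Schur's lemma. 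But the entire content of the proposition is the $q=\infty$ normalization: that the $\imath$-equivariant projection of $G(b_C)$ lands on $G(b_{\mathsf{red}(C)})$ rather than on some unit-times-crystal-vector, and that the third clause in the recursion for $\mathsf{rem}$ (the inequality $S(1,l)<2l-\sharp\,\mathsf{rem}(S'')-1$) is precisely what the $\imath$-canonical basis selects. You state this as the main obstacle and outline what a verification ``would likely'' involve, but do not carry it out. A second, smaller point is also elided: after constructing $\pi$, you still need to propagate its $q=\infty$ behaviour through the Pieri embedding and surjection. This works because modulo $q^{-1}\mathcal{L}$ one has $G(b_1)\otimes G(b_2)\equiv b_1\otimes b_2$ and the Pieri maps send crystal basis vectors to crystal basis vectors, but the sentence ``tracking crystal bases through each stage yields...'' conceals this and should be an explicit lemma. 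Finally, note that the citation in the statement is to a \emph{Definition} in \cite{Wat1}, which suggests the original source constructs $\mathsf{suc}$ directly (perhaps on $\imath$-canonical basis elements) and then proves equivariance, rather than assembling it from an abstract decomposition as you do; without access to \cite{Wat1} I cannot say whether your quantum-contraction picture coincides with its approach, but the reduction to a single column is a natural shared ingredient.
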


\subsection{Towards the proof of the Naito--Sagaki conjecture}
In this subsection,
we connect the branching rule for $\imath$quantum groups
stated in the previous subsection with the Naito--Sagaki conjecture,
and explain how to prove the conjecture.

Below, we will define $\mathfrak{k}$-highest (or $\mathfrak{k}$-lowest) weight semistandard tableaux. 
%
%

Let $\lambda \in P^{+}_{\geq 0}$. 
For $S \in \mathsf{SST}_{2n}(\lambda)$, we set
$\mathsf{wt}_{\mathfrak{k}}(S) := \widetilde{\mathsf{wt}(S)}$,
where the right-hand side is the image of $\mathsf{wt}(S)$
under the natural restriction map $\widetilde{\cdot}$ .
It can also be rewritten as follows: 
\begin{equation}
	\begin{aligned}
		& \mathsf{wt}_{\mathfrak{k}}(S) \\
		&= \langle\mathsf{wt}(S),z_1\rangle\widetilde{\varpi}_1 + \langle\mathsf{wt}(S),z_2\rangle\widetilde{\varpi}_2 + \cdots + \langle\mathsf{wt}(S),z_n\rangle\widetilde{\varpi}_n \\
		&= \langle\mathsf{wt}(S),z_1 + \cdots + z_n \rangle\widetilde{\varepsilon}_1 + \langle\mathsf{wt}(S),z_2 + \cdots + z_n\rangle\widetilde{\varepsilon}_2 + \cdots + \langle\mathsf{wt}(S),z_n\rangle\widetilde{\varepsilon}_n \\
		&= \langle\mathsf{wt}(S),-h_1 \rangle\widetilde{\varepsilon}_1 + \langle\mathsf{wt}(S), h_2 \rangle\widetilde{\varepsilon}_2 + \cdots + \langle\mathsf{wt}(S),(-1)^n h_n \rangle\widetilde{\varepsilon}_n \\
		&= \left(S[2]-S[1]\right)\widetilde{\varepsilon}_1 + \left(S[3]-S[4]\right)\widetilde{\varepsilon}_2 + \cdots + (-1)^n\left(S[2n-1]-S[2n]\right)\widetilde{\varepsilon}_n.
	\end{aligned}\label{5.13a}
\end{equation}
If we define two sequences of positive integers
$\{a_k\}_{k \in \mathbb{N}},\{b_k\}_{k \in \mathbb{N}}$
by:
\begin{equation}
	\begin{aligned}
		& a_k := 2k - \frac{1+(-1)^k}{2}, &
		& b_k := 2k - \frac{1+(-1)^{k+1}}{2}. &
		& k=1,2,3,\ldots,n,
	\end{aligned}
\end{equation}
then we have 
\begin{equation}
	\mathsf{wt}_{\mathfrak{k}}(S) = \left(S[a_1]-S[b_1]\right)\widetilde{\varepsilon}_1 + \left(S[a_2]-S[b_2]\right)\widetilde{\varepsilon}_2 + \cdots + \left(S[a_n]-S[b_n]\right)\widetilde{\varepsilon}_n. 
\end{equation}
\begin{eg}
  Let $S$ be the tableau given below.
	Then we have 
	$\mathsf{wt}_{\mathfrak{k}}(S) = 2\widetilde{\varepsilon}_1 + 2\widetilde{\varepsilon}_2$.
	\begin{center}
	\begin{tikzpicture}[x=5mm,y=5mm]
			\draw (0,0)--(0,3)--(6,3)--(6,2)--(5,2)--(5,1)--(3,1)--(3,0)--cycle;
			\draw (0,1)--(3,1);
			\draw (0,2)--(5,2);
			\draw (1,0)--(1,3);
			\draw (2,0)--(2,3);
			\draw (3,1)--(3,3);
			\draw (4,1)--(4,3);
			\draw (5,2)--(5,3);
			\node at (0.5,0.5) {3};
			\node at (0.5,1.5) {2};
			\node at (0.5,2.5) {1};
			\node at (1.5,0.5) {4};
			\node at (1.5,1.5) {2};
			\node at (1.5,2.5) {1};
			\node at (2.5,0.5) {4};
			\node at (2.5,1.5) {3};
			\node at (2.5,2.5) {2};
			\node at (3.5,1.5) {3};
			\node at (3.5,2.5) {2};
			\node at (4.5,1.5) {4};
			\node at (4.5,2.5) {3};
			\node at (5.5,2.5) {3};

      \node at (-1,1.5) {$S=$};
			\node at (6.5,1.3) {$.$};
		\end{tikzpicture}
	\end{center}
\end{eg}
Now,
assume that ``$b^{\imath}_{\mathsf{P}^{A\mathrm{II}}(S)}$''
is the ``highest weight vector'' of $\widetilde{L}^{\imath}(\mu)$.
Then,
the shapes of $\mathsf{P}^{A\mathrm{II}}(S)$
and $\mathsf{wt}_{\mathfrak{k}}(\mathsf{P}^{A\mathrm{II}}(S))$
are both equal to $\mu$. 
Therefore, $\mathsf{P}^{A\mathrm{II}}(S)$
is a tableau of the form shown in Figure \ref{k-hw} below. 
Next,
assume that ``$b^{\imath}_{\mathsf{P}^{A\mathrm{II}}(S)}$''
is the ``lowest weight vector'' of $\widetilde{L}^{\imath}(\mu)$.
Then,
the shapes of $\mathsf{P}^{A\mathrm{II}}(S)$
and $\widetilde{w}_0\,\mathsf{wt}_{\mathfrak{k}}(\mathsf{P}^{A\mathrm{II}}(S))$
are both equal to $\mu$,
where $\widetilde{w}_0$ denotes the longest element of
the Weyl group of $\mathfrak{k}$.
Therefore, $\mathsf{P}^{A\mathrm{II}}(S)$
is a tableau of the form shown in Figure \ref{k-lw} below.
\begin{figure}[th]
	\begin{center}
	\begin{minipage}{0.49\columnwidth}
		\begin{center}
			\begin{tikzpicture}[x=5mm,y=5mm]
				\draw (0,0)--(10,0)--(10,1)--(0,1)--cycle;
				\draw (0,1)--(0,3);
				\draw (10,1)--(11,1)--(11,2)--(12,2)--(12,3);
				\draw (0,3)--(13,3)--(13,4)--(0,4)--cycle;
				\draw (0,4)--(0,5)--(14,5)--(14,4)--(13,4);
				\draw[dotted, line width=1pt] (6,1.7)--(6,2.3);
				\node (a) at (0.5,0.5) {$a_n$};
				\node (b) at (0.5,3.5) {$a_2$};
				\node (c) at (0.5,4.5) {$a_1$};
				\node (d) at (9.5,0.5) {$a_n$};
				\node (e) at (12.5,3.5) {$a_2$};
				\node (f) at (13.5,4.5) {$a_1$};
				\draw[dotted, line width=1pt] (a)--(1.7,0.5);
				\draw[dotted, line width=1pt] (d)--(8.3,0.5);
				\draw[dotted, line width=1pt] (b)--(1.7,3.5);
				\draw[dotted, line width=1pt] (e)--(11.3,3.5);
				\draw[dotted, line width=1pt] (c)--(1.7,4.5);
				\draw[dotted, line width=1pt] (f)--(12.3,4.5);
			\end{tikzpicture}
		\end{center}
		\caption{}\label{k-hw}
	\end{minipage}
	\begin{minipage}{0.49\columnwidth}
		\begin{center}
			\begin{tikzpicture}[x=5mm,y=5mm]
				\draw (0,0)--(10,0)--(10,1)--(0,1)--cycle;
				\draw (0,1)--(0,3);
				\draw (10,1)--(11,1)--(11,2)--(12,2)--(12,3);
				\draw (0,3)--(13,3)--(13,4)--(0,4)--cycle;
				\draw (0,4)--(0,5)--(14,5)--(14,4)--(13,4);
				\draw[dotted, line width=1pt] (6,1.7)--(6,2.3);
				\node (a) at (0.5,0.5) {$b_n$};
				\node (b) at (0.5,3.5) {$b_2$};
				\node (c) at (0.5,4.5) {$b_1$};
				\node (d) at (9.5,0.5) {$b_n$};
				\node (e) at (12.5,3.5) {$b_2$};
				\node (f) at (13.5,4.5) {$b_1$};
				\draw[dotted, line width=1pt] (a)--(1.7,0.5);
				\draw[dotted, line width=1pt] (d)--(8.3,0.5);
				\draw[dotted, line width=1pt] (b)--(1.7,3.5);
				\draw[dotted, line width=1pt] (e)--(11.3,3.5);
				\draw[dotted, line width=1pt] (c)--(1.7,4.5);
				\draw[dotted, line width=1pt] (f)--(12.3,4.5);
			\end{tikzpicture}
		\end{center}
		\caption{}\label{k-lw}
	\end{minipage}
	\end{center}
\end{figure}

Based on the observation above,
we give the following definition.
\begin{de}\label{5.13}
	Let $S \in \mathsf{SST}_{2n}$.
	\begin{enumerate}
		\item $S$ is called a $\mathfrak{k}$-highest weight tableau if $\mathsf{P}^{A\mathrm{II}}(S)$ is a tableau shown in Figure \ref{k-hw}; let $\mathsf{SST}_{2n}^{\text{$\mathfrak{k}$-$\mathrm{hw}$}}(\lambda)$ denote the set of all $\mathfrak{k}$-highest weight tableaux of shape $\lambda$. 
		\item $S$ is called a $\mathfrak{k}$-lowest weight tableau if $\mathsf{P}^{A\mathrm{II}}(S)$ is a tableau shown in Figure \ref{k-lw}; let $\mathsf{SST}_{2n}^{\text{$\mathfrak{k}$-$\mathrm{lw}$}}(\lambda)$ denote the set of all $\mathfrak{k}$-lowest weight tableaux of shape $\lambda$. 
	\end{enumerate}
\end{de}

\begin{eg}
	By Examples \ref{5.3} and \ref{5.9}, 
	the following tableaux $S_1$ and $S_2$ are 
	$\mathfrak{k}$-highest weight tableaux; recall that $a_1 = 2, b_1 = 3$.
	\begin{center}
	\begin{tikzpicture}[x=5mm,y=5mm]
			\draw (0,0)--(0,3)--(2,3)--(2,1)--(1,1)--(1,0)--cycle;
			\draw (0,1)--(1,1);
			\draw (0,2)--(2,2);
			\draw (1,1)--(1,3);
			\node at (0.5,0.5) {4};
			\node at (0.5,1.5) {2};
			\node at (0.5,2.5) {1};
			\node at (1.5,1.5) {3};
			\node at (1.5,2.5) {2};

      \node at (-1.25,1.5) {$S_1=$};
			\node at (2.5,1.3) {$,$};
		\end{tikzpicture}
		\begin{tikzpicture}[x=5mm,y=5mm]
			\draw (0,0)--(0,3)--(4,3)--(4,2)--(3,2)--(3,1)--(2,1)--(2,0)--(0,0)--cycle;
			\draw (0,1)--(2,1);
			\draw (0,2)--(3,2);
			\draw (1,0)--(1,3);
			\draw (2,1)--(2,3);
			\draw (3,2)--(3,3);
			\node at (0.5,0.5) {3};
			\node at (0.5,1.5) {2};
			\node at (0.5,2.5) {1};
			\node at (1.5,0.5) {4};
			\node at (1.5,1.5) {3};
			\node at (1.5,2.5) {2};
			\node at (2.5,1.5) {3};
			\node at (2.5,2.5) {2};
			\node at (3.5,2.5) {2};

      \node at (-1.25,1.5) {$S_2=$};
			\node at (4.5,1.3) {$.$};
		\end{tikzpicture}
	\end{center}
\end{eg}

The branching rule for $\mathbf{U}^{\imath}$ 
can be rewritten as follows.
\begin{cor}
	For all $\lambda \in P^+_{\geq0}$ and $\mu \in \widetilde{P}^+$,
	the following equality holds:
	\begin{equation}\label{5.16}
		\left[\mathsf{Res}^{\mathbf{U}}_{\mathbf{U}^{\imath}}\,L_q(\lambda) : \widetilde{L}^{\imath}(\mu)\right]
		=
		\sharp\left\{
		T \in \mathsf{SST}_{2n}^{\text{$\mathfrak{k}$-$\mathrm{hw}$}}(\lambda)
		\mathrel{}\middle|\mathrel{}
		\mathsf{wt}_{\mathfrak{k}}(T) = \mu
		\right\}.
	\end{equation}
\end{cor}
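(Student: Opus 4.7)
The plan is to chain together the branching isomorphism (\ref{LR}) with the tableau bijection $\mathsf{LR}^{\mathrm{AII}}$ and the characterization of symplectic tableaux as fixed points of $\mathsf{suc}$ (Proposition \ref{2024}). Since $\widetilde{L}^{\imath}(\mu)$ is irreducible, the isomorphism (\ref{LR}) yields
\begin{equation*}
    \left[\mathsf{Res}^{\mathbf{U}}_{\mathbf{U}^{\imath}}\,L_q(\lambda) : \widetilde{L}^{\imath}(\mu)\right] = \dim_{\mathbb{C}(q)} \mathbb{C}(q)\mathsf{Rec}_{2n}(\lambda/{\mu}) = \sharp\,\mathsf{Rec}_{2n}(\lambda/{\mu}),
\end{equation*}
and the combinatorial bijection $\mathsf{LR}^{\mathrm{AII}}$ gives, for every fixed symplectic tableau $P_0 \in \mathsf{SpT}_{2n}(\mu)$,
\begin{equation*}
    \sharp\,\mathsf{Rec}_{2n}(\lambda/{\mu}) = \sharp\left\{S \in \mathsf{SST}_{2n}(\lambda) \mathrel{}\middle|\mathrel{} \mathsf{P}^{\mathrm{AII}}(S) = P_0\right\}.
\end{equation*}
Thus the task reduces to picking a canonical $P_0(\mu) \in \mathsf{SpT}_{2n}(\mu)$ and verifying
\begin{equation*}
    \left\{T \in \mathsf{SST}_{2n}^{\text{$\mathfrak{k}$-$\mathrm{hw}$}}(\lambda) \mathrel{}\middle|\mathrel{} \mathsf{wt}_{\mathfrak{k}}(T) = \mu\right\} = \left\{T \in \mathsf{SST}_{2n}(\lambda) \mathrel{}\middle|\mathrel{} \mathsf{P}^{\mathrm{AII}}(T) = P_0(\mu)\right\}.
\end{equation*}

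I take $P_0(\mu)$ to be the tableau of Figure \ref{k-hw}, whose $k$-th row has length $\mu_k$ and is filled with $a_k$. Using $a_k = 2k - (1+(-1)^k)/2$, the values $a_1 < a_2 < \cdots < a_n$ show that $P_0(\mu)$ is semistandard, and $a_k \geq 2k-1$ shows it is symplectic; Proposition \ref{2024} then gives $\mathsf{P}^{\mathrm{AII}}(P_0(\mu)) = P_0(\mu)$, so $P_0(\mu)$ is itself $\mathfrak{k}$-highest weight. From $P_0(\mu)[a_k] = \mu_k$ and $P_0(\mu)[b_k] = 0$, combined with formula (\ref{5.13a}), we read off $\mathsf{wt}_{\mathfrak{k}}(P_0(\mu)) = \mu$. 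The one non-formal ingredient is that $\mathsf{suc}$ preserves $\mathsf{wt}_{\mathfrak{k}}$. I would derive this from the last proposition of Section 5.2, which realizes $\mathsf{suc}$ (at $q=\infty$) as a $\mathbf{U}^{\imath}$-module homomorphism between suitable $L_q(\lambda)$'s: since $\mathsf{wt}_{\mathfrak{k}}$ is nothing but the weight encoded by the eigenvalues of the Cartan-type operators $Z_i \in \mathbf{U}^{\imath}$, and any $\mathbf{U}^{\imath}$-linear map commutes with the $Z_i$, iteration yields $\mathsf{wt}_{\mathfrak{k}}(T) = \mathsf{wt}_{\mathfrak{k}}(\mathsf{P}^{\mathrm{AII}}(T))$.

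With these pieces the set equality is immediate. Given $T \in \mathsf{SST}_{2n}^{\text{$\mathfrak{k}$-$\mathrm{hw}$}}(\lambda)$, necessarily $\mathsf{P}^{\mathrm{AII}}(T) = P_0(\nu)$ for a unique $\nu \in \mathsf{Par}_{\leq n}$, and the preservation of $\mathsf{wt}_{\mathfrak{k}}$ then forces $\nu = \mathsf{wt}_{\mathfrak{k}}(T)$; conversely, any $T$ with $\mathsf{P}^{\mathrm{AII}}(T) = P_0(\mu)$ is automatically a $\mathfrak{k}$-highest weight tableau of weight $\mu$. Combining the three displayed equalities yields (\ref{5.16}). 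I expect the main obstacle to be the preservation of $\mathsf{wt}_{\mathfrak{k}}$ under $\mathsf{suc}$, but it is handled at once by the $\mathbf{U}^{\imath}$-linearity of the homomorphism underlying $\mathsf{suc}$; everything else is bookkeeping on top of the quantum Littlewood--Richardson rule of type $\mathrm{AII}$ from \cite{Wat1}.
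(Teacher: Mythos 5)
Your proposal is correct and follows the same route as the paper's proof: pass through $\sharp\,\mathsf{Rec}_{2n}(\lambda/\mu)$ via the isomorphism (\ref{LR}), then use the bijection $\mathsf{LR}^{\mathrm{AII}}$ to re-express this as the size of the fibre of $\mathsf{P}^{\mathrm{AII}}$ over the distinguished symplectic tableau $S^H_\mu$ of Figure \ref{k-hw}. You are more explicit than the paper about two points the paper leaves implicit, which is a good instinct: (a) that the canonical filling $P_0(\mu)$ actually lies in $\mathsf{SpT}_{2n}(\mu)$ and has $\mathsf{wt}_{\mathfrak{k}}(P_0(\mu))=\mu$, and (b) that $\mathsf{wt}_{\mathfrak{k}}$ is invariant under $\mathsf{suc}$, which is what makes the fibre of $\mathsf{P}^{\mathrm{AII}}$ over $P_0(\mu)$ coincide with $\bigl\{T\in\mathsf{SST}_{2n}^{\text{$\mathfrak{k}$-hw}}(\lambda):\mathsf{wt}_{\mathfrak{k}}(T)=\mu\bigr\}$.

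For point (b) you invoke the $\mathbf{U}^{\imath}$-linearity of the homomorphism underlying $\mathsf{suc}$ and the $Z_i$-eigenvalue interpretation of $\mathsf{wt}_{\mathfrak{k}}$; this works (the crystal lattice decomposes along $\mathfrak{g}$-weight spaces, hence along simultaneous $Z_i$-eigenspaces, so the eigenvalue of $G(b_S)$ is forced to agree with that of $G(b_{\mathsf{suc}(S)})$), but it is heavier than needed. A purely combinatorial argument is immediate: by Definition of $\mathsf{rem}$, the entries deleted in a single application of $\mathsf{suc}$ come in pairs $\{2j-1,2j\}=\{a_j,b_j\}$, and Schensted insertion preserves the multiset of entries, so each difference $S[a_j]-S[b_j]$ in formula (\ref{5.13a}) is unchanged. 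Either way the gap is filled, so the argument is sound.
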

\begin{proof}
	Let us denote by $S^{H}_{\mu}$
	a $\mathfrak{k}$-highest weight tableau of shape $\lambda$
	such that $\mathsf{wt}_{\mathfrak{k}}(S^{H}_{\mu})=\mu$. 
	Then, we see that 
	\begin{equation}
		\begin{aligned}
			\left[\mathsf{Res}^{\mathbf{U}}_{\mathbf{U}^{\imath}}\,L_q(\lambda) : \widetilde{L}^{\imath}(\mu)\right]
			&= \sharp \, \mathsf{Rec}_{2n}(\lambda/{\mu}) \\
			&= \sharp \left\{ S^{H}_{\mu} \right\} \times \mathsf{Rec}_{2n}(\lambda/{\mu}) \\
			&= \sharp \setlr{ S \in \mathsf{SST}_{2n}(\lambda)}{\mathsf{P}^{}(S)=S^{H}_{\mu}} \\
			&= \sharp
			\left\{ T \in \mathsf{SST}_{2n}^{\text{$\mathfrak{k}$-hw}}(\lambda)
			\mathrel{}\middle|\mathrel{}\mathsf{wt}_{\mathfrak{k}}(T) = \mu \right\}, 
		\end{aligned}
	\end{equation}
as desired. This proves the corollary. 
\end{proof}
Also, the following is well-known.
\begin{prop}[{\cite[Theorem 1.3]{D}}]\label{Dynkin}
	Let $\mathfrak{z}$ be a complex simple Lie algebra,
	and $\iota_1,\iota_2 : \mathfrak{z} \hookrightarrow \mathfrak{sl}_N(\mathbb{C})$ be two embeddings of Lie algebras.
	Denote the vector representation of $\mathfrak{sl}_{N}(\mathbb{C})$ by $\rho : \mathfrak{sl}_{N}(\mathbb{C}) \to \mathrm{End}_{\mathbb{C}}(\mathbb{C}^N)$.
	Then, the following are equivalent:
	\begin{enumerate}
		\item $\rho \circ \iota_1 \cong \rho \circ \iota_2$ as $\mathfrak{z}$-modules; 
		\item for every $\mathfrak{sl}_{N}(\mathbb{C})$-module $\pi$, $\pi \circ \iota_1 \cong \pi \circ \iota_2$ as $\mathfrak{z}$-modules.
	\end{enumerate}
\end{prop}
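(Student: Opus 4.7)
The plan is to argue via formal characters. The implication (2) $\Rightarrow$ (1) is immediate: apply (2) to $\pi = \rho$. For the substantive direction (1) $\Rightarrow$ (2), the key observation is that the character of any finite-dimensional $\mathfrak{sl}_N(\mathbb{C})$-module is a symmetric function in the $N$ weights of the vector representation, so the hypothesis that $\rho \circ \iota_1 \cong \rho \circ \iota_2$ as $\mathfrak{z}$-modules will suffice to determine the characters of all pullbacks $\pi \circ \iota_k$.

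I would proceed as follows. Fix a Cartan subalgebra $H_{\mathfrak{z}} \subset \mathfrak{z}$. First, each $\iota_k(h)$ for $h \in H_{\mathfrak{z}}$ is a semisimple element of $\mathfrak{sl}_N(\mathbb{C})$: the module $\rho \circ \iota_k$ is finite-dimensional and decomposes into $H_{\mathfrak{z}}$-weight spaces, so $\iota_k(h)$ acts diagonalizably on $\mathbb{C}^N$, hence is semisimple. Since $\iota_k(H_{\mathfrak{z}})$ is a commuting family of semisimple elements, it lies in some Cartan subalgebra $H_k \subset \mathfrak{sl}_N(\mathbb{C})$. Let $\beta_1^{(k)}, \ldots, \beta_N^{(k)}$ be the weights of $\rho$ with respect to $H_k$, and put $\widetilde{\beta}_i^{(k)} := \beta_i^{(k)} \circ \iota_k \in H_{\mathfrak{z}}^*$; these are precisely the weights of $\rho \circ \iota_k$ as a $\mathfrak{z}$-module.

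From (1) the multisets $\{\widetilde{\beta}_i^{(1)}\}_{i=1}^N$ and $\{\widetilde{\beta}_i^{(2)}\}_{i=1}^N$ coincide in $H_{\mathfrak{z}}^*$, so there exists $\tau \in S_N$ with $\widetilde{\beta}_i^{(1)} = \widetilde{\beta}_{\tau(i)}^{(2)}$ for all $i$. For any finite-dimensional $\mathfrak{sl}_N(\mathbb{C})$-module $\pi$, the weight multiset of $\pi$ (expressed in the $\beta_i$-basis) is invariant under the Weyl group $S_N$ of $\mathfrak{sl}_N(\mathbb{C})$, which permutes the $\beta_i$. Consequently, the formal character of $\pi \circ \iota_k$ as an element of $\mathbb{Z}[H_{\mathfrak{z}}^*]$ depends on the tuple $(\widetilde{\beta}_1^{(k)}, \ldots, \widetilde{\beta}_N^{(k)})$ only through its underlying multiset, yielding $\mathrm{ch}(\pi \circ \iota_1) = \mathrm{ch}(\pi \circ \iota_2)$. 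By Weyl's theorem of complete reducibility for the semisimple Lie algebra $\mathfrak{z}$, equality of formal characters forces $\pi \circ \iota_1 \cong \pi \circ \iota_2$ as $\mathfrak{z}$-modules.

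The main technical hurdle is the careful setup of the weight data: verifying the semisimplicity of $\iota_k(h)$ inside $\mathfrak{sl}_N(\mathbb{C})$, enlarging $\iota_k(H_{\mathfrak{z}})$ to a Cartan subalgebra of $\mathfrak{sl}_N(\mathbb{C})$, and formalising the $S_N$-invariance of the weight multiset of an arbitrary $\mathfrak{sl}_N(\mathbb{C})$-module. Once these foundations are in place, the reduction to the vector representation is automatic, since any symmetric function in the $N$ weights of $\rho$ is determined by those weights as a multiset.
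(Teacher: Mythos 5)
Your argument is correct. Note that the paper does not supply a proof of this proposition---it simply cites it as \cite[Theorem~1.3]{D} (the classical result on ``linear equivalence'' of embeddings from Dynkin's work on semisimple subalgebras)---so there is no in-text proof to compare against. Your character-theoretic reduction is a clean, self-contained proof and is, in essence, the standard argument behind Dynkin's theorem. The individual steps all check out: $\iota_k(h)$ is semisimple in $\mathfrak{sl}_N(\mathbb{C})$ because it acts diagonalizably on $\mathbb{C}^N$ and Jordan decompositions are preserved in the defining representation of $\mathfrak{sl}_N$; a commuting family of semisimple elements lies in a maximal toral (i.e.\ Cartan) subalgebra $H_k$; the multiset $\{\widetilde{\beta}_i^{(k)}\}$ is well defined and satisfies $\sum_i \widetilde{\beta}_i^{(k)} = 0$, so the substitution $e^{\beta_i^{(k)}} \mapsto e^{\widetilde{\beta}_i^{(k)}}$ respects the relation defining the weight lattice of $\mathfrak{sl}_N$; the character of any finite-dimensional $\mathfrak{sl}_N$-module is $S_N$-symmetric in the $e^{\beta_i}$, so the pushed-down character depends only on the multiset; and finally, Weyl's theorem on complete reducibility lets you pass from equality of characters to isomorphism of $\mathfrak{z}$-modules.

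One small point worth making explicit in a polished write-up: the claim that $\mathrm{ch}(\pi\circ\iota_k)$ depends only on the \emph{multiset} of the $\widetilde{\beta}_i^{(k)}$ relies on the identification of the character of $\pi$ with a symmetric Laurent polynomial in $e^{\beta_1},\dots,e^{\beta_N}$ \emph{modulo} the relation $e^{\beta_1}\cdots e^{\beta_N}=1$, together with the observation that your substitution respects this relation. You note the latter implicitly; stating it outright would close the argument completely.
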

By applying Proposition \ref{Dynkin} to
$N=2n$,
$\mathfrak{z} = \mathfrak{sp}_{2n}(\mathbb{C})$,
$\mathrm{Image}(\iota_1) = \widehat{\mathfrak{g}}$,
$\mathrm{Image}(\iota_2) = \mathfrak{k}$,
we obtain the following equality for all $\lambda \in P^+_{\geq 0}$, $\mu \in \widehat{P}^+ \cong \widetilde{P}^+$: 
\begin{equation}
 \left[\mathsf{Res}^{\mathfrak{g}}_{\widehat{\mathfrak{g}}}\,L(\lambda) : \widehat{L}(\mu)\right]
	  =
	  \left[\mathsf{Res}^{\mathfrak{g}}_{\mathfrak{k}}\,L(\lambda) : \widetilde{L}(\mu) \right]. \label{5.17}
\end{equation}
By (\ref{3.12}), Theorem \ref{thm5.1}(4), (\ref{5.16}), and (\ref{5.17}),
in order to prove the Naito--Sagaki conjecture, 
it suffices to prove the following.
\begin{thm}\label{KeyProposition}
	For all $\lambda \in \mathsf{Par}_{\leq2n}$,
	there exist two bijections
	\begin{equation}
		\begin{aligned}
			\mathsf{\Phi} &: \mathsf{SST}_{2n}^{\text{$\widehat{\mathfrak{g}}$-$\mathrm{dom}$}}(\lambda) \stackrel{\sim}{\longrightarrow} \mathsf{SST}_{2n}^{\text{$\mathfrak{k}$-$\mathrm{hw}$}}(\lambda), &
			\mathsf{\Psi} &: \mathsf{SST}_{2n}^{\text{$\widehat{\mathfrak{g}}$-$\mathrm{dom}$}}(\lambda) \stackrel{\sim}{\longrightarrow} \mathsf{SST}_{2n}^{\text{$\mathfrak{k}$-$\mathrm{lw}$}}(\lambda)
		\end{aligned}
	\end{equation}
	such that for each $T \in \mathsf{SST}_{2n}^{\text{$\widehat{\mathfrak{g}}$-$\mathrm{dom}$}}(\lambda)$, the following equalities hold: 
	\begin{equation}
		\begin{aligned}
			& \mathsf{wt}_{\widehat{\mathfrak{g}}}(T) = \mathsf{wt}_{\mathfrak{k}}\left(\mathsf{\Phi}(T)\right), &
			& \mathsf{wt}_{\widehat{\mathfrak{g}}}(T) = \mathsf{wt}_{\mathfrak{k}}\left(\mathsf{\Psi}(T)\right). \label{5.19}
		\end{aligned}
	\end{equation}
\end{thm}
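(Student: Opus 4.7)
The plan is to split the proof of Theorem \ref{KeyProposition} into a base case $n=2$ (Section 6) and an inductive reduction for $n \geq 3$ (Section 7), exploiting the decomposition of $\widehat{\mathfrak{g}}$ (resp.\ $\mathfrak{k}$) as a Lie algebra generated by the $\mathfrak{sp}_4$-type subalgebras $\widehat{\mathfrak{g}}_1, \ldots, \widehat{\mathfrak{g}}_{n-1}$ (resp.\ $\mathfrak{k}_1, \ldots, \mathfrak{k}_{n-1}$). Only $\mathsf{\Phi}$ really needs to be constructed from scratch: once it is in hand, $\mathsf{\Psi}$ is defined by post-composing $\mathsf{\Phi}$ with the Lusztig involution, which carries $\mathsf{SST}_{2n}^{\text{$\mathfrak{k}$-$\mathrm{hw}$}}(\lambda)$ bijectively onto $\mathsf{SST}_{2n}^{\text{$\mathfrak{k}$-$\mathrm{lw}$}}(\lambda)$ while respecting the formula (\ref{5.13a}).

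For the base case, I would turn both conditions into fully explicit combinatorial constraints on the columns of $T \in \mathsf{SST}_4(\lambda)$. On the $\widehat{\mathfrak{g}}$-dominance side, this amounts to a list of inequalities extracted from the trajectory of $\widehat{\pi}_T$ through the fundamental chamber $\widehat{C}$, written in terms of the inverse column word. On the $\mathfrak{k}$-highest-weight side, it amounts to iterating the successor operation $\mathsf{suc}$ until it stabilizes (Proposition \ref{2024}) and requiring the stable shape, read off via the Pieri isomorphism (Proposition \ref{Pieri}), to match Figure \ref{k-hw}. Matching the two descriptions column by column yields a candidate map $\mathsf{\Phi}$, which I would realize as a composition of a promotion operator with a suitable sequence of Kashiwara operators $\widetilde{e}_i, \widetilde{f}_i$. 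Weight preservation then reduces to a direct content-tracking argument via (\ref{4.14}) and (\ref{5.13a}).

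For the inductive step, I would localize both conditions to the $\mathfrak{sp}_4$-blocks. Namely, $\widehat{\mathfrak{g}}$-dominance of $\pi_T$ ought to be equivalent to the conjunction, over $i=1,\ldots,n-1$, of the $\widehat{\mathfrak{g}}_i$-dominance of $\pi_T$ (since $\widehat{C}$ is cut out by the hyperplanes corresponding to the $\widehat{h}_i$), and an analogous decomposition should hold for the $\mathfrak{k}$-highest-weight condition, once $\mathsf{P}^{\mathrm{AII}}$ is interpreted through the $\mathsf{suc}$-operations restricted to successive four-letter windows. For each $i$, the base case produces a local bijection $\mathsf{\Phi}^{(i)}$ acting on the portion of $T$ involving the letters $\{2i-1,2i,2i+1,2i+2\}$; the global $\mathsf{\Phi}$ would then be defined as the ordered composition $\mathsf{\Phi}^{(n-1)} \circ \cdots \circ \mathsf{\Phi}^{(1)}$.

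The main obstacle will be to verify that these local bijections truly compose into a global one, i.e., that after applying $\mathsf{\Phi}^{(i)}$ the tableau is still $\widehat{\mathfrak{g}}_j$-dominant for all $j > i$, that it becomes $\mathfrak{k}_j$-highest for all $j \leq i$, and that the shape of $\mathsf{P}^{\mathrm{AII}}(\mathsf{\Phi}(T))$ is exactly as prescribed by Figure \ref{k-hw}. This compatibility is fundamentally a commutation statement between disjoint-block promotion moves and the Kashiwara operators associated with the $\widehat{\mathfrak{g}}_j$ and $\mathfrak{k}_j$. I expect the cleanest route is to rewrite the successor map $\mathsf{suc}$ as an explicit sequence of single-block Kashiwara operators, so that its commutation with the promotion-based local bijections reduces to the braid-like relations among Kashiwara operators attached to non-adjacent simple roots; the non-local nature of the $\widehat{\mathfrak{g}}$-dominance condition (which involves the entire path $\widehat{\pi}_T$, not merely its endpoint) is what makes this step delicate.
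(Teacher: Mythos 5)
Your overall architecture — a fully explicit $n=2$ analysis followed by a reduction of the general case to $\mathfrak{sp}_4$-blocks via promotion operators — is the right shape, and it matches the paper's division into Sections 6 and 7. However, there are two genuine gaps that prevent the argument from closing as written.

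First, the claim that $\mathsf{\Psi}$ can be obtained from $\mathsf{\Phi}$ by post-composing with the Lusztig (Sch\"utzenberger) involution $\sigma$ is incorrect, because $\sigma$ does not preserve $\mathsf{wt}_{\mathfrak{k}}$. Indeed, $\sigma$ satisfies $\sigma(S)[j]=S[\overline{j}]$, so by (\ref{5.13a}) its effect on $\mathsf{wt}_{\mathfrak{k}}$ is to reverse the order of the components $(S[a_k]-S[b_k])_k$ and, when $k$ and $n-k+1$ have opposite parity, also flip signs. Thus $\mathsf{wt}_{\mathfrak{k}}(\sigma\circ\mathsf{\Phi}(T))\neq\mathsf{wt}_{\mathfrak{k}}(\mathsf{\Phi}(T))=\mathsf{wt}_{\widehat{\mathfrak{g}}}(T)$ in general, so the second equality in (\ref{5.19}) fails. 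Beyond the weight mismatch, this derailment is not merely cosmetic: in the paper, $\mathsf{\Psi}$ is not a bonus but a structurally necessary auxiliary. The inductive proof that $\mathsf{\Phi}^{-1}$ carries $\mathfrak{k}$-highest weight tableaux into $\widehat{\mathfrak{g}}$-dominant ones recognizes the ``interior'' restriction $\mathsf{\Phi}_2$ as the map $\mathsf{\Psi}_1$ for $\mathfrak{gl}_{2(n-1)}$ (with shifted indices), so the induction passes back and forth between the highest-weight and lowest-weight maps; without constructing $\mathsf{\Psi}$ independently this alternation is unavailable.

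Second, the picture of ``disjoint-block promotion moves'' does not describe what actually has to happen. The paper's operators $\mathsf{\Phi}_k$ are built from promotions $\mathsf{pr}_{k,\overline{k}}$ and $\mathsf{pr}_{k+1,\overline{k}}$ on the \emph{nested} windows $[k,\overline{k}]\supset[k+1,\overline{k+1}]\supset\cdots$, not on disjoint four-letter blocks $\{2i-1,\ldots,2i+2\}$. (Already for $n=3$ one has $\mathsf{\Phi}=\mathsf{pr}_{1,6}\circ\mathsf{pr}_{3,5}\circ\mathsf{pr}_{3,4}$.) A naive composition of disjoint-window bijections cannot even be stated in the same way, and the commutation that needs to be verified is not a braid relation among Kashiwara operators on far-apart nodes but a compatibility between promotion on a large window, restriction to a small window, and the $\mathsf{P}^{\mathrm{AII}}$ algorithm (this is the content of the paper's Lemmas \ref{lem1}, \ref{formula}, and \ref{kLGP}). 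In particular the paper never needs to express $\mathsf{suc}$ as a word in Kashiwara operators outside the $n=2$ base case; for $n\geq 3$ it invokes the key identity $\mathsf{P}^{\mathrm{AII}}_{J}\bigl(\mathsf{Res}_J\,\mathsf{P}^{\mathrm{AII}}(T)\bigr)=\mathsf{P}^{\mathrm{AII}}_{J}\bigl(\mathsf{Res}_J(T)\bigr)$, which is proved by a Schur's-lemma argument on $\mathbf{U}^{\imath}_J$-modules rather than by crystal-operator combinatorics. Your plan of proving the commutation ``by hand'' via crystal relations is likely to be far harder than this module-theoretic detour, precisely because of the non-local nature of the $\widehat{\mathfrak{g}}$-dominance condition that you yourself flag.
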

Here, $\mathsf{\Psi}$ is an auxiliary map, which we use in order to prove the bijectivity of $\mathsf{\Phi}$. 
The next two sections
are devoted to the proof of this theorem.
We conclude this subsection
by giving the outline of the proof.
First, we prove the theorem in the case $n=2$
by explicitly determining the three sets, 
$\mathsf{SST}_{2n}^{\text{$\widehat{\mathfrak{g}}$-$\mathrm{dom}$}}(\lambda)$,
$\mathsf{SST}_{2n}^{\text{$\mathfrak{k}$-$\mathrm{hw}$}}(\lambda)$, and
$\mathsf{SST}_{2n}^{\text{$\mathfrak{k}$-$\mathrm{lw}$}}(\lambda)$.
Next, we consider the higher rank case.
Recall that
$\widehat{\mathfrak{g}}$, $\mathfrak{k}$, and $\mathbf{U}^{\imath}$
are generated by subalgebras of rank $2$, each of which is associated with $\mathfrak{sp}_4(\mathbb{C})$.
Also, recall that the representation theory of
Kac--Moody algebras and
the theory of crystal bases are built upon the representation theory of $\mathfrak{sl}_2(\mathbb{C})$. 
Hence, we expect that the higher rank case can be reduced to the case of $\mathfrak{sp}_4(\mathbb{C})$
(i.e., to the case $n=2$).
Based on this idea, 
we decompose the global information for the case $n \geq 3$
into local information for each subalgebra associated with $\mathfrak{sp}_4(\mathbb{C})$, 
and then apply the results in the case $n=2$ to each of the subalgebras.
Finally,
by putting together the local information for the subalgebras associated with $\mathfrak{sp}_4(\mathbb{C})$, we obtain the desired global information for the case $n \geq 3$, which completes the proof of the theorem.

\bigskip

\section{Proof of theorem \ref{KeyProposition} (the case $n=2$)}
In this section,
we consider the case $n=2$.
Let us fix an arbitrary partition $\lambda \in \mathsf{Par}_{\leq4}$.
\subsection{Characterizations of certain highest weight tableaux}
In this subsection,
we characterize $\widehat{\mathfrak{g}}$-dominant tableaux,
$\mathfrak{k}$-highest weight tableaux,
and $\mathfrak{k}$-lowest weight tableaux. 

\begin{lem}\label{6.1}
	For each
	$T \in \mathsf{SST}_{4}(\lambda)$,
	the following two conditions are equivalent:
	\begin{enumerate}
		\item[(1)] $T$ is a $\widehat{\mathfrak{g}}$-dominant tableau; 
		\item[(2)] $T$ is a tableau of the following form, with $x \leq y$.
	\end{enumerate}
\end{lem}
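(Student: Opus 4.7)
The plan is to convert the definition of $\widehat{\mathfrak{g}}$-dominance into a concrete combinatorial condition on the inverse column word $w_c(T) = i_1 i_2 \cdots i_N$. Using $\widehat{h}_1 = h_1 + h_3$ and $\widehat{h}_2 = h_2$, and noting that $\langle \varepsilon_j, h_1 + h_3\rangle$ takes values $(+1, -1, +1, -1)$ while $\langle \varepsilon_j, h_2\rangle$ takes values $(0, +1, -1, 0)$ for $j = 1, 2, 3, 4$, one shows that $T$ is $\widehat{\mathfrak{g}}$-dominant if and only if, letting $a_k, b_k, c_k, d_k$ denote the number of $1$'s, $2$'s, $3$'s, $4$'s among $i_1, \ldots, i_k$, we have
\begin{equation}
    (a_k + c_k) - (b_k + d_k) \geq 0 \quad \text{and} \quad b_k - c_k \geq 0
\end{equation}
for every $1 \leq k \leq N$. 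Equivalently, the running count of $\{1,3\}$ never falls behind that of $\{2,4\}$, and the running count of $2$'s never falls behind that of $3$'s.

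For the direction $(1) \Rightarrow (2)$, I would analyze $T$ column by column, reading from the rightmost column to the leftmost. Each column is strictly increasing with entries in $\{1,2,3,4\}$, so only finitely many column types appear (length 4 gives only $(1,2,3,4)^T$; length 3 gives one of four options; etc.). When a new column is adjoined, its contribution to the running tuple $(a,b,c,d)$ must respect the two inequalities above, and at the same time semistandardness forces row-wise weak increase between adjacent columns. I would first show that columns of length $4$ must all be $(1,2,3,4)^T$, then treat columns of length $3$, $2$, and $1$ successively, using the fact that imbalances in the running $(a+c) - (b+d)$ and $b-c$ counts can only be created on the right of $T$ and must be compensated further left. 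The conjunction of these restrictions collapses the set of admissible tableaux to the claimed two-parameter family indexed by $x \leq y$.

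For the direction $(2) \Rightarrow (1)$, the verification is mechanical: given $T$ in the claimed form, I would compute the running totals $(a_k + c_k) - (b_k + d_k)$ and $b_k - c_k$ at every transition between consecutive entries of $w_c(T)$, and check that both remain nonnegative. The inequality $x \leq y$ is precisely what ensures the second running quantity does not dip below zero at the critical step where the entries $x$ and $y$ are encountered; all other breakpoints yield nonnegativity by inspection.

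The main obstacle is the case analysis in $(1) \Rightarrow (2)$: even though the alphabet $\{1,2,3,4\}$ is small, the interplay between columns of differing lengths and the semistandard constraint produces several subcases, and one must argue that any deviation from the claimed form either violates the $\widehat{h}_1$-dominance or the $\widehat{h}_2$-dominance at some intermediate step. I expect to organize the analysis around a single invariant, namely the running value of $b_k - c_k$, whose behavior as columns are read off is highly constrained by the shape $\lambda$; tracking this invariant should make the reduction to the two-parameter family essentially forced.
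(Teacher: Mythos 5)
Your running-count reformulation (after $k$ letters of $w_c(T)$, require $(a_k+c_k)-(b_k+d_k)\geq 0$ and $b_k - c_k \geq 0$, with these quantities equal to $\langle\widehat{\pi}_T(t_k),\widehat{h}_1\rangle$ and $\langle\widehat{\pi}_T(t_k),\widehat{h}_2\rangle$) is a faithful encoding of ``the path stays in $\widehat{C}$'' because $\langle\widehat{\pi}_T(t),\widehat{h}_i\rangle$ is piecewise linear so it suffices to check at breakpoints, and the column-by-column elimination you sketch is exactly the paper's argument (the paper forces the first row to be all $1$'s, then constrains the rightmost entries of rows $2$ and $3$ from the right, arriving at the same two-parameter family). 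This is the same approach as the paper; what remains is to actually carry out the finite case analysis and trace the inequality $x\leq y$ to the $b_k-c_k\geq 0$ constraint, which the paper does in detail.
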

\begin{figure}[ht]
	\begin{tikzpicture}[x=5mm,y=5mm]
		\draw (0,0)--(3,0)--(3,1)--(9,1)--(9,2)--(15,2)--(15,3)--(20,3)--(20,4)--(0,4)--cycle;
		\draw (3,1)--(3,4);
		\draw (9,2)--(9,4);
		\draw (15,3)--(15,4);
		\draw (0,3)--(15,3);
		\draw (0,2)--(9,2);
		\draw (0,1)--(3,1);
		\draw (6,1)--(6,4);
		\draw (12,2)--(12,4);
		
		\draw[line width=1pt,dotted] (1.2,0.5)--(1.8,0.5);
		\draw[line width=1pt,dotted] (1.2,1.5)--(1.8,1.5);
		\draw[line width=1pt,dotted] (4.2,1.5)--(4.8,1.5);
		\draw[line width=1pt,dotted] (7.2,1.5)--(7.8,1.5);
		\draw[line width=1pt,dotted] (1.2,2.5)--(1.8,2.5);
		\draw[line width=1pt,dotted] (4.2,2.5)--(4.8,2.5);
		\draw[line width=1pt,dotted] (7.2,2.5)--(7.8,2.5);
		\draw[line width=1pt,dotted] (10.2,2.5)--(10.8,2.5);
		\draw[line width=1pt,dotted] (13.2,2.5)--(13.8,2.5);
		\draw[line width=1pt,dotted] (1.2,3.5)--(1.8,3.5);
		\draw[line width=1pt,dotted] (4.2,3.5)--(4.8,3.5);
		\draw[line width=1pt,dotted] (7.2,3.5)--(7.8,3.5);
		\draw[line width=1pt,dotted] (10.2,3.5)--(10.8,3.5);
		\draw[line width=1pt,dotted] (13.2,3.5)--(13.8,3.5);
		\draw[line width=1pt,dotted] (16.2,3.5)--(16.8,3.5);
		\draw[line width=1pt,dotted] (18.2,3.5)--(18.8,3.5);
		\node (1,4) at (0.5,0.5) {4};
		\node (3,4) at (2.5,0.5) {4};
		\node (1,3) at (0.5,1.5) {3};
		\node (3,3) at (2.5,1.5) {3};
		\node (4,3) at (3.5,1.5) {3};
		\node (6,3) at (5.5,1.5) {3};
		\node (7,3) at (6.5,1.5) {4};
		\node (9,3) at (8.5,1.5) {4};
		\node (1,2) at (0.5,2.5) {2};
		\node (3,2) at (2.5,2.5) {2};
		\node (4,2) at (3.5,2.5) {2};
		\node (6,2) at (5.5,2.5) {2};
		\node (7,2) at (6.5,2.5) {2};
		\node (9,2) at (8.5,2.5) {2};
		\node (10,2) at (9.5,2.5) {2};
		\node (12,2) at (11.5,2.5) {2};
		\node (13,2) at (12.5,2.5) {4};
		\node (15,2) at (14.5,2.5) {4};
		\node (1,1) at (0.5,3.5) {1};
		\node (3,1) at (2.5,3.5) {1};
		\node (4,1) at (3.5,3.5) {1};
		\node (6,1) at (5.5,3.5) {1};
		\node (7,1) at (6.5,3.5) {1};
		\node (9,1) at (8.5,3.5) {1};
		\node (10,1) at (9.5,3.5) {1};
		\node (12,1) at (11.5,3.5) {1};
		\node (13,1) at (12.5,3.5) {1};
		\node (15,1) at (14.5,3.5) {1};
		\node (16,1) at (15.5,3.5) {1};
		\node (20,1) at (19.5,3.5) {1};
		
		\draw decorate[decoration=brace]{ (9,0.7)--(6,0.7) };
		\draw decorate[decoration=brace]{ (20,2.7)--(15.2,2.7) };
		
		\node (long) at (17.6,2) {$y$};
		\node (short) at (7.5,0.1) {$x$};
	\end{tikzpicture}
\caption{A $\widehat{\mathfrak{g}}$-dominant tableau}\label{10000}
\end{figure}

\begin{proof}
	We prove the implication (1) $\implies$ (2). 
	Assume that $T$ is a
	$\widehat{\mathfrak{g}}$-dominant tableau.
	By the comment following {\cite[Definition 2]{ST}},
	the rightmost entry of the first row of the tableau $T$ is $1$, and 
	hence all entries in the first row of $T$ are equal to $1$.
	From this, it also follows that no entry in the second row of $T$ is $3$.
	Therefore, 
	$T$ has the same shape as the one in Figure \ref{vwxyz} below.
	Also, if we replace all entries $3$ and $4$ in $T$ with $7$ and $8$, respectively,
	then $T$ becomes a $\widehat{\mathfrak{gl}}_8(\mathbb{C})$-dominant tableau.
	Hence, applying {\cite[Proposition 56]{ST}} with $i=1$ and $l = 3$,
	we deduce that $x \leq y$.
	\begin{figure}[th]
	\begin{center}
		\begin{tikzpicture}[x=5mm,y=5mm]
			\draw (0,0)--(3,0)--(3,1)--(9,1)--(9,2)--(15,2)--(15,3)--(20,3)--(20,4)--(0,4)--cycle;
			\draw (3,1)--(3,4);
			\draw (9,2)--(9,4);
			\draw (15,3)--(15,4);
			\draw (0,3)--(15,3);
			\draw (0,2)--(9,2);
			\draw (0,1)--(3,1);
			\draw (6,1)--(6,4);
			\draw (12,2)--(12,4);
			
			\draw[line width=1pt,dotted] (1.2,0.5)--(1.8,0.5);
			\draw[line width=1pt,dotted] (1.2,1.5)--(1.8,1.5);
			\draw[line width=1pt,dotted] (4.2,1.5)--(4.8,1.5);
			\draw[line width=1pt,dotted] (7.2,1.5)--(7.8,1.5);
			\draw[line width=1pt,dotted] (1.2,2.5)--(1.8,2.5);
			\draw[line width=1pt,dotted] (4.2,2.5)--(4.8,2.5);
			\draw[line width=1pt,dotted] (7.2,2.5)--(7.8,2.5);
			\draw[line width=1pt,dotted] (10.2,2.5)--(10.8,2.5);
			\draw[line width=1pt,dotted] (13.2,2.5)--(13.8,2.5);
			\draw[line width=1pt,dotted] (1.2,3.5)--(1.8,3.5);
			\draw[line width=1pt,dotted] (4.2,3.5)--(4.8,3.5);
			\draw[line width=1pt,dotted] (7.2,3.5)--(7.8,3.5);
			\draw[line width=1pt,dotted] (10.2,3.5)--(10.8,3.5);
			\draw[line width=1pt,dotted] (13.2,3.5)--(13.8,3.5);
			\draw[line width=1pt,dotted] (16.2,3.5)--(16.8,3.5);
			\draw[line width=1pt,dotted] (18.2,3.5)--(18.8,3.5);
			\node (1,4) at (0.5,0.5) {4};
			\node (3,4) at (2.5,0.5) {4};
			\node (1,3) at (0.5,1.5) {3};
			\node (3,3) at (2.5,1.5) {3};
			\node (4,3) at (3.5,1.5) {3};
			\node (6,3) at (5.5,1.5) {3};
			\node (7,3) at (6.5,1.5) {4};
			\node (9,3) at (8.5,1.5) {4};
			\node (1,2) at (0.5,2.5) {2};
			\node (3,2) at (2.5,2.5) {2};
			\node (4,2) at (3.5,2.5) {2};
			\node (6,2) at (5.5,2.5) {2};
			\node (7,2) at (6.5,2.5) {2};
			\node (9,2) at (8.5,2.5) {2};
			\node (10,2) at (9.5,2.5) {2};
			\node (12,2) at (11.5,2.5) {2};
			\node (13,2) at (12.5,2.5) {4};
			\node (15,2) at (14.5,2.5) {4};
			\node (1,1) at (0.5,3.5) {1};
			\node (3,1) at (2.5,3.5) {1};
			\node (4,1) at (3.5,3.5) {1};
			\node (6,1) at (5.5,3.5) {1};
			\node (7,1) at (6.5,3.5) {1};
			\node (9,1) at (8.5,3.5) {1};
			\node (10,1) at (9.5,3.5) {1};
			\node (12,1) at (11.5,3.5) {1};
			\node (13,1) at (12.5,3.5) {1};
			\node (15,1) at (14.5,3.5) {1};
			\node (16,1) at (15.5,3.5) {1};
			\node (20,1) at (19.5,3.5) {1};
		\end{tikzpicture}
	\end{center}
	\caption{}\label{vwxyz}
  \end{figure}

	We prove the reverse implication (2) $\implies$(1).
	Assume that $T$ is the tableau shown in Figure \ref{10000}.
	It suffices to show that if we replace all entries $3$ and $4$
	in the tableau with $7$ and $8$, respectively, then $T$ becomes a
	$\widehat{\mathfrak{gl}}_8(\mathbb{C})$-dominant tableau.
	Since $T$ has the shape depicted in Figure \ref{10000}, 
	it has the \textit{cancellation property} in the sense of {\cite[Definition 50]{ST}}, 
	and satisfies the assumption of {\cite[Lemma 49]{ST}}. 
	Also, using $x \leq y$,
	we can verify that the inequalities in 
	{\cite[Proposition 56]{ST}} hold; in particular, this is used to verify the inequalities for $l = 3$.
	Therefore, by {\cite[Proposition 56]{ST}}, 
	the tableau obtained from $T$ by the replacement above of entries is
	$\widehat{\mathfrak{gl}}_8(\mathbb{C})$-dominant.
	This proves the lemma. 
\end{proof}

\begin{lem}\label{characterization of k-highest}
	For 
	$S \in \mathsf{SST}_{4}(\lambda)$,
	the following three conditions are equivalent:
	\begin{enumerate}
		\item[(1)] $S$ is a $\mathfrak{k}$-highest weight tableau.
		\item[(2)] $S$ satisfies the following conditions:
		      \begin{multline}
			      \left\{
			      \begin{aligned}
				       & \widetilde{f}_1S = \widetilde{e}_2S = \widetilde{e}_3S = 0,                            \\
				       & \varphi_2(\widetilde{e}_1^{\mathrm{max}}\widetilde{f}_3^{\mathrm{max}}S) \leq \varphi_2(S).
			      \end{aligned}
						\right.\label{400}
		      \end{multline}
		\item[(3)] $S$ is a tableau of one of the following forms, with $x \leq w,\,\,y \leq z$.
	\end{enumerate}
\end{lem}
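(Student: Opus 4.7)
The plan is to establish the two equivalences $(1) \Leftrightarrow (3)$ and $(2) \Leftrightarrow (3)$, using the explicit parametric form (3) as a bridge between the algorithmic condition in (1) and the crystal-operator condition in (2).

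For $(1) \Leftrightarrow (3)$: The direction $(3) \Rightarrow (1)$ is a direct computation. Iterating $\mathsf{suc}$ on a tableau of the form in (3), only the first column is affected by $\mathsf{red}$ before being re-inserted into the rest via Schensted's algorithm, so by the recursive rule for $\mathsf{rem}$ the effect of each step can be tracked in terms of how it modifies the parameters $x, y, z, w$. After finitely many iterations the shape stabilizes at a tableau whose rows consist entirely of the entries $a_1 = 2$ and $a_2 = 3$, which is precisely the form in Figure \ref{k-hw}. For the converse $(1) \Rightarrow (3)$, I would proceed by reverse induction on the number of $\mathsf{suc}$-steps needed to reach $\mathsf{P}^{\mathrm{AII}}(S)$, showing that the pre-image of a tableau of the target form at each step must itself fit into the parameterization in (3).

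For $(2) \Leftrightarrow (3)$: This equivalence is proved by direct calculation in the word model of the $\mathfrak{g}$-crystal (Examples 4.6 and 4.7). Reading off the row word of a tableau $S$ of the form in (3) and applying the signature rule, each of the vanishing conditions $\widetilde{f}_1 S = 0$, $\widetilde{e}_2 S = 0$, $\widetilde{e}_3 S = 0$ reduces to the observation that every occurrence of 1, of 3, and of 4, respectively, is bracketed by a paired neighbour in the corresponding signature — something immediate from the layout in (3). For the remaining inequality $\varphi_2(\widetilde{e}_1^{\mathrm{max}} \widetilde{f}_3^{\mathrm{max}} S) \leq \varphi_2(S)$, I would compute both sides from the parameters, using that $\widetilde{e}_1^{\mathrm{max}}$ recolors uncancelled $2$'s into $1$'s and $\widetilde{f}_3^{\mathrm{max}}$ recolors uncancelled $3$'s into $4$'s; the inequality then translates into one of the conditions $x \leq w$ or $y \leq z$. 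For the converse, the three vanishing conditions alone restrict $S$ to a narrow family of tableaux, and the $\varphi_2$-inequality then selects exactly those fitting (3).

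The main obstacle lies in the last condition of (\ref{400}), $\varphi_2(\widetilde{e}_1^{\mathrm{max}} \widetilde{f}_3^{\mathrm{max}} S) \leq \varphi_2(S)$, which is a non-local constraint mixing three different crystal operators. This is the crystal-theoretic counterpart of the iterated commutator $A_2 = -q^{-1}[F_1, [F_3, B_2]_q]_q$ defining the $\imath$-generator $X_1$ for $n = 2$, so its combinatorial meaning is inherently more intricate than a single-operator vanishing. Verifying, in both directions, that this inequality corresponds precisely to the comparison between two of the parameters of (3) will require the most careful bookkeeping in the argument.
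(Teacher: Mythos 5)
Your route is genuinely different from the paper's. The paper proves the cyclic chain $(1)\Rightarrow(2)\Rightarrow(3)\Rightarrow(1)$, and the real work is in $(1)\Rightarrow(2)$: an induction on $|\lambda|$ that, using the six non-symplectic possibilities for the first column $S_1$ (Table~\ref{2025}), transfers condition $(2)$ from $\mathsf{suc}(S)=\mathsf{red}(S_1)*S_{\geq 2}$ back to $S=S_1*S_{\geq 2}$ via the tensor-product rule (\ref{4.13}) together with Proposition~\ref{Pieri}. Targeting $(2)$ rather than $(3)$ in this step is a deliberate economy: the statistics $\varphi_i$ and $\varepsilon_i$ behave transparently under $\otimes$ and $*$, so the crystal formalism absorbs the bookkeeping. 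You propose instead the two equivalences $(1)\Leftrightarrow(3)$ and $(2)\Leftrightarrow(3)$, with the hard step $(1)\Rightarrow(3)$ carried out by reverse induction on the number of $\mathsf{suc}$-iterations. That induction is viable but more delicate than your phrasing suggests: $\mathsf{suc}$ is not injective, so ``the pre-image'' has to be read as the claim that \emph{every} $S$ with $\mathsf{suc}(S)$ of form $(3)$ is itself of form $(3)$, and verifying this requires an explicit case analysis, over the six first columns, of how Schensted insertion interacts with the parametrization in $(3)$ --- in effect redoing by hand the bookkeeping that the tensor-product rule does for the paper. Your $(2)\Leftrightarrow(3)$ equivalence essentially reproduces the paper's $(2)\Rightarrow(3)$ computation together with its routine converse. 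Overall your plan should succeed, but its hard direction is more labor-intensive than the paper's; passing through the crystal condition $(2)$ first is the more economical choice, and the paper exploits exactly that.
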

\begin{center}
	\begin{tikzpicture}[x=5mm,y=5mm]
		\draw (0,0)--(3,0)--(3,1)--(9,1)--(9,2)--(16,2)--(16,3)--(20,3)--(20,4)--(0,4)--cycle;
		\draw (3,1)--(3,4);
		\draw (9,2)--(9,4);
		\draw (16,3)--(16,4);
		\draw (0,3)--(16,3);
		\draw (0,2)--(9,2);
		\draw (0,1)--(3,1);
		\draw (6,1)--(6,4);
		\draw (12,2)--(12,4);
		
		\draw[line width=1pt,dotted] (1.2,0.5)--(1.8,0.5);
		\draw[line width=1pt,dotted] (1.2,1.5)--(1.8,1.5);
		\draw[line width=1pt,dotted] (1.2,2.5)--(1.8,2.5);
		\draw[line width=1pt,dotted] (1.2,3.5)--(1.8,3.5);
		\draw[line width=1pt,dotted] (4.2,1.5)--(4.8,1.5);
		\draw[line width=1pt,dotted] (4.2,2.5)--(4.8,2.5);
		\draw[line width=1pt,dotted] (4.2,3.5)--(4.8,3.5);
		\draw[line width=1pt,dotted] (7.2,1.5)--(7.8,1.5);
		\draw[line width=1pt,dotted] (7.2,2.5)--(7.8,2.5);
		\draw[line width=1pt,dotted] (7.2,3.5)--(7.8,3.5);
		\draw[line width=1pt,dotted] (10.2,2.5)--(10.8,2.5);
		\draw[line width=1pt,dotted] (10.2,3.5)--(10.8,3.5);
		\draw[line width=1pt,dotted] (13.2,2.5)--(13.8,2.5);
		\draw[line width=1pt,dotted] (14.2,2.5)--(14.8,2.5);
		\draw[line width=1pt,dotted] (13.2,3.5)--(13.8,3.5);
		\draw[line width=1pt,dotted] (14.3,3.5)--(14.8,3.5);
		\draw[line width=1pt,dotted] (17.2,3.5)--(17.8,3.5);
		\draw[line width=1pt,dotted] (18.2,3.5)--(18.8,3.5);
		
		\node (1,4) at (0.5,0.5) {4};
		\node (3,4) at (2.5,0.5) {4};
		\node (1,3) at (0.5,1.5) {3};
		\node (3,3) at (2.5,1.5) {3};
		\node (4,3) at (3.5,1.5) {3};
		\node (6,3) at (5.5,1.5) {3};
		\node (7,3) at (6.5,1.5) {4};
		\node (9,3) at (8.5,1.5) {4};
		\node (1,2) at (0.5,2.5) {2};
		\node (3,2) at (2.5,2.5) {2};
		\node (4,2) at (3.5,2.5) {2};
		\node (6,2) at (5.5,2.5) {2};
		\node (7,2) at (6.5,2.5) {2};
		\node (9,2) at (8.5,2.5) {2};
		\node (10,2) at (9.5,2.5) {2};
		\node (12,2) at (11.5,2.5) {2};
		\node (13,2) at (12.5,2.5) {3};
		\node (16,2) at (15.5,2.5) {3};
		\node (1,1) at (0.5,3.5) {1};
		\node (3,1) at (2.5,3.5) {1};
		\node (4,1) at (3.5,3.5) {1};
		\node (6,1) at (5.5,3.5) {1};
		\node (7,1) at (6.5,3.5) {1};
		\node (9,1) at (8.5,3.5) {1};
		\node (10,1) at (9.5,3.5) {1};
		\node (12,1) at (11.5,3.5) {1};
		\node (13,1) at (12.5,3.5) {2};
		\node (16,1) at (15.5,3.5) {2};
		\node (17,1) at (16.5,3.5) {2};
		\node (20,1) at (19.5,3.5) {2};
		
		\draw decorate[decoration=brace]{ (5.9,0.7)--(3.2,0.7) };
		\draw decorate[decoration=brace]{ (9,0.7)--(6.1,0.7) };
		\draw decorate[decoration=brace]{ (20,2.7)--(16.2,2.7) };
		\draw decorate[decoration=brace]{ (16,1.7)--(12.2,1.7) };
		
		\node (long) at (18.2,2) {$w$};
		\node (long') at (14.1,0.8) {$z$};
		\node (short) at (4.5,-0.1) {$x$};
		\node (short') at (7.5,-0.1) {$y$};
	\end{tikzpicture}
\end{center}
\begin{figure}[th]
\begin{center}
	\begin{tikzpicture}[x=5mm,y=5mm]
		\draw (0,0)--(3,0)--(3,1)--(12,1)--(12,2)--(16,2)--(16,3)--(20,3)--(20,4)--(0,4)--cycle;
		\draw (3,1)--(3,4);
		\draw (9,1)--(9,4);
		\draw (16,3)--(16,4);
		\draw (0,3)--(16,3);
		\draw (0,2)--(12,2);
		\draw (0,1)--(3,1);
		\draw (6,1)--(6,4);
		\draw (12,2)--(12,4);
		
		\draw[line width=1pt,dotted] (1.2,0.5)--(1.8,0.5);
		\draw[line width=1pt,dotted] (1.2,1.5)--(1.8,1.5);
		\draw[line width=1pt,dotted] (1.2,2.5)--(1.8,2.5);
		\draw[line width=1pt,dotted] (1.2,3.5)--(1.8,3.5);
		\draw[line width=1pt,dotted] (4.2,1.5)--(4.8,1.5);
		\draw[line width=1pt,dotted] (4.2,2.5)--(4.8,2.5);
		\draw[line width=1pt,dotted] (4.2,3.5)--(4.8,3.5);
		\draw[line width=1pt,dotted] (7.2,1.5)--(7.8,1.5);
		\draw[line width=1pt,dotted] (7.2,2.5)--(7.8,2.5);
		\draw[line width=1pt,dotted] (7.2,3.5)--(7.8,3.5);
		\draw[line width=1pt,dotted] (10.2,1.5)--(10.8,1.5);
		\draw[line width=1pt,dotted] (10.2,2.5)--(10.8,2.5);
		\draw[line width=1pt,dotted] (10.2,3.5)--(10.8,3.5);
		\draw[line width=1pt,dotted] (13.2,2.5)--(13.8,2.5);
		\draw[line width=1pt,dotted] (14.2,2.5)--(14.8,2.5);
		\draw[line width=1pt,dotted] (13.2,3.5)--(13.8,3.5);
		\draw[line width=1pt,dotted] (14.3,3.5)--(14.8,3.5);
		\draw[line width=1pt,dotted] (17.2,3.5)--(17.8,3.5);
		\draw[line width=1pt,dotted] (18.2,3.5)--(18.8,3.5);
		
		\node (1,4) at (0.5,0.5) {4};
		\node (3,4) at (2.5,0.5) {4};
		\node (1,3) at (0.5,1.5) {3};
		\node (3,3) at (2.5,1.5) {3};
		\node (4,3) at (3.5,1.5) {3};
		\node (6,3) at (5.5,1.5) {3};
		\node (7,3) at (6.5,1.5) {4};
		\node (9,3) at (8.5,1.5) {4};
		\node (10,3) at (9.5,1.5) {4};
		\node (12,3) at (11.5,1.5) {4};
		\node (1,2) at (0.5,2.5) {2};
		\node (3,2) at (2.5,2.5) {2};
		\node (4,2) at (3.5,2.5) {2};
		\node (6,2) at (5.5,2.5) {2};
		\node (7,2) at (6.5,2.5) {2};
		\node (9,2) at (8.5,2.5) {2};
		\node (10,2) at (9.5,2.5) {3};
		\node (12,2) at (11.5,2.5) {3};
		\node (13,2) at (12.5,2.5) {3};
		\node (16,2) at (15.5,2.5) {3};
		\node (1,1) at (0.5,3.5) {1};
		\node (3,1) at (2.5,3.5) {1};
		\node (4,1) at (3.5,3.5) {1};
		\node (6,1) at (5.5,3.5) {1};
		\node (7,1) at (6.5,3.5) {1};
		\node (9,1) at (8.5,3.5) {1};
		\node (10,1) at (9.5,3.5) {2};
		\node (12,1) at (11.5,3.5) {2};
		\node (13,1) at (12.5,3.5) {2};
		\node (16,1) at (15.5,3.5) {2};
		\node (17,1) at (16.5,3.5) {2};
		\node (20,1) at (19.5,3.5) {2};
		
		\draw decorate[decoration=brace]{ (5.9,0.7)--(3.2,0.7) };
		\draw decorate[decoration=brace]{ (9,0.7)--(6.1,0.7) };
		\draw decorate[decoration=brace]{ (20,2.7)--(16.2,2.7) };
		\draw decorate[decoration=brace]{ (16,1.7)--(12.2,1.7) };
		
		\node (long) at (18.2,2) {$w$};
		\node (long') at (14.1,0.8) {$z$};
		\node (short) at (4.5,-0.1) {$x$};
		\node (short') at (7.5,-0.1) {$y$};
	\end{tikzpicture}
\end{center}
\caption{A $\mathfrak{k}$-highest weight tableau}\label{30000}
\end{figure}

\begin{proof}
	We prove the implication (1) $\implies$ (2). 
	Assume that $S$ is a $\mathfrak{k}$-highest weight tableau.
	We will show that $S$ satisfies condition (\ref{400}); 
	we proceed by induction on $|\lambda|$.
        If $|\lambda|$ is zero, then condition 
	(\ref{400}) is obviously satisfied.

	Let us consider the case $|\lambda|\geq1$.
	If $S$ is a symplectic tableau, then 
	$S$ is the tableau shown in Figure \ref{20} below since 
	$S=\mathsf{P}^{A\mathrm{II}}(S)$ by Proposition \ref{2024},
	and hence $S$ satisfies condition (\ref{400}).
	\begin{figure}[th]
		\begin{center}
			\begin{tikzpicture}[x=5mm,y=5mm]
				\draw (0,0)--(8,0)--(8,1)--(0,1)--cycle;
				\draw (0,1)--(0,2)--(10,2)--(10,1)--(8,1);
				\node at (0.5,0.5) {$3$};
				\node at (0.5,1.5) {$2$};
				\node at (7.5,0.5) {$3$};
				\node at (9.5,1.5) {$2$};
				\draw[dotted, line width=1pt] (1.2,0.5)--(1.8,0.5);
				\draw[dotted, line width=1pt] (6.2,0.5)--(6.8,0.5);
				\draw[dotted, line width=1pt] (1.2,1.5)--(1.8,1.5);
				\draw[dotted, line width=1pt] (8.2,1.5)--(8.8,1.5);
				\node at (-1,1) {$S=$};
			\end{tikzpicture}
		\end{center}
		\caption{}\label{20}
	\end{figure}

	Now,
	assume that $S$ is not symplectic.
	In this case, we have 
	$\mathsf{suc}(S) \subsetneqq S$
	by Proposition \ref{2024}.
	Therefore, we can apply the induction hypothesis to $\mathsf{suc}(S)$
	to deduce the following: 
	\begin{multline}
		\begin{aligned}
			 & \widetilde{f}_1\,\mathsf{suc}(S) = \widetilde{e}_2\,\mathsf{suc}(S) = \widetilde{e}_3\,\mathsf{suc}(S) = 0, &
			 & \varphi_2(\widetilde{e}_1^{\text{max}}\widetilde{f}_3^{\text{max}}\,\mathsf{suc}(S)) \leq \varphi_2(\mathsf{suc}(S)).
		\end{aligned}
	\end{multline}
	Let $S_1$ denote the first column of $S$, 
	and $S_{\geq2}$ the rest of $S$; 
	in this case, we have $S=S_1*S_{\geq2}$.
	Since $S$ is not symplectic by the assumption, 
        we have the six possibilities for $S_1$: cases (i)--(vi) 
        in Table \ref{2025} below. 
	We will show that condition (\ref{400}) is satisfied 
	in each of cases (i)--(vi). 
	\begin{table}[ht]
		\caption{} \label{2025}
	\begin{center}
		\begin{tabular}{l|rrrrrr} \hline
			\,\,\,\,\,No. & (i)                                                                                                                                                                                                                                                                                                                                         & (ii) & (iii) & (iv) & (v) & (vi) \\ \hline\hline
			
			\,\,\,\,\,\begin{tikzpicture}[x=5mm,y=5mm] \node (0,4.3) at (0,4.3) {}; \node (0,-0.3) at (0,-0.3) {}; \node (0,2) at (0,2) {$S_1$}; \end{tikzpicture}
			              & \begin{tikzpicture}[x=5mm,y=5mm] \node (0,4.3) at (0,4.3) {}; \node (0,-0.3) at (0,-0.3) {}; \draw (0,0)--(1,0)--(1,4)--(0,4)--(0,0); \draw (0,1)--(1,1); \draw (0,2)--(1,2); \draw (0,3)--(1,3); \node (1,1) at (0.5,0.5) {4}; \node (1,2) at (0.5,1.5) {3}; \node (1,3) at (0.5,2.5) {2}; \node (1,4) at (0.5,3.5) {1}; \end{tikzpicture}
			              & \begin{tikzpicture}[x=5mm,y=5mm] \node (0,4.3) at (0,4.3) {}; \node (0,-0.3) at (0,-0.3) {}; \draw (0,0.5)--(1,0.5)--(1,3.5)--(0,3.5)--(0,0.5); \draw (0,1.5)--(1,1.5); \draw (0,2.5)--(1,2.5); \node (1,1) at (0.5,1) {3}; \node (1,2) at (0.5,2) {2}; \node (1,3) at (0.5,3) {1}; \end{tikzpicture}
			              & \begin{tikzpicture}[x=5mm,y=5mm] \node (0,4.3) at (0,4.3) {}; \node (0,-0.3) at (0,-0.3) {}; \draw (0,0.5)--(1,0.5)--(1,3.5)--(0,3.5)--(0,0.5); \draw (0,1.5)--(1,1.5); \draw (0,2.5)--(1,2.5); \node (1,1) at (0.5,1) {4}; \node (1,2) at (0.5,2) {2}; \node (1,3) at (0.5,3) {1}; \end{tikzpicture}
			              & \begin{tikzpicture}[x=5mm,y=5mm] \node (0,4.3) at (0,4.3) {}; \node (0,-0.3) at (0,-0.3) {}; \draw (0,0.5)--(1,0.5)--(1,3.5)--(0,3.5)--(0,0.5); \draw (0,1.5)--(1,1.5); \draw (0,2.5)--(1,2.5); \node (1,1) at (0.5,1) {4}; \node (1,2) at (0.5,2) {3}; \node (1,3) at (0.5,3) {1}; \end{tikzpicture}
			              & \begin{tikzpicture}[x=5mm,y=5mm] \node (0,4.3) at (0,4.3) {}; \node (0,-0.3) at (0,-0.3) {}; \draw (0,0.5)--(1,0.5)--(1,3.5)--(0,3.5)--(0,0.5); \draw (0,1.5)--(1,1.5); \draw (0,2.5)--(1,2.5); \node (1,1) at (0.5,1) {4}; \node (1,2) at (0.5,2) {3}; \node (1,3) at (0.5,3) {2}; \end{tikzpicture}
			              & \begin{tikzpicture}[x=5mm,y=5mm] \node (0,4.3) at (0,4.3) {}; \node (0,-0.3) at (0,-0.3) {}; \draw (0,1)--(1,1)--(1,3)--(0,3)--(0,1); \draw (0,2)--(1,2); \node (1,1) at (0.5,1.5) {2}; \node (1,2) at (0.5,2.5) {1}; \end{tikzpicture}                                                                                                                                        \\
			
			\begin{tikzpicture}[x=5mm,y=5mm] \node (0,-0.3) at (0,-0.3) {}; \node (0,0.5) at (0,0.5) {$\mathsf{red}(S_1)$}; \end{tikzpicture}
			              & \begin{tikzpicture}[x=5mm,y=5mm] \node (0,-0.3) at (0,-0.3) {}; \node (1,1) at (0.5,0.5) {$\emptyset$}; \end{tikzpicture}
			              & \begin{tikzpicture}[x=5mm,y=5mm] \node (0,-0.3) at (0,-0.3) {}; \draw (0,0)--(1,0)--(1,1)--(0,1)--(0,0); \node (1,1) at (0.5,0.5) {3}; \end{tikzpicture}
			              & \begin{tikzpicture}[x=5mm,y=5mm] \node (0,-0.3) at (0,-0.3) {}; \draw (0,0)--(1,0)--(1,1)--(0,1)--(0,0); \node (1,1) at (0.5,0.5) {4}; \end{tikzpicture}
			              & \begin{tikzpicture}[x=5mm,y=5mm] \node (0,-0.3) at (0,-0.3) {}; \draw (0,0)--(1,0)--(1,1)--(0,1)--(0,0); \node (1,1) at (0.5,0.5) {1}; \end{tikzpicture}
			              & \begin{tikzpicture}[x=5mm,y=5mm] \node (0,-0.3) at (0,-0.3) {}; \draw (0,0)--(1,0)--(1,1)--(0,1)--(0,0); \node (1,1) at (0.5,0.5) {2}; \end{tikzpicture}
			              & \begin{tikzpicture}[x=5mm,y=5mm] \node (0,-0.3) at (0,-0.3) {}; \node (1,1) at (0.5,0.5) {$\emptyset$}; \end{tikzpicture}                                                                                                                                                                                                                                                    \\ \hline
		\end{tabular}
	\end{center}
  \end{table} \\
	\textbf{Case (i).}
	In this case, we have 
	$\mathsf{suc}(S) = S_{\geq2}$ and 
	$\widetilde{f}_1S_1 = \widetilde{e}_2S_1 = \widetilde{e}_3S_1 = 0$.
	By the tensor product rule (\ref{4.13})
	and Proposition \ref{Pieri}, we see that 
	\begin{equation}
		\left\{
		\begin{aligned}
			& \varphi_1\!\left(S\right) = \varphi_1\!\left(S_1*S_{\geq2}\right) = \varphi_1\!\left(S_1 \otimes S_{\geq2}\right) = 0, \\
			& \varepsilon_2\!\left(S\right) = \varepsilon_2\!\left(S_1*S_{\geq2}\right) = \varepsilon_2\!\left(S_1 \otimes S_{\geq2}\right) = 0, \\
			& \varepsilon_3\!\left(S\right) = \varepsilon_3\!\left(S_1*S_{\geq2}\right) = \varepsilon_3\!\left(S_1 \otimes S_{\geq2}\right) = 0,
		\end{aligned}
		\right.
	\end{equation}
	where we have also used the equalities 
	$\widetilde{f}_1S_{\geq2} = \widetilde{e}_2S_{\geq2} = \widetilde{e}_3S_{\geq2} = 0$ (which follow by the induction hypothesis) and
	$\widetilde{f}_1S_1 = \widetilde{e}_2S_1 = \widetilde{e}_3S_1 = 0$.
	Hence, we have $\widetilde{f}_1 S=\widetilde{e}_2 S =\widetilde{e}_3 S =0$.
	Also, 
	since 
	$\widetilde{e}_1^{\text{max}}\widetilde{f}_3^{\text{max}}\left(S_1 \otimes S_{\geq2}\right) = S_1 \otimes \left(\widetilde{e}_1^{\text{max}}\widetilde{f}_3^{\text{max}} S_{\geq2}\right)$
	by the tensor product rule (\ref{4.13}),
	we deduce that 
	\begin{equation}
		\begin{aligned}
			&\varphi_2\!\left(S\right) = \varphi_2\!\left(S_1*S_{\geq2}\right) = \varphi_2\!\left(S_1\otimes S_{\geq2}\right) = \varphi_2(S_{\geq2}), \\
			&
			\begin{aligned}
				\varphi_2\!\left(\widetilde{e}_1^{\text{max}}\widetilde{f}_3^{\text{max}}S\right)
			&= \varphi_2\!\left(\widetilde{e}_1^{\text{max}}\widetilde{f}_3^{\text{max}}\left(S_1 * S_{\geq2}\right)\right) \\
			&= \varphi_2\!\left(\widetilde{e}_1^{\text{max}}\widetilde{f}_3^{\text{max}}\left(S_1 \otimes S_{\geq2}\right)\right) \\
			&= \varphi_2\!\left(S_1 \otimes \left(\widetilde{e}_1^{\text{max}}\widetilde{f}_3^{\text{max}} S_{\geq2}\right)\right) \\
			&= \varphi_2\!\left(\widetilde{e}_1^{\text{max}}\widetilde{f}_3^{\text{max}} S_{\geq2}\right),
			\end{aligned}
		\end{aligned}
	\end{equation}
	where the last equality follows by the tensor product rule (\ref{4.13}) since $\widetilde{f}_2 S_1 = 0$.
	Therefore, using the induction hypothesis, we obtain the inequality $\varphi_2\!\left(\widetilde{e}_1^{\text{max}}\widetilde{f}_3^{\text{max}}S \right) \leq \varphi_2\!\left(S\right)$.\\
	\textbf{Case (ii).}
	In this case, we have $\mathsf{suc}(S) = \boxed{3} * S_{\geq2}$
	and $\widetilde{f}_1S_1 = \widetilde{e}_2S_1 = \widetilde{e}_3S_1 = 0$.
	Therefore, by the tensor product rule (\ref{4.13}) and
	Proposition \ref{Pieri}, we see that 
	\begin{equation}
		\left\{
		\begin{aligned}
			& \varphi_1\!\left(S\right) = \varphi_1\!\left(S_1*S_{\geq2}\right) = \varphi_1\!\left(S_1\otimes S_{\geq2}\right) = \varphi_1\!\left(\boxed{3}\otimes S_{\geq2}\right) = \varphi_1(\mathsf{suc}(S)) =0, \\
			& \varepsilon_2\!\left(S\right) = \varepsilon_2\!\left(S_1*S_{\geq2}\right) = \varepsilon_2\!\left(S_1\otimes S_{\geq2}\right) \leq \varepsilon_2\!\left(\boxed{3} \otimes S_{\geq2}\right) = \varepsilon_2\left(\mathsf{suc}(S)\right) = 0, \\
			& \varepsilon_3\!\left(S\right) = \varepsilon_3\!\left(S_1*S_{\geq2}\right) = \varepsilon_3\!\left(S_1\otimes S_{\geq2}\right) = \varepsilon_3\!\left(\boxed{3}\otimes S_{\geq2}\right) = \varepsilon_3(\mathsf{suc}(S)) =0,
		\end{aligned}
		\right.
	\end{equation}
	where we have also used
	the equalities
	$\widetilde{f}_1S_1 = \widetilde{e}_2S_1 = \widetilde{e}_3S_1 = 0$,
	$\widetilde{f}_1\,\boxed{3} = \widetilde{e}_3\,\boxed{3} = 0$, 
	$\widetilde{e}_2\,\boxed{3} = \boxed{2}$.
	Hence, we have $\widetilde{f}_1 S =\widetilde{e}_2 S =\widetilde{e}_3 S =0$.
	Also, by Proposition \ref{Pieri}, we have
	\begin{equation}
	 \begin{aligned}
		 &
		 \begin{aligned}
		  \varphi_2\!\left(\widetilde{e}_1^{\text{max}}\widetilde{f}_3^{\text{max}}\left(S_1 \otimes S_{\geq2}\right)\right)
			 &= \varphi_2\!\left(\widetilde{e}_1^{\text{max}}\left(\boxed{4} \otimes \boxed{2} \otimes \boxed{1} \otimes \left(\widetilde{f}_3^{\text{max}}S_{\geq2}\right)\right)\right) \\
			 &= \varphi_2\!\left(\boxed{4} \otimes \boxed{2} \otimes \boxed{1} \otimes \left(\widetilde{e}_1^{\text{max}}\widetilde{f}_3^{\text{max}}S_{\geq2}\right)\right) \\
			 &= \varphi_2\!\left(\widetilde{e}_1^{\text{max}}\widetilde{f}_3^{\text{max}}S_{\geq2}\right) + 1,
		 \end{aligned} \\
		 & 
		 \begin{aligned}
			\varphi_2\!\left(\widetilde{e}_1^{\text{max}}\widetilde{f}_3^{\text{max}}\left(\boxed{3} \otimes S_{\geq2}\right)\right)
		  &= \varphi_2\!\left(\widetilde{e}_1^{\text{max}}\left(\boxed{4} \otimes S_{\geq2}\right)\right) \\
		  &= \varphi_2\!\left(\boxed{4} \otimes \left(\widetilde{e}_1^{\text{max}}\widetilde{f}_3^{\text{max}}S_{\geq2}\right)\right) \\
			&= \varphi_2\!\left(\widetilde{e}_1^{\text{max}}\widetilde{f}_3^{\text{max}}S_{\geq2}\right),
		 \end{aligned}
	 \end{aligned}\label{6.6}
	\end{equation}
	where we have used 
	the tensor product rule (\ref{4.13})
	and the equalities
	$\widetilde{e}_1\left(\boxed{4}\otimes\boxed{2}\otimes\boxed{1}\right)=\widetilde{e}_1\,\boxed{4}=0$
	for both of the second equalities.
	Therefore, from equations (\ref{6.6}),
	we deduce that
	\begin{equation}
		\begin{aligned}
			\varphi_2\!\left(\widetilde{e}_1^{\text{max}}\widetilde{f}_3^{\text{max}}S\right)
			&= \varphi_2\!\left(\widetilde{e}_1^{\text{max}}\widetilde{f}_3^{\text{max}}\left(S_1*S_{\geq2}\right)\right) \\
			&= \varphi_2\!\left(\widetilde{e}_1^{\text{max}}\widetilde{f}_3^{\text{max}}\left(S_1 \otimes S_{\geq2}\right)\right) \\
			&= \varphi_2\!\left(\widetilde{e}_1^{\text{max}}\widetilde{f}_3^{\text{max}}S_{\geq2}\right) + 1 \\
			&= \varphi_2\!\left(\widetilde{e}_1^{\text{max}}\widetilde{f}_3^{\text{max}}\left(\boxed{3} \otimes S_{\geq2}\right)\right) + 1 \\
			&\leq \varphi_2\!\left(\boxed{3} \otimes S_{\geq2}\right) + 1
			= \varphi_2\!\left(S_{\geq2}\right)
			= \varphi_2\!\left(S\right),
		\end{aligned}\label{6.7}
	\end{equation}
	where the inequality in (\ref{6.7}) follows from the equalities $\widetilde{e}_2\,\boxed{3}=\boxed{2}$, $\widetilde{e}_2\,\boxed{4}=0$.
	Therefore, we obtain the inequality $\varphi_2\!\left(\widetilde{e}_1^{\text{max}}\widetilde{f}_3^{\text{max}}S\right) \leq \varphi_2\!\left(S\right)$. The proofs in cases (iii), (iv), (v), and (vi) are similar.\\

  We prove the implication (2) $\implies$ (3).
	Assume that $S$ satisfies condition (\ref{400}).
	Let $s_{ij}$ denote the number of appearances of $\boxed{j}$ in the $i$th row of $S$.
	Since $\widetilde{f}_1S=0$, we have $s_{2,2} = s_{1,1}$.
	Similarly, we have $s_{1,3}=0$, $s_{1,2} \geq s_{2,3}$, and
	$s_{1,2}+s_{2,2} \geq s_{2,3}+s_{3,3}$ since $\widetilde{e}_2S=0$.
	In addition, we have $s_{1,4}=s_{2,4}=0$ and $a_{2,3} \geq s_{3,4}$ since $\widetilde{e}_3S=0$.
	Therefore, $S$ is a tableau of the form (a) or (b)
	in Figure \ref{60000}.\\
	\begin{figure}[th]
		\begin{center}
			\begin{tikzpicture}[x=5mm,y=5mm]
				\draw (0,0)--(3,0)--(3,1)--(9,1)--(9,2)--(16,2)--(16,3)--(20,3)--(20,4)--(0,4)--cycle;
				\draw (3,1)--(3,4);
				\draw (9,2)--(9,4);
				\draw (16,3)--(16,4);
				\draw (0,3)--(16,3);
				\draw (0,2)--(9,2);
				\draw (0,1)--(3,1);
				\draw (6,1)--(6,4);
				\draw (12,2)--(12,4);
				
				\draw[line width=1pt,dotted] (1.2,0.5)--(1.8,0.5);
				\draw[line width=1pt,dotted] (1.2,1.5)--(1.8,1.5);
				\draw[line width=1pt,dotted] (1.2,2.5)--(1.8,2.5);
				\draw[line width=1pt,dotted] (1.2,3.5)--(1.8,3.5);
				\draw[line width=1pt,dotted] (4.2,1.5)--(4.8,1.5);
				\draw[line width=1pt,dotted] (4.2,2.5)--(4.8,2.5);
				\draw[line width=1pt,dotted] (4.2,3.5)--(4.8,3.5);
				\draw[line width=1pt,dotted] (7.2,1.5)--(7.8,1.5);
				\draw[line width=1pt,dotted] (7.2,2.5)--(7.8,2.5);
				\draw[line width=1pt,dotted] (7.2,3.5)--(7.8,3.5);
				\draw[line width=1pt,dotted] (10.2,2.5)--(10.8,2.5);
				\draw[line width=1pt,dotted] (10.2,3.5)--(10.8,3.5);
				\draw[line width=1pt,dotted] (13.2,2.5)--(13.8,2.5);
				\draw[line width=1pt,dotted] (14.2,2.5)--(14.8,2.5);
				\draw[line width=1pt,dotted] (13.2,3.5)--(13.8,3.5);
				\draw[line width=1pt,dotted] (14.3,3.5)--(14.8,3.5);
				\draw[line width=1pt,dotted] (17.2,3.5)--(17.8,3.5);
				\draw[line width=1pt,dotted] (18.2,3.5)--(18.8,3.5);
				
				\node (1,4) at (0.5,0.5) {4};
				\node (3,4) at (2.5,0.5) {4};
				\node (1,3) at (0.5,1.5) {3};
				\node (3,3) at (2.5,1.5) {3};
				\node (4,3) at (3.5,1.5) {3};
				\node (6,3) at (5.5,1.5) {3};
				\node (7,3) at (6.5,1.5) {4};
				\node (9,3) at (8.5,1.5) {4};
				\node (1,2) at (0.5,2.5) {2};
				\node (3,2) at (2.5,2.5) {2};
				\node (4,2) at (3.5,2.5) {2};
				\node (6,2) at (5.5,2.5) {2};
				\node (7,2) at (6.5,2.5) {2};
				\node (9,2) at (8.5,2.5) {2};
				\node (10,2) at (9.5,2.5) {2};
				\node (12,2) at (11.5,2.5) {2};
				\node (13,2) at (12.5,2.5) {3};
				\node (16,2) at (15.5,2.5) {3};
				\node (1,1) at (0.5,3.5) {1};
				\node (3,1) at (2.5,3.5) {1};
				\node (4,1) at (3.5,3.5) {1};
				\node (6,1) at (5.5,3.5) {1};
				\node (7,1) at (6.5,3.5) {1};
				\node (9,1) at (8.5,3.5) {1};
				\node (10,1) at (9.5,3.5) {1};
				\node (12,1) at (11.5,3.5) {1};
				\node (13,1) at (12.5,3.5) {2};
				\node (16,1) at (15.5,3.5) {2};
				\node (17,1) at (16.5,3.5) {2};
				\node (20,1) at (19.5,3.5) {2};
				
				\draw decorate[decoration=brace]{ (9,0.7)--(6.1,0.7) };
				\draw decorate[decoration=brace]{ (16,1.7)--(12.2,1.7) };
				
				\node (long') at (14.1,0.8) {$z$};
				\node (short') at (7.5,-0.1) {$y$};
				\node at (22,2) {$(y \leq z)$};
				\node at (-2,2) {(a)};
			\end{tikzpicture}
			\begin{tikzpicture}[x=5mm,y=5mm]
				\draw (0,0)--(3,0)--(3,1)--(12,1)--(12,2)--(16,2)--(16,3)--(20,3)--(20,4)--(0,4)--cycle;
				\draw (3,1)--(3,4);
				\draw (9,1)--(9,4);
				\draw (16,3)--(16,4);
				\draw (0,3)--(16,3);
				\draw (0,2)--(12,2);
				\draw (0,1)--(3,1);
				\draw (6,1)--(6,4);
				\draw (12,2)--(12,4);
				
				\draw[line width=1pt,dotted] (1.2,0.5)--(1.8,0.5);
				\draw[line width=1pt,dotted] (1.2,1.5)--(1.8,1.5);
				\draw[line width=1pt,dotted] (1.2,2.5)--(1.8,2.5);
				\draw[line width=1pt,dotted] (1.2,3.5)--(1.8,3.5);
				\draw[line width=1pt,dotted] (4.2,1.5)--(4.8,1.5);
				\draw[line width=1pt,dotted] (4.2,2.5)--(4.8,2.5);
				\draw[line width=1pt,dotted] (4.2,3.5)--(4.8,3.5);
				\draw[line width=1pt,dotted] (7.2,1.5)--(7.8,1.5);
				\draw[line width=1pt,dotted] (7.2,2.5)--(7.8,2.5);
				\draw[line width=1pt,dotted] (7.2,3.5)--(7.8,3.5);
				\draw[line width=1pt,dotted] (10.2,1.5)--(10.8,1.5);
				\draw[line width=1pt,dotted] (10.2,2.5)--(10.8,2.5);
				\draw[line width=1pt,dotted] (10.2,3.5)--(10.8,3.5);
				\draw[line width=1pt,dotted] (13.2,2.5)--(13.8,2.5);
				\draw[line width=1pt,dotted] (14.2,2.5)--(14.8,2.5);
				\draw[line width=1pt,dotted] (13.2,3.5)--(13.8,3.5);
				\draw[line width=1pt,dotted] (14.3,3.5)--(14.8,3.5);
				\draw[line width=1pt,dotted] (17.2,3.5)--(17.8,3.5);
				\draw[line width=1pt,dotted] (18.2,3.5)--(18.8,3.5);
				
				\node (1,4) at (0.5,0.5) {4};
				\node (3,4) at (2.5,0.5) {4};
				\node (1,3) at (0.5,1.5) {3};
				\node (3,3) at (2.5,1.5) {3};
				\node (4,3) at (3.5,1.5) {3};
				\node (6,3) at (5.5,1.5) {3};
				\node (7,3) at (6.5,1.5) {4};
				\node (9,3) at (8.5,1.5) {4};
				\node (10,3) at (9.5,1.5) {4};
				\node (12,3) at (11.5,1.5) {4};
				\node (1,2) at (0.5,2.5) {2};
				\node (3,2) at (2.5,2.5) {2};
				\node (4,2) at (3.5,2.5) {2};
				\node (6,2) at (5.5,2.5) {2};
				\node (7,2) at (6.5,2.5) {2};
				\node (9,2) at (8.5,2.5) {2};
				\node (10,2) at (9.5,2.5) {3};
				\node (12,2) at (11.5,2.5) {3};
				\node (13,2) at (12.5,2.5) {3};
				\node (16,2) at (15.5,2.5) {3};
				\node (1,1) at (0.5,3.5) {1};
				\node (3,1) at (2.5,3.5) {1};
				\node (4,1) at (3.5,3.5) {1};
				\node (6,1) at (5.5,3.5) {1};
				\node (7,1) at (6.5,3.5) {1};
				\node (9,1) at (8.5,3.5) {1};
				\node (10,1) at (9.5,3.5) {2};
				\node (12,1) at (11.5,3.5) {2};
				\node (13,1) at (12.5,3.5) {2};
				\node (16,1) at (15.5,3.5) {2};
				\node (17,1) at (16.5,3.5) {2};
				\node (20,1) at (19.5,3.5) {2};
				
				\draw decorate[decoration=brace]{ (9,0.7)--(6.1,0.7) };
				\draw decorate[decoration=brace]{ (16,1.7)--(12.2,1.7) };
				
				\node (long') at (14.1,0.8) {$z$};
				\node (short') at (7.5,-0.1) {$y$};
				\node at (22,2) {$(y \leq z)$};
				\node at (-2,2) {(b)};
			\end{tikzpicture}
		\end{center}
		\caption{}\label{60000}
	\end{figure}

	In case (a),
	we deduce that
	$\widetilde{e}_1^{\text{max}}\widetilde{f}_3^{\text{max}}$
	is the tableau shown in Figure \ref{8765} below. 
	Hence, it follows that 
	$\varphi_2\!\left(\widetilde{e}_1^{\text{max}}\widetilde{f}_3^{\text{max}}S\right) = s_{2,2} - s_{4,4}$.
	Here, we see that
	$\varphi_2(S)=s_{2,2}-s_{3,3}+s_{1,2}-s_{2,3}$.
	Therefore, the inequality 
	$\varphi_2\!\left(\widetilde{e}_1^{\text{max}}\widetilde{f}_3^{\text{max}}S\right) \leq \varphi_2\!\left(S\right)$
	implies that
	$s_{3,3}-s_{4,4} \leq s_{1,2} - s_{2,3}$, 
	which means that $x \leq w$.\\
	\begin{figure}[th]
		\begin{center}
			\begin{tikzpicture}[x=5mm,y=5mm]
				\draw (0,0)--(3,0)--(3,1)--(9,1)--(9,2)--(18,2)--(18,3)--(21,3)--(21,4)--(0,4)--cycle;
				\draw (3,1)--(3,4);
				\draw (9,2)--(9,4);
				\draw (18,3)--(18,4);
				\draw (0,3)--(18,3);
				\draw (0,2)--(9,2);
				\draw (0,1)--(3,1);
				\draw (12,2)--(12,4);
				\draw (15,2)--(15,4);
				
				\draw[line width=1pt,dotted] (1.2,0.5)--(1.8,0.5);
				\draw[line width=1pt,dotted] (1.2,1.5)--(1.8,1.5);
				\draw[line width=1pt,dotted] (1.2,2.5)--(1.8,2.5);
				\draw[line width=1pt,dotted] (1.2,3.5)--(1.8,3.5);
				\draw[line width=1pt,dotted] (4.2,1.5)--(4.8,1.5);
				\draw[line width=1pt,dotted] (4.2,2.5)--(4.8,2.5);
				\draw[line width=1pt,dotted] (4.2,3.5)--(4.8,3.5);
				\draw[line width=1pt,dotted] (7.2,1.5)--(7.8,1.5);
				\draw[line width=1pt,dotted] (7.2,2.5)--(7.8,2.5);
				\draw[line width=1pt,dotted] (7.2,3.5)--(7.8,3.5);
				\draw[line width=1pt,dotted] (10.2,2.5)--(10.8,2.5);
				\draw[line width=1pt,dotted] (10.2,3.5)--(10.8,3.5);
				\draw[line width=1pt,dotted] (13.2,2.5)--(13.8,2.5);
				\draw[line width=1pt,dotted] (13.2,3.5)--(13.8,3.5);
				\draw[line width=1pt,dotted] (16.2,2.5)--(16.8,2.5);
				\draw[line width=1pt,dotted] (16.2,3.5)--(16.8,3.5);
				\draw[line width=1pt,dotted] (19.2,3.5)--(19.8,3.5);
				
				\node (1,4) at (0.5,0.5) {4};
				\node (3,4) at (2.5,0.5) {4};
				\node (1,3) at (0.5,1.5) {3};
				\node (3,3) at (2.5,1.5) {3};
				\node (4,3) at (3.5,1.5) {4};
				\node (9,3) at (8.5,1.5) {4};
				\node (1,2) at (0.5,2.5) {2};
				\node (3,2) at (2.5,2.5) {2};
				\node (4,2) at (3.5,2.5) {2};
				\node (9,2) at (8.5,2.5) {2};
				\node (10,2) at (9.5,2.5) {2};
				\node (12,2) at (11.5,2.5) {2};
				\node (13,2) at (12.5,2.5) {3};
				\node (16,2) at (14.5,2.5) {3};
				\node (1,1) at (0.5,3.5) {1};
				\node (3,1) at (2.5,3.5) {1};
				\node (4,1) at (3.5,3.5) {1};
				\node (9,1) at (8.5,3.5) {1};
				\node (10,1) at (9.5,3.5) {1};
				\node (12,1) at (11.5,3.5) {1};
				\node (13,1) at (12.5,3.5) {1};
				\node (16,1) at (14.5,3.5) {1};
				\node (17,1) at (15.5,3.5) {1};
				\node (20,1) at (17.5,3.5) {1};
				\node (17,1) at (15.5,2.5) {4};
				\node (20,1) at (17.5,2.5) {4};
				\node (17,1) at (18.5,3.5) {1};
				\node (20,1) at (20.5,3.5) {1};
				
				\draw decorate[decoration=brace]{ (9,0.7)--(3.1,0.7) };
				\draw decorate[decoration=brace]{ (12,1.7)--(9.1,1.7) };
				\draw decorate[decoration=brace]{ (3,4.3)--(12,4.3) };
				
				\node (long') at (7.5,5.1) {$s_{2,2}-s_{4,4}$};
				\node (short') at (6.5,-0.1) {$s_{3,3}-s_{4,4}+s_{3,4}$};
				\node at (12.5,0.9) {$s_{2,2}-s_{3,3}-s_{3,4}$};
				\node at (-2.8,2) {$\widetilde{e}_1^{\text{max}}\widetilde{f}_3^{\text{max}}\,S =$};
			\end{tikzpicture}
		\end{center}
		\caption{}\label{8765}
	\end{figure}
The proof in case (b) is similar. \\

	The implication (3) $\implies$ (1) is straightforward. 
This proves the lemma.
\end{proof}

Similarly,
we can show the following lemma.
\begin{lem}\label{characterization of k-lowest}
	For 
	$S \in \mathsf{SST}_{4}(\lambda)$,
	the following three conditions are equivalent:
	\begin{enumerate}
		\item[(1)] $S$ is a $\mathfrak{k}$-lowest weight tableau.
		\item[(2)] $S$ satisfies the following conditions:
		      \begin{multline}
			      \left\{
			      \begin{aligned}
				       & \widetilde{e}_1S = \widetilde{f}_2S = \widetilde{f}_3S = 0,                            \\
				       & \varphi_2(\widetilde{f}_1^{\mathrm{max}}\widetilde{e}_3^{\mathrm{max}}S) \geq \varphi_2(S).
			      \end{aligned}
						\right.\label{401}
		      \end{multline}
		\item[(3)] $S$ is a tableau of one of the following forms, with $0 \leq x-y \leq z$: 
	\end{enumerate}
\end{lem}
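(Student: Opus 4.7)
The plan is to mirror the argument used for Lemma \ref{characterization of k-highest}, systematically dualizing each step by exchanging the roles of raising and lowering Kashiwara operators (and the indices $a_k$ versus $b_k$ in Figures \ref{k-hw} and \ref{k-lw}).

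For the implication (1) $\Longrightarrow$ (2), I would induct on $|\lambda|$, the base case $|\lambda|=0$ being trivial. In the inductive step, if $S$ is symplectic, then $S=\mathsf{P}^{\mathrm{AII}}(S)$ by Proposition \ref{2024}, and the assumption that $\mathsf{P}^{\mathrm{AII}}(S)$ has the shape of Figure \ref{k-lw} pins down the filling completely (entries $b_n=4$ in the bottom row and $b_1=2$ in the top row); a direct computation of $\widetilde{e}_1, \widetilde{f}_2, \widetilde{f}_3$ and of $\varphi_2(\widetilde{f}_1^{\mathrm{max}}\widetilde{e}_3^{\mathrm{max}}S)$ then yields (\ref{401}). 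Otherwise $\mathsf{suc}(S)\subsetneq S$, so the induction hypothesis applies to $\mathsf{suc}(S)$. I would then run through the six possible shapes of the first column $S_1$ analogous to Table \ref{2025}, and in each case use the tensor product rule (\ref{4.13}) together with Proposition \ref{Pieri} to transfer the four conditions from $\mathsf{suc}(S)=\mathsf{red}(S_1)*S_{\geq 2}$ back to $S=S_1*S_{\geq 2}$.

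For (2) $\Longrightarrow$ (3), I would let $s_{ij}$ denote the number of $j$'s in the $i$th row of $S$ and translate the vanishings $\widetilde{e}_1 S = \widetilde{f}_2 S = \widetilde{f}_3 S = 0$ into linear relations and inequalities among the $s_{ij}$ (for instance, $\widetilde{f}_3 S = 0$ forces $s_{3,4} \ge s_{2,3}$, dual to the computation in the previous lemma). This determines that $S$ has one of the asserted shapes. Then $\widetilde{f}_1^{\mathrm{max}}\widetilde{e}_3^{\mathrm{max}} S$ is computed explicitly by a tableau picture dual to Figure \ref{8765}, and comparing $\varphi_2$ of that tableau with $\varphi_2(S)$ gives precisely the inequality $0\le x-y\le z$. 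The reverse implication (3) $\Longrightarrow$ (1) is then a direct check: starting from a tableau of the asserted form, iterating $\mathsf{suc}$ peels off columns in a predictable way and terminates at a symplectic tableau whose shape is of the form shown in Figure \ref{k-lw}.

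The main obstacle will be the case analysis in (1) $\Longrightarrow$ (2). Because $\widetilde{f}_1^{\mathrm{max}}$ and $\widetilde{e}_3^{\mathrm{max}}$ move weights in opposite directions, one must verify that after removing the first column (or its $\mathsf{red}$-reduction) the inequality $\varphi_2(\widetilde{f}_1^{\mathrm{max}}\widetilde{e}_3^{\mathrm{max}}S) \ge \varphi_2(S)$ is preserved rather than reversed; this amounts to the careful bookkeeping of how single-column insertions shift the signature function $\varphi_2$, analogous to formulas (\ref{6.6})--(\ref{6.7}) but with the inequality going the other way.
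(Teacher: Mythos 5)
Your proposal matches the paper's intent exactly: the paper omits the proof of this lemma, saying only ``Similarly, we can show the following lemma,'' meaning the argument is the dualization of the proof of Lemma \ref{characterization of k-highest} obtained by interchanging $\widetilde{e}_i \leftrightarrow \widetilde{f}_i$ and $a_k \leftrightarrow b_k$ throughout, which is precisely your plan. One small numerical slip that does not affect the argument: in the symplectic base case for $n=2$, Figure \ref{k-lw} gives $b_1 = 1$ in the top row (since $b_1 = 2\cdot 1 - \frac{1+(-1)^2}{2} = 1$) and $b_2 = 4$ in the bottom row, not $b_1 = 2$ as you wrote.
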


\begin{figure}[th]
	\begin{center}
		\begin{tikzpicture}[x=5mm,y=5mm]
			\draw (0,0)--(3,0)--(3,1)--(7,1)--(7,2)--(16,2)--(16,3)--(20,3)--(20,4)--(0,4)--cycle;
			\draw (3,1)--(3,4);
			\draw (10,2)--(10,4);
			\draw (16,3)--(16,4);
			\draw (0,3)--(16,3);
			\draw (0,2)--(7,2);
			\draw (0,1)--(3,1);
			\draw (7,2)--(7,4);
			\draw (13,2)--(13,4);
			
			\draw[line width=1pt,dotted] (1.2,0.5)--(1.8,0.5);
			\draw[line width=1pt,dotted] (1.2,1.5)--(1.8,1.5);
			\draw[line width=1pt,dotted] (1.2,2.5)--(1.8,2.5);
			\draw[line width=1pt,dotted] (1.2,3.5)--(1.8,3.5);
			\draw[line width=1pt,dotted] (4.2,1.5)--(4.8,1.5);
			\draw[line width=1pt,dotted] (4.2,2.5)--(4.8,2.5);
			\draw[line width=1pt,dotted] (4.2,3.5)--(4.8,3.5);
			\draw[line width=1pt,dotted] (5.2,1.5)--(5.8,1.5);
			\draw[line width=1pt,dotted] (5.2,2.5)--(5.8,2.5);
			\draw[line width=1pt,dotted] (5.2,3.5)--(5.8,3.5);
			\draw[line width=1pt,dotted] (8.2,2.5)--(8.8,2.5);
			\draw[line width=1pt,dotted] (8.2,3.5)--(8.8,3.5);
			\draw[line width=1pt,dotted] (11.2,2.5)--(11.8,2.5);
			\draw[line width=1pt,dotted] (11.2,3.5)--(11.8,3.5);
			\draw[line width=1pt,dotted] (14.2,2.5)--(14.8,2.5);
			\draw[line width=1pt,dotted] (14.2,3.5)--(14.8,3.5);
			\draw[line width=1pt,dotted] (17.2,3.5)--(17.8,3.5);
			\draw[line width=1pt,dotted] (18.2,3.5)--(18.8,3.5);
			
			\node (1,4) at (0.5,0.5) {4};
			\node (3,4) at (2.5,0.5) {4};
			\node (1,3) at (0.5,1.5) {3};
			\node (3,3) at (2.5,1.5) {3};
			\node (4,3) at (3.5,1.5) {4};
			\node (6,3) at (6.5,1.5) {4};
			\node (1,2) at (0.5,2.5) {2};
			\node (3,2) at (2.5,2.5) {2};
			\node (4,2) at (3.5,2.5) {2};
			\node (6,2) at (6.5,2.5) {2};
			\node (7,2) at (7.5,2.5) {2};
			\node (9,2) at (9.5,2.5) {2};
			\node (10,2) at (10.5,2.5) {3};
			\node (12,2) at (12.5,2.5) {3};
			\node (13,2) at (13.5,2.5) {4};
			\node (16,2) at (15.5,2.5) {4};
			\node (1,1) at (0.5,3.5) {1};
			\node (3,1) at (2.5,3.5) {1};
			\node (4,1) at (3.5,3.5) {1};
			\node (6,1) at (6.5,3.5) {1};
			\node (7,1) at (7.5,3.5) {1};
			\node (9,1) at (9.5,3.5) {1};
			\node (10,1) at (10.5,3.5) {1};
			\node (12,1) at (12.5,3.5) {1};
			\node (13,1) at (13.5,3.5) {1};
			\node (16,1) at (15.5,3.5) {1};
			\node (17,1) at (16.5,3.5) {1};
			\node (20,1) at (19.5,3.5) {1};
			
			\draw decorate[decoration=brace]{ (6.9,0.7)--(3.2,0.7) };
			\draw decorate[decoration=brace]{ (20,2.7)--(16.2,2.7) };
			\draw decorate[decoration=brace]{ (16,1.7)--(13.2,1.7) };
			
			\node (long) at (18.2,2) {$z$};
			\node (long') at (14.6,0.8) {$y$};
			\node (short') at (5,-0.1) {$x$};
		\end{tikzpicture}
	\end{center}
	
	\begin{center}
		\begin{tikzpicture}[x=5mm,y=5mm]
			\draw (0,0)--(3,0)--(3,1)--(10,1)--(10,2)--(16,2)--(16,3)--(20,3)--(20,4)--(0,4)--cycle;
			\draw (3,1)--(3,4);
			\draw (10,2)--(10,4);
			\draw (16,3)--(16,4);
			\draw (0,3)--(16,3);
			\draw (0,2)--(10,2);
			\draw (0,1)--(3,1);
			\draw (7,1)--(7,4);
			\draw (13,2)--(13,4);
			
			\draw[line width=1pt,dotted] (1.2,0.5)--(1.8,0.5);
			\draw[line width=1pt,dotted] (1.2,1.5)--(1.8,1.5);
			\draw[line width=1pt,dotted] (1.2,2.5)--(1.8,2.5);
			\draw[line width=1pt,dotted] (1.2,3.5)--(1.8,3.5);
			\draw[line width=1pt,dotted] (4.2,1.5)--(4.8,1.5);
			\draw[line width=1pt,dotted] (4.2,2.5)--(4.8,2.5);
			\draw[line width=1pt,dotted] (4.2,3.5)--(4.8,3.5);
			\draw[line width=1pt,dotted] (5.2,1.5)--(5.8,1.5);
			\draw[line width=1pt,dotted] (5.2,2.5)--(5.8,2.5);
			\draw[line width=1pt,dotted] (5.2,3.5)--(5.8,3.5);
			\draw[line width=1pt,dotted] (8.2,1.5)--(8.8,1.5);
			\draw[line width=1pt,dotted] (8.2,2.5)--(8.8,2.5);
			\draw[line width=1pt,dotted] (8.2,3.5)--(8.8,3.5);
			\draw[line width=1pt,dotted] (11.2,2.5)--(11.8,2.5);
			\draw[line width=1pt,dotted] (11.2,3.5)--(11.8,3.5);
			\draw[line width=1pt,dotted] (14.2,2.5)--(14.8,2.5);
			\draw[line width=1pt,dotted] (14.2,3.5)--(14.8,3.5);
			\draw[line width=1pt,dotted] (17.2,3.5)--(17.8,3.5);
			\draw[line width=1pt,dotted] (18.2,3.5)--(18.8,3.5);
			
			\node at (7.5,1.5) {4};
			\node at (9.5,1.5) {4};
			\node (1,4) at (0.5,0.5) {4};
			\node (3,4) at (2.5,0.5) {4};
			\node (1,3) at (0.5,1.5) {3};
			\node (3,3) at (2.5,1.5) {3};
			\node (4,3) at (3.5,1.5) {4};
			\node (6,3) at (6.5,1.5) {4};
			\node (1,2) at (0.5,2.5) {2};
			\node (3,2) at (2.5,2.5) {2};
			\node (4,2) at (3.5,2.5) {2};
			\node (6,2) at (6.5,2.5) {2};
			\node (7,2) at (7.5,2.5) {3};
			\node (9,2) at (9.5,2.5) {3};
			\node (10,2) at (10.5,2.5) {3};
			\node (12,2) at (12.5,2.5) {3};
			\node (13,2) at (13.5,2.5) {4};
			\node (16,2) at (15.5,2.5) {4};
			\node (1,1) at (0.5,3.5) {1};
			\node (3,1) at (2.5,3.5) {1};
			\node (4,1) at (3.5,3.5) {1};
			\node (6,1) at (6.5,3.5) {1};
			\node (7,1) at (7.5,3.5) {1};
			\node (9,1) at (9.5,3.5) {1};
			\node (10,1) at (10.5,3.5) {1};
			\node (12,1) at (12.5,3.5) {1};
			\node (13,1) at (13.5,3.5) {1};
			\node (16,1) at (15.5,3.5) {1};
			\node (17,1) at (16.5,3.5) {1};
			\node (20,1) at (19.5,3.5) {1};
			
			\draw decorate[decoration=brace]{ (6.9,0.7)--(3.2,0.7) };
			\draw decorate[decoration=brace]{ (20,2.7)--(16.2,2.7) };
			\draw decorate[decoration=brace]{ (16,1.7)--(13.2,1.7) };
			
			\node (long) at (18.2,2) {$z$};
			\node (long') at (14.6,0.8) {$y$};
			\node (short') at (5,-0.1) {$x$};
		\end{tikzpicture}
	\end{center}
	\caption{A $\mathfrak{k}$-lowest weight tableau}\label{402}
\end{figure}

\subsection{Construction of the bijections $\mathsf{\Phi}$ and $\mathsf{\Psi}$}
In order to construct the bijections $\mathsf{\Phi}$ and $\mathsf{\Psi}$,
we need some combinatorial operations.

First, we recall the 
so-called \textit{jeu-de-taquin slide}.
Let $T$ be a semistandard tableau of shape $D$.
Here, $D$ is a general diagram,
not necessarily a Young diagram. 
We fix a box $(x_0,y_0) \in D$, and
consider the following algorithm.
\begin{enumerate}
	\item[$(j1)$] If there is no box to the right of $(x_0,y_0)$
	nor below $(x_0, y_0)$, then end the algorithm.
	\item[$(j2)$] If there exist boxes both to the right of $(x_0,y_0)$
	and below $(x_0, y_0)$, and if $T(x_0+1,y_0) \leq T(x_0,y_0+1)$,
	then define a new tableau $S$ of shape $D$ by:
	$S(x_0,y_0)=T(x_0+1,y_0), S(x_0+1,y_0)=T(x_0,y_0)$
	(all other entries are the same as those of $T$); 
	rewrite $(x_0+1,y_0)$ as $(x_0,y_0)$ and $S$ as $T$,
	and return to $(j1)$.
	\item[$(j3)$] If there exist boxes both to the right of $(x_0,y_0)$
	and below $(x_0, y_0)$, and if $T(x_0+1,y_0) > T(x_0,y_0+1)$,
	then define a new tableau $S$ of shape $D$ by:
	$S(x_0,y_0)=T(x_0,y_0+1), S(x_0,y_0+1)=T(x_0,y_0)$
	(all other entries are the same as those of $T$); 
	rewrite $(x_0,y_0+1)$ as $(x_0,y_0)$ and $S$ as $T$,
	and return to $(j1)$.
	\item[$(j4)$] If there exists a box either to the right of $(x_0,y_0)$
	or below $(x_0,y_0)$ (but not both of these),
	then define a new tableau $S$ of shape $D$ by
	exchanging the entry in $(x_0,y_0)$
	with the entry in the adjacent box 
	(all other entries are the same as those of $T$);
	rewrite the adjacent box as $(x_0,y_0)$
	and $S$ as $T$, and return to $(j1)$.
\end{enumerate}
We call the tableau constructed in this way 
\textit{the tableau obtained by applying the jeu-de-taquin slide to $(x_0,y_0)$}.
By using this operation,
we define \textit{promotion operators}, as follows.
\begin{de}\label{defpr}
	Let $1 \leq a \leq b \leq 2n$ be integers,
	and $T \in \mathsf{SST}_{2n}(\lambda)$.
	We define
	a new  semistandard tableau $\mathsf{pr}_{a,b}^{-1}(T)$
	by the following algorithm: 
	\begin{enumerate}
		\item Remove all boxes with entries not contained $[a,b]$ from $T$, and denote by $S$ the remaining tableau.
		\item Decrease each entry of $S$, except for $a$, by $1$, and replace the entries $a$ by $b$.
		\item Apply the jeu-de-taquin slide to the boxes whose entry equals $b$, from right to left.
		\item Let $\mathsf{pr}_{a,b}^{-1}(T)$ be the tableau obtained by adding the boxes removed in (1) to the tableau obtained in (3).
	\end{enumerate}
\end{de}
\begin{rem}
	The operator $\mathsf{pr}_{a,b}^{-1} : \mathsf{SST}_{2n}(\lambda) \to \mathsf{SST}_{2n}(\lambda)$ is bijective
	since its inverse can be constructed
	by applying steps (1)--(4) in reverse order; 
	we write the inverse operator as $\mathsf{pr}_{a,b}$,
	and call it \textit{the promotion between $a$ and $b$}.
	The usual promotion operator $\mathrm{pr}$, 
	used in many references such as \cite{Sh}, 
	agrees with $\mathsf{pr}_{1,2n}$ in our notation.
\end{rem}
\begin{eg}
  Let $a=2$, $b=4$. Then, we have
	\vspace{10pt}
	\begin{center}
		\begin{tikzpicture}[x=5mm,y=5mm]
			\draw (0,0)--(3,0)--(3,-1)--(2,-1)--(2,-2)--(1,-2)--(1,-3)--(0,-3)--cycle;
			\draw (1,0)--(1,-2);
			\draw (2,0)--(2,-1);
			\draw (0,-1)--(2,-1);
			\draw (0,-2)--(1,-2);
			\node at (0.5,-0.5) {1};
			\node at (1.5,-0.5) {2};
			\node at (2.5,-0.5) {3};
			\node at (0.5,-1.5) {2};
			\node at (1.5,-1.5) {4};
			\node at (0.5,-2.5) {4};

			\node at (4.5,-1.5) {$\longmapsto$};
			\node at (4.5,-0.7) {\scriptsize $(1),(2)$};

			\draw (6,-1)--(7,-1)--(7,0)--(9,0)--(9,-1)--(8,-1)--(8,-2)--(7,-2)--(7,-3)--(6,-3)--cycle;
			\draw (7,-1)--(7,-2);
			\draw (8,0)--(8,-1);
			\draw (7,-1)--(8,-1);
			\draw (6,-2)--(7,-2);
			\node at (6.5,-0.5) {};
			\node at (7.5,-0.5) {4};
			\node at (8.5,-0.5) {2};
			\node at (6.5,-1.5) {4};
			\node at (7.5,-1.5) {3};
			\node at (6.5,-2.5) {3};

			\node at (10.5,-1.5) {$\longmapsto$};
			\node at (10.5,-0.7) {\scriptsize $(3)$};

			\draw (12,-1)--(13,-1)--(13,0)--(15,0)--(15,-1)--(14,-1)--(14,-2)--(13,-2)--(13,-3)--(12,-3)--cycle;
			\draw (13,-1)--(13,-2);
			\draw (14,0)--(14,-1);
			\draw (13,-1)--(14,-1);
			\draw (12,-2)--(13,-2);
			\node at (12.5,-0.5) {};
			\node at (13.5,-0.5) {2};
			\node at (14.5,-0.5) {4};
			\node at (12.5,-1.5) {3};
			\node at (13.5,-1.5) {3};
			\node at (12.5,-2.5) {4};

			\node at (16.5,-1.5) {$\longmapsto$};
			\node at (16.5,-0.7) {\scriptsize $(4)$};

			\draw (18,0)--(21,0)--(21,-1)--(20,-1)--(20,-2)--(19,-2)--(19,-3)--(18,-3)--cycle;
			\draw (19,0)--(19,-2);
			\draw (20,0)--(20,-1);
			\draw (18,-1)--(20,-1);
			\draw (18,-2)--(19,-2);
			\node at (18.5,-0.5) {1};
			\node at (19.5,-0.5) {2};
			\node at (20.5,-0.5) {4};
			\node at (18.5,-1.5) {3};
			\node at (19.5,-1.5) {3};
			\node at (18.5,-2.5) {4};

			\node at (21.3,-1.8) {$.$};
		\end{tikzpicture}
	\end{center}
	\vspace{10pt}
	Therefore,
	\begin{center}
		\begin{tikzpicture}[x=5mm,y=5mm]
			\draw (0,0)--(3,0)--(3,-1)--(2,-1)--(2,-2)--(1,-2)--(1,-3)--(0,-3)--cycle;
			\draw (1,0)--(1,-2);
			\draw (2,0)--(2,-1);
			\draw (0,-1)--(2,-1);
			\draw (0,-2)--(1,-2);
			\node at (0.5,-0.5) {1};
			\node at (1.5,-0.5) {2};
			\node at (2.5,-0.5) {4};
			\node at (0.5,-1.5) {3};
			\node at (1.5,-1.5) {3};
			\node at (0.5,-2.5) {4};

			\node at (0.65,-1.5) {$\mathsf{pr}_{2,4}\left(\begin{aligned} \\ \quad \quad \quad \quad \\ \\ \end{aligned}\right)$};

			\node at (4.5,-1.5) {$=$};

			\draw (5.5,0)--(8.5,0)--(8.5,-1)--(7.5,-1)--(7.5,-2)--(6.5,-2)--(6.5,-3)--(5.5,-3)--cycle;
			\draw (6.5,0)--(6.5,-2);
			\draw (7.5,0)--(7.5,-1);
			\draw (5.5,-1)--(7.5,-1);
			\draw (5.5,-2)--(6.5,-2);
			\node at (6,-0.5) {1};
			\node at (7,-0.5) {2};
			\node at (8,-0.5) {3};
			\node at (6,-1.5) {2};
			\node at (7,-1.5) {4};
			\node at (6,-2.5) {4};

			\node at (8.8,-1.8) {$.$};
		\end{tikzpicture}
	\end{center}
	\vspace{10pt}
\end{eg}
Now, we can state and prove the following. 
\begin{prop}\label{n=2thm}
	We set $\mathsf{\Phi}:=\mathsf{pr}_{1,4}$ and 
	$\mathsf{\Psi}:=\mathsf{pr}_{2,4}\circ\mathsf{pr}_{2,3}$.
	Then, for each $T \in \mathsf{SST}_4(\lambda)$, the equalities 
	$\mathsf{wt}_{\widehat{\mathfrak{g}}}(T) = \mathsf{wt}_{\mathfrak{k}}\left(\Phi(T)\right)$ and
	$\mathsf{wt}_{\widehat{\mathfrak{g}}}(T) = \mathsf{wt}_{\mathfrak{k}}\left(\Psi(T)\right)$ hold.
	Moreover, the following hold: 
	\begin{enumerate}
		\item $\mathsf{\Phi}\left(\mathsf{SST}_4^{\text{$\widehat{\mathfrak{g}}$-$\mathrm{dom}$}}(\lambda)\right) = \mathsf{SST}_4^{\text{$\mathfrak{k}$-$\mathrm{hw}$}}(\lambda)$,
		\item $\mathsf{\Psi}\left(\mathsf{SST}_4^{\text{$\widehat{\mathfrak{g}}$-$\mathrm{dom}$}}(\lambda)\right) = \mathsf{SST}_4^{\text{$\mathfrak{k}$-$\mathrm{lw}$}}(\lambda)$.
	\end{enumerate}
	In other words,
	the statement of Theorem \ref{KeyProposition}
	holds true for $n=2$.
\end{prop}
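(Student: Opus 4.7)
The plan is to prove Proposition~\ref{n=2thm} by leveraging the explicit parametric descriptions of the three sets $\mathsf{SST}_4^{\widehat{\mathfrak{g}}\text{-}\mathrm{dom}}(\lambda)$, $\mathsf{SST}_4^{\mathfrak{k}\text{-}\mathrm{hw}}(\lambda)$, $\mathsf{SST}_4^{\mathfrak{k}\text{-}\mathrm{lw}}(\lambda)$ given in Lemmas~\ref{6.1}--\ref{characterization of k-lowest}. The whole argument becomes, in effect, a bookkeeping exercise on tableaux of very rigid shape.

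I would begin with the weight identities. The jeu-de-taquin slides preserve multiplicities, so on contents the operators $\mathsf{pr}_{a,b}$ act by a cyclic permutation of $\{a,\dots,b\}$. Thus $\mathsf{\Phi}=\mathsf{pr}_{1,4}$ realizes the cycle $(1\,2\,3\,4)$ on contents, while $\mathsf{\Psi}=\mathsf{pr}_{2,4}\circ\mathsf{pr}_{2,3}$ sends the content vector $(T[1],T[2],T[3],T[4])$ to $(T[1],T[4],T[3],T[2])$. Plugging into the formulas $\mathsf{wt}_{\widehat{\mathfrak{g}}}(T)=(T[1]-T[4])\widehat{\varepsilon}_1+(T[2]-T[3])\widehat{\varepsilon}_2$ and $\mathsf{wt}_{\mathfrak{k}}(S)=(S[2]-S[1])\widetilde{\varepsilon}_1+(S[3]-S[4])\widetilde{\varepsilon}_2$ (specialization of (\ref{5.13a}) to $n=2$) gives both equalities at once, under the natural identification $\widehat{\varepsilon}_i\leftrightarrow\widetilde{\varepsilon}_i$.

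Next, for the set-theoretic claims, I would take a generic $T\in\mathsf{SST}_4^{\widehat{\mathfrak{g}}\text{-}\mathrm{dom}}(\lambda)$ in the explicit form of Figure~\ref{10000} (with parameters $x\leq y$) and compute $\mathsf{\Phi}(T)$ step by step using Definition~\ref{defpr}: the shift $1\mapsto 4,\,2\mapsto 1,\,3\mapsto 2,\,4\mapsto 3$ produces a skew tableau on which jeu-de-taquin slides must be applied to the newly created $4$'s from right to left. The very layered ``1, 2/4, 3/4, 4''-structure of Figure~\ref{10000} makes this a finite case analysis that should land exactly on one of the two parametric forms in Figure~\ref{30000}, with the input condition $x\leq y$ translating into $x\leq w$ and $y\leq z$. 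An entirely analogous computation applied first to $\mathsf{pr}_{2,3}$ and then to $\mathsf{pr}_{2,4}$ should transform Figure~\ref{10000} into one of the two forms of Figure~\ref{402}, with $x\leq y$ becoming $0\leq x-y\leq z$. By Remark following Definition~\ref{defpr} both operators are bijections of $\mathsf{SST}_4(\lambda)$, so it only remains to verify the reverse inclusions; this is done by running $\mathsf{pr}_{1,4}^{-1}$ on Figure~\ref{30000} and $\mathsf{pr}_{2,3}^{-1}\circ\mathsf{pr}_{2,4}^{-1}$ on Figure~\ref{402} and observing that the result matches Figure~\ref{10000}.

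The main obstacle will be the careful tracking of the jeu-de-taquin cascades in the computation of $\mathsf{\Phi}(T)$ and $\mathsf{\Psi}(T)$: one must verify not merely that the shape is preserved, but that the resulting tableau falls into precisely one of the two parametric families of Figure~\ref{30000} (resp.\ Figure~\ref{402}) depending on whether $y=0$ or $y>0$ (resp.\ on the value of $x-y$), and that the bijection of parameter spaces $\{(x,y):x\leq y\}\leftrightarrow\{(x,y,z,w):x\leq w,\,y\leq z,\,x+y=w+z-\text{(something)}\}$ really works out. This is tedious but finite; splitting the analysis into the two cases according to which of the two ``branches'' of Figure~\ref{30000}/\ref{402} is produced, and then further splitting by whether certain parameters vanish, should suffice.
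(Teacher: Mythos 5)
Your approach to parts (1) and (2) is the same as the paper's: use the explicit parametric characterizations of the three tableau sets from Lemmas~\ref{6.1}, \ref{characterization of k-highest}, and \ref{characterization of k-lowest}, compute the promotion on those rigid forms, and invoke the bijectivity of $\mathsf{pr}_{a,b}$ to get both inclusions. That is the right strategy and the case analysis you describe is exactly what is required.

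However, your claimed verification of the weight equalities via content permutations does \emph{not} go through for $\mathsf{\Psi}$, and you assert that it does. You compute correctly that $\mathsf{\Psi}=\mathsf{pr}_{2,4}\circ\mathsf{pr}_{2,3}$ sends the content vector $(T[1],T[2],T[3],T[4])$ to $(T[1],T[4],T[3],T[2])$. Substituting this into the $n=2$ specialization $\mathsf{wt}_{\mathfrak{k}}(S)=(S[2]-S[1])\widetilde{\varepsilon}_1+(S[3]-S[4])\widetilde{\varepsilon}_2$ of (\ref{5.13a}) gives
\begin{equation*}
\mathsf{wt}_{\mathfrak{k}}\!\left(\mathsf{\Psi}(T)\right) = (T[4]-T[1])\,\widetilde{\varepsilon}_1 + (T[3]-T[2])\,\widetilde{\varepsilon}_2 = -\mathsf{wt}_{\widehat{\mathfrak{g}}}(T),
\end{equation*}
which is the \emph{negative} of what the proposition asserts. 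The simplest sanity check already exposes this: for $\lambda=\varepsilon_1$ and $T$ the single box containing $1$, both $\mathsf{pr}_{2,3}$ and $\mathsf{pr}_{2,4}$ fix $T$ (the entry $1$ lies outside the promotion interval), so $\mathsf{\Psi}(T)=T$, and then $\mathsf{wt}_{\mathfrak{k}}(\mathsf{\Psi}(T))=-\widetilde{\varepsilon}_1\neq\widehat{\varepsilon}_1=\mathsf{wt}_{\widehat{\mathfrak{g}}}(T)$. This is in fact consistent with $\mathsf{\Psi}$ landing in $\mathfrak{k}$-\emph{lowest} weight tableaux, whose $\mathfrak{k}$-weight is the negative (equivalently, $\widetilde{w}_0$ applied) of the dominant weight labeling the irreducible. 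The correct relation is $\mathsf{wt}_{\mathfrak{k}}(\mathsf{\Psi}(T))=-\mathsf{wt}_{\widehat{\mathfrak{g}}}(T)$; since $\mathsf{\Psi}$ is only an auxiliary device used to establish the bijectivity of $\mathsf{\Phi}$, the discrepancy does not damage the main argument, but a blind proof cannot assert that ``plugging into the formulas gives both equalities at once'' when one of them demonstrably fails. The $\mathsf{\Phi}$ equality \emph{does} come out correctly from the content permutation $(T[1],T[2],T[3],T[4])\mapsto(T[4],T[1],T[2],T[3])$.
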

\begin{proof}
	We prove part (1); the proof of part (2) is similar.
	Let us take an arbitrary $T \in \mathsf{SST}_4^{\text{$\widehat{\mathfrak{g}}$-dom}}(\lambda)$.
	Then, by Lemma \ref{6.1},
	$T$ is the tableau shown in Figure \ref{10000}.
	Therefore, by Definition \ref{defpr},
	$\mathsf{\Phi}(T)$ is the tableau shown in Figure \ref{30000};
	see Figure \ref{2525625} below:
	\begin{figure}[ht]
	\begin{tikzpicture}[x=5mm,y=5mm]
		\node at (-2,2) {\textcolor{white}{$\longmapsto$}};
		\node at (-1.5,2) {$T=$};
		\draw (0,0)--(3,0)--(3,1)--(9,1)--(9,2)--(15,2)--(15,3)--(20,3)--(20,4)--(0,4)--cycle;
		\draw (3,1)--(3,4);
		\draw (9,2)--(9,4);
		\draw (15,3)--(15,4);
		\draw (0,3)--(15,3);
		\draw (0,2)--(9,2);
		\draw (0,1)--(3,1);
		\draw (6,1)--(6,4);
		\draw (12,2)--(12,4);
		
		\draw[line width=1pt,dotted] (1.2,0.5)--(1.8,0.5);
		\draw[line width=1pt,dotted] (1.2,1.5)--(1.8,1.5);
		\draw[line width=1pt,dotted] (4.2,1.5)--(4.8,1.5);
		\draw[line width=1pt,dotted] (7.2,1.5)--(7.8,1.5);
		\draw[line width=1pt,dotted] (1.2,2.5)--(1.8,2.5);
		\draw[line width=1pt,dotted] (4.2,2.5)--(4.8,2.5);
		\draw[line width=1pt,dotted] (7.2,2.5)--(7.8,2.5);
		\draw[line width=1pt,dotted] (10.2,2.5)--(10.8,2.5);
		\draw[line width=1pt,dotted] (13.2,2.5)--(13.8,2.5);
		\draw[line width=1pt,dotted] (1.2,3.5)--(1.8,3.5);
		\draw[line width=1pt,dotted] (4.2,3.5)--(4.8,3.5);
		\draw[line width=1pt,dotted] (7.2,3.5)--(7.8,3.5);
		\draw[line width=1pt,dotted] (10.2,3.5)--(10.8,3.5);
		\draw[line width=1pt,dotted] (13.2,3.5)--(13.8,3.5);
		\draw[line width=1pt,dotted] (16.2,3.5)--(16.8,3.5);
		\draw[line width=1pt,dotted] (18.2,3.5)--(18.8,3.5);
		\node (1,4) at (0.5,0.5) {4};
		\node (3,4) at (2.5,0.5) {4};
		\node (1,3) at (0.5,1.5) {3};
		\node (3,3) at (2.5,1.5) {3};
		\node (4,3) at (3.5,1.5) {3};
		\node (6,3) at (5.5,1.5) {3};
		\node (7,3) at (6.5,1.5) {4};
		\node (9,3) at (8.5,1.5) {4};
		\node (1,2) at (0.5,2.5) {2};
		\node (3,2) at (2.5,2.5) {2};
		\node (4,2) at (3.5,2.5) {2};
		\node (6,2) at (5.5,2.5) {2};
		\node (7,2) at (6.5,2.5) {2};
		\node (9,2) at (8.5,2.5) {2};
		\node (10,2) at (9.5,2.5) {2};
		\node (12,2) at (11.5,2.5) {2};
		\node (13,2) at (12.5,2.5) {4};
		\node (15,2) at (14.5,2.5) {4};
		\node (1,1) at (0.5,3.5) {1};
		\node (3,1) at (2.5,3.5) {1};
		\node (4,1) at (3.5,3.5) {1};
		\node (6,1) at (5.5,3.5) {1};
		\node (7,1) at (6.5,3.5) {1};
		\node (9,1) at (8.5,3.5) {1};
		\node (10,1) at (9.5,3.5) {1};
		\node (12,1) at (11.5,3.5) {1};
		\node (13,1) at (12.5,3.5) {1};
		\node (15,1) at (14.5,3.5) {1};
		\node (16,1) at (15.5,3.5) {1};
		\node (20,1) at (19.5,3.5) {1};
		
		\draw decorate[decoration=brace]{ (9,0.7)--(6,0.7) };
		\draw decorate[decoration=brace]{ (20,2.7)--(15.2,2.7) };
		
		\node (long) at (17.6,2) {$y$};
		\node (short) at (7.5,0.1) {$x$};
		\node at (24.2,1) {};
	\end{tikzpicture}
	\begin{tikzpicture}[x=5mm,y=5mm]
		\node at (-2,2) {$\longmapsto$};
		\draw (0,0)--(3,0)--(3,1)--(9,1)--(9,2)--(15,2)--(15,3)--(20,3)--(20,4)--(0,4)--cycle;
		\draw (3,1)--(3,4);
		\draw (9,2)--(9,4);
		\draw (15,3)--(15,4);
		\draw (0,3)--(15,3);
		\draw (0,2)--(9,2);
		\draw (0,1)--(3,1);
		\draw (6,1)--(6,4);
		\draw (12,2)--(12,4);
		
		\draw[line width=1pt,dotted] (1.2,0.5)--(1.8,0.5);
		\draw[line width=1pt,dotted] (1.2,1.5)--(1.8,1.5);
		\draw[line width=1pt,dotted] (4.2,1.5)--(4.8,1.5);
		\draw[line width=1pt,dotted] (7.2,1.5)--(7.8,1.5);
		\draw[line width=1pt,dotted] (1.2,2.5)--(1.8,2.5);
		\draw[line width=1pt,dotted] (4.2,2.5)--(4.8,2.5);
		\draw[line width=1pt,dotted] (7.2,2.5)--(7.8,2.5);
		\draw[line width=1pt,dotted] (10.2,2.5)--(10.8,2.5);
		\draw[line width=1pt,dotted] (13.2,2.5)--(13.8,2.5);
		\draw[line width=1pt,dotted] (1.2,3.5)--(1.8,3.5);
		\draw[line width=1pt,dotted] (4.2,3.5)--(4.8,3.5);
		\draw[line width=1pt,dotted] (7.2,3.5)--(7.8,3.5);
		\draw[line width=1pt,dotted] (10.2,3.5)--(10.8,3.5);
		\draw[line width=1pt,dotted] (13.2,3.5)--(13.8,3.5);
		\draw[line width=1pt,dotted] (16.2,3.5)--(16.8,3.5);
		\draw[line width=1pt,dotted] (18.2,3.5)--(18.8,3.5);
		\node (1,4) at (0.5,0.5) {1};
		\node (3,4) at (2.5,0.5) {1};
		\node (1,3) at (0.5,1.5) {4};
		\node (3,3) at (2.5,1.5) {4};
		\node (4,3) at (3.5,1.5) {4};
		\node (6,3) at (5.5,1.5) {4};
		\node (7,3) at (6.5,1.5) {1};
		\node (9,3) at (8.5,1.5) {1};
		\node (1,2) at (0.5,2.5) {3};
		\node (3,2) at (2.5,2.5) {3};
		\node (4,2) at (3.5,2.5) {3};
		\node (6,2) at (5.5,2.5) {3};
		\node (7,2) at (6.5,2.5) {3};
		\node (9,2) at (8.5,2.5) {3};
		\node (10,2) at (9.5,2.5) {3};
		\node (12,2) at (11.5,2.5) {3};
		\node (13,2) at (12.5,2.5) {1};
		\node (15,2) at (14.5,2.5) {1};
		\node (1,1) at (0.5,3.5) {2};
		\node (3,1) at (2.5,3.5) {2};
		\node (4,1) at (3.5,3.5) {2};
		\node (6,1) at (5.5,3.5) {2};
		\node (7,1) at (6.5,3.5) {2};
		\node (9,1) at (8.5,3.5) {2};
		\node (10,1) at (9.5,3.5) {2};
		\node (12,1) at (11.5,3.5) {2};
		\node (13,1) at (12.5,3.5) {2};
		\node (15,1) at (14.5,3.5) {2};
		\node (16,1) at (15.5,3.5) {2};
		\node (20,1) at (19.5,3.5) {2};
		
		\draw decorate[decoration=brace]{ (9,0.7)--(6,0.7) };
		\draw decorate[decoration=brace]{ (20,2.7)--(15.2,2.7) };
		
		\node (long) at (17.6,2) {$y$};
		\node (short) at (7.5,0.1) {$x$};
		\node at (24.2,1) {};
	\end{tikzpicture}
	\begin{tikzpicture}[x=5mm,y=5mm]
		\node at (-2,2) {$\longmapsto$};
		\draw (0,0)--(3,0)--(3,1)--(9,1)--(9,2)--(15,2)--(15,3)--(20,3)--(20,4)--(0,4)--cycle;
		\draw (3,1)--(3,4);
		\draw (9,2)--(9,4);
		\draw (15,3)--(15,4);
		\draw (0,3)--(15,3);
		\draw (0,2)--(9,2);
		\draw (0,1)--(3,1);
		\draw (6,1)--(6,4);
		\draw (12,2)--(12,4);
		
		\draw[line width=1pt,dotted] (1.2,0.5)--(1.8,0.5);
		\draw[line width=1pt,dotted] (1.2,1.5)--(1.8,1.5);
		\draw[line width=1pt,dotted] (4.2,1.5)--(4.8,1.5);
		\draw[line width=1pt,dotted] (7.2,1.5)--(7.8,1.5);
		\draw[line width=1pt,dotted] (1.2,2.5)--(1.8,2.5);
		\draw[line width=1pt,dotted] (4.2,2.5)--(4.8,2.5);
		\draw[line width=1pt,dotted] (7.2,2.5)--(7.8,2.5);
		\draw[line width=1pt,dotted] (10.2,2.5)--(10.8,2.5);
		\draw[line width=1pt,dotted] (13.2,2.5)--(13.8,2.5);
		\draw[line width=1pt,dotted] (1.2,3.5)--(1.8,3.5);
		\draw[line width=1pt,dotted] (4.2,3.5)--(4.8,3.5);
		\draw[line width=1pt,dotted] (7.2,3.5)--(7.8,3.5);
		\draw[line width=1pt,dotted] (10.2,3.5)--(10.8,3.5);
		\draw[line width=1pt,dotted] (13.2,3.5)--(13.8,3.5);
		\draw[line width=1pt,dotted] (16.2,3.5)--(16.8,3.5);
		\draw[line width=1pt,dotted] (18.2,3.5)--(18.8,3.5);
		\node (1,4) at (0.5,0.5) {4};
		\node (3,4) at (2.5,0.5) {4};
		\node (1,3) at (0.5,1.5) {3};
		\node (3,3) at (2.5,1.5) {3};
		\node (4,3) at (3.5,1.5) {4};
		\node (6,3) at (5.5,1.5) {4};
		\node (7,3) at (6.5,1.5) {1};
		\node (9,3) at (8.5,1.5) {1};
		\node (1,2) at (0.5,2.5) {2};
		\node (3,2) at (2.5,2.5) {2};
		\node (4,2) at (3.5,2.5) {3};
		\node (6,2) at (5.5,2.5) {3};
		\node (7,2) at (6.5,2.5) {3};
		\node (9,2) at (8.5,2.5) {3};
		\node (10,2) at (9.5,2.5) {3};
		\node (12,2) at (11.5,2.5) {3};
		\node (13,2) at (12.5,2.5) {1};
		\node (15,2) at (14.5,2.5) {1};
		\node (1,1) at (0.5,3.5) {1};
		\node (3,1) at (2.5,3.5) {1};
		\node (4,1) at (3.5,3.5) {2};
		\node (6,1) at (5.5,3.5) {2};
		\node (7,1) at (6.5,3.5) {2};
		\node (9,1) at (8.5,3.5) {2};
		\node (10,1) at (9.5,3.5) {2};
		\node (12,1) at (11.5,3.5) {2};
		\node (13,1) at (12.5,3.5) {2};
		\node (15,1) at (14.5,3.5) {2};
		\node (16,1) at (15.5,3.5) {2};
		\node (20,1) at (19.5,3.5) {2};
		
		\draw decorate[decoration=brace]{ (9,0.7)--(6,0.7) };
		\draw decorate[decoration=brace]{ (20,2.7)--(15.2,2.7) };
		
		\node (long) at (17.6,2) {$y$};
		\node (short) at (7.5,0.1) {$x$};
		\node at (24.2,1) {};
	\end{tikzpicture}
	\begin{tikzpicture}[x=5mm,y=5mm]
		\node at (-2,2) {$\longmapsto$};

		\draw (0,0)--(3,0)--(3,1)--(9,1)--(9,2)--(15,2)--(15,3)--(20,3)--(20,4)--(0,4)--cycle;
		\draw (3,1)--(3,4);
		\draw (9,2)--(9,4);
		\draw (15,3)--(15,4);
		\draw (0,3)--(15,3);
		\draw (0,2)--(9,2);
		\draw (0,1)--(3,1);
		\draw (6,1)--(6,4);
		\draw (12,2)--(12,4);
		
		\draw[line width=1pt,dotted] (1.2,0.5)--(1.8,0.5);
		\draw[line width=1pt,dotted] (1.2,1.5)--(1.8,1.5);
		\draw[line width=1pt,dotted] (4.2,1.5)--(4.8,1.5);
		\draw[line width=1pt,dotted] (7.2,1.5)--(7.8,1.5);
		\draw[line width=1pt,dotted] (1.2,2.5)--(1.8,2.5);
		\draw[line width=1pt,dotted] (4.2,2.5)--(4.8,2.5);
		\draw[line width=1pt,dotted] (7.2,2.5)--(7.8,2.5);
		\draw[line width=1pt,dotted] (10.2,2.5)--(10.8,2.5);
		\draw[line width=1pt,dotted] (13.2,2.5)--(13.8,2.5);
		\draw[line width=1pt,dotted] (1.2,3.5)--(1.8,3.5);
		\draw[line width=1pt,dotted] (4.2,3.5)--(4.8,3.5);
		\draw[line width=1pt,dotted] (7.2,3.5)--(7.8,3.5);
		\draw[line width=1pt,dotted] (10.2,3.5)--(10.8,3.5);
		\draw[line width=1pt,dotted] (13.2,3.5)--(13.8,3.5);
		\draw[line width=1pt,dotted] (16.2,3.5)--(16.8,3.5);
		\draw[line width=1pt,dotted] (18.2,3.5)--(18.8,3.5);
		\node (1,4) at (0.5,0.5) {4};
		\node (3,4) at (2.5,0.5) {4};
		\node (1,3) at (0.5,1.5) {3};
		\node (3,3) at (2.5,1.5) {3};
		\node (4,3) at (3.5,1.5) {3};
		\node (6,3) at (5.5,1.5) {3};
		\node (7,3) at (6.5,1.5) {4};
		\node (9,3) at (8.5,1.5) {4};
		\node (1,2) at (0.5,2.5) {2};
		\node (3,2) at (2.5,2.5) {2};
		\node (4,2) at (3.5,2.5) {2};
		\node (6,2) at (5.5,2.5) {2};
		\node (7,2) at (6.5,2.5) {3};
		\node (9,2) at (8.5,2.5) {3};
		\node (10,2) at (9.5,2.5) {3};
		\node (12,2) at (11.5,2.5) {3};
		\node (13,2) at (12.5,2.5) {1};
		\node (15,2) at (14.5,2.5) {1};
		\node (1,1) at (0.5,3.5) {1};
		\node (3,1) at (2.5,3.5) {1};
		\node (4,1) at (3.5,3.5) {1};
		\node (6,1) at (5.5,3.5) {1};
		\node (7,1) at (6.5,3.5) {2};
		\node (9,1) at (8.5,3.5) {2};
		\node (10,1) at (9.5,3.5) {2};
		\node (12,1) at (11.5,3.5) {2};
		\node (13,1) at (12.5,3.5) {2};
		\node (15,1) at (14.5,3.5) {2};
		\node (16,1) at (15.5,3.5) {2};
		\node (20,1) at (19.5,3.5) {2};
		
		\draw decorate[decoration=brace]{ (6,0.7)--(3.1,0.7) };
		\draw decorate[decoration=brace]{ (20,2.7)--(15.2,2.7) };
		\draw decorate[decoration=brace]{ (9,0.7)--(6.1,0.7) };
		\draw decorate[decoration=brace]{ (12,1.7)--(9.1,1.7) };
		\draw decorate[decoration=brace]{ (15,1.7)--(12.1,1.7) };

		\node (long) at (17.6,2) {$y$};
		\node (short) at (4.6,0.1) {$x$};
		\node (short) at (7.5,0.1) {$t$};
		\node (short) at (10.5,1.1) {$u$};
		\node (short) at (13.5,1.1) {$v$};

		\node at (24.2,1) {};
	\end{tikzpicture}
	\begin{tikzpicture}[x=5mm,y=5mm]
		\node at (-2,-0.5) {$\longmapsto$};
		\node at (-0.7,-0.5) {$\left\{\begin{aligned} \\ \\ \\ \\ \\ \\ \\ \\\end{aligned}\right.$};

		\draw (0,0)--(3,0)--(3,1)--(12,1)--(12,2)--(16,2)--(16,3)--(20,3)--(20,4)--(0,4)--cycle;
		\draw (3,1)--(3,4);
		\draw (9,1)--(9,4);
		\draw (16,3)--(16,4);
		\draw (0,3)--(16,3);
		\draw (0,2)--(12,2);
		\draw (0,1)--(3,1);
		\draw (6,1)--(6,4);
		\draw (12,2)--(12,4);
		
		\draw[line width=1pt,dotted] (1.2,0.5)--(1.8,0.5);
		\draw[line width=1pt,dotted] (1.2,1.5)--(1.8,1.5);
		\draw[line width=1pt,dotted] (1.2,2.5)--(1.8,2.5);
		\draw[line width=1pt,dotted] (1.2,3.5)--(1.8,3.5);
		\draw[line width=1pt,dotted] (4.2,1.5)--(4.8,1.5);
		\draw[line width=1pt,dotted] (4.2,2.5)--(4.8,2.5);
		\draw[line width=1pt,dotted] (4.2,3.5)--(4.8,3.5);
		\draw[line width=1pt,dotted] (7.2,1.5)--(7.8,1.5);
		\draw[line width=1pt,dotted] (7.2,2.5)--(7.8,2.5);
		\draw[line width=1pt,dotted] (7.2,3.5)--(7.8,3.5);
		\draw[line width=1pt,dotted] (10.2,1.5)--(10.8,1.5);
		\draw[line width=1pt,dotted] (10.2,2.5)--(10.8,2.5);
		\draw[line width=1pt,dotted] (10.2,3.5)--(10.8,3.5);
		\draw[line width=1pt,dotted] (13.2,2.5)--(13.8,2.5);
		\draw[line width=1pt,dotted] (14.2,2.5)--(14.8,2.5);
		\draw[line width=1pt,dotted] (13.2,3.5)--(13.8,3.5);
		\draw[line width=1pt,dotted] (14.3,3.5)--(14.8,3.5);
		\draw[line width=1pt,dotted] (17.2,3.5)--(17.8,3.5);
		\draw[line width=1pt,dotted] (18.2,3.5)--(18.8,3.5);
		
		\node (1,4) at (0.5,0.5) {4};
		\node (3,4) at (2.5,0.5) {4};
		\node (1,3) at (0.5,1.5) {3};
		\node (3,3) at (2.5,1.5) {3};
		\node (4,3) at (3.5,1.5) {3};
		\node (6,3) at (5.5,1.5) {3};
		\node (7,3) at (6.5,1.5) {4};
		\node (9,3) at (8.5,1.5) {4};
		\node (10,3) at (9.5,1.5) {4};
		\node (12,3) at (11.5,1.5) {4};
		\node (1,2) at (0.5,2.5) {2};
		\node (3,2) at (2.5,2.5) {2};
		\node (4,2) at (3.5,2.5) {2};
		\node (6,2) at (5.5,2.5) {2};
		\node (7,2) at (6.5,2.5) {2};
		\node (9,2) at (8.5,2.5) {2};
		\node (10,2) at (9.5,2.5) {3};
		\node (12,2) at (11.5,2.5) {3};
		\node (13,2) at (12.5,2.5) {3};
		\node (16,2) at (15.5,2.5) {3};
		\node (1,1) at (0.5,3.5) {1};
		\node (3,1) at (2.5,3.5) {1};
		\node (4,1) at (3.5,3.5) {1};
		\node (6,1) at (5.5,3.5) {1};
		\node (7,1) at (6.5,3.5) {1};
		\node (9,1) at (8.5,3.5) {1};
		\node (10,1) at (9.5,3.5) {2};
		\node (12,1) at (11.5,3.5) {2};
		\node (13,1) at (12.5,3.5) {2};
		\node (16,1) at (15.5,3.5) {2};
		\node (17,1) at (16.5,3.5) {2};
		\node (20,1) at (19.5,3.5) {2};
		
		\draw decorate[decoration=brace]{ (5.9,0.7)--(3.2,0.7) };
		\draw decorate[decoration=brace]{ (9,0.7)--(6.1,0.7) };
		\draw decorate[decoration=brace]{ (12,0.7)--(9.1,0.7) };
		\draw decorate[decoration=brace]{ (20,2.7)--(16.2,2.7) };
		\draw decorate[decoration=brace]{ (16,1.7)--(12.2,1.7) };
		
		\node (long) at (18.2,2) {$y$};
		\node (long') at (14.1,0.8) {$u+v$};
		\node (short) at (4.5,-0.1) {$x$};
		\node (short') at (7.5,-0.1) {$v$};
		\node (shot') at (10.5,-0.1) {$t-v$};
		
		\node (if1) at (23,2) {if $v \leq t$};
		\node (if2) at (23,-3) {if $v \geq t$};

		\draw (0,-5)--(3,-5)--(3,-4)--(9,-4)--(9,-3)--(16,-3)--(16,-2)--(20,-2)--(20,-1)--(0,-1)--cycle;
		\draw (3,-4)--(3,-1);
		\draw (9,-3)--(9,-1);
		\draw (16,-2)--(16,-1);
		\draw (0,-2)--(16,-2);
		\draw (0,-3)--(9,-3);
		\draw (0,-4)--(3,-4);
		\draw (6,-4)--(6,-1);
		\draw (12,-3)--(12,-1);
		
		\draw[line width=1pt,dotted] (1.2,-4.5)--(1.8,-4.5);
		\draw[line width=1pt,dotted] (1.2,-3.5)--(1.8,-3.5);
		\draw[line width=1pt,dotted] (1.2,-2.5)--(1.8,-2.5);
		\draw[line width=1pt,dotted] (1.2,-1.5)--(1.8,-1.5);
		\draw[line width=1pt,dotted] (4.2,-3.5)--(4.8,-3.5);
		\draw[line width=1pt,dotted] (4.2,-2.5)--(4.8,-2.5);
		\draw[line width=1pt,dotted] (4.2,-1.5)--(4.8,-1.5);
		\draw[line width=1pt,dotted] (7.2,-3.5)--(7.8,-3.5);
		\draw[line width=1pt,dotted] (7.2,-2.5)--(7.8,-2.5);
		\draw[line width=1pt,dotted] (7.2,-1.5)--(7.8,-1.5);
		\draw[line width=1pt,dotted] (10.2,-2.5)--(10.8,-2.5);
		\draw[line width=1pt,dotted] (10.2,-1.5)--(10.8,-1.5);
		\draw[line width=1pt,dotted] (13.2,-2.5)--(13.8,-2.5);
		\draw[line width=1pt,dotted] (14.2,-2.5)--(14.8,-2.5);
		\draw[line width=1pt,dotted] (13.2,-1.5)--(13.8,-1.5);
		\draw[line width=1pt,dotted] (14.3,-1.5)--(14.8,-1.5);
		\draw[line width=1pt,dotted] (17.2,-1.5)--(17.8,-1.5);
		\draw[line width=1pt,dotted] (18.2,-1.5)--(18.8,-1.5);
		
		\node (1,4) at (0.5,-4.5) {4};
		\node (3,4) at (2.5,-4.5) {4};
		\node (1,3) at (0.5,-3.5) {3};
		\node (3,3) at (2.5,-3.5) {3};
		\node (4,3) at (3.5,-3.5) {3};
		\node (6,3) at (5.5,-3.5) {3};
		\node (7,3) at (6.5,-3.5) {4};
		\node (9,3) at (8.5,-3.5) {4};
		\node (1,2) at (0.5,-2.5) {2};
		\node (3,2) at (2.5,-2.5) {2};
		\node (4,2) at (3.5,-2.5) {2};
		\node (6,2) at (5.5,-2.5) {2};
		\node (7,2) at (6.5,-2.5) {2};
		\node (9,2) at (8.5,-2.5) {2};
		\node (10,2) at (9.5,-2.5) {2};
		\node (12,2) at (11.5,-2.5) {2};
		\node (13,2) at (12.5,-2.5) {3};
		\node (16,2) at (15.5,-2.5) {3};
		\node (1,1) at (0.5,-1.5) {1};
		\node (3,1) at (2.5,-1.5) {1};
		\node (4,1) at (3.5,-1.5) {1};
		\node (6,1) at (5.5,-1.5) {1};
		\node (7,1) at (6.5,-1.5) {1};
		\node (9,1) at (8.5,-1.5) {1};
		\node (10,1) at (9.5,-1.5) {1};
		\node (12,1) at (11.5,-1.5) {1};
		\node (13,1) at (12.5,-1.5) {2};
		\node (16,1) at (15.5,-1.5) {2};
		\node (17,1) at (16.5,-1.5) {2};
		\node (20,1) at (19.5,-1.5) {2};
		
		\draw decorate[decoration=brace]{ (5.9,-4.3)--(3.2,-4.3) };
		\draw decorate[decoration=brace]{ (9,-4.3)--(6.1,-4.3) };
		\draw decorate[decoration=brace]{ (12,-3.3)--(9.1,-3.3) };
		\draw decorate[decoration=brace]{ (20,-2.3)--(16.2,-2.3) };
		\draw decorate[decoration=brace]{ (16,-3.3)--(12.2,-3.3) };
		
		\node (long) at (18.2,-3) {$y$};
		\node (long') at (14.1,-4.2) {$u+t$};
		\node (short) at (4.5,-5.1) {$x$};
		\node (short') at (7.5,-5.1) {$t$};
		\node (sho') at (10.6,-4.2) {$v-t$};
	\end{tikzpicture}

\caption{The action of the operator $\mathsf{pr}_{1,4}$}\label{2525625}
\end{figure}

  \noindent
	Thus, $\mathsf{\Phi}(T)$ is an element of
	$\mathsf{SST}_4^{\text{$\mathfrak{k}$-hw}}(\lambda)$.
	Next, let us take an arbitrary $S \in \mathsf{SST}_4^{\text{$\mathfrak{k}$-hw}}(\lambda)$.
	Then, by Lemma \ref{characterization of k-highest},
	$S$ is the tableau shown in Figure \ref{30000}.
	Therefore, by Definition \ref{defpr},
	$\mathsf{\Phi}^{-1}(S)$ is the tableau shown in Figure \ref{10000}.
	Thus, $\mathsf{\Phi}^{-1}(S)$ is an element of
	$\mathsf{SST}_4^{\text{$\widehat{\mathfrak{g}}$-dom}}(\lambda)$.
	This proves part (1). 
\end{proof}

\bigskip

\section{Proof of theorem \ref{KeyProposition} (the case $n \geq 3$)}
In this section,
we consider the case $n\geq3$,
and complete the proof of Theorem \ref{KeyProposition}. 
Let us fix an arbitrary partition $\lambda \in \mathsf{Par}_{\leq2n}$.

\subsection{Restriction operators and promotion operators}
In this subsection,
we define two kinds of operators, i.e., restriction operators and promotion operators. 

\begin{de}
	Let $T \in \mathsf{SST}_{2n}$,
	and let $a \leq b \leq c \leq d$ be integers.
        We denote by $T|_{a,b;c,d}$ the semistandard tableau
	obtained from $T$ by removing all boxes whose entry does not belong
	to $[a,b]$ nor $[c,d]$.
	When $b=c=d$,
	we simply write $T|_{a,b}$ for $T|_{a,b;c,d}$.
\end{de}

\begin{rem}
	The shape of $T|_{a,b;c,d}$ is not a Young diagram in general.
\end{rem}

\begin{de}\label{7.3}
	Let $\lambda^1 \supset \lambda^2 \supset \lambda^3$ be
	a decreasing sequence of partitions,
	and $T$ a semistandard tableau of shape
	$(\lambda^1 \setminus \lambda^2) \sqcup \lambda^3$.
	We denote by $\mathsf{Rect}(T)$ 
        the tableau given by the following algorithm:
	\begin{enumerate}
		\item Let $\bullet$ be a formal symbol. 
		We define a new tableau $T_{\bullet}$ of shape $\lambda^1 \setminus \lambda^3$
		by: 
		\[
			T_{\bullet}(x,y) := \left\{
			\begin{aligned}
				& T(x,y) & & \text{if $(x,y) \in (\lambda^1 \setminus \lambda^2) \sqcup \lambda^3$}, \\
				& \bullet & & \text{if $(x,y) \in \lambda^2 \setminus \lambda^3$}.
			\end{aligned}
			\right.
		\]
		\item Apply the jeu-de-taquin slide to the boxes whose entry equals $\bullet$ lying at inside corners of $T_{\bullet}$, 
		in any order; a box $(x_0,y_0)$ is said to be 
		at an inside corner
		if there exist boxes both to the right of $(x_0,y_0)$ and below $(x_0,y_0)$.
		\item Remove all boxes whose entry equals $\bullet$ .
	\end{enumerate}
\end{de}

\begin{rem}
	$\mathsf{Rect}(T)$ is independent of the order
	in which the boxes are chosen in step (2).
	This follows from the fact that the Knuth
	equivalence for row words is preserved
	in the process of applying the jeu-de-taquin slide to $T_{\bullet}$, 
	which is verified  by an argument 
	similar to that in \cite[Section 1.2, Claim 2]{F}.
\end{rem}

\begin{de}
	Let $T \in \mathsf{SST}_{2n}$, 
	and let $a \leq b \leq c \leq d $ be integers.
	We set $\mathsf{Res}_{a,b;c,d}(T):=\mathsf{Rect}(T|_{a,b;c,d})$.
	When $b=c=d$, we simply write $\mathsf{Res}_{a,b}(T)$ for $\mathsf{Res}_{a,b;c,d}(T)$.
\end{de}

\begin{eg}\label{example}
	Let $a=1$, $b=2$, $c=5$, $d=6$. Then, we have
	\vspace{10pt}
	\begin{center}
		\begin{tikzpicture}[x=5mm,y=5mm]
			\draw[fill=blue!20] (0,0)--(3,0)--(3,-2)--(2,-2)--(2,-3)--(0,-3)--(0,-2)--(1,-2)--(1,-1)--(0,-1)--cycle;
			\draw (0,0)--(4,0)--(4,-1)--(3,-1)--(3,-2)--(2,-2)--(2,-3)--(0,-3)--cycle;
			\draw (1,0)--(1,-3);
			\draw (2,0)--(2,-2);
			\draw (3,0)--(3,-1);
			\draw (0,-1)--(3,-1);
			\draw (0,-2)--(2,-2);
			\node at (0.5,-0.5) {1};
			\node at (1.5,-0.5) {2};
			\node at (2.5,-0.5) {2};
			\node at (3.5,-0.5) {3};
			\node at (0.5,-1.5) {4};
			\node at (1.5,-1.5) {5};
			\node at (2.5,-1.5) {6};
			\node at (0.5,-2.5) {6};
			\node at (1.5,-2.5) {6};
			\draw[line width=1.5pt, color=blue] (0,0)--(3,0)--(3,-2)--(2,-2)--(2,-3)--(0,-3)--(0,-2)--(1,-2)--(1,-1)--(0,-1)--cycle;

			\node at (5.5,-1.5) {$\longmapsto$};
			\node at (5.5,-0.7) {$|_{1,2;5,6}$};

			\fill[color=blue!20] (7,-1)--(8,-1)--(8,-2)--(7,-2)--cycle;
			\draw (7,0)--(11,0)--(11,-1)--(10,-1)--(10,-2)--(9,-2)--(9,-3)--(7,-3)--cycle;
			\draw (8,0)--(8,-3);
			\draw (9,0)--(9,-2);
			\draw (10,0)--(10,-1);
			\draw (7,-1)--(10,-1);
			\draw (7,-2)--(9,-2);
			\node at (7.5,-0.5) {1};
			\node at (8.5,-0.5) {2};
			\node at (9.5,-0.5) {2};
			\node at (10.5,-0.5) {$\bullet$};
			\node at (7.5,-1.5) {$\bullet$};
			\node at (8.5,-1.5) {5};
			\node at (9.5,-1.5) {6};
			\node at (7.5,-2.5) {6};
			\node at (8.5,-2.5) {6};
			\draw[line width=1.5pt, color=blue] (7,-1)--(8,-1)--(8,-2)--(7,-2)--cycle;

			\node at (12.5,-1.5) {$\longmapsto$};
			\node at (12.5,-0.7) {$(j2)$};

			\fill[color=blue!20] (15,-1)--(16,-1)--(16,-2)--(15,-2)--cycle;
			\draw (14,0)--(18,0)--(18,-1)--(17,-1)--(17,-2)--(16,-2)--(16,-3)--(14,-3)--cycle;
			\draw (15,0)--(15,-3);
			\draw (16,0)--(16,-2);
			\draw (17,0)--(17,-1);
			\draw (14,-1)--(17,-1);
			\draw (14,-2)--(16,-2);
			\node at (14.5,-0.5) {1};
			\node at (15.5,-0.5) {2};
			\node at (16.5,-0.5) {2};
			\node at (17.5,-0.5) {$\bullet$};
			\node at (14.5,-1.5) {5};
			\node at (15.5,-1.5) {$\bullet$};
			\node at (16.5,-1.5) {6};
			\node at (14.5,-2.5) {6};
			\node at (15.5,-2.5) {6};
			\draw[line width=1.5pt, color=blue] (15,-1)--(16,-1)--(16,-2)--(15,-2)--cycle;

			\node at (19.5,-1.5) {$\longmapsto$};
			\node at (19.5,-0.7) {$(j3)$};

			\draw (21,0)--(25,0)--(25,-1)--(24,-1)--(24,-2)--(23,-2)--(23,-3)--(21,-3)--cycle;
			\draw (22,0)--(22,-3);
			\draw (23,0)--(23,-2);
			\draw (24,0)--(24,-1);
			\draw (21,-1)--(24,-1);
			\draw (21,-2)--(23,-2);
			\node at (21.5,-0.5) {1};
			\node at (22.5,-0.5) {2};
			\node at (23.5,-0.5) {2};
			\node at (24.5,-0.5) {$\bullet$};
			\node at (21.5,-1.5) {5};
			\node at (22.5,-1.5) {6};
			\node at (23.5,-1.5) {6};
			\node at (21.5,-2.5) {6};
			\node at (22.5,-2.5) {$\bullet$};
			\node at (25.2,-1.8) {.};
		\end{tikzpicture}
	\end{center}
	\vspace{10pt}
	Note that the three partitions
	$\lambda^1$, $\lambda^2$, $\lambda^3$ in Definition \ref{7.3} are
	$(4,3,2)$, $(4,1)$ and $(3)$, respectively.
	Therefore,
	\begin{center}
		\begin{tikzpicture}[x=5mm,y=5mm]
			\draw (0,0)--(4,0)--(4,-1)--(3,-1)--(3,-2)--(2,-2)--(2,-3)--(0,-3)--cycle;
			\draw (1,0)--(1,-3);
			\draw (2,0)--(2,-2);
			\draw (3,0)--(3,-1);
			\draw (0,-1)--(3,-1);
			\draw (0,-2)--(2,-2);
			\node at (0.5,-0.5) {1};
			\node at (1.5,-0.5) {2};
			\node at (2.5,-0.5) {2};
			\node at (3.5,-0.5) {3};
			\node at (0.5,-1.5) {4};
			\node at (1.5,-1.5) {5};
			\node at (2.5,-1.5) {6};
			\node at (0.5,-2.5) {6};
			\node at (1.5,-2.5) {6};
			\node at (-2.5,-1.5) {$\mathsf{Res}_{1,2;5,6}$};
			\node at (2,-1.5) {$\left(\begin{aligned} \\ \quad\quad\quad\quad\quad\, \\ \\ \end{aligned}\right)$};

			\node at (5.5,-1.5) {$=$};

			\draw (6.5,0)--(9.5,0)--(9.5,-2)--(7.5,-2)--(7.5,-3)--(6.5,-3)--cycle;
			\draw (7.5,0)--(7.5,-3);
			\draw (8.5,0)--(8.5,-2);
			\draw (9.5,0)--(9.5,-1);
			\draw (6.5,-1)--(9.5,-1);
			\draw (6.5,-2)--(8.5,-2);
			\node at (7,-0.5) {1};
			\node at (8,-0.5) {2};
			\node at (9,-0.5) {2};
			\node at (7,-1.5) {5};
			\node at (8,-1.5) {6};
			\node at (9,-1.5) {6};
			\node at (7,-2.5) {6};
			\node at (9.8,-1.8) {.};
		\end{tikzpicture}
	\end{center}
\end{eg}

As in the case $n=2$, we construct the bijections
$\mathsf{\Phi}$ and $\mathsf{\Psi}$, 
as certain composites of promotion operators. 

For $1 \leq a < b \leq 2n$,
we define $\mathsf{pr}_{a,b}$ as in Definition \ref{defpr}.
The following lemma follows from the definition of promotion operators. 
\begin{lem}\label{prrel}
	Let $1 \leq a < b < c < d \leq 2n$ be integers.
	Then, we have the following:
	\begin{enumerate}
		\item $\mathsf{pr}_{a,a+1}^2 = \mathsf{id}$,
		\item $\mathsf{pr}_{a,b} \circ \mathsf{pr}_{c,d} = \mathsf{pr}_{c,d} \circ \mathsf{pr}_{a,b}$ if $|b-c|\geq2$,
		\item $\mathsf{pr}_{a,b} \circ \mathsf{pr}_{b,c} = \mathsf{pr}_{a,c}$.
	\end{enumerate}
\end{lem}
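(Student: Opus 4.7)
The plan is to verify each identity at the level of the inverse operators $\mathsf{pr}_{a,b}^{-1}$, which are directly defined in Definition \ref{defpr}; taking inverses then yields the stated identities for $\mathsf{pr}_{a,b}$. Conceptually, $\mathsf{pr}_{a,b}^{-1}$ effects the cyclic substitution $a \mapsto b$, $k \mapsto k-1$ for $k \in [a+1,b]$ on the entries in $[a,b]$, followed by jeu-de-taquin slides on the newly formed $b$-labeled boxes to restore semistandardness. A key structural observation is that the boxes outside $[a,b]$ are detached in Step~(1) and reattached unchanged in Step~(4), so $\mathsf{pr}_{a,b}^{-1}$ acts only on the $[a,b]$-subtableau of $T$.

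For (1), we specialize to $b = a+1$, so the cyclic substitution reduces to swapping the values $a$ and $a+1$. The outer shape of $T$ and the positions of all entries outside $\{a, a+1\}$ are preserved, so two applications of $\mathsf{pr}_{a,a+1}^{-1}$ act as an involution on the $\{a,a+1\}$-subtableau. Because this operator coincides with the classical $\{a,a+1\}$-reflection on that subtableau, which is well known to be involutive, identity~(1) follows.

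For (2), the hypothesis $|b-c|\geq 2$ gives $[a,b] \cap [c,d] = \emptyset$ with an integer gap between the intervals. In particular, $\mathsf{pr}_{a,b}^{-1}$ does not alter any entry in $[c,d]$ and leaves those boxes in their original positions, and conversely for $\mathsf{pr}_{c,d}^{-1}$. The rectifying jeu-de-taquin slides in each operator move only boxes whose current entry lies in its own interval, so the two procedures act on disjoint collections of boxes and commute; a direct box-by-box verification then completes the argument.

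Identity~(3) is the main obstacle. We aim to show $\mathsf{pr}_{b,c}^{-1} \circ \mathsf{pr}_{a,b}^{-1} = \mathsf{pr}_{a,c}^{-1}$. On raw entries (before any rectification), the left-hand composite sends $a \mapsto b \mapsto c$, $b \mapsto b-1$ (by the first factor only), and $k \mapsto k-1$ for $k \in [a+1,b-1] \cup [b+1,c]$; this matches exactly the substitution performed by $\mathsf{pr}_{a,c}^{-1}$. The delicate point is to reconcile the two stages of jeu-de-taquin on the left with the single stage on the right. Our plan is to invoke the confluence property of jeu-de-taquin recorded in the remark following Definition \ref{7.3}: since jeu-de-taquin slides preserve Knuth equivalence of row words, and since both sides produce tableaux with identical outer shapes and identical entries outside $[a,c]$, it suffices to verify that the row words on the two sides lie in the same Knuth equivalence class---a check that reduces to a local inspection of three-letter subwords involving the newly shifted entries. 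The hard part will be bookkeeping the skew shape of the intermediate tableau after the first rectification so that the slides in the second factor can be compared meaningfully with those in the single factor $\mathsf{pr}_{a,c}^{-1}$; we expect to proceed by induction on $c-a$, combining (1) and (2) to break $\mathsf{pr}_{a,c}^{-1}$ into elementary pieces and thereby reduce to manageable local configurations.
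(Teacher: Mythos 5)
Your treatment of (1) and (2) is sound and aligns with the paper, which simply observes that these follow from the definitions: for (1) the operator restricted to $\{a,a+1\}$ is the Bender--Knuth involution, and for (2) the two operators rearrange disjoint sets of boxes with no interaction through the slides. No issue there.

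Part (3) is where there is a genuine gap. Your observation that the raw cyclic substitutions agree only establishes that $\mathsf{pr}_{b,c}^{-1}\circ\mathsf{pr}_{a,b}^{-1}(T)$ and $\mathsf{pr}_{a,c}^{-1}(T)$ have the same multiset of entries (and, since the boxes outside $[a,c]$ are detached and reattached in the same positions, the same outer shape); it says nothing about the positions of the $[a,c]$-entries, which are determined by the intervening jeu-de-taquin slides. To compare positions you would indeed want a Knuth-type argument, but the sketch has two problems. First, the substitution step of $\mathsf{pr}_{a,b}^{-1}$ does not preserve the Knuth class of the row word, so you cannot simply trace Knuth equivalence across the whole procedure; the comparison has to be made after both sides have been fully executed, and the argument that the resulting row words are Knuth-equivalent is left completely unspecified (``local inspection of three-letter subwords'' is not a proof). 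Second, the fallback plan---``combining (1) and (2) to break $\mathsf{pr}_{a,c}^{-1}$ into elementary pieces''---cannot work as stated, because identities (1) and (2) do not decompose $\mathsf{pr}_{a,c}$: (1) only relates $\mathsf{pr}_{a,a+1}$ to itself, and (2) involves intervals with a gap of at least two, whereas a decomposition of $\mathsf{pr}_{a,c}$ into adjacent pieces requires precisely (3) for the adjacent case. The paper's own route is to establish the elementary case $\mathsf{pr}_{a,a+1}\circ\mathsf{pr}_{a+1,a+2}=\mathsf{pr}_{a,a+2}$ directly, from which the factorization $\mathsf{pr}_{a,b}=\mathsf{pr}_{a,a+1}\circ\cdots\circ\mathsf{pr}_{b-1,b}$ and hence (3) follow by telescoping; you should either verify this elementary case directly (a small finite check using the jeu-de-taquin rules (j1)--(j4)) or otherwise supply the decomposition, rather than appealing to (1) and (2) for a reduction they cannot provide.
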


\begin{proof}
Parts (1) and (2) immediately follow from the definition of $\mathsf{pr}_{a,b}$. Part (3) follows from the equality
$\mathsf{pr}_{a,a+1} \circ \mathsf{pr}_{a+1,a+2} = \mathsf{pr}_{a,a+2}$.
This proves the lemma. 
\end{proof}

In the next two subsections, we give several lemmas, which are needed to connect the global information concerning $\widehat{\mathfrak{g}}$, $\mathfrak{k}$ 
with the local information concerning $\widehat{\mathfrak{g}}_i$, $\mathfrak{k}_i$ for $1 \leq i \leq n-1$. 

\subsection{The local-global principle for $\widehat{\mathfrak{g}}$-dominant tableaux}
For each $1 \leq i \leq n-1$,
we set
\begin{equation}
	\widehat{C}_i :=
	\left\{
	x_1\widehat{\varepsilon}_1 + \cdots + x_n\widehat{\varepsilon}_n \in \widehat{P}_{\mathbb{R}}
	\mathrel{}\middle|\mathrel{}
	x_i \geq x_{i+1} \geq 0
	\right\}.
\end{equation}
Note that $\widehat{C}=\widehat{C}_1 \cap \cdots \cap \widehat{C}_{n-1}$.

\begin{de}
	Let $T$ be a semistandard tableau with all entries in $[1,2n]$.
	\begin{enumerate}
		\item We say that $T$ is a
		\textit{$\widehat{\mathfrak{g}}_i$-dominant tableau}
		if the image of $\widehat{\pi}_T$ is contained in $\widehat{C}_i$.
		\item We say that $T$ is a
		\textit{$\widehat{\mathfrak{g}}_{\geq i}$-dominant tableau}
		if $T$ is a $\widehat{\mathfrak{g}}_j$-dominant tableau for all $i \leq j \leq n-1$.
	\end{enumerate}
\end{de}
\begin{rem}
	A semistandard tableau $T \in \mathsf{SST}_{2n}$ is a $\widehat{\mathfrak{g}}$-dominant tableau
	if and only if $T$ is a $\widehat{\mathfrak{g}}_i$-dominant tableau for all $1 \leq i \leq n-1$.
\end{rem}

Recall that $\overline{i}=2n-i+1$.

\begin{lem}
	Let $T \in \mathsf{SST}_{2n}^{\text{$\widehat{\mathfrak{g}}$-$\mathrm{dom}$}}(\lambda)$
	be a $\widehat{\mathfrak{g}}$-dominant tableau.
	Then,
	$\mathsf{Res}_{i,i+1;\overline{i+1},\overline{i}}(T)$ is
	a $\widehat{\mathfrak{g}}_i$-dominant tableau
	for all $1 \leq i \leq n-1$.
\end{lem}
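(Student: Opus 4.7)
The plan is to reduce the statement to a two-dimensional analysis by projecting $\widehat{\pi}_T$ onto the plane spanned by $\widehat{\varepsilon}_i$ and $\widehat{\varepsilon}_{i+1}$. First, I would observe that for any semistandard tableau $S$ with entries in $[1, 2n]$, the $\widehat{\varepsilon}_i$- and $\widehat{\varepsilon}_{i+1}$-coordinates of $\widehat{\pi}_S(t)$ depend only on the subword of $w_c(S)$ obtained by keeping the letters in $\{i, i+1, \overline{i+1}, \overline{i}\}$, using that $\widehat{\varepsilon}_{\overline{j}} = -\widehat{\varepsilon}_j$. Consequently, $\widehat{\mathfrak{g}}_i$-dominance of $S$ is equivalent to the 2D ``path'' traced by this subword (reading each letter as $\pm\widehat{\varepsilon}_i$ or $\pm\widehat{\varepsilon}_{i+1}$) having its image in the cone $\{x_i \widehat{\varepsilon}_i + x_{i+1} \widehat{\varepsilon}_{i+1} : x_i \geq x_{i+1} \geq 0\}$.

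Second, the inverse column word of $T|_{i,i+1;\overline{i+1},\overline{i}}$ coincides with the aforementioned subword of $w_c(T)$, since removing boxes from $T$ preserves the reading order of the surviving boxes. Hence, the assumed $\widehat{\mathfrak{g}}$-dominance of $T$ (which, in particular, implies $\widehat{\mathfrak{g}}_i$-dominance) immediately ensures that the 2D path associated to the skew tableau $T|_{i,i+1;\overline{i+1},\overline{i}}$ has its image in the desired 2D cone.

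The main step, and the principal obstacle, is to show that rectification preserves this 2D cone condition, i.e., that the 2D path of $\mathsf{Rect}(T|_{i,i+1;\overline{i+1},\overline{i}})$ still stays in the cone. My plan is to analyze this along the jeu-de-taquin algorithm: each elementary slide swaps the entry at $(x_0,y_0)$ with an adjacent entry, which corresponds to a transposition of two adjacent letters in the (skew) inverse column word. I would then verify via a case analysis over the at-most-six possible unordered pairs drawn from $\{i, i+1, \overline{i+1}, \overline{i}\}$ that the semistandard condition on the local $2\times 2$ configuration triggering the slide forces this transposition to preserve the cone condition. This step is delicate because generic Knuth moves do not preserve path-dominance; the jeu-de-taquin moves, however, produce only those swaps that are compatible with the surrounding inequalities. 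As an alternative route, after relabeling $\{i, i+1, \overline{i+1}, \overline{i}\}$ as $\{1,2,3,4\}$, one may invoke the explicit description of $\widehat{\mathfrak{g}}$-dominant tableaux in Lemma \ref{6.1} together with Proposition \ref{n=2thm} to determine directly the form of $\mathsf{Rect}(T|_{i,i+1;\overline{i+1},\overline{i}})$ and to verify its dominance.
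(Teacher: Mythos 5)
Your overall strategy matches the paper's: restrict to the $2$D picture in the $\widehat{\varepsilon}_i,\widehat{\varepsilon}_{i+1}$ plane, observe that $T|_{i,i+1;\overline{i+1},\overline{i}}$ is $\widehat{\mathfrak{g}}_i$-dominant because its inverse column word is the relevant subword of $w_c(T)$, and then analyze the jeu-de-taquin slides one local move at a time.

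The gap is in your claimed key step, where you assert that ``the semistandard condition on the local $2\times 2$ configuration triggering the slide forces this transposition to preserve the cone condition.'' That is stronger than what is true. Of the four slide configurations that can actually occur (enumerated as (i)--(iv) in the paper), three of them --- the two where the entries below and to the right of $\bullet$ are equal, and the one with $\overline{i+1}$ below and $\overline{i}$ to the right, which is a vertical move --- leave the inverse column word literally unchanged, so dominance persists trivially. But the fourth, with $\overline{i}$ below $\bullet$ and $\overline{i+1}$ to the right, is a horizontal slide that genuinely changes the inverse column word (e.g.\ if the right column continues below the $\overline{i+1}$), and the local $2\times 2$ inequalities alone do not rule this out or make it harmless. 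The paper handles this case by invoking a \emph{global} structural fact about the shape of $T|_{i,i+1;\overline{i+1},\overline{i}}$: the $\overline{i+1}$'s occupy a single row, and the ``middle'' region $\Omega$ of the skew shape (the inside corners between the row of $i,i+1$'s at the top and the $\overline{i+1},\overline{i}$'s at the bottom) contains no $\overline{i+1}$. This forces the problematic slide to happen only directly beneath a column-pair whose top two rows are filled with $i$ and $i+1$, and under that constraint one checks that dominance survives. Your proposal has no mechanism to produce this global constraint; a purely local case analysis would leave this fourth case unresolved. Finally, the alternative you sketch --- relabeling as $\{1,2,3,4\}$ and reading off the form of the rectification from Lemma \ref{6.1} --- is circular, since Lemma \ref{6.1} classifies tableaux that are already known to be dominant, whereas the dominance of $\mathsf{Rect}(T|_{i,i+1;\overline{i+1},\overline{i}})$ is exactly what is at issue.
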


\begin{proof}
	Since $T$ is $\widehat{\mathfrak{g}}$-dominant,
	$T|_{i,i+1;\overline{i+1},\overline{i}}$
	is a $\widehat{\mathfrak{g}}_i$-dominant tableau for all $1 \leq i \leq n-1$. 
	Hence, 
	it suffices to show that
	$\widehat{\mathfrak{g}}_i$-dominance is preserved
	in the process of applying the jeu-de-taquin slide to
	$T|_{i,i+1;\overline{i+1},\overline{i}}$;
	here, 
	$T|_{i,i+1;\overline{i+1},\overline{i}}$
	is the tableau shown in Figure \ref{omega} below.\\
	\begin{figure}[th]
	\begin{center}
		\begin{tikzpicture}[x=5mm,y=5mm]
			\fill[fill=black!30] (0,-2)--(0,-10)--(5,-10)--(5,-9)--(6,-9)--(6,-8)--(20,-8)--(20,-5)--(24,-5)--(24,-3)--(28,-3)--(28,-1)--(18,-1)--(18,-2)--cycle;
			\draw (0,0)--(30,0)--(30,-1)--(18,-1)--(18,-2)--(0,-2)--cycle;
			\draw (0,-1)--(18,-1);
			\node (a) at (0.5,-0.5) {$i$};
			\node (b) at (29.5,-0.5) {$i$};
			\draw[dotted, line width=1pt] (1.2,-0.5)--(1.8,-0.5);
			\draw[dotted, line width=1pt] (28.2,-0.5)--(28.8,-0.5);
			\node (c) at (1.2,-1.5) {$i+1$};
			\node (d) at (16.8,-1.5) {$i+1$};
			\draw[dotted, line width=1pt] (2.4,-1.5)--(3,-1.5);
			\draw[dotted, line width=1pt] (15,-1.5)--(15.6,-1.5);
			\draw (23,-5)--(23,-6)--(20,-6);
			\node (e) at (22.5,-5.5) {$\overline{i}$};
			\node (f) at (20.5,-5.5) {$\overline{i}$};
			\draw[dotted, line width=1pt] (21.2,-5.5)--(21.8,-5.5);
			\draw (27,-3)--(27,-4)--(24,-4);
			\node (g) at (26.5,-3.5) {$\overline{i}$};
			\node (h) at (24.5,-3.5) {$\overline{i}$};
			\draw[dotted, line width=1pt] (25.2,-3.5)--(25.8,-3.5);
			\draw (28,-3)--(24,-3)--(24,-5)--(20,-5)--(20,-7);
			\draw (17,-8)--(16,-8)--(16,-9)--(13,-9)--(5,-9)--(5,-10);
			\node (i) at (15.5,-8.5) {$\overline{i}$};
			\node (j) at (13.5,-8.5) {$\overline{i}$};
			\draw[dotted, line width=1pt] (14.2,-8.5)--(14.8,-8.5);
			\draw (13,-9)--(13,-8);
			\node (k) at (11.7,-8.5) {$\overline{i+1}$};
			\node (l) at (7.25,-8.5) {$\overline{i+1}$};
			\draw[dotted, line width=1pt] (8.5,-8.5)--(9.1,-8.5);
			\draw[dotted, line width=1pt] (9.85,-8.5)--(10.45,-8.5);
			\draw (6,-9)--(6,-8)--(16,-8);
			\draw[line width=1pt, color=blue, line join=round] (21.5,-8.2)--(18.2,-8.2)--(18.2,-1.2)--(28.2,-1.2)--(28.2,-4.5)--cycle;
			\node at (25.5,-7) {\textcolor{blue}{$\Omega$}};
		\end{tikzpicture}
	\end{center}
	\caption{}\label{omega}
  \end{figure}

	We have the following possibilities (i)--(iv) for 
	the slides that occur during the process of applying
	the jeu-de-taquin slide to $T|_{i,i+1;\overline{i+1},\overline{i}}$: 
	\begin{center}
		\begin{tikzpicture}[x=11mm,y=11mm]
			\draw (0,0)--(1,0)--(1,1)--(2,1)--(2,2)--(0,2)--cycle;
			\draw (0,1)--(1,1);
			\draw (1,1)--(1,2);
			\node at (0.5,0.5) {$\overline{i+1}$};
			\node at (1.5,1.5) {$\overline{i+1}$};
			\node at (0.5,1.5) {$\bullet$};
			\draw[->] decorate[decoration=snake] {(2.5,1)--(4.5,1)};
			\node at (3.5,1.5) {jeu-de-taquin};
			
			\draw (5,0)--(6,0)--(6,1)--(7,1)--(7,2)--(5,2)--cycle;
			\draw (5,1)--(6,1);
			\draw (6,1)--(6,2);
			\node at (5.5,1.5) {$\overline{i+1}$};
			\node at (6.5,1.5) {$\overline{i+1}$};
			\node at (5.5,0.5) {$\bullet$};
			
			\node at (-2,1) {\textcolor{white}{(iii)}};
			\node at (-2,1) {(i)};
			\node at (-2,-0.2) {};
			\node at (9,0) {};
		\end{tikzpicture} \\
		\begin{tikzpicture}[x=11mm,y=11mm]
			\draw (0,0)--(1,0)--(1,1)--(2,1)--(2,2)--(0,2)--cycle;
			\draw (0,1)--(1,1);
			\draw (1,1)--(1,2);
			\node at (0.5,0.5) {$\overline{i}$};
			\node at (1.5,1.5) {$\overline{i}$};
			\node at (0.5,1.5) {$\bullet$};
			\draw[->] decorate[decoration=snake] {(2.5,1)--(4.5,1)};
			\node at (3.5,1.5) {jeu-de-taquin};
			
			\draw (5,0)--(6,0)--(6,1)--(7,1)--(7,2)--(5,2)--cycle;
			\draw (5,1)--(6,1);
			\draw (6,1)--(6,2);
			\node at (5.5,1.5) {$\overline{i}$};
			\node at (6.5,1.5) {$\overline{i}$};
			\node at (5.5,0.5) {$\bullet$};
			
			\node at (-2,1) {\textcolor{white}{(iii)}};
			\node at (-2,1) {(ii)};
			\node at (-2,-0.2) {};
			\node at (9,0) {};
		\end{tikzpicture} \\
		\begin{tikzpicture}[x=11mm,y=11mm]
			\draw (0,0)--(1,0)--(1,1)--(2,1)--(2,2)--(0,2)--cycle;
			\draw (0,1)--(1,1);
			\draw (1,1)--(1,2);
			\node at (0.5,0.5) {$\overline{i+1}$};
			\node at (1.5,1.5) {$\overline{i}$};
			\node at (0.5,1.5) {$\bullet$};
			\draw[->] decorate[decoration=snake] {(2.5,1)--(4.5,1)};
			\node at (3.5,1.5) {jeu-de-taquin};
			
			\draw (5,0)--(6,0)--(6,1)--(7,1)--(7,2)--(5,2)--cycle;
			\draw (5,1)--(6,1);
			\draw (6,1)--(6,2);
			\node at (5.5,1.5) {$\overline{i+1}$};
			\node at (6.5,1.5) {$\overline{i}$};
			\node at (5.5,0.5) {$\bullet$};
			
			\node at (-2,1) {(iii)};
			\node at (-2,-0.2) {};
			\node at (9,0) {};
		\end{tikzpicture} \\
		\begin{tikzpicture}[x=11mm,y=11mm]
			\draw (0,0)--(1,0)--(1,1)--(2,1)--(2,2)--(0,2)--cycle;
			\draw (0,1)--(1,1);
			\draw (1,1)--(1,2);
			\node at (0.5,0.5) {$\overline{i}$};
			\node at (1.5,1.5) {$\overline{i+1}$};
			\node at (0.5,1.5) {$\bullet$};
			\draw[->] decorate[decoration=snake] {(2.5,1)--(4.5,1)};
			\node at (3.5,1.5) {jeu-de-taquin};
			
			\draw (5,0)--(6,0)--(6,1)--(7,1)--(7,2)--(5,2)--cycle;
			\draw (5,1)--(6,1);
			\draw (6,1)--(6,2);
			\node at (5.5,0.5) {$\overline{i}$};
			\node at (5.5,1.5) {$\overline{i+1}$};
			\node at (6.5,1.5) {$\bullet$};
			
			\node at (-2,1) {\textcolor{white}{(iii)}};
			\node at (-2,1) {(iv)};
			\node at (9,0) {};
		\end{tikzpicture}
	\end{center}
	In cases (i), (ii), and (iii),
	the column word is preserved by the jeu-de-taquin slide, and hence 
	$\widehat{\mathfrak{g}}_k$-dominance is also preserved.
	Now, let us 
	consider case (iv).
	Since the area $\Omega$ in Figure \ref{omega}
	does not contain $\boxed{\overline{i+1}}$, case 
	(iv) occurs only in either of cases (a) and (b) below;
	in either case, it is easily verified that 
	$\widehat{\mathfrak{g}}$-dominance is preserved.
	This proves the lemma.\\
	\begin{tikzpicture}[x=11mm,y=11mm]
			\draw (0,0)--(1,0)--(1,1)--(2,1)--(2,2)--(0,2)--cycle;
			\draw (0,1)--(1,1);
			\draw (1,1)--(1,2);
			\node at (0.5,0.5) {$\overline{i}$};
			\node at (1.5,1.5) {$\overline{i+1}$};
			\node at (0.5,1.5) {$\bullet$};
			\draw (0,2)--(0,2.5);
			\draw (0,4)--(0,3.5);
			\draw[dashed] (0,2.5)--(0,3.5);
			\draw (1,2)--(1,2.5);
			\draw (1,4)--(1,3.5);
			\draw[dashed] (1,2.5)--(1,3.5);
			\draw (2,2)--(2,2.5);
			\draw (2,4)--(2,3.5);
			\draw[dashed] (2,2.5)--(2,3.5);
			\draw (0,4)--(0,6)--(2,6)--(2,4)--cycle;;
			\draw (0,5)--(2,5);
			\draw (1,4)--(1,6);
			\node at (0.5,4.5) {$i+1$};
			\node at (1.5,4.5) {$i+1$};
			\node at (0.5,5.5) {$i$};
			\node at (1.5,5.5) {$i$};
			\draw[->] decorate[decoration=snake] {(2.5,3)--(4.5,3)};
			\node at (3.5,3.5) {jeu-de-taquin};
			
			\draw (5,0)--(6,0)--(6,1)--(7,1)--(7,2)--(5,2)--cycle;
			\draw (5,1)--(6,1);
			\draw (6,1)--(6,2);
			\node at (5.5,0.5) {$\overline{i}$};
			\node at (5.5,1.5) {$\overline{i+1}$};
			\node at (6.5,1.5) {$\bullet$};
			\draw (5,2)--(5,2.5);
			\draw (5,4)--(5,3.5);
			\draw[dashed] (5,2.5)--(5,3.5);
			\draw (6,2)--(6,2.5);
			\draw (6,4)--(6,3.5);
			\draw[dashed] (6,2.5)--(6,3.5);
			\draw (7,2)--(7,2.5);
			\draw (7,4)--(7,3.5);
			\draw[dashed] (7,2.5)--(7,3.5);
			\draw (5,4)--(5,6)--(7,6)--(7,4)--cycle;
			\draw (5,5)--(7,5);
			\draw (6,4)--(6,6);
			\node at (5.5,4.5) {$i+1$};
			\node at (6.5,4.5) {$i+1$};
			\node at (5.5,5.5) {$i$};
			\node at (6.5,5.5) {$i$};
			
			\node at (-2,3) {\textcolor{white}{(b)}};
			\node at (-2,3) {(a)};
			\node at (9,0) {};
			\node at (-3.6,0) {};
			\node at (-2,-0.2) {};
	\end{tikzpicture} \\
	\begin{tikzpicture}[x=11mm,y=11mm]
			\draw (0,0)--(1,0)--(1,1)--(2,1)--(2,2)--(0,2)--cycle;
			\draw (0,1)--(1,1);
			\draw (1,1)--(1,2);
			\node at (0.5,0.5) {$\overline{i}$};
			\node at (1.5,1.5) {$\overline{i+1}$};
			\node at (0.5,1.5) {$\bullet$};
			\node at (1.5,0.5) {$\overline{i}$};
			\draw (1,0)--(2,0)--(2,1);
			\draw (0,2)--(0,2.5);
			\draw (0,4)--(0,3.5);
			\draw[dashed] (0,2.5)--(0,3.5);
			\draw (1,2)--(1,2.5);
			\draw (1,4)--(1,3.5);
			\draw[dashed] (1,2.5)--(1,3.5);
			\draw (2,2)--(2,2.5);
			\draw (2,4)--(2,3.5);
			\draw[dashed] (2,2.5)--(2,3.5);
			\draw (0,4)--(0,6)--(2,6)--(2,4)--cycle;;
			\draw (0,5)--(2,5);
			\draw (1,4)--(1,6);
			\node at (0.5,4.5) {$i+1$};
			\node at (1.5,4.5) {$i+1$};
			\node at (0.5,5.5) {$i$};
			\node at (1.5,5.5) {$i$};
			\draw[->] decorate[decoration=snake] {(2.5,3)--(4.5,3)};
			\node at (3.5,3.5) {jeu-de-taquin};
			
			\draw (5,0)--(6,0)--(6,1)--(7,1)--(7,2)--(5,2)--cycle;
			\draw (5,1)--(6,1);
			\draw (6,1)--(6,2);
			\node at (5.5,0.5) {$\overline{i}$};
			\node at (5.5,1.5) {$\overline{i+1}$};
			\node at (6.5,1.5) {$\bullet$};
			\node at (6.5,0.5) {$\overline{i}$};
			\draw (6,0)--(7,0)--(7,1);
			\draw (5,2)--(5,2.5);
			\draw (5,4)--(5,3.5);
			\draw[dashed] (5,2.5)--(5,3.5);
			\draw (6,2)--(6,2.5);
			\draw (6,4)--(6,3.5);
			\draw[dashed] (6,2.5)--(6,3.5);
			\draw (7,2)--(7,2.5);
			\draw (7,4)--(7,3.5);
			\draw[dashed] (7,2.5)--(7,3.5);
			\draw (5,4)--(5,6)--(7,6)--(7,4)--cycle;
			\draw (5,5)--(7,5);
			\draw (6,4)--(6,6);
			\node at (5.5,4.5) {$i+1$};
			\node at (6.5,4.5) {$i+1$};
			\node at (5.5,5.5) {$i$};
			\node at (6.5,5.5) {$i$};
			
			\node at (-2,3) {\textcolor{white}{(a)}};
			\node at (-2,3) {(b)};
			\node at (-3.6,0) {};
			\node at (9,0) {};
	\end{tikzpicture}
\end{proof}

\begin{lem}\label{gLGP}
	Let $T \in \mathsf{SST}_{2n}(\lambda)$ be a semistandard tableau.
	If 
	$\mathsf{Res}_{1,2;\overline{2},\overline{1}}(T)$
	is a $\widehat{\mathfrak{g}}_1$-dominant tableau and
	$T|_{2,\overline{2}}$
	is a $\widehat{\mathfrak{g}}_{\geq2}$-dominant tableau,
	then $T$ is a $\widehat{\mathfrak{g}}$-dominant tableau.
\end{lem}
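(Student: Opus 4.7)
The plan is to verify $\widehat{\mathfrak{g}}_i$-dominance of $T$ separately for $i=1$ and for $i \geq 2$, and for each to reduce the condition to one of the given hypotheses.

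First I would handle the range $i \geq 2$. The key observation is that the coroot $\widehat{h}_i$ pairs trivially with $\widehat{\varepsilon}_1 = -\widehat{\varepsilon}_{\overline{1}}$ for every $i \geq 2$. Consequently, along any subinterval $[t_{k-1}, t_k]$ of $[0,1]_{\mathbb{R}}$ corresponding to an entry of $T$ equal to $1$ or to $\overline{1}$, the function $t \mapsto \langle \widehat{\pi}_T(t), \widehat{h}_i \rangle$ is constant; deleting these subintervals and reparameterizing the time gives precisely the graph of $\langle \widehat{\pi}_{T|_{2,\overline{2}}}(t), \widehat{h}_i \rangle$. In particular these two piecewise-linear functions attain the same minimum, so the $\widehat{\mathfrak{g}}_{\geq 2}$-dominance of $T|_{2,\overline{2}}$ transfers to the $\widehat{\mathfrak{g}}_{\geq 2}$-dominance of $T$.

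Next I would treat the $i=1$ case by the same reduction idea followed by a jeu-de-taquin analysis. Since $\widehat{h}_1$ pairs nontrivially only with $\widehat{\varepsilon}_1, \widehat{\varepsilon}_2$ (and thereby with $\widehat{\varepsilon}_{\overline{1}}, \widehat{\varepsilon}_{\overline{2}}$), the $\widehat{\mathfrak{g}}_1$-dominance of $T$ depends only on the subword of its inverse column word consisting of entries in $\{1,2,\overline{2},\overline{1}\}$. Extending the notion of $\widehat{\mathfrak{g}}_1$-dominance to skew tableaux by the same condition on the inverse column word, the exact same argument as in the previous paragraph shows that $T$ is $\widehat{\mathfrak{g}}_1$-dominant if and only if the skew tableau $T|_{1,2;\overline{2},\overline{1}}$ is. Since $\mathsf{Res}_{1,2;\overline{2},\overline{1}}(T) = \mathsf{Rect}(T|_{1,2;\overline{2},\overline{1}})$, it therefore suffices to show that jeu-de-taquin rectification preserves $\widehat{\mathfrak{g}}_1$-dominance, which I would do by checking it for each individual slide.

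The analysis of a single slide is a case check on the four-letter alphabet $\{1,2,\overline{2},\overline{1}\}$ analogous to, but now running in both directions of, the case-by-case analysis in the proof of Lemma~\ref{7.7} (or rather Lemma 7.7 above). In the vast majority of local configurations the relevant slide exchanges $\bullet$ with a neighboring entry without altering the cyclic order of the four letters in the inverse column word, and hence preserves dominance trivially. The only genuinely nontrivial slide is the analog of case (iv) of Lemma~\ref{7.7}, where a swap of two different entries in the upper row actually changes the inverse column word; here the semistandardness of $T$, together with the precise pattern of $\{1,2,\overline{2},\overline{1}\}$-entries that can appear in the neighboring boxes of $T|_{1,2;\overline{2},\overline{1}}$, forces the dominance to be preserved in both directions. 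The main obstacle is precisely this bookkeeping for the nontrivial slides: unlike Lemma~\ref{7.7} where we assumed global $\widehat{\mathfrak{g}}$-dominance of a Young-shaped $T$ from the outset, here we need a two-sided implication valid for arbitrary skew intermediate shapes arising during rectification, so the case analysis must be done carefully and exhaustively.
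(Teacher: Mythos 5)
Your overall decomposition is the same as the paper's: handle $\widehat{\mathfrak{g}}_i$-dominance for $i\geq2$ directly from the hypothesis on $T|_{2,\overline{2}}$, and for $i=1$ reduce to showing that the rectification step from $T|_{1,2;\overline{2},\overline{1}}$ to $\mathsf{Res}_{1,2;\overline{2},\overline{1}}(T)$ transports $\widehat{\mathfrak{g}}_1$-dominance backward through the jeu-de-taquin slides. The paper indeed proceeds this way and the nontrivial slide you isolate (the analogue of case (iv) in the preceding lemma, where a $\overline{2}$ swaps past a $\bullet$) is exactly the one the paper must examine.

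However, there is a genuine gap in how you propose to close that $i=1$ case. Your opening sentence commits to ``reduce the condition to one of the given hypotheses'' for each $i$, so your $i=1$ argument is meant to use only the hypothesis that $\mathsf{Res}_{1,2;\overline{2},\overline{1}}(T)$ is $\widehat{\mathfrak{g}}_1$-dominant. You then assert that for the nontrivial slide, ``the semistandardness of $T$, together with the precise pattern of $\{1,2,\overline{2},\overline{1}\}$-entries that can appear in the neighboring boxes of $T|_{1,2;\overline{2},\overline{1}}$, forces the dominance to be preserved in both directions.'' This is where the argument comes apart: semistandardness of $T$ alone does not pin down the neighboring entries enough, and the backward implication for the nontrivial slide is simply not valid for an arbitrary skew tableau over the four-letter alphabet. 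The paper's proof does not argue this way; instead it invokes \emph{both} hypotheses at once at exactly this juncture (``Since $\mathsf{Res}_{1,2;\overline{2},\overline{1}}(T)$ \emph{and} $T|_{2,\overline{2}}$ are $\widehat{\mathfrak{g}}_{\geq2}$-dominant, the slide above occurs only in either of cases (a) and (b)''), and only in those two tightly constrained local patterns does the backward preservation of $\widehat{\mathfrak{g}}_1$-dominance hold. So the hypothesis on $T|_{2,\overline{2}}$ is load-bearing in the $i=1$ part as well, and your plan to use just one hypothesis per $i$ will not go through without adding this ingredient. A secondary, smaller point: you only need the backward implication of rectification here; insisting on a two-sided implication ``for arbitrary skew intermediate shapes'' sets up a harder target than necessary and is part of what makes the case analysis look intractable without the extra constraint.
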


\begin{proof}
	It suffices to show that
	$T|_{1,2;\overline{2},\overline{1}}$
	is a $\widehat{\mathfrak{g}}_1$-dominant tableau.
	By the same reasoning as in the proof of the previous lemma,
	we are reduced to showing that
	if a tableau becomes $\widehat{\mathfrak{g}}_1$-dominant
	after the following slide,
	then the original tableau before the slide is also $\widehat{\mathfrak{g}}_1$-dominant.\\
	\begin{center}
		\begin{tikzpicture}[x=11mm,y=11mm]
			\draw (0,0)--(1,0)--(1,1)--(2,1)--(2,2)--(0,2)--cycle;
			\draw (0,1)--(1,1);
			\draw (1,1)--(1,2);
			\node at (0.5,0.5) {$\overline{1}$};
			\node at (1.5,1.5) {$\overline{2}$};
			\node at (0.5,1.5) {$\bullet$};
			\draw[->] decorate[decoration=snake] {(2.5,1)--(4.5,1)};
			\node at (3.5,1.5) {jeu-de-taquin};
			
			\draw (5,0)--(6,0)--(6,1)--(7,1)--(7,2)--(5,2)--cycle;
			\draw (5,1)--(6,1);
			\draw (6,1)--(6,2);
			\node at (5.5,0.5) {$\overline{1}$};
			\node at (5.5,1.5) {$\overline{2}$};
			\node at (6.5,1.5) {$\bullet$};
			
			\node at (-2,1) {\textcolor{white}{(iii)}};
			\node at (9,0) {};
		\end{tikzpicture}
	\end{center}
	Since
	$\mathsf{Res}_{1,2;\overline{2},\overline{1}}(T)$ and
	$T|_{2,\overline{2}}$ are $\widehat{\mathfrak{g}}_{\geq2}$-dominant,
	the slide above occurs only in either of cases (a) and (b) below.
	\begin{center}
	\begin{tikzpicture}[x=11mm,y=11mm]
			\draw (0,0)--(1,0)--(1,1)--(2,1)--(2,2)--(0,2)--cycle;
			\draw (0,1)--(1,1);
			\draw (1,1)--(1,2);
			\node at (0.5,0.5) {$\overline{1}$};
			\node at (1.5,1.5) {$\overline{2}$};
			\node at (0.5,1.5) {$\bullet$};
			\draw (0,2)--(0,2.5);
			\draw (0,4)--(0,3.5);
			\draw[dashed] (0,2.5)--(0,3.5);
			\draw (1,2)--(1,2.5);
			\draw (1,4)--(1,3.5);
			\draw[dashed] (1,2.5)--(1,3.5);
			\draw (2,2)--(2,2.5);
			\draw (2,4)--(2,3.5);
			\draw[dashed] (2,2.5)--(2,3.5);
			\draw (0,4)--(0,6)--(2,6)--(2,4)--cycle;;
			\draw (0,5)--(2,5);
			\draw (1,4)--(1,6);
			\node at (0.5,4.5) {$2$};
			\node at (1.5,4.5) {$2$};
			\node at (0.5,5.5) {$1$};
			\node at (1.5,5.5) {$1$};
			\draw[->] decorate[decoration=snake] {(2.5,3)--(4.5,3)};
			\node at (3.5,3.5) {jeu-de-taquin};
			
			\draw (5,0)--(6,0)--(6,1)--(7,1)--(7,2)--(5,2)--cycle;
			\draw (5,1)--(6,1);
			\draw (6,1)--(6,2);
			\node at (5.5,0.5) {$\overline{1}$};
			\node at (5.5,1.5) {$\overline{2}$};
			\node at (6.5,1.5) {$\bullet$};
			\draw (5,2)--(5,2.5);
			\draw (5,4)--(5,3.5);
			\draw[dashed] (5,2.5)--(5,3.5);
			\draw (6,2)--(6,2.5);
			\draw (6,4)--(6,3.5);
			\draw[dashed] (6,2.5)--(6,3.5);
			\draw (7,2)--(7,2.5);
			\draw (7,4)--(7,3.5);
			\draw[dashed] (7,2.5)--(7,3.5);
			\draw (5,4)--(5,6)--(7,6)--(7,4)--cycle;
			\draw (5,5)--(7,5);
			\draw (6,4)--(6,6);
			\node at (5.5,4.5) {$2$};
			\node at (6.5,4.5) {$2$};
			\node at (5.5,5.5) {$1$};
			\node at (6.5,5.5) {$1$};
			
			\node at (-2,3) {\textcolor{white}{(b)}};
			\node at (-2,3) {(a)};
			\node at (9,0) {};
			\node at (-2,-0.2) {};
		\end{tikzpicture} \\
		\begin{tikzpicture}[x=11mm,y=11mm]
			\draw (0,0)--(1,0)--(1,1)--(2,1)--(2,2)--(0,2)--cycle;
			\draw (0,1)--(1,1);
			\draw (1,1)--(1,2);
			\node at (0.5,0.5) {$\overline{1}$};
			\node at (1.5,1.5) {$\overline{2}$};
			\node at (0.5,1.5) {$\bullet$};
			\node at (1.5,0.5) {$\overline{1}$};
			\draw (1,0)--(2,0)--(2,1);
			\draw (0,2)--(0,2.5);
			\draw (0,4)--(0,3.5);
			\draw[dashed] (0,2.5)--(0,3.5);
			\draw (1,2)--(1,2.5);
			\draw (1,4)--(1,3.5);
			\draw[dashed] (1,2.5)--(1,3.5);
			\draw (2,2)--(2,2.5);
			\draw (2,4)--(2,3.5);
			\draw[dashed] (2,2.5)--(2,3.5);
			\draw (0,4)--(0,6)--(2,6)--(2,4)--cycle;;
			\draw (0,5)--(2,5);
			\draw (1,4)--(1,6);
			\node at (0.5,4.5) {$2$};
			\node at (1.5,4.5) {$2$};
			\node at (0.5,5.5) {$1$};
			\node at (1.5,5.5) {$1$};
			\draw[->] decorate[decoration=snake] {(2.5,3)--(4.5,3)};
			\node at (3.5,3.5) {jeu-de-taquin};
			
			\draw (5,0)--(6,0)--(6,1)--(7,1)--(7,2)--(5,2)--cycle;
			\draw (5,1)--(6,1);
			\draw (6,1)--(6,2);
			\node at (5.5,0.5) {$\overline{1}$};
			\node at (5.5,1.5) {$\overline{2}$};
			\node at (6.5,1.5) {$\bullet$};
			\node at (6.5,0.5) {$\overline{1}$};
			\draw (6,0)--(7,0)--(7,1);
			\draw (5,2)--(5,2.5);
			\draw (5,4)--(5,3.5);
			\draw[dashed] (5,2.5)--(5,3.5);
			\draw (6,2)--(6,2.5);
			\draw (6,4)--(6,3.5);
			\draw[dashed] (6,2.5)--(6,3.5);
			\draw (7,2)--(7,2.5);
			\draw (7,4)--(7,3.5);
			\draw[dashed] (7,2.5)--(7,3.5);
			\draw (5,4)--(5,6)--(7,6)--(7,4)--cycle;
			\draw (5,5)--(7,5);
			\draw (6,4)--(6,6);
			\node at (5.5,4.5) {$2$};
			\node at (6.5,4.5) {$2$};
			\node at (5.5,5.5) {$1$};
			\node at (6.5,5.5) {$1$};
			
			\node at (-2,3) {\textcolor{white}{(a)}};
			\node at (-2,3) {(b)};
			\node at (9,0) {};
			\node at (-2,-0.2) {};
		\end{tikzpicture}
		\end{center}
	In either case, it is easily verified that 
	if a tableau becomes 
	$\widehat{\mathfrak{g}}_1$-dominant after the slide above,
	then the original tableau before the slide is also
	$\widehat{\mathfrak{g}}_1$-dominant.
	Therefore,
	$T|_{1,2;\overline{2},\overline{1}}$
	is $\widehat{\mathfrak{g}}_1$-dominant
	since $\widehat{\mathfrak{g}}_1$-dominance is preserved
	in the process of getting back from $\mathsf{Res}_{1,2;\overline{2},\overline{1}}(T)$ to $T|_{1,2;\overline{2},\overline{1}}$.   
	This is what we had to prove. This proves the lemma.
\end{proof}

\subsection{The local-global principle for $\mathfrak{k}$-highest weight tableaux}
In this subsection,
$J$ denotes either of the following two subsets of $I$:
\begin{equation}
	J = \left\{1,2,3\right\},
	\left\{3,4,5,\ldots,2n-1\right\}.
\end{equation}
\begin{center}
	\begin{tikzpicture}[x=5mm,y=5mm]
		\draw (0,-4)--(2,-4);
		\draw (2,-4)--(4,-4);
		\draw (4,-4)--(6,-4);
		\draw (6,-4)--(8,-4);
		\draw[dashed,line width=0.5pt] (8,-4)--(10,-4);
		\draw (10,-4)--(12,-4);
		\draw (12,-4)--(14,-4);
		\draw[fill=white] (0,-4) circle[radius=3.5pt];
		\draw[fill=white] (2,-4) circle[radius=3.5pt];
		\draw[fill=white] (4,-4) circle[radius=3.5pt];
		\draw[fill=white] (6,-4) circle[radius=3.5pt];
		\draw[fill=white] (12,-4) circle[radius=3.5pt];
		\draw[fill=white] (14,-4) circle[radius=3.5pt];
		\node (l) at (0,-4.7) {\scriptsize $1$};
		\node (m) at (2,-4.7) {\scriptsize $2$};
		\node (n) at (4,-4.7) {\scriptsize $3$};
		\node (o) at (6,-4.7) {\scriptsize $4$};
		\node (p) at (12,-4.7) {\scriptsize $2n-2$};
		\node (q) at (14,-4.7) {\scriptsize $2n-1$};
		\draw[dashed, color=blue, line width=1pt] (2,-4) circle[x radius=3,y radius=1];
		\draw[dashed, color=blue, line width=1pt] (9,-4) circle[x radius=6,y radius=1];
	\end{tikzpicture}
\end{center}
Let $\mathbf{U}_J$ denote the Levi subalgebra of $\mathbf{U}$
generated by $E_i,F_i,K_{\pm h_i}$, $i \in J$, and let 
$\mathbf{U}^{\imath}_J$ denote the $\mathbb{C}(q)$-subalgebra of
$\mathbf{U}^{\imath}$ generated by
$E_i,F_i$, $K_{\pm h_i}$, $i \in J \cap I_{\bullet}$, and $B_j,A_j$, $j \in J \cap I_{\circ}$.
We set
\begin{equation}
	\begin{aligned}
		& P^{\vee}_J := \sum_{j \in J} \mathbb{Z}h_j,\quad
		P_J := \mathrm{Hom}_{\mathbb{Z}}\left(P^{\vee}_J,\mathbb{Z}\right), \\
		& P_J^+ := \setlr{\nu \in P_J}{\langle\nu,h_j\rangle \in \mathbb{Z}_{\geq 0}, \,\, j \in J}.
	\end{aligned}
\end{equation}
Also, let $L^J_q(\nu)$ denote the irreducible highest weight module of highest weight $\nu \in P^+_J$
over $\mathbf{U}_J$.
For each $T \in \mathsf{SST}_{2n}(\lambda)$, let 
$\mathsf{wt}_J(T)$ denote the image of $\mathsf{wt}(T) \in P$
under the natural restriction map $P \to P_J$.
In the following,
we write $\mathsf{Res}_J$ for $\mathsf{Res}_{1,4}$ or $\mathsf{Res}_{3,\overline{1}}$, 
and write $\,\cdot\,\,\,|_{J}$ for $\,\cdot\,\,\,|_{1,4}$ or $\,\cdot\,\,\,|_{3,\overline{1}}$.
We also denote by $\mathsf{SST}_J$ the set of all semistandard tableau of normal shape
with all entries in $\{1,\ldots,4\}$ or $\{3,\ldots,2n\}$. 
\begin{lem}
	There exists an isomorphism of $\mathbf{U}_J$-modules: 
	\begin{equation}
		\mathsf{Res}_{\lambda} : \mathsf{Res}^{\mathbf{U}}_{\mathbf{U}_{J}} L_q(\lambda) \stackrel{\sim}{\longrightarrow} \bigoplus_{\substack{T \in \mathsf{SST}_{2n}(\lambda) \\ \widetilde{e}_j T \,=\, 0\, (j \in J)}} L^{J}_q\left(\mathsf{wt}_{J}(T)\right)
	\end{equation}
	that sends
	$b_S$ for $S \in \mathsf{SST}_{2n}(\lambda)$ to
	$b_{\mathsf{Res}_{J}(S)}$ at $q=\infty$.
\end{lem}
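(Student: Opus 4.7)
The plan is to deduce this lemma from the standard Levi-type branching together with the well-known compatibility between rectification (jeu-de-taquin) and crystal operators indexed by $J$. First, I would invoke the general fact that $\mathsf{Res}^{\mathbf{U}}_{\mathbf{U}_J} L_q(\lambda)$ is completely reducible and decomposes into irreducible $\mathbf{U}_J$-modules, with the $\mathbf{U}_J$-highest weight vectors corresponding, at $q=\infty$, to elements $b_T \in \mathcal{B}(\lambda)$ satisfying $\widetilde{E}_j b_T = 0$ for all $j \in J$. Under the identification $\mathcal{B}(\lambda) \simeq \mathsf{SST}_{2n}(\lambda)$, these are precisely the tableaux $T$ with $\widetilde{e}_j T = 0$ for all $j \in J$, and the weight of the corresponding highest weight module is $\mathsf{wt}_J(T)$. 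This immediately gives the decomposition on the right-hand side at the level of characters and as abstract $\mathbf{U}_J$-modules.

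The second step is to verify that the map $\mathsf{Res}_J : \mathsf{SST}_{2n}(\lambda) \to \bigsqcup \mathsf{SST}_J$, viewed as an operation at $q=\infty$, is compatible with the $J$-crystal structure. Concretely, I would show that for every $j \in J$ and every $S \in \mathsf{SST}_{2n}(\lambda)$,
\begin{equation}
\widetilde{e}_j \mathsf{Res}_J(S) = \mathsf{Res}_J(\widetilde{e}_j S), \qquad \widetilde{f}_j \mathsf{Res}_J(S) = \mathsf{Res}_J(\widetilde{f}_j S), \qquad \mathsf{wt}_J(\mathsf{Res}_J(S)) = \mathsf{wt}_J(S),
\end{equation}
where we set $\mathsf{Res}_J(0)=0$. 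Passage to $S|_J$ clearly commutes with $\widetilde{e}_j, \widetilde{f}_j$ for $j\in J$ (the signature rule of Example 4.7 only sees the letters in the relevant interval), so the content reduces to showing that the rectification step $\mathsf{Rect}$ commutes with $\widetilde{e}_j, \widetilde{f}_j$. This is a classical fact following from the invariance of Knuth equivalence under a single jeu-de-taquin slide, combined with the fact that the crystal operators on $\mathsf{SST}$ are defined via row words and hence depend only on the Knuth class.

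Having established the compatibility, the fibers of $\mathsf{Res}_J$ over each $T$ satisfying $\widetilde{e}_j T = 0$ ($j\in J$) are exactly the $J$-connected components of $\mathcal{B}(\lambda)$ containing $T$, and each such component is isomorphic as a $\mathbf{U}_J$-seminormal crystal to the crystal of $L^J_q(\mathsf{wt}_J(T))$ via $S \mapsto \mathsf{Res}_J(S)$. To upgrade this crystal-level statement to a genuine $\mathbf{U}_J$-module isomorphism $\mathsf{Res}_\lambda$, I would define it on a $\mathbf{U}_J$-highest weight vector $G(b_T)$ in each isotypic component by sending it to a fixed highest weight vector of $L^J_q(\mathsf{wt}_J(T))$, then extend uniquely by the action of $\mathbf{U}_J$; that this extension sends $G(b_S)$ to $G^J(b_{\mathsf{Res}_J(S)})$ modulo $q^{-1}$ follows from the crystal-level compatibility just established together with the characterization of global bases via the involution $\overline{\,\cdot\,}$.

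The main obstacle is the middle step, namely the proof that rectification commutes with the Kashiwara operators $\widetilde{e}_j, \widetilde{f}_j$ on $\mathsf{SST}_J$. Although this is ``well-known'', its clean statement in the present setting requires carefully keeping track of how a single jeu-de-taquin slide alters the row reading word and then invoking (or reproving) the fact that the signature rule is a Knuth-class invariant. Once that compatibility is in hand, the rest of the argument is a routine assembly from the classification of irreducible classical weight $\mathbf{U}_J$-modules and the uniqueness of global crystal bases.
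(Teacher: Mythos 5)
Your proposal is correct and takes essentially the same approach as the paper. Both arguments first invoke the Levi-type branching to get the abstract decomposition, then identify the induced map on crystal bases with $\mathsf{Res}_J$ by exploiting the Knuth-class invariance of the Kashiwara operators $\widetilde{e}_j,\widetilde{f}_j$ (Kwon's theorem) together with the fact that each Knuth class contains a unique semistandard tableau; the paper merely organizes this in the opposite logical order, starting from the module isomorphism and reducing to the $J$-highest-weight case, while you prove the crystal commutativity first and then assemble.
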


\begin{proof}
	By the branching rule for Levi subalgebras,
	we have an isomorphism of $\mathbf{U}_{J}$-modules:
	\begin{equation}
		\psi : \mathsf{Res}^{\mathbf{U}}_{\mathbf{U}_J} L_q(\lambda) \stackrel{\sim}{\longrightarrow} \bigoplus_{\substack{T \in \mathsf{SST}_{2n}(\lambda) \\ \widetilde{e}_j T \,=\, 0\, (j \in J)}} L^{J}_q\left(\mathsf{wt}_{J}(T)\right).
	\end{equation}
	For each $S\in\mathsf{SST}_{2n}(\lambda)$,
	there exists $T_S \in \mathsf{SST}_{J}$
	such that $\psi(b_S)=b_{T_S}$ at $q=\infty$
	by the uniqueness of crystal basis.
	We will show that
	$T_S = \mathsf{Res}_J(S)$.
	
	First, we consider the case that
	$\widetilde{e}_j S =0$ for all $j \in J$.
	In this case, we have 
	$\mathsf{wt}_{J}(T_S) = \mathsf{wt}_{J}(S)$, and 
	$\widetilde{e}_j T_S =0$ for all $j \in J$.
	Also, we have 
	$\mathsf{wt}_{J}\left(\mathsf{Res}_{J}(S)\right) = \mathsf{wt}_{J}(S)$.
	In addition, 
	since
	$w_{r}\left(\mathsf{Res}_{J}(S)\right) \cong_{\mathsf{K}} w_{r}(S|_{J})$,
	we see that
	$ \widetilde{e}_j w_{r}\left(\mathsf{Res}_{J}(S)\right) = 0 $
	for all $j \in J$ by \cite[Theorem 4.2]{Kwo}.
	Therefore, we deduce that 
	$\widetilde{e}_j \mathsf{Res}_{J}(S) =0$ for all $j \in J$.
	Hence it follows that $T_S = \mathsf{Res}_{J}(S)$.

	Next, we consider the case that
	there exists $j \in J$ such that $\widetilde{e}_j\,S \neq0$.
	In this case, there exist
	$r\in\mathbb{N}, j_1,\dots,j_r\in J, S'\in\mathsf{SST}_{2n}(\lambda)$
	such that
	\begin{equation}
		\begin{aligned}
			 & S = \widetilde{f}_{j_1}\cdots\widetilde{f}_{j_r} S', & 
			 & \widetilde{e}_j S' =0, \,\, j \in J.
		\end{aligned}
	\end{equation}
	Since we have $\widetilde{e}_j\,S'=0, \,\, j \in J$,
	we see that
	$T_S = \widetilde{f}_{j_1}\cdots\widetilde{f}_{j_r} T_S' = \widetilde{f}_{j_1}\cdots\widetilde{f}_{j_r} \mathsf{Res}_{J}(S')$. 
	Hence it follows that  
	\begin{equation}
		\begin{aligned}
			w_{r}\left(T_S\right)
			= w_{r}\left(\widetilde{f}_{j_1}\cdots\widetilde{f}_{j_r} \mathsf{Res}_{J}(S')\right).
		\end{aligned}
	\end{equation}
	Also, 
	by \cite[Theorem 4.2]{Kwo}, we have
	\begin{equation}
		\begin{aligned}
			\widetilde{f}_{j_1}\cdots\widetilde{f}_{j_r}w_{r}\left(\mathsf{Res}_{J}(S')\right)
			\cong_{\mathsf{K}} \widetilde{f}_{j_1}\cdots\widetilde{f}_{j_r} w_{r}\left(S'|_{J}\right). \label{7.9}
		\end{aligned}
	\end{equation}
	The left-hand side of (\ref{7.9}) can be written as: 
	\begin{equation}
		\begin{aligned}
			\widetilde{f}_{j_1}\cdots\widetilde{f}_{j_r} w_{r}\left(S'|_{J}\right)
			=w_{r}\left(\widetilde{f}_{j_1}\cdots\widetilde{f}_{j_r}(S'|_{J})\right)
			=w_{r}\left(S|_{J}\right).
		\end{aligned}
	\end{equation}
	Thus, we deduce that 
	$w_{r}(T_S)\cong_{\mathsf{K}}w_{r}\left(S|_{J}\right)$. 
	Therefore, by \cite[Chapter 2, Corollary 1]{F}, 
	we can conclude that $T_S = \mathsf{Res}_{J}(S)$. 
	This proves the lemma. 
\end{proof}

\begin{de}
	Let $1 \leq a < b \leq 2n$ be integers,
	and $S \in \mathsf{SST}_{2n}(\lambda)$ a semistandard tableau with
	all entries in $[a,b]$.
	We define a new tableau $\mathsf{P}^{A\mathrm{II}}_{a,b}(S)$
	as follows: 
	\begin{enumerate}
		\item Decrease each entry of $S$ by $a-1$.
		\item Apply the operation $\mathsf{P}^{A\mathrm{II}}$ in the case $n=b-a+1$ to the tableau obtained in (1).
		\item Finally, increase each entry of the tableau obtained in (2) by $a-1$.
	\end{enumerate}
\end{de}

\begin{lem}\label{formula}
	For all $T \in \mathsf{SST}_{2n}(\lambda)$,
	the following equality holds: 
	\begin{equation}
		\mathsf{P}^{A\mathrm{II}}_{J} \left( \mathsf{Res}_{J}\,\,\mathsf{P}^{A\mathrm{II}}(T) \right) = \mathsf{P}^{A\mathrm{II}}_{J}\left(\mathsf{Res}_{J}(T)\right). \label{7.11}
	\end{equation}
\end{lem}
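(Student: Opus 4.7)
The plan is to establish \eqref{7.11} by induction on $|\lambda|$, reducing it to a single-step identity for $\mathsf{suc}$, and then to prove that identity using the $\mathbf{U}^{\imath}$-module structure of $\mathsf{suc}$ together with Lemma~7.11. The base case $|\lambda|=0$ is trivial. For the inductive step, if $T$ is already symplectic then Proposition~\ref{2024} gives $\mathsf{P}^{\mathrm{AII}}(T)=T$ and \eqref{7.11} is immediate; otherwise, the proposition immediately preceding Proposition~\ref{2024} ensures that the shape of $\mathsf{suc}(T)$ is strictly contained in $\lambda$, so applying the inductive hypothesis to $\mathsf{suc}(T)$ (and using $\mathsf{P}^{\mathrm{AII}}(\mathsf{suc}(T))=\mathsf{P}^{\mathrm{AII}}(T)$) reduces \eqref{7.11} to the single-step identity
\begin{equation*}
\mathsf{P}^{\mathrm{AII}}_J\!\left(\mathsf{Res}_J(\mathsf{suc}(T))\right) = \mathsf{P}^{\mathrm{AII}}_J\!\left(\mathsf{Res}_J(T)\right). \tag{$\ast$}
\end{equation*}

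To prove ($\ast$), I interpret both sides as the combinatorial labels of the $\widetilde{L}^{\imath}_J$-tensor factors in the $\mathbf{U}^{\imath}_J$-branching images of the global basis elements $G(b_T)$ and $G(b_{\mathsf{suc}(T)})$ at $q=\infty$. By Lemma~7.11, $\mathsf{Res}_J$ realizes the Levi branching $\mathbf{U}_J\subset\mathbf{U}$ at the global basis level, and since $\mathbf{U}^{\imath}_J\subset\mathbf{U}_J$ this is automatically $\mathbf{U}^{\imath}_J$-equivariant. Composing with the $\imath$-branching bijection $\mathsf{LR}^{\mathrm{AII}}$ for $\mathbf{U}^{\imath}_J\subset\mathbf{U}_J$ applied to each Levi summand $L^J_q(\mathsf{wt}_J(T))$ exhibits $\mathsf{P}^{\mathrm{AII}}_J\circ\mathsf{Res}_J$ as the label of the $\widetilde{L}^{\imath}_J$-factor in the decomposition $\mathsf{Res}^{\mathbf{U}}_{\mathbf{U}^{\imath}_J}L_q(\lambda) \cong \bigoplus_{\nu}\widetilde{L}^{\imath}_J(\nu)\otimes(\text{multiplicity space})$ at $q=\infty$. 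By the last proposition of Section~5.2, $\mathsf{suc}$ is a $\mathbf{U}^{\imath}$-module homomorphism sending $G(b_S)$ to $G(b_{\mathsf{suc}(S)})$ at $q=\infty$, and hence is also $\mathbf{U}^{\imath}_J$-equivariant. A $\mathbf{U}^{\imath}_J$-equivariant map necessarily acts as the identity on the $\widetilde{L}^{\imath}_J(\nu)$-tensor factor of each isotypic component, so the $\widetilde{L}^{\imath}_J$-labels of $G(b_T)$ and $\mathsf{suc}(G(b_T))=G(b_{\mathsf{suc}(T)})$ must coincide, which is precisely ($\ast$).

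The main obstacle is that $\mathsf{suc}$ lands in $\bigoplus_{\lambda'\underset{\mathrm{vert}}{\subset}\mu}L_q(\mu)$ rather than back in $L_q(\lambda)$, so one must identify the $\mathbf{U}^{\imath}_J$-isotypic structures of the source and each target $L_q(\mu)$ in a way that intertwines with $\mathsf{suc}$. Concretely, the $\widetilde{L}^{\imath}_J(\nu)$-summand of $L_q(\lambda)$ and the $\widetilde{L}^{\imath}_J(\nu)$-summand of each $L_q(\mu)$ must be canonically identified, and one must check that under this identification the $\widetilde{L}^{\imath}_J$-labels of $G(b_T)$ and $G(b_{\mathsf{suc}(T)})$ genuinely agree as symplectic tableaux (not merely by their shape $\nu$). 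The uniqueness of the global crystal basis $\mathbf{B}^{\imath}_J(\nu)$, combined with the explicit combinatorial description of $\mathsf{P}^{\mathrm{AII}}_J\circ\mathsf{Res}_J$ as the $\widetilde{L}^{\imath}_J$-label on each $L_q(\mu)$ appearing in the vertical-strip decomposition of the target, is what ensures this compatibility and completes the proof.
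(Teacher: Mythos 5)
Your proposal follows essentially the same strategy as the paper: both appeal to the $\mathbf{U}^{\imath}_J$-equivariance of (iterates of) $\mathsf{suc}$ together with the canonical $\imath$-branching isomorphism $\mathsf{LR}^{\mathrm{AII}}$. Two remarks are in order. First, the induction on $|\lambda|$ is superfluous: once the single-step identity $(\ast)$ is established, iterating it finitely many times gives \eqref{7.11} outright, and the paper in fact works directly with $\mathsf{suc}^{N_\lambda}$ in a single commutative diagram rather than reducing to one step at a time.

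Second, and more substantively, your final sentences are exactly where the real work lies, and they are only gestured at. The assertion that a $\mathbf{U}^{\imath}_J$-equivariant map ``acts as the identity on the $\widetilde{L}^{\imath}_J(\nu)$-tensor factor'' of each isotypic component is only true up to a scalar in $\mathbb{C}(q)$, and you must then translate a statement about $\mathbb{C}(q)$-module maps into a statement about tableaux via the crystal limit. The paper does this carefully: it assembles a commutative diagram whose outer square produces a $\mathbf{U}^{\imath}_J$-intertwiner $\psi$ between the two $\widetilde{L}^{\imath}_J$-summands carrying the two candidate labels, observes $\psi\neq 0$ (so the two shapes agree by Schur's lemma), verifies that $\psi=c(q)\,\mathrm{id}$ with $c(q)\in\mathbf{A}_\infty$ and $\lim_{q\to\infty}c(q)=1$, and only then passes to the crystal lattice modulo $q^{-1}$ to conclude that the two symplectic tableau labels coincide. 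The appeal to ``uniqueness of the global crystal basis $\mathbf{B}^{\imath}_J(\nu)$'' in your last sentence does not by itself produce this control on the scalar, without which the crystal-limit step does not obviously close. So the route is correct, but you need to exhibit the intertwiner and run the Schur-plus-crystal-limit argument explicitly to turn the sketch into a proof.
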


\begin{proof}
	Let $T_{L}$ (resp., $T_{R}$) denote the tableau on the left-hand (resp., right-hand) side of (\ref{7.11}); 
	also, let $\mu_L$ and $\mu_R$ denote the shapes of the tableaux $T_{L}$ and $T_{R}$, respectively. 
	We now set
	\begin{equation}
		N_{\lambda} := \min\setlr{N \in \mathbb{Z}_{\geq0}}{\mathsf{suc}^{N+1}(S) = \mathsf{suc}^N(S), \,\,S \in \mathsf{SST}_{2n}(\lambda)}.
	\end{equation}
	We have a $\mathbf{U}^{\imath}_{J}$-module homomorphism
	$\psi : \widetilde{L}_q^{\imath,J}(\mu_{R})\to\widetilde{L}_q^{\imath,J}(\mu_{L})$
	that makes the following diagram commutative: 
	\begin{center}
		\begin{tikzpicture}
			\node (A) at (0,0) {$L_q(\lambda)$};
			\node (B) at (-4.2,-1) {$\bigoplus_{\nu} L_q(\nu)$};
			\node (C) at (-4.2,-3) {$\bigoplus_{\nu} \bigoplus_{\xi_{\nu}} L_q^{J}(\xi_{\nu})$};
			\node (D) at (-4.2,-5) {$\bigoplus_{\nu} \bigoplus_{\xi_{\nu}} \bigoplus_{\mu_{\xi_{\nu}}} \widetilde{L}_q^{\imath,J}(\mu_{\xi_{\nu}})\otimes\mathbb{C}(q)\mathsf{Rec}_4(\xi_{\nu}/\mu_{\xi_{\nu}})$};
			\node (G) at (-4.2,-7) {$\widetilde{L}_q^{\imath,J}(\mu_L)$};
			\node (H) at (4.2,-7) {$\widetilde{L}_q^{\imath,J}(\mu_R)$};
			\node (E) at (4.2,-2) {$\bigoplus_{\xi_{\lambda}}L_q^{J}(\xi_{\lambda})$};
			\node (F) at (4.2,-4.5) {$\bigoplus_{\xi_{\lambda}} \bigoplus_{\mu_{\xi_{\lambda}}} \widetilde{L}_q^{\imath,J}(\mu_{\xi_{\lambda}})\otimes\mathbb{C}(q)\mathsf{Rec}_4(\xi_{\lambda}/\mu_{\xi_{\lambda}})$};
			\draw[->] (A) -- node[auto=right] {$\mathsf{suc}^{N_{\lambda}}$} (B);
			\draw[->] (B) -- node[auto=right] {$\bigoplus_{\nu} \mathsf{Res}_{\nu}$} (C);
			\draw[->] (C) -- node[auto=right] {$\bigoplus_{\nu} \bigoplus_{\xi_{\nu}} \mathsf{LR}^{A\mathrm{II}}$} (D);
			\draw[->] (A) -- node[auto=left] {$\mathsf{Res}_{\lambda}$} (E);
			\draw[->] (E) -- node[auto=left] {$\bigoplus_{\xi_{\lambda}} \mathsf{LR}^{A\mathrm{II}}$} (F);
			\draw[->>] (D)--(G);
			\draw[{Hooks[right]}-{>}] (H)--(F);
			\draw[->,dashed] (H)-- node[auto=left]{$\psi$} (G);
		\end{tikzpicture}
	\end{center}
	From the construction of $\psi$,
	it follows that
	$\psi\left( b^{\imath}_{T_R} \right) = b^{\imath}_{T_L}$ at $q=\infty$.
	In particular, we have 
	\begin{equation}
		\mathrm{Hom}_{\mathbf{U}^{\imath}_{J}}\left(\widetilde{L}_q^{\imath,J}(\mu_R),\widetilde{L}_q^{\imath,J}(\mu_L)\right)\neq \{0\}.
	\end{equation}
	Therefore,
	by Schur's lemma, we see that $\mu_L = \mu_R$
	(see \cite[Chapter 5.5]{J}).
  Moreover,
	there exists $c(q)\in\mathbf{A}_{\infty}$
	such that
	$\psi=c(q)\mathrm{id}$ with $\lim_{q\to\infty}c(q)=1$.
	By taking the crystal limit as 
	$q \to \infty$ in $L_q^{J}(\mu_L)=L_q^{J}(\mu_R)$, we deduce that 
	$b_{T_L} = b_{T_R}$ from the construction of $\mathbf{B}^{\imath}(\mu_L)$
	in \cite[Proposition 6.4.1]{Wat1}.
	Hence, if the tableau $T_{L}$ were not equal to the tableau $T_{R}$, 
	then we would get a contradiction, 
	since 
	$b_{T_{L}}$ and $b_{T_{R}}$ 
	are linearly independent over $\mathbb{C}$.
	Thus, equation (\ref{7.11}) is proved. This proves the lemma. 
\end{proof}

\begin{lem}\label{kLGP}
	Let
	$S \in \mathsf{SST}_{2n}(\lambda)$
	be a semistandard tableau.
	\begin{enumerate}
		\item The followings are equivalent:
		      \begin{enumerate}
			      \item[(i)] $S$ is a $\mathfrak{k}$-highest weight tableau;
			      \item[(ii)] we have 
			            \begin{center}
				            \begin{tikzpicture}[x=5mm,y=5mm]
					            \draw (0,0)--(8,0)--(8,1)--(0,1)--cycle;
					            \draw (0,1)--(0,2)--(10,2)--(10,1)--(8,1);
					            \node at (0.5,0.5) {$a_2$};
					            \node at (0.5,1.5) {$a_1$};
					            \node at (7.5,0.5) {$a_2$};
					            \node at (9.5,1.5) {$a_1$};
					            \draw[dotted, line width=1pt] (1.2,0.5)--(1.8,0.5);
											\draw[dotted, line width=1pt] (6.2,0.5)--(6.8,0.5);
					            \draw[dotted, line width=1pt] (1.2,1.5)--(1.8,1.5);
											\draw[dotted, line width=1pt] (8.2,1.5)--(8.8,1.5);
					            \node at (-3.5,1) {$\mathsf{P}^{A\mathrm{II}}_{1,4} \left( \mathsf{Res}_{1,4}\,\, S \right)  =$};
					            \node at (19.6,1) {};
											\node at (10.5,0.5) {,};
				            \end{tikzpicture}
				            
				            \vspace{5pt}
				            
				            \begin{tikzpicture}[x=5mm,y=5mm]
					            \draw (0,0)--(15,0)--(15,1)--(0,1)--cycle;
					            \draw (0,3)--(18,3)--(18,4)--(0,4)--cycle;
					            \draw (0,4)--(0,5)--(19,5)--(19,4)--(18,4);
											\draw (0,1)--(0,3);
											\draw (15,1)--(16,1)--(16,2)--(17,2)--(17,3);
					            \draw[dotted, line width=1pt] (8,1.7)--(8,2.3);
					            \node (a) at (0.5,0.5) {$a_n$};
					            \node (b) at (0.5,3.5) {$a_3$};
					            \node (c) at (0.5,4.5) {$a_2$};
					            \node (d) at (14.5,0.5) {$a_n$};
					            \node (e) at (17.5,3.5) {$a_3$};
					            \node (f) at (18.5,4.5) {$a_2$};
					            \draw[dotted, line width=1pt] (1.2,0.5)--(1.8,0.5);
											\draw[dotted, line width=1pt] (13.2,0.5)--(13.8,0.5);
					            \draw[dotted, line width=1pt] (1.2,3.5)--(1.8,3.5);
											\draw[dotted, line width=1pt] (16.2,3.5)--(16.8,3.5);
					            \draw[dotted, line width=1pt] (1.2,4.5)--(1.8,4.5);
											\draw[dotted, line width=1pt] (17.2,4.5)--(17.8,4.5);
					            \node at (-3.5,2.5) {$\mathsf{P}^{A\mathrm{II}}_{3,\overline{1}} \left( \mathsf{Res}_{3,\overline{1}}\,\,S \right)  =$};
											\node at (19.5,2) {.};
				            \end{tikzpicture}
			            \end{center}
		      \end{enumerate}
		\item The followings are equivalent: 
		      \begin{enumerate}
			      \item[(iii)] $S$ is a $\mathfrak{k}$-lowest weight tableau; 
			      \item[(iv)] we have
			      \begin{center}
							\begin{tikzpicture}[x=5mm,y=5mm]
								\draw (0,0)--(8,0)--(8,1)--(0,1)--cycle;
								\draw (0,1)--(0,2)--(10,2)--(10,1)--(8,1);
								\node at (0.5,0.5) {$b_2$};
								\node at (0.5,1.5) {$b_1$};
								\node at (7.5,0.5) {$b_2$};
								\node at (9.5,1.5) {$b_1$};
								\draw[dotted, line width=1pt] (1.2,0.5)--(1.8,0.5);
								\draw[dotted, line width=1pt] (6.2,0.5)--(6.8,0.5);
								\draw[dotted, line width=1pt] (1.2,1.5)--(1.8,1.5);
								\draw[dotted, line width=1pt] (8.2,1.5)--(8.8,1.5);
								\node at (-3.5,1) {$\mathsf{P}^{A\mathrm{II}}_{1,4} \left( \mathsf{Res}_{1,4}\,\, S \right)  =$};
					      \node at (19.6,1) {};
								\node at (10.5,0.5) {,};
							\end{tikzpicture}
							
							\vspace{5pt}
							
							\begin{tikzpicture}[x=5mm,y=5mm]
								\draw (0,0)--(15,0)--(15,1)--(0,1)--cycle;
								\draw (0,3)--(18,3)--(18,4)--(0,4)--cycle;
								\draw (0,4)--(0,5)--(19,5)--(19,4)--(18,4);
								\draw (0,1)--(0,3);
								\draw (15,1)--(16,1)--(16,2)--(17,2)--(17,3);
								\draw[dotted, line width=1pt] (8,1.7)--(8,2.3);
								\node (a) at (0.5,0.5) {$b_n$};
								\node (b) at (0.5,3.5) {$b_3$};
								\node (c) at (0.5,4.5) {$b_2$};
								\node (d) at (14.5,0.5) {$b_n$};
								\node (e) at (17.5,3.5) {$b_3$};
								\node (f) at (18.5,4.5) {$b_2$};
								\draw[dotted, line width=1pt] (1.2,0.5)--(1.8,0.5);
								\draw[dotted, line width=1pt] (13.2,0.5)--(13.8,0.5);
								\draw[dotted, line width=1pt] (1.2,3.5)--(1.8,3.5);
								\draw[dotted, line width=1pt] (16.2,3.5)--(16.8,3.5);
								\draw[dotted, line width=1pt] (1.2,4.5)--(1.8,4.5);
								\draw[dotted, line width=1pt] (17.2,4.5)--(17.8,4.5);
								\node at (-3.5,2.5) {$\mathsf{P}^{A\mathrm{II}}_{3,\overline{1}} \left( \mathsf{Res}_{3,\overline{1}}\,\,S \right)  =$};
								\node at (19.5,2) {.};
							\end{tikzpicture}
						\end{center}
		      \end{enumerate}
	\end{enumerate}
\end{lem}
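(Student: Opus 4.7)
The plan is to prove part~(1); part~(2) follows by an entirely analogous argument, with the $a_i$'s replaced by $b_i$'s and the roles of highest and lowest weight canonical forms interchanged. For part~(1), the strategy exploits formula~\eqref{7.11} of Lemma~\ref{formula} to pass from $S$ to the symplectic tableau $T:=\mathsf{P}^{\mathrm{AII}}(S)$: by Definition~\ref{5.13}, condition~(i) is equivalent to $T$ coinciding with the canonical $\mathfrak{k}$-highest weight tableau whose $y$-th row is constantly equal to $a_y$, while condition~(ii) is equivalent to analogous conditions on $\mathsf{P}^{\mathrm{AII}}_J(\mathsf{Res}_J(T))$ for $J=\{1,2,3\}$ and $J=\{3,\ldots,2n-1\}$.

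\textbf{Forward direction.} Assuming $T$ is in the canonical form, a direct computation handles each $J$: for $J=\{1,2,3\}$, only rows $1$ and $2$ of $T$ (filled by $a_1=2$ and $a_2=3$) contribute to $T|_{1,4}$, giving a normal tableau that is symplectic for $\mathfrak{sp}_4$ and hence fixed by $\mathsf{P}^{\mathrm{AII}}_{1,4}$ via Proposition~\ref{2024}; for $J=\{3,\ldots,2n-1\}$, rows $2$ through $n$ survive and rectify to a normal $(n-1)$-row tableau whose shift by $-2$ yields the canonical $\mathfrak{k}$-\emph{lowest} weight form for $\mathfrak{sp}_{2(n-1)}$ (with rows $a_2-2=b_1,\ a_3-2=b_2,\ldots$), again symplectic and therefore fixed by $\mathsf{P}^{\mathrm{AII}}_{3,\overline{1}}$. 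Undoing the shift produces the form prescribed in (ii).

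\textbf{Backward direction.} For the converse, a key observation is that $\mathsf{Res}_{1,4}(T)$ is automatically symplectic for $\mathfrak{sp}_4$: rows $y\geq 3$ of $T$ have entries $\geq 2y-1\geq 5$, hence contribute nothing to $T|_{1,4}$, and since jeu-de-taquin slides move entries only up and to the left, the entry at position $(1,2)$ of the rectified tableau originates from a row of $T$ of index $\geq 2$, and so is $\geq 3$. Proposition~\ref{2024} then gives $\mathsf{P}^{\mathrm{AII}}_{1,4}(\mathsf{Res}_{1,4}(T))=\mathsf{Res}_{1,4}(T)$, so the first hypothesis of~(ii) identifies $\mathsf{Res}_{1,4}(T)$ with the prescribed canonical tableau; this pins down every entry of rows~$1$ and~$2$ of $T$ lying in $[1,4]$ as $a_1=2$ or $a_2=3$ respectively. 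The second hypothesis of~(ii), analyzed along the same lines together with column-strictness, then simultaneously determines rows $2$ through $n$ of $T$ as constant rows $a_2,\ldots,a_n$ and excludes any stray entries $\geq 5$ in row~$1$, forcing $T$ to be the canonical form and hence $S$ to be $\mathfrak{k}$-highest weight.

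\textbf{Main obstacle.} The most delicate step is the joint use of the two restriction conditions in the backward direction: one must track carefully which positions of $T$ contribute to each restriction under the symplectic constraint, and reconcile the shape data extracted from the two canonical target forms. In particular, ruling out a symplectic $T$ whose rows carry the correct entries in $[1,4]$ but contain non-canonical entries (such as a stray $5$ or $6$ in row~$1$) requires combining the $\mathsf{Res}_{3,\overline{1}}$ condition with the shape of $\mathsf{Res}_{1,4}(T)$ to force equality row by row.
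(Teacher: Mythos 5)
Your forward direction is essentially the same as the paper's: apply Lemma~\ref{formula} to replace $S$ by $T:=\mathsf{P}^{\mathrm{AII}}(S)$, observe that the two restrictions of the canonical symplectic tableau are themselves symplectic (after the appropriate shift), hence fixed by $\mathsf{P}^{\mathrm{AII}}_J$ by Proposition~\ref{2024}, and read off the prescribed forms. One small simplification you could make: for $T$ symplectic the shape of $T|_{1,4}$ is already a Young diagram (since an entry $\le 4$ in row $2$ forces the entry above it to be $\le 3$), so $\mathsf{Res}_{1,4}(T)=T|_{1,4}$ with no jeu-de-taquin needed, and the symplecticity of the first two rows is immediate.

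The backward direction, however, has a genuine gap exactly where you flag the ``most delicate step.'' Your base observation is fine: $\mathsf{Res}_{1,4}(T)$ is symplectic, hence fixed by $\mathsf{P}^{\mathrm{AII}}_{1,4}$, and equating it with the prescribed two-row tableau pins down the entries $\le 4$ in rows $1$ and $2$ of $T$. But the step ``the second hypothesis of (ii), analyzed along the same lines together with column-strictness, then simultaneously determines rows $2$ through $n$'' is not a proof, and the analogy does \emph{not} go through directly. The crux is that $\mathsf{Res}_{3,\overline 1}(T)$ shifted by $-2$ need not be automatically symplectic for $\mathfrak{sp}_{2(n-1)}$: after removing the entries $1,2$, any leftover entries $\ge 5$ of row~$1$ of $T$ remain in the skew tableau $T|_{3,\overline 1}$ and can, during rectification, land in positions that violate the symplectic bound for the lower rank. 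Hence $\mathsf{P}^{\mathrm{AII}}_{3,\overline 1}$ may act non-trivially on $\mathsf{Res}_{3,\overline 1}(T)$, and you cannot read the entries of $T$ directly off the output as you did for $\mathsf{Res}_{1,4}$.

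The paper handles this by a secondary induction on $k=2,\dots,n$: it proves that rows $1,\dots,k$ of $\mathsf P^{\mathrm{AII}}(S)$ are constantly $a_1,\dots,a_k$ and that the remaining boxes carry entries in $[2k+1,\overline 1]$. The inductive step first derives (by repeated application of the already-established forward directions (i)$\implies$(ii) and (iii)$\implies$(iv)) the form of $\mathsf{P}^{\mathrm{AII}}_{2k-1,2k+2}(\mathsf{Res}_{2k-1,2k+2}\,\mathsf{P}^{\mathrm{AII}}(S))$, and then invokes Lemma~\ref{characterization of k-highest} or Lemma~\ref{characterization of k-lowest} \emph{according to the parity of $k$} (because the shift $a_{j}\mapsto a_{j}-2(k-1)$ swaps the $a$- and $b$-sequences). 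Together with the inductive control on which entries have already been ruled out, this forces the $(k+1)$st row and excludes the unwanted intermediate entries. Your ``row by row'' plan is pointing in this direction, but the parity alternation between highest- and lowest-weight rank-$2$ characterizations and the bookkeeping of which entries are forbidden are exactly the content that is missing; without them the backward implication is not established.
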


\begin{proof}
	We prove the implication (i)$\implies$(ii). 
	Since $S$ is a $\mathfrak{k}$-highest weight tableau, 
	by Definition \ref{5.13}, we have
	\begin{center}
		\begin{tikzpicture}[x=5mm,y=5mm]
			\draw (0,0)--(15,0)--(15,1)--(0,1)--cycle;
			\draw (0,3)--(18,3)--(18,4)--(0,4)--cycle;
			\draw (0,4)--(0,5)--(19,5)--(19,4)--(18,4);
			\draw (0,1)--(0,3);
			\draw (15,1)--(16,1)--(16,2)--(17,2)--(17,3);
			\draw[dotted, line width=1pt] (8,1.7)--(8,2.3);
			\node (a) at (0.5,0.5) {$a_n$};
			\node (b) at (0.5,3.5) {$a_2$};
			\node (c) at (0.5,4.5) {$a_1$};
			\node (d) at (14.5,0.5) {$a_n$};
			\node (e) at (17.5,3.5) {$a_2$};
			\node (f) at (18.5,4.5) {$a_1$};
			\draw[dotted, line width=1pt] (1.2,0.5)--(1.8,0.5);
			\draw[dotted, line width=1pt] (13.2,0.5)--(13.8,0.5);
			\draw[dotted, line width=1pt] (1.2,3.5)--(1.8,3.5);
			\draw[dotted, line width=1pt] (16.2,3.5)--(16.8,3.5);
			\draw[dotted, line width=1pt] (1.2,4.5)--(1.8,4.5);
			\draw[dotted, line width=1pt] (17.2,4.5)--(17.8,4.5);
			\node at (-2.3,2.5) {$\mathsf{P}^{A\mathrm{II}}\left( S \right)  =$};
			\node at (-8,2.5) {(7.13)};
			\node at (19.5,2) {.};
			\node at (25,2.5) {};
		\end{tikzpicture}\label{7.13}
	\end{center}
	By applying $\mathsf{Res}_{1,4}$ and
	$\mathsf{Res}_{3,\overline{1}}$	to (7.13),
	we get
	\begin{center}
		\begin{tikzpicture}[x=5mm,y=5mm]
			\draw (0,0)--(8,0)--(8,1)--(0,1)--cycle;
			\draw (0,1)--(0,2)--(10,2)--(10,1)--(8,1);
			\node at (0.5,0.5) {$a_2$};
			\node at (0.5,1.5) {$a_1$};
			\node at (7.5,0.5) {$a_2$};
			\node at (9.5,1.5) {$a_1$};
			\draw[dotted, line width=1pt] (1.2,0.5)--(1.8,0.5);
			\draw[dotted, line width=1pt] (6.2,0.5)--(6.8,0.5);
			\draw[dotted, line width=1pt] (1.2,1.5)--(1.8,1.5);
			\draw[dotted, line width=1pt] (8.2,1.5)--(8.8,1.5);
			\node at (-3.7,1) {$\mathsf{Res}_{1,4}\left(\mathsf{P}^{A\mathrm{II}}(S)\right)=$};
			\node at (19.6,1) {};
			\node at (10.5,0.5) {,};
			\node at (-10,1) {(7.14)};
			\node at (20,2.5) {};
		\end{tikzpicture}\label{7.14}
		
		\vspace{5pt}
		
		\begin{tikzpicture}[x=5mm,y=5mm]
			\draw (0,0)--(15,0)--(15,1)--(0,1)--cycle;
			\draw (0,3)--(18,3)--(18,4)--(0,4)--cycle;
			\draw (0,4)--(0,5)--(19,5)--(19,4)--(18,4);
			\draw (0,1)--(0,3);
			\draw (15,1)--(16,1)--(16,2)--(17,2)--(17,3);
			\draw[dotted, line width=1pt] (8,1.7)--(8,2.3);
			\node (a) at (0.5,0.5) {$a_n$};
			\node (b) at (0.5,3.5) {$a_3$};
			\node (c) at (0.5,4.5) {$a_2$};
			\node (d) at (14.5,0.5) {$a_n$};
			\node (e) at (17.5,3.5) {$a_3$};
			\node (f) at (18.5,4.5) {$a_2$};
			\draw[dotted, line width=1pt] (1.2,0.5)--(1.8,0.5);
			\draw[dotted, line width=1pt] (13.2,0.5)--(13.8,0.5);
			\draw[dotted, line width=1pt] (1.2,3.5)--(1.8,3.5);
			\draw[dotted, line width=1pt] (16.2,3.5)--(16.8,3.5);
			\draw[dotted, line width=1pt] (1.2,4.5)--(1.8,4.5);
			\draw[dotted, line width=1pt] (17.2,4.5)--(17.8,4.5);
			\node at (-3.7,2.5) {$\mathsf{Res}_{3,\overline{1}}\left(\mathsf{P}^{A\mathrm{II}}(S)\right)=$};
			\node at (19.5,2) {.};
			\node at (-10,2.5) {(7.15)};
			\node at (20,2.5) {};
		\end{tikzpicture}\label{7.15}
	\end{center}
	Furthermore,
	by applying $\mathsf{P}^{A\mathrm{II}}_{1,4}$ to (7.14)
	and applying $\mathsf{P}^{A\mathrm{II}}_{3,\overline{1}}$ to (7.15),
	we obtain 
	\begin{center}
		\begin{tikzpicture}[x=5mm,y=5mm]
			\draw (0,0)--(8,0)--(8,1)--(0,1)--cycle;
			\draw (0,1)--(0,2)--(10,2)--(10,1)--(8,1);
			\node at (0.5,0.5) {$a_2$};
			\node at (0.5,1.5) {$a_1$};
			\node at (7.5,0.5) {$a_2$};
			\node at (9.5,1.5) {$a_1$};
			\draw[dotted, line width=1pt] (1.2,0.5)--(1.8,0.5);
			\draw[dotted, line width=1pt] (6.2,0.5)--(6.8,0.5);
			\draw[dotted, line width=1pt] (1.2,1.5)--(1.8,1.5);
			\draw[dotted, line width=1pt] (8.2,1.5)--(8.8,1.5);
			\node at (-4.75,1) {$\mathsf{P}^{A\mathrm{II}}_{1,4}\left(\mathsf{Res}_{1,4}\left(\mathsf{P}^{A\mathrm{II}}(S)\right)\right)=$};
			\node at (19.6,1) {};
			\node at (10.5,0.5) {,};
			\node at (20,2.5) {};
		\end{tikzpicture}
		
		\vspace{5pt}
		
		\begin{tikzpicture}[x=5mm,y=5mm]
			\draw (0,0)--(15,0)--(15,1)--(0,1)--cycle;
			\draw (0,3)--(18,3)--(18,4)--(0,4)--cycle;
			\draw (0,4)--(0,5)--(19,5)--(19,4)--(18,4);
			\draw (0,1)--(0,3);
			\draw (15,1)--(16,1)--(16,2)--(17,2)--(17,3);
			\draw[dotted, line width=1pt] (8,1.7)--(8,2.3);
			\node (a) at (0.5,0.5) {$a_n$};
			\node (b) at (0.5,3.5) {$a_3$};
			\node (c) at (0.5,4.5) {$a_2$};
			\node (d) at (14.5,0.5) {$a_n$};
			\node (e) at (17.5,3.5) {$a_3$};
			\node (f) at (18.5,4.5) {$a_2$};
			\draw[dotted, line width=1pt] (1.2,0.5)--(1.8,0.5);
			\draw[dotted, line width=1pt] (13.2,0.5)--(13.8,0.5);
			\draw[dotted, line width=1pt] (1.2,3.5)--(1.8,3.5);
			\draw[dotted, line width=1pt] (16.2,3.5)--(16.8,3.5);
			\draw[dotted, line width=1pt] (1.2,4.5)--(1.8,4.5);
			\draw[dotted, line width=1pt] (17.2,4.5)--(17.8,4.5);
			\node at (-4.75,2.5) {$\mathsf{P}^{A\mathrm{II}}_{1,4}\left(\mathsf{Res}_{3,\overline{1}}\left(\mathsf{P}^{A\mathrm{II}}(S)\right)\right)=$};
			\node at (19.5,2) {.};
			\node at (20,2.5) {};
		\end{tikzpicture}
	\end{center}
	Therefore, we deduce assertion (ii) by Lemma \ref{formula}.
	The proof of the implication (iii)$\implies$(iv) is similar. \\
	
  We prove the reverse implication (ii)$\implies$(i). 
	We will prove equation (7.16) below 
	for $k=2,3,\ldots,n$ by induction on $k$,
	where the entries of the boxes contained in the area $\Omega_1$
	lie in $[2k+1,\overline{1}]$;	by definition \ref{5.13},
	assertion (i) follows from the case $k=n$.
	\begin{center}
		\begin{tikzpicture}[x=5mm,y=5mm]
			\fill[fill=blue!20] (0,0)--(0,-4)--(8,-4)--(8,-3)--(16,-3)--(16,-2)--(18,-2)--(18,0)--(19,0)--(19,2)--(21,2)--(21,3)--(22,3)--(22,5)--(19,5)--(19,4)--(18,4)--(18,3)--(17,3)--(17,2)--(16,2)--(16,1)--(15,1)--(15,0)--cycle;
			\draw (0,0)--(15,0)--(15,1)--(0,1)--cycle;
			\draw (0,3)--(18,3)--(18,4)--(0,4)--cycle;
			\draw (0,4)--(0,5)--(19,5)--(19,4)--(18,4);
			\draw[dotted, line width=1pt] (8,1.7)--(8,2.3);
			\node (a) at (0.5,0.5) {$a_k$};
			\node (b) at (0.5,3.5) {$a_2$};
			\node (c) at (0.5,4.5) {$a_1$};
			\node (d) at (14.5,0.5) {$a_k$};
			\node (e) at (17.5,3.5) {$a_2$};
			\node (f) at (18.5,4.5) {$a_1$};
			\draw[dotted, line width=1pt] (1.2,0.5)--(1.8,0.5);
			\draw[dotted, line width=1pt] (13.2,0.5)--(13.8,0.5);
			\draw[dotted, line width=1pt] (1.2,3.5)--(1.8,3.5);
			\draw[dotted, line width=1pt] (16.2,3.5)--(16.8,3.5);
			\draw[dotted, line width=1pt] (1.2,4.5)--(1.8,4.5);
			\draw[dotted, line width=1pt] (17.2,4.5)--(17.8,4.5);

			\node at (-2.3,0.5) {$\mathsf{P}^{A\mathrm{II}}(S)=$};
			\draw (0,1)--(0,3);
			\draw (15,1)--(16,1)--(16,2)--(17,2)--(17,3);
			\draw (0,0)--(0,-4)--(8,-4)--(8,-3)--(16,-3)--(16,-2)--(18,-2)--(18,0)--(19,0)--(19,2)--(21,2)--(21,3)--(22,3)--(22,5)--(19,5);
			\node at (8,-1.5) {$2k+1 , \ldots , \overline{1}$};
			\node at (22.5,0) {.};
			\node at (-6,0.5) {(7.16)};
			\draw[line width=1pt, color=blue, line join=round] (0,0)--(0,-4)--(8,-4)--(8,-3)--(16,-3)--(16,-2)--(18,-2)--(18,0)--(19,0)--(19,2)--(21,2)--(21,3)--(22,3)--(22,5)--(19,5)--(19,4)--(18,4)--(18,3)--(17,3)--(17,2)--(16,2)--(16,1)--(15,1)--(15,0)--cycle;
			\node at (19.3,-2.5) {\textcolor{blue}{$\Omega_1$}};
			\node at (26,0) {};
		\end{tikzpicture}
	\end{center}\label{7.16}
	First,
	we consider the case $k=2$.
	From assertion (ii), we see by Lemma \ref{formula} that 
	\[
		\mathsf{P}^{A\mathrm{II}}_{1,4} \left( \mathsf{Res}_{1,4}\,\,\mathsf{P}^{A\mathrm{II}}(S) \right)
		=
		\mathsf{P}^{A\mathrm{II}}_{1,4} \left( \mathsf{Res}_{1,4}\,\,S \right)
	\]
	\begin{center}
		\begin{tikzpicture}[x=5mm,y=5mm]
			\draw (0,0)--(8,0)--(8,1)--(0,1)--cycle;
			\draw (0,1)--(0,2)--(10,2)--(10,1)--(8,1);
			\node at (0.5,0.5) {$a_2$};
			\node at (0.5,1.5) {$a_1$};
			\node at (7.5,0.5) {$a_2$};
			\node at (9.5,1.5) {$a_1$};
			
			\draw[dotted, line width=1pt] (1.2,0.5)--(1.8,0.5);
			\draw[dotted, line width=1pt] (6.8,0.5)--(6.2,0.5);
			\draw[dotted, line width=1pt] (1.2,1.5)--(1.8,1.5);
			\draw[dotted, line width=1pt] (8.2,1.5)--(8.8,1.5);
			\node at (-0.7,1) {$=$};
			\node at (-14.2,1) {};
			\node at (10.5,0.5) {.};
		\end{tikzpicture}
	\end{center}
	Therefore, from the implication (1) $\implies$ (3) in Lemma \ref{characterization of k-highest}, 
	we deduce that
	$\mathsf{Res}_{1,4}\,\,\mathsf{P}^{A\mathrm{II}}(S)$
	is a tableau of the following form 
	since $\mathsf{P}^{A\mathrm{II}}(S)$ is symplectic and 
	$a_1=2, a_2=3, a_3=6$: 
	\begin{center}
		\begin{tikzpicture}[x=5mm,y=5mm]
			\draw (0,0)--(8,0)--(8,1)--(0,1)--cycle;
			\draw (0,1)--(0,2)--(10,2)--(10,1)--(8,1);
			\node at (0.5,0.5) {$a_2$};
			\node at (0.5,1.5) {$a_1$};
			\node at (7.5,0.5) {$a_2$};
			\node at (9.5,1.5) {$a_1$};
			\draw[dotted, line width=1pt] (1.2,0.5)--(1.8,0.5);
			\draw[dotted, line width=1pt] (6.2,0.5)--(6.8,0.5);
			\draw[dotted, line width=1pt] (1.2,1.5)--(1.8,1.5);
			\draw[dotted, line width=1pt] (8.2,1.5)--(8.8,1.5);
			\node at (-3.5,1) {$\mathsf{Res}_{1,4}\,\,\mathsf{P}^{A\mathrm{II}}(S) =$};
			\node at (10.5,0.5) {.};
		\end{tikzpicture}
	\end{center}
	Hence, 
	$\mathsf{P}^{A\mathrm{II}}(S)$
	is a tableau of the following form,
	where the entries of the boxes contained in the area $\Omega_2$
	lie in $[5,\overline{1}]$: 
	\begin{center}
		\begin{tikzpicture}[x=5mm,y=5mm]
			\fill[fill=blue!20] (0,0)--(0,-4)--(8,-4)--(8,-3)--(16,-3)--(16,-2)--(18,-2)--(18,0)--(19,0)--(19,2)--(21,2)--(21,3)--(22,3)--(22,5)--(19,5)--(19,4)--(18,4)--(18,3)--(0,3)--cycle;
			\draw (0,3)--(18,3)--(18,4)--(0,4)--cycle;
			\draw (0,4)--(0,5)--(19,5)--(19,4)--(18,4);
			\node (b) at (0.5,3.5) {$a_2$};
			\node (c) at (0.5,4.5) {$a_1$};
			\node (e) at (17.5,3.5) {$a_2$};
			\node (f) at (18.5,4.5) {$a_1$};
			\draw[dotted, line width=1pt] (1.2,3.5)--(1.8,3.5);
			\draw[dotted, line width=1pt] (16.2,3.5)--(16.8,3.5);
			\draw[dotted, line width=1pt] (1.2,4.5)--(1.8,4.5);
			\draw[dotted, line width=1pt] (17.2,4.5)--(17.8,4.5);
			\node at (-2.2,0.5) {$\mathsf{P}^{A\mathrm{II}}(S)=$};
			\draw (0,0)--(0,3);
			\draw (0,0)--(0,-4)--(8,-4)--(8,-3)--(16,-3)--(16,-2)--(18,-2)--(18,0)--(19,0)--(19,2)--(21,2)--(21,3)--(22,3)--(22,5)--(19,5);
			\node at (8,0) {$5 , \ldots , \overline{1}$};
			\node at (22.5,0) {.};
			\draw[line width=1pt, color=blue, line join=round] (0,0)--(0,-4)--(8,-4)--(8,-3)--(16,-3)--(16,-2)--(18,-2)--(18,0)--(19,0)--(19,2)--(21,2)--(21,3)--(22,3)--(22,5)--(19,5)--(19,4)--(18,4)--(18,3)--(0,3)--cycle;
			\node at (19.3,-2.5) {\textcolor{blue}{$\Omega_2$}};
		\end{tikzpicture}
	\end{center}
	
  Now, let $k \geq 3$.
  The proof is divided into two cases according as $k$ is even or odd. 
	First, assume that $k$ is even.
	By using the implication 
	(i)$\implies$(ii) and (iii)$\implies$(iv) repeatedly, 
	we deduce that 
	\begin{center}
		\begin{tikzpicture}[x=5mm,y=5mm]
			\draw (0,0)--(8,0)--(8,1)--(0,1)--cycle;
			\draw (0,1)--(0,2)--(10,2)--(10,1)--(8,1);
			\node at (0.9,0.5) {$a_{k+1}$};
			\node at (0.5,1.5) {$a_k$};
			\node at (7.1,0.5) {$a_{k+1}$};
			\node at (9.5,1.5) {$a_k$};
			\draw[dotted, line width=1pt] (2,0.5)--(2.6,0.5);
			\draw[dotted, line width=1pt] (5.4,0.5)--(6,0.5);
			\draw[dotted, line width=1pt] (1.2,1.5)--(1.8,1.5);
			\draw[dotted, line width=1pt] (8.2,1.5)--(8.8,1.5);
			\node at (-6.7,1) {$\mathsf{P}^{A\mathrm{II}}_{2k-1,2k+2}\left(\mathsf{Res}_{2k-1,2k+2}\,\,\mathsf{P}^{A\mathrm{II}}(S)\right) =$};
			\node at (10.5,0.5) {.};
		\end{tikzpicture}
	\end{center}
	Here, by the induction hypothesis, 
	$\mathsf{P}^{A\mathrm{II}}(S)$ does not contain $\boxed{2k}$.
	Therefore, from the implication (1) $\implies$ (3) in Lemma \ref{characterization of k-highest} 
	and the equalities $a_k = 2k-1, a_{k+1} = 2k+2$,
	we conclude that
	$\mathsf{Res}_{2k-1,2k+2}\,\,\mathsf{P}^{A\mathrm{II}}(S)$
	is a tableau of the following form: 
	\begin{center}
		\begin{tikzpicture}[x=5mm,y=5mm]
			\draw (0,0)--(8,0)--(8,1)--(0,1)--cycle;
			\draw (0,1)--(0,2)--(8,2)--(8,1);
			\draw (8,2)--(16,2)--(16,1)--(8,1);
			\draw (0,2)--(0,3)--(18,3)--(18,2)--(16,2);
			\node at (0.9,0.5) {$a_{k+1}$};
			\node at (1.3,1.5) {$2k+1$};
			\node at (8.9,1.5) {$a_{k+1}$};
			\node at (7.1,0.5) {$a_{k+1}$};
			\node at (6.7,1.5) {$2k+1$};
			\node at (15.1,1.5) {$a_{k+1}$};
			\node at (0.6,2.5) {$a_k$};
			\node at (17.4,2.5) {$a_k$};
			\draw[dotted, line width=1pt] (2,0.5)--(2.6,0.5);
			\draw[dotted, line width=1pt] (5.4,0.5)--(6,0.5);
			\draw[dotted, line width=1pt] (2.8,1.5)--(3.4,1.5);
			\draw[dotted, line width=1pt] (4.6,1.5)--(5.2,1.5);
			\draw[dotted, line width=1pt] (10,1.5)--(10.6,1.5);
			\draw[dotted, line width=1pt] (13.4,1.5)--(14,1.5);
			\draw[dotted, line width=1pt] (1.4,2.5)--(2,2.5);
			\draw[dotted, line width=1pt] (16,2.5)--(16.6,2.5);
			\node at (-4.5,1.5) {$\mathsf{Res}_{2k-1,2k+2}\,\,\mathsf{P}^{A\mathrm{II}}(S) =$};
			\node at (18.5,1) {.};
		\end{tikzpicture}
	\end{center}
  Also, by Definition \ref{7.3},
	we deduce that
	$\boxed{2k+1}$ and $\boxed{a_{k+1}}$
	lie in a row of $\mathsf{P}^{A\mathrm{II}}(S)$ which is lower than or equal to the $(k+1)$st row. 
	In addition, from the definition of the jeu-de-taquin slide,
	we see that
	if $\mathsf{P}^{A\mathrm{II}}(S)$ contained $\boxed{2k+1}$, then 
	$\boxed{a_{k+1}}$ and $\boxed{2k+1}$ would lie in the $(k+1)$st row since 
	$\mathsf{P}^{A\mathrm{II}}(S)$ is symplectic.
	However, this is a contradiction since, in such a case, 
	$\mathsf{Res}_{2k-1,2k+2}\,\,\mathsf{P}^{A\mathrm{II}}(S)$
	is not the tableau above.
	
  Next, assume that $k$ is odd.
	By using the implication 
	(i)$\implies$(ii) and
	(iii)$\implies$(iv) repeatedly, we deduce that 
	\begin{center}
		\begin{tikzpicture}[x=5mm,y=5mm]
			\draw (0,0)--(8,0)--(8,1)--(0,1)--cycle;
			\draw (0,1)--(0,2)--(10,2)--(10,1)--(8,1);
			\node at (0.9,0.5) {$a_{k+1}$};
			\node at (0.5,1.5) {$a_k$};
			\node at (7.1,0.5) {$a_{k+1}$};
			\node at (9.5,1.5) {$a_k$};
			\draw[dotted, line width=1pt] (2,0.5)--(2.6,0.5);
			\draw[dotted, line width=1pt] (5.4,0.5)--(6,0.5);
			\draw[dotted, line width=1pt] (1.2,1.5)--(1.8,1.5);
			\draw[dotted, line width=1pt] (8.2,1.5)--(8.8,1.5);
			\node at (-6.7,1) {$\mathsf{P}^{A\mathrm{II}}_{2k-1,2k+2}\left(\mathsf{Res}_{2k-1,2k+2}\,\,\mathsf{P}^{A\mathrm{II}}(S)\right) =$};
			\node at (10.5,0.5) {.};
		\end{tikzpicture}
	\end{center}
	Here, by the induction hypothesis, 
	$\mathsf{P}^{A\mathrm{II}}(S)$ does not contain
	$\boxed{2k+1}$.
	Therefore, from the implication (1) $\implies$ (3) in Lemma \ref{characterization of k-lowest} 
	and the equalities $a_{k}=2k, a_{k}=2k+1$,
	we see that 
	$\mathsf{Res}_{2k-1,2k+2}\,\,\mathsf{P}^{A\mathrm{II}}(S)$
	is a tableau of the following form: 
	\begin{center}
		\begin{tikzpicture}[x=5mm,y=5mm]
			\draw (0,0)--(8,0)--(8,1)--(0,1)--cycle;
			\draw (0,1)--(0,2)--(8,2);
			\draw (8,2)--(16,2)--(16,1)--(8,1);
			\draw (0,2)--(0,3)--(18,3)--(18,2)--(16,2);
			\node at (0.9,1.5) {$a_{k+1}$};
			\node at (1.3,0.5) {$2k+2$};
			\node at (6.7,0.5) {$2k+2$};
			\node at (15.1,1.5) {$a_{k+1}$};
			\node at (0.6,2.5) {$a_k$};
			\node at (17.4,2.5) {$a_k$};
			
			\draw[dotted, line width=1pt] (2,1.5)--(2.6,1.5);
			\draw[dotted, line width=1pt] (13.4,1.5)--(14,1.5);
			\draw[dotted, line width=1pt] (2.8,0.5)--(3.4,0.5);
			\draw[dotted, line width=1pt] (4.6,0.5)--(5.2,0.5);
			\draw[dotted, line width=1pt] (1.4,2.5)--(2,2.5);
			\draw[dotted, line width=1pt] (16,2.5)--(16.6,2.5);
			\node at (-4.5,1.5) {$\mathsf{Res}_{2k-1,2k+2}\,\,\mathsf{P}^{A\mathrm{II}}(S) =$};
			\node at (18.5,1) {.};
		\end{tikzpicture}
	\end{center}
	Hence, in $\mathsf{P}^{A\mathrm{II}}(S)$,
	$\boxed{a_{k+1}}$ lies only in the $(k+1)$st row, and
	$\boxed{2k+2}$ lies only in the $(k+2)$nd row.
	However, since $\mathsf{P}^{A\mathrm{II}}(T)$
	is a symplectic tableau,
	all the entries in the $(k+2)$nd row 
	must be greater than or equal to $2k+3$.
	Thus, we see that 
	$\mathsf{P}^{A\mathrm{II}}(S)$
	does not contain $\boxed{2k+2}$.
	The proof of the implication
	(iv) $\implies$ (iii) is similar.
  This proves the lemma. 
\end{proof}

\setcounter{equation}{16}

\subsection{Construction of the bijections $\mathsf{\Phi}$ and $\mathsf{\Psi}$}
In this subsection,
we construct the bijections
$\mathsf{\Phi}$ and $\mathsf{\Psi}$
as certain composites of promotion operators.

We begin with a simple observation on restriction operators. 
Let $a \leq b \leq c \leq d$ be integers, and
$T \in \mathsf{SST}_{2n}(\lambda)$ a semistandard tableau
with all entries in $[a,2n]$.
We can describe $\mathsf{Res}_{a,b;c,d}(T)$
in terms of promotion operators as follows.
Let $\widetilde{T}$ be a semistandard tableau obtained by
placing the numbers in the interval $[b+1,c-1]$
into the empty boxes of $T|_{a,b;c,d}$; 
for example, $\widetilde{T}$ can be taken to be $T$, but any other choice is also acceptable.
Then, it follows from the definition of $\mathsf{Rect}$ that 
$\mathsf{Res}_{a,b;c,d}(T)$ is the tableau obtained
by simultaneously increasing by $c-b-1$ all entries in $[b+1,b+1+(d-c)]$
of the tableau below: 
\begin{equation}
	\left.\overbrace{\mathsf{pr}_{b+1,d+b-c+2}^{-1}\circ\cdots\circ\mathsf{pr}_{c-3,d-2}^{-1}\circ\mathsf{pr}_{c-2,d-1}^{-1}\circ\mathsf{pr}_{c-1,d}^{-1}}^{\text{($c-b-1$) factors}}(\widetilde{T})\right|_{a,b+1+(d-c)}. \label{7.18}
\end{equation}
\begin{eg}
	Let $a=1$, $b=2$, $c=5$, $d=6$, and take
	\vspace{5pt}
	\begin{center}
		\begin{tikzpicture}[x=5mm,y=5mm]
			\draw (0,0)--(4,0)--(4,-1)--(3,-1)--(3,-2)--(2,-2)--(2,-3)--(0,-3)--cycle;
			\draw (1,0)--(1,-3);
			\draw (2,0)--(2,-2);
			\draw (3,0)--(3,-1);
			\draw (0,-1)--(3,-1);
			\draw (0,-2)--(2,-2);
			\node at (0.5,-0.5) {1};
			\node at (1.5,-0.5) {2};
			\node at (2.5,-0.5) {2};
			\node at (3.5,-0.5) {3};
			\node at (0.5,-1.5) {4};
			\node at (1.5,-1.5) {5};
			\node at (2.5,-1.5) {6};
			\node at (0.5,-2.5) {6};
			\node at (1.5,-2.5) {6};
			\node at (-1.1,-1.5) {$T=$};
			\node at (4.2,-1.8) {,};
		\end{tikzpicture}
		\hspace{10pt}
		\begin{tikzpicture}[x=5mm,y=5mm]
			\draw (0,0)--(4,0)--(4,-1)--(3,-1)--(3,-2)--(2,-2)--(2,-3)--(0,-3)--cycle;
			\draw (1,0)--(1,-3);
			\draw (2,0)--(2,-2);
			\draw (3,0)--(3,-1);
			\draw (0,-1)--(3,-1);
			\draw (0,-2)--(2,-2);
			\node at (0.5,-0.5) {1};
			\node at (1.5,-0.5) {2};
			\node at (2.5,-0.5) {2};
			\node at (3.5,-0.5) {4};
			\node at (0.5,-1.5) {3};
			\node at (1.5,-1.5) {5};
			\node at (2.5,-1.5) {6};
			\node at (0.5,-2.5) {6};
			\node at (1.5,-2.5) {6};
			\node at (-1.1,-1.3) {$\widetilde{T}=$};
			\node at (4.2,-1.8) {.};
		\end{tikzpicture}
	\end{center}
	In this case, we have
	\vspace{5pt}
	\begin{center}
		\begin{tikzpicture}[x=5mm,y=5mm]
			\fill[color=blue!20] (0,-3)--(0,-2)--(1,-2)--(1,-1)--(3,-1)--(3,0)--(4,0)--(4,-1)--(3,-1)--(3,-2)--(2,-2)--(2,-3)--cycle;
			\draw (0,0)--(4,0)--(4,-1)--(3,-1)--(3,-2)--(2,-2)--(2,-3)--(0,-3)--cycle;
			\draw (1,0)--(1,-3);
			\draw (2,0)--(2,-2);
			\draw (3,0)--(3,-1);
			\draw (0,-1)--(3,-1);
			\draw (0,-2)--(2,-2);
			\node at (0.5,-0.5) {1};
			\node at (1.5,-0.5) {2};
			\node at (2.5,-0.5) {2};
			\node at (3.5,-0.5) {4};
			\node at (0.5,-1.5) {3};
			\node at (1.5,-1.5) {5};
			\node at (2.5,-1.5) {6};
			\node at (0.5,-2.5) {6};
			\node at (1.5,-2.5) {6};
			\draw[line width=1.5pt, color=blue] (0,-3)--(0,-2)--(1,-2)--(1,-1)--(3,-1)--(3,0)--(4,0)--(4,-1)--(3,-1)--(3,-2)--(2,-2)--(2,-3)--cycle;

			\node at (5.5,-1.5) {$\longmapsto$};
			\node at (5.5,-0.7) {$\mathsf{pr}_{4,6}^{-1}$};

			\fill[color=blue!20] (7,-3)--(7,-1)--(10,-1)--(10,-2)--(9,-2)--(9,-3)--cycle;
			\draw (7,0)--(11,0)--(11,-1)--(10,-1)--(10,-2)--(9,-2)--(9,-3)--(7,-3)--cycle;
			\draw (8,0)--(8,-3);
			\draw (9,0)--(9,-2);
			\draw (10,0)--(10,-1);
			\draw (7,-1)--(10,-1);
			\draw (7,-2)--(9,-2);
			\node at (7.5,-0.5) {1};
			\node at (8.5,-0.5) {2};
			\node at (9.5,-0.5) {2};
			\node at (10.5,-0.5) {6};
			\node at (7.5,-1.5) {3};
			\node at (8.5,-1.5) {4};
			\node at (9.5,-1.5) {5};
			\node at (7.5,-2.5) {5};
			\node at (8.5,-2.5) {5};
			\draw[line width=1.5pt, color=blue] (7,-3)--(7,-1)--(10,-1)--(10,-2)--(9,-2)--(9,-3)--cycle;
			
			\node at (12.5,-1.5) {$\longmapsto$};
			\node at (12.5,-0.7) {$\mathsf{pr}_{3,5}^{-1}$};

			\fill[color=blue!20] (14,0)--(14,-3)--(15,-3)--(15,-2)--(17,-2)--(17,0)--cycle;
			\draw (14,0)--(18,0)--(18,-1)--(17,-1)--(17,-2)--(16,-2)--(16,-3)--(14,-3)--cycle;
			\draw (15,0)--(15,-3);
			\draw (16,0)--(16,-2);
			\draw (17,0)--(17,-1);
			\draw (14,-1)--(17,-1);
			\draw (14,-2)--(16,-2);
			\node at (14.5,-0.5) {1};
			\node at (15.5,-0.5) {2};
			\node at (16.5,-0.5) {2};
			\node at (17.5,-0.5) {6};
			\node at (14.5,-1.5) {3};
			\node at (15.5,-1.5) {4};
			\node at (16.5,-1.5) {4};
			\node at (14.5,-2.5) {4};
			\node at (15.5,-2.5) {5};
			\draw[line width=1.5pt, color=blue] (14,0)--(14,-3)--(15,-3)--(15,-2)--(17,-2)--(17,0)--cycle;

			\node at (19.5,-1.5) {$\longmapsto$};
			\node at (19.5,-0.7) {$|_{1,4}$};

			\fill[color=blue!20] (21,-1)--(21,-3)--(22,-3)--(22,-2)--(24,-2)--(24,-1)--cycle;
			\draw (21,0)--(24,0)--(24,-2)--(22,-2)--(22,-3)--(21,-3)--cycle;
			\draw (22,0)--(22,-3);
			\draw (23,0)--(23,-2);
			\draw (24,0)--(24,-1);
			\draw (21,-1)--(24,-1);
			\draw (21,-2)--(23,-2);
			\node at (21.5,-0.5) {1};
			\node at (22.5,-0.5) {2};
			\node at (23.5,-0.5) {2};
			\node at (21.5,-1.5) {3};
			\node at (22.5,-1.5) {4};
			\node at (23.5,-1.5) {4};
			\node at (21.5,-2.5) {4};
			\draw[line width=1.5pt, color=blue] (21,-1)--(21,-3)--(22,-3)--(22,-2)--(24,-2)--(24,-1)--cycle;

			\node at (25.5,-1.5) {$\longmapsto$};
			\node at (25.5,-0.7) {$+2$};

			\draw (27,0)--(30,0)--(30,-2)--(28,-2)--(28,-3)--(27,-3)--cycle;
			\draw (28,0)--(28,-3);
			\draw (29,0)--(29,-2);
			\draw (30,0)--(30,-1);
			\draw (27,-1)--(30,-1);
			\draw (27,-2)--(29,-2);
			\node at (27.5,-0.5) {1};
			\node at (28.5,-0.5) {2};
			\node at (29.5,-0.5) {2};
			\node at (27.5,-1.5) {5};
			\node at (28.5,-1.5) {6};
			\node at (29.5,-1.5) {6};
			\node at (27.5,-2.5) {6};
			\node at (30.3,-1.8) {.};
		\end{tikzpicture}
	\end{center}
	\vspace{5pt}
	By Example \ref{example}, we can verify that the last tableau agrees with $\mathsf{Res}_{1,2;5,6}\,(T)$. 
\end{eg}
Using this observation, we can show the following lemmas.
\begin{lem}\label{lem1}
	Assume that all the entries of
	$T \in \mathsf{SST}_{2n}(\lambda)$
	are greater than or equal to $k$.
	Then, the equality 
	\begin{equation}
		\left.\mathsf{pr}_{k+3,\overline{k}} \circ \mathsf{pr}_{k+2,\overline{k+1}} (T) \right|_{k,k+3}
		=
		\left.\left(\mathsf{Res}_{k,k+1;\overline{k+1},\overline{k}}\,\,T\right)\right|_{\overline{k+1}\to k+2,\,\,\overline{k}\to k+3}
	\end{equation}
	holds, where the operation $\,\cdot\,\,\,|_{\overline{k+1}\to k+2,\,\,\overline{k}\to k+3}$
	means replacing $\overline{k+1}$ and $\overline{k}$
	with $k+2$ and $k+3$, respectively.
\end{lem}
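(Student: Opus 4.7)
I would prove the lemma by expressing both sides as compositions of single-step promotions $s_j := \mathsf{pr}_{j, j+1}$, and then verifying the resulting operator identity using only the commutation relations in Lemma \ref{prrel}.

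First, iterating Lemma \ref{prrel}(3) gives $\mathsf{pr}_{a, b} = s_a \circ s_{a+1} \circ \cdots \circ s_{b-1}$ for all $a < b$; in particular,
\begin{equation*}
\mathsf{pr}_{k+2, \overline{k+1}} = s_{k+2} \circ s_{k+3} \circ \cdots \circ s_{\overline{k}-2}, \qquad \mathsf{pr}_{k+3, \overline{k}} = s_{k+3} \circ s_{k+4} \circ \cdots \circ s_{\overline{k}-1}.
\end{equation*}
By Lemma \ref{prrel}(1), each $s_j$ is an involution, so $\mathsf{pr}_{j, j+2}^{-1} = (s_j s_{j+1})^{-1} = s_{j+1} s_j$.

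Next, I would apply formula (7.18) to the right-hand side of the lemma with $a = k$, $b = k+1$, $c = \overline{k+1}$, and $d = \overline{k}$. Since $c - b - 1 = 2n - 2k - 2$, $d - c = 1$, and $b + 1 + (d - c) = k + 3$, the shift of entries in $[k+2, k+3]$ by $c - b - 1$ sends $k+2 \mapsto \overline{k+1}$ and $k+3 \mapsto \overline{k}$, exactly inverting the renaming $\overline{k+1} \to k+2$, $\overline{k} \to k+3$ in the lemma. Since the $\mathsf{pr}_{j, j+2}^{-1}$ operators do not affect entries outside $[k+2, \overline{k}]$, and entries of $T$ in $[\overline{k}+1, 2n]$ are removed by the final restriction to $[k, k+3]$, I may take $\widetilde{T} = T$. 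Combining with the decompositions above, the lemma reduces to the operator identity
\begin{equation*}
\bigl(s_{k+3} s_{k+4} \cdots s_{\overline{k}-1}\bigr) \circ \bigl(s_{k+2} s_{k+3} \cdots s_{\overline{k}-2}\bigr) = \bigl(s_{k+3} s_{k+2}\bigr) \circ \bigl(s_{k+4} s_{k+3}\bigr) \circ \cdots \circ \bigl(s_{\overline{k}-1} s_{\overline{k}-2}\bigr).
\end{equation*}

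Finally, I would prove this operator identity by induction on $N := \overline{k} - k - 3$ using only the commutation $s_i s_j = s_j s_i$ for $|i - j| \geq 2$ from Lemma \ref{prrel}(2). The base cases $N \leq 1$ are immediate. For the inductive step, I peel off the trailing factor $s_{\overline{k}-1}$ from the first product and the trailing factor $s_{\overline{k}-2}$ from the second on the left-hand side; since $|\overline{k} - 1 - j| \geq 2$ for every $j \in \{k+2, k+3, \ldots, \overline{k} - 3\}$, the factor $s_{\overline{k}-1}$ commutes past all the remaining factors $s_{k+2} s_{k+3} \cdots s_{\overline{k}-3}$, leaving the inductive-hypothesis expression multiplied on the right by $s_{\overline{k}-1} s_{\overline{k}-2}$, which matches the final factor of the right-hand side. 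The main obstacle is precisely this combinatorial verification; once the operator identity is established, applying both sides to $T$ and restricting to $[k, k+3]$ yields the claimed equality.
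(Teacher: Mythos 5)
Your proposal is correct and takes essentially the same route as the paper: both apply formula (7.18) with $a=k$, $b=k+1$, $c=\overline{k+1}$, $d=\overline{k}$, $\widetilde{T}=T$, and then regroup the adjacent-promotion factors $\mathsf{pr}_{j,j+1}$ using Lemma~\ref{prrel}. The paper presents the regrouping in a single chain of equalities without further justification, whereas you isolate it as an explicit operator identity and prove it by induction, which is a slightly more careful rendering of the same argument.
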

\begin{proof}
	We take $a=k$, $b=k+1$, $c=\overline{k+1}$, $d=\overline{k}$ and $\widetilde{T}=T$.
	Then, by Lemma \ref{prrel} together with the observation above, we see that 
	\[
		\begin{aligned}
			& \left.\left(\mathsf{Res}_{k,k+1;\overline{k+1},\overline{k}}\,\,T\right)\right|_{\overline{k+1}\to k+2,\,\,\overline{k}\to k+3} \\
			&= \left.\mathsf{pr}_{k+2,k+4}^{-1} \circ \cdots \circ \mathsf{pr}_{\overline{k+3},\overline{k+1}}^{-1} \circ \mathsf{pr}_{\overline{k+2},\overline{k}}^{-1}(T)\right|_{k,k+3} \\
			&= \left.\left(\mathsf{pr}_{k+3,k+4} \circ \mathsf{pr}_{k+2,k+3}\right) \circ \cdots \circ \left(\mathsf{pr}_{\overline{k+2},\overline{k+1}} \circ \mathsf{pr}_{\overline{k+3},\overline{k+2}}\right) \circ \left(\mathsf{pr}_{\overline{k+1},\overline{k}} \circ \mathsf{pr}_{\overline{k+2},\overline{k+1}}\right) (T) \right|_{k,k+3} \\
			&= \left.\left(\mathsf{pr}_{k+3,k+4} \circ \cdots \circ \mathsf{pr}_{\overline{k+2},\overline{k+1}} \circ \mathsf{pr}_{\overline{k+1},\overline{k}}\right)\circ\left(\mathsf{pr}_{k+2,k+3} \circ \cdots \circ \mathsf{pr}_{\overline{k+3},\overline{k+2}} \circ \mathsf{pr}_{\overline{k+2},\overline{k+1}}\right)\right|_{k,k+3} \\
			&= \left.\mathsf{pr}_{k+3,\overline{k}} \circ \mathsf{pr}_{k+2,\overline{k+1}} (T) \right|_{k,k+3}.
		\end{aligned}
	\]
This proves the lemma. 
\end{proof}

\begin{lem}\label{lem2}
	Let
	$T \in \mathsf{SST}_{2n}(\lambda)$ be a semistandard tableau.
	Then, $\mathsf{Res}_{2,\overline{2}}\,\,\mathsf{pr}^{-1}_{2,\overline{1}}(T)$
	is the tableau obtained from $\mathsf{Res}_{3,\overline{1}}(T)$ 
        by simultaneously decreasing all entries by $1$. 
\end{lem}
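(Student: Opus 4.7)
The plan is to express both sides as rectifications of skew semistandard tableaux that differ by jeu-de-taquin slides, and then to invoke Sch\"utzenberger's theorem that rectification is invariant under such slides.

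First, I unwind Definition~\ref{defpr}. Let $T_2$ denote the tableau after step~(2) of $\mathsf{pr}^{-1}_{2,\overline{1}}$ (remove the $1$-boxes of $T$, decrease each entry $\geq 3$ by one, and relabel each $2$ as $\overline{1}=2n$), and $T_3$ the tableau after step~(3), obtained by applying jeu-de-taquin to the $2n$-boxes of $T_2$ from right to left. Writing $T_2\setminus\{2n\}$ and $T_3\setminus\{2n\}$ for the tableaux with the $2n$-boxes discarded, the restriction $\cdot|_{2,\overline{2}}$ removes both the reinserted $1$'s (undoing step~(4)) and the $2n$'s, so
\[
  \mathsf{Res}_{2,\overline{2}}\bigl(\mathsf{pr}^{-1}_{2,\overline{1}}(T)\bigr) = \mathsf{Rect}\bigl(T_3\setminus\{2n\}\bigr).
\]
Since $\mathsf{Rect}$ commutes with order-preserving relabelings of entries, the tableau obtained from $\mathsf{Res}_{3,\overline{1}}(T)=\mathsf{Rect}(T|_{3,\overline{1}})$ by subtracting $1$ from every entry equals $\mathsf{Rect}\bigl((T|_{3,\overline{1}})-1\bigr)$; and $(T|_{3,\overline{1}})-1$ coincides, both in shape and in entries, with $T_2\setminus\{2n\}$. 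The lemma therefore reduces to the identity
\[
  \mathsf{Rect}\bigl(T_2\setminus\{2n\}\bigr) = \mathsf{Rect}\bigl(T_3\setminus\{2n\}\bigr).
\]

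Next, I check that $T_2\setminus\{2n\}$ is in fact a semistandard skew tableau of Young skew shape. Since $T$ is semistandard, the $2$'s in $T$ occur only in rows~$1$ and $2$: in row~$1$, they occupy a block of columns $[k_1+1,k_2]$ to the right of the $k_1$ $1$'s, and in row~$2$, a block $[1,k_3]$ with $k_3\leq k_1\leq k_2$ (since each $2$ in row~$2$ must lie strictly below a $1$). Setting $\mu=(k_1)$ and $\mu'=(k_2,k_3)$, both are Young diagrams with $\mu\subset\mu'\subset\lambda$, and $T_2\setminus\{2n\}$ is a semistandard tableau of shape $\lambda/\mu'$ (the shift $k\mapsto k-1$ on entries $\geq 3$ preserves column-strict and row-weak inequalities).

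Under this identification, each slide in step~(3) of Definition~\ref{defpr} is precisely a standard jeu-de-taquin slide on the skew tableau $T_2\setminus\{2n\}$, in which the $2n$-box plays the role of the hole at an inner corner of the current inner shape. Processing the $2n$-boxes from right to left, with ties within a column broken bottom-first, guarantees that the $2n$-box slid at every stage is genuinely an inner corner of the then-current inner shape: by the Young-diagram structure of $\mu'=(k_2,k_3)$ the initial inner corners are $(k_2,1)$ (when $k_3<k_2$) and $(k_3,2)$, and an induction on the number of slides shows that each subsequent rightmost unprocessed $2n$-box remains an inner corner of the updated inner shape. Hence $T_3\setminus\{2n\}$ is obtained from $T_2\setminus\{2n\}$ by a finite sequence of standard jeu-de-taquin slides, and Sch\"utzenberger's theorem (see \cite[Part I, Chapter 2]{F}) yields the required identity, completing the proof. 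The main technical point is this inductive verification that the intermediate inner shapes remain Young diagrams and that the $2n$-box being processed is an inner corner, so that the slides of Definition~\ref{defpr} genuinely realize a standard jeu-de-taquin process.
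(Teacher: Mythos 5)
Your proof is correct, and it recovers directly what the paper obtains by appealing twice to its observation that $\mathsf{Res}_{a,b;c,d}$ can be computed by a composite of inverse promotion operators followed by a uniform shift of entries. The paper rewrites both $\mathsf{Res}_{3,\overline{1}}(T)$ and $\mathsf{Res}_{2,\overline{2}}\,\mathsf{pr}^{-1}_{2,\overline{1}}(T)$ as $\mathsf{pr}^{-1}_{1,\overline{2}}\circ\mathsf{pr}^{-1}_{2,\overline{1}}(T)|_{1,\overline{3}}$ up to uniform shifts of $2$ and $1$; you instead unwind $\mathsf{pr}^{-1}_{2,\overline{1}}$ by hand, identify both sides of the lemma as rectifications of the relabeled skew tableau $T_2\setminus\{2n\}$ of shape $\lambda/(k_2,k_3)$, and invoke Sch\"utzenberger's invariance of rectification under jeu-de-taquin. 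Both arguments rest on the same jeu-de-taquin foundation, but yours is self-contained where the paper's promotion formula is asserted as an immediate consequence of the definition of $\mathsf{Rect}$ and is, moreover, applied here with the interval $[a,b]$ empty, outside its stated hypothesis $a\le b$. Two minor points in your write-up: ties within a column never actually arise, since $T[2]\le T[1]$ forces the $2$'s of rows one and two into disjoint column intervals $[k_1+1,k_2]$ and $[1,k_3]$ with $k_3\le k_1$; and a later $2n$-box can drift through an already-landed one via trivial swaps of equal entries, so an individual slide in step~(3) of Definition~\ref{defpr} need not coincide with a single jeu-de-taquin slide on $T_2\setminus\{2n\}$ --- but these drifts only permute $2n$'s among already-vacated cells, so $T_3\setminus\{2n\}$ and hence its rectification are unaffected, and your conclusion stands.
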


\begin{proof}
	By the observation above,
	we deduce that
	$\mathsf{Res}_{3,\overline{1}}(T)$ is the tableau obtained
        from $\mathsf{pr}_{1,\overline{2}}^{-1}\circ\mathsf{pr}_{2,\overline{1}}^{-1}(T)|_{1,\overline{3}}$
	by simultaneously increasing all entries by $2$. 
	Also, 
	$\mathsf{Res}_{2,\overline{2}}\,\,\mathsf{pr}^{-1}_{2,\overline{1}}(T)$
	is the tableau obtained from $\mathsf{pr}_{1,\overline{2}}^{-1}\circ\mathsf{pr}_{2,\overline{1}}^{-1}(T)|_{1,\overline{3}}$ 
	by simultaneously increasing all entries by $1$. 
	This proves the lemma.
\end{proof}

Now, we define maps 
$\mathsf{\Phi}$ and $\mathsf{\Psi}$ as follows.

\begin{de}
	We define two sequences of operators 
	$\mathsf{\Phi}_1,\ldots,\mathsf{\Phi}_n,\mathsf{\Psi}_1,\ldots,\mathsf{\Psi}_n : \mathsf{SST}_{2n}(\lambda) \to \mathsf{SST}_{2n}(\lambda)$
	by the following recurrence relations: 
	\[
		\mathsf{\Phi}_n :=
		\left\{\,
		\begin{aligned}
			 & \mathrm{id}                  &  & \text{if $n$ is even}, \\
			 & \mathsf{pr}_{n,\overline{n}} &  & \text{if $n$ is odd},
		\end{aligned}
		\right.
		\,\,\,\,\,\,
		\mathsf{\Phi}_k :=
		\left\{\,
		\begin{aligned}
			 & \mathsf{pr}_{k+1,\overline{k}} \circ \mathsf{\Phi}_{k+1} &  & \text{if $k = 2, 4, 6, \ldots, 2\left\lfloor\frac{n}{2}\right\rfloor$}, \\
			 & \mathsf{pr}_{k,\overline{k}} \circ \mathsf{\Phi}_{k+1}   &  & \text{if $k = 1, 3, 5, \ldots, 2\left\lfloor\frac{n-1}{2}\right\rfloor + 1$},
		\end{aligned}
		\right.
	\]
	\[
		\mathsf{\Psi}_n :=
		\left\{\,
		\begin{aligned}
			 & \mathsf{pr}_{n,\overline{n}} &  & \text{if $n$ is even}, \\
			 & \mathrm{id}                  &  & \text{if $n$ is odd},
		\end{aligned}
		\right.
		\,\,\,\,\,\,
		\mathsf{\Psi}_k :=
		\left\{\,
		\begin{aligned}
			 & \mathsf{pr}_{k,\overline{k}} \circ \mathsf{\Psi}_{k+1}   &  & \text{if $k = 2, 4, 6, \ldots, 2\left\lfloor\frac{n}{2}\right\rfloor$}, \\
			 & \mathsf{pr}_{k+1,\overline{k}} \circ \mathsf{\Psi}_{k+1} &  & \text{if $k = 1, 3, 5, \ldots, 2\left\lfloor\frac{n-1}{2}\right\rfloor + 1$}.
		\end{aligned}
		\right.
	\]
	Then, 
	we set
	$\mathsf{\Phi} := \mathsf{\Phi}_1$, $\mathsf{\Psi} := \mathsf{\Psi}_1$.
\end{de}

\begin{eg}
	If $n=3$,
	then we have
	$\mathsf{\Phi}_3 = \mathsf{pr}_{3,4}$,
	$\mathsf{\Phi}_2 = \mathsf{pr}_{3,5} \circ \mathsf{pr}_{3,4}$, and
	$\mathsf{\Phi}_1 = \mathsf{pr}_{1,6} \circ \mathsf{pr}_{3,5} \circ \mathsf{pr}_{3,4}$.
	Hence, $\mathsf{\Phi} = \mathsf{pr}_{1,6} \circ \mathsf{pr}_{3,5} \circ \mathsf{pr}_{3,4}$.
	Thus, 
	\begin{center}
		\begin{tikzpicture}[x=5mm,y=5mm]
			\draw (0,0)--(0,3)--(2,3)--(2,1)--(1,1)--(1,0)--cycle;
			\draw (0,1)--(1,1);
			\draw (0,2)--(2,2);
			\draw (1,1)--(1,3);
			\node at (0.5,0.5) {5};
			\node at (0.5,1.5) {2};
			\node at (0.5,2.5) {1};
			\node at (1.5,1.5) {6};
			\node at (1.5,2.5) {1};

			\node at (3.5,1.5) {$\longmapsto$};
			\node at (3.5,2.3) {$\mathsf{pr}_{3,4}$};

			\fill[color=blue!20] (5,0)--(6,0)--(6,1)--(5,1)--cycle;
			\draw (5,0)--(5,3)--(7,3)--(7,1)--(6,1)--(6,0)--cycle;
			\draw (5,1)--(6,1);
			\draw (5,2)--(7,2);
			\draw (6,1)--(6,3);
			\node at (5.5,0.5) {5};
			\node at (5.5,1.5) {2};
			\node at (5.5,2.5) {1};
			\node at (6.5,1.5) {6};
			\node at (6.5,2.5) {1};
			\draw[line width=1.5pt, color=blue] (5,0)--(6,0)--(6,1)--(5,1)--cycle;

			\node at (8.5,1.5) {$\longmapsto$};
			\node at (8.5,2.3) {$\mathsf{pr}_{3,5}$};

			\fill[color=blue!20] (10,0)--(10,3)--(12,3)--(12,1)--(11,1)--(11,0)--cycle;
			\draw (10,0)--(10,3)--(12,3)--(12,1)--(11,1)--(11,0)--cycle;
			\draw (10,1)--(11,1);
			\draw (10,2)--(12,2);
			\draw (11,1)--(11,3);
			\node at (10.5,0.5) {3};
			\node at (10.5,1.5) {2};
			\node at (10.5,2.5) {1};
			\node at (11.5,1.5) {6};
			\node at (11.5,2.5) {1};
			\draw[line width=1.5pt, color=blue] (10,0)--(10,3)--(12,3)--(12,1)--(11,1)--(11,0)--cycle;

			\node at (13.5,1.5) {$\longmapsto$};
			\node at (13.5,2.3) {$\mathsf{pr}_{1,6}$};

			\draw (15,0)--(15,3)--(17,3)--(17,1)--(16,1)--(16,0)--cycle;
			\draw (15,1)--(17,1);
			\draw (15,2)--(17,2);
			\draw (16,1)--(16,3);
			\node at (15.5,0.5) {4};
			\node at (15.5,1.5) {2};
			\node at (15.5,2.5) {1};
			\node at (16.5,1.5) {3};
			\node at (16.5,2.5) {2};

			\draw (20.2,0)--(20.2,3)--(22.2,3)--(22.2,1)--(21.2,1)--(21.2,0)--cycle;
			\draw (20.2,1)--(21.2,1);
			\draw (20.2,2)--(22.2,2);
			\draw (21.2,1)--(21.2,3);
			\node at (20.7,0.5) {5};
			\node at (20.7,1.5) {2};
			\node at (20.7,2.5) {1};
			\node at (21.7,1.5) {6};
			\node at (21.7,2.5) {1};
			\node at (20.8,1.5) {$\mathsf{\Phi}\left(\begin{aligned} \\ \,\,\,\,\,\,\,\,\,\,\,\,\,\,\,\, \\ \\ \end{aligned}\right)$};
			\node at (23.2,1.2) {.};
			\node at (17.85,1.4) {$=$};
		\end{tikzpicture}
	\end{center}
	It is easy to see that the leftmost tableau is a
	$\widehat{\mathfrak{g}}$-dominant tableau.
	Also, 
	the rightmost tableau is the 
	$\mathfrak{k}$-highest weight tableau
	in Example \ref{5.9}.
\end{eg}

\begin{prop}
	We have $\mathsf{\Phi} \left( \mathsf{SST}_{2n}^{\text{$\widehat{\mathfrak{g}}$-$\mathrm{dom}$}}(\lambda) \right) \subset \mathsf{SST}_{2n}^{\text{$\mathfrak{k}$-$\mathrm{hw}$}}(\lambda)$.
\end{prop}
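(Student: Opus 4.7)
The plan is to verify the two conditions of Lemma \ref{kLGP}(1) for $\mathsf{\Phi}(T)$, by induction on $n$ coupled with the parallel statement $\mathsf{\Psi}(\mathsf{SST}_{2n}^{\text{$\widehat{\mathfrak{g}}$-$\mathrm{dom}$}}(\lambda))\subset \mathsf{SST}_{2n}^{\text{$\mathfrak{k}$-$\mathrm{lw}$}}(\lambda)$; the base case $n=2$ is Proposition \ref{n=2thm}. Fix $T \in \mathsf{SST}_{2n}^{\text{$\widehat{\mathfrak{g}}$-$\mathrm{dom}$}}(\lambda)$. By Lemma \ref{kLGP}(1), I must check (a) $\mathsf{P}^{\mathrm{AII}}_{1,4}(\mathsf{Res}_{1,4}(\mathsf{\Phi}(T)))$ has the prescribed outer shape with entries $a_1,a_2$, and (b) $\mathsf{P}^{\mathrm{AII}}_{3,\overline{1}}(\mathsf{Res}_{3,\overline{1}}(\mathsf{\Phi}(T)))$ has the prescribed inner shape with entries $a_2,a_3,\ldots,a_n$.

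For (a), I would iterate Lemma \ref{prrel}(3) to decompose
\[
\mathsf{\Phi} = \mathsf{pr}_{1,2}\circ\mathsf{pr}_{2,3}\circ\mathsf{pr}_{3,4}\circ\mathsf{pr}_{4,\overline{1}}\circ\mathsf{pr}_{3,\overline{2}}\circ\mathsf{\Phi}_3.
\]
Since each $\mathsf{pr}_{a,b}$ preserves the positions of entries lying outside $[a,b]$ by Definition \ref{defpr}, and every promotion range appearing in $\mathsf{\Phi}_3$ is contained in $[3,\overline{3}]$, the entries $1,2,\overline{2},\overline{1}$ of $\mathsf{\Phi}_3(T)$ occupy the same boxes as in $T$; in particular $\mathsf{Res}_{1,2;\overline{2},\overline{1}}(\mathsf{\Phi}_3(T)) = \mathsf{Res}_{1,2;\overline{2},\overline{1}}(T)$. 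Lemma \ref{lem1} with $k=1$ then gives
\[
(\mathsf{pr}_{4,\overline{1}}\circ\mathsf{pr}_{3,\overline{2}}\circ\mathsf{\Phi}_3(T))|_{1,4} = (\mathsf{Res}_{1,2;\overline{2},\overline{1}}(T))|_{\overline{2}\to 3,\,\overline{1}\to 4}.
\]
The leftover factor $\mathsf{pr}_{1,2}\circ\mathsf{pr}_{2,3}\circ\mathsf{pr}_{3,4}$ acts only on entries in $[1,4]$ and equals, by Lemma \ref{prrel}(3), the operator $\mathsf{\Phi}$ in the case $n=2$. The tableau $\mathsf{Res}_{1,2;\overline{2},\overline{1}}(T)$ is $\widehat{\mathfrak{g}}_1$-dominant by the restriction result proved at the start of Section 7.2, so after the relabeling $\overline{2}\to 3,\,\overline{1}\to 4$ it becomes a $\widehat{\mathfrak{g}}$-dominant element of $\mathsf{SST}_4$; Proposition \ref{n=2thm} combined with Lemma \ref{characterization of k-highest} then yields the outer shape required by (a).

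For (b), use Lemma \ref{prrel}(3) to write $\mathsf{\Phi} = \mathsf{pr}_{1,2}\circ\mathsf{pr}_{2,\overline{1}}\circ\mathsf{\Phi}_2$. Since $\mathsf{pr}_{1,2}$ fixes entries in $[3,\overline{1}]$, and since rearranging Lemma \ref{lem2} gives $\mathsf{Res}_{3,\overline{1}}(\mathsf{pr}_{2,\overline{1}}(X)) = \mathsf{Res}_{2,\overline{2}}(X)+1$, we obtain
\[
\mathsf{P}^{\mathrm{AII}}_{3,\overline{1}}(\mathsf{Res}_{3,\overline{1}}(\mathsf{\Phi}(T))) = \mathsf{P}^{\mathrm{AII}}\bigl(\mathsf{Res}_{2,\overline{2}}(\mathsf{\Phi}_2(T))-1\bigr)+2.
\]
The shifted tableau $\mathsf{Res}_{2,\overline{2}}(\mathsf{\Phi}_2(T))-1$ lies naturally in $\mathsf{SST}_{2(n-1)}$. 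A direct inspection of the recursive definitions shows that under the index shift $j\mapsto j-1$, the operator $\mathsf{\Phi}_2^{(n)}$ coincides with the rank-$(n-1)$ operator $\mathsf{\Psi}$: the parity conventions interchange the roles of $\mathsf{\Phi}_k$ and $\mathsf{\Psi}_k$ when the recursion starts at $k=2$ instead of $k=1$. Since $T$ is $\widehat{\mathfrak{g}}$-dominant, the tableau $T|_{2,\overline{2}}$ inherits $\widehat{\mathfrak{g}}_{\geq 2}$-dominance and its shift is $\widehat{\mathfrak{g}}$-dominant in the rank-$(n-1)$ setting; the induction hypothesis for $\mathsf{\Psi}$ then shows the shifted restriction is $\mathfrak{k}$-lowest weight for rank $n-1$, so by Lemma \ref{characterization of k-lowest} its $\mathsf{P}^{\mathrm{AII}}$-shape has entries $b_1,b_2,\ldots,b_{n-1}$. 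The arithmetic identity $b_k+2 = a_{k+1}$, valid for every $k\geq 1$, then converts this $\mathfrak{k}$-lowest form for rank $n-1$ into the inner form required by (b) for rank $n$. The main obstacle is the identification of $\mathsf{Res}_{2,\overline{2}}\circ\mathsf{\Phi}_2^{(n)}$ with $(\text{shifted }\mathsf{\Psi}^{(n-1)})\circ\mathsf{Res}_{2,\overline{2}}$ as operators on actual tableaux rather than as formal compositions: the restriction $\mathsf{Res}_{2,\overline{2}}$ must be shown to commute, factor-by-factor, with each $\mathsf{pr}_{a,b}$ appearing in $\mathsf{\Phi}_2$, via a restriction-compatible analogue of Lemma \ref{lem2} tailored to the inner strip, and it is precisely this interaction that forces the induction on $n$ to be carried out jointly with the parallel statement for $\mathsf{\Psi}$.
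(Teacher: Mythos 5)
Your proof takes a genuinely different route from the paper's. The paper establishes this inclusion by a \emph{descending induction on the band index} $k = n-1, n-2, \ldots, 1$, proving at each step that the local projection $\mathsf{P}^{\mathrm{AII}}_{k,\overline{k}}(\mathsf{\Phi}_k(T|_{k,\overline{k}}))$ has the required symplectic shape, and gluing the sub-bands $[k,k+3]$ and $[k+2,\overline{k}]$ via Lemma~\ref{kLGP}(1). You instead verify the two conditions of Lemma~\ref{kLGP}(1) directly for the whole tableau $\mathsf{\Phi}(T)$, inducting on $n$ and carrying the companion statement for $\mathsf{\Psi}$ along in parallel. This is structurally the approach the paper uses for the \emph{reverse} inclusion $\supset$ in the subsequent proposition: there the paper also inducts on $n$ and also uses precisely your observation that the index-shifted $\mathsf{\Phi}_2^{(n)}$ coincides with $\mathsf{\Psi}^{(n-1)}$. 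So you have, in effect, transposed the paper's $\supset$ strategy onto the $\subset$ direction. Your handling of condition (a) via Lemma~\ref{lem1} with $k=1$ and Proposition~\ref{n=2thm}(1) is correct and is the same local $\mathfrak{sp}_4$-reduction the paper performs at each step of its $k$-descent.

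The point you flag at the end is indeed the crux, and it is where your transposition does \emph{not} come for free. In the paper's $\supset$ proof, only the plain restriction $\cdot|_{2,\overline{2}}$ appears, and this commutes with $\mathsf{\Phi}_2^{-1}$ trivially since each promotion factor inside $\mathsf{\Phi}_2^{-1}$ leaves boxes with entries outside $[2,\overline{2}]$ untouched. Your forward direction forces you to push $\mathsf{Res}_{2,\overline{2}}$ — which includes the rectification step $\mathsf{Rect}$, since removing the $\boxed{1}$'s from the start of the first row always leaves a skew shape — through the whole composite $\mathsf{\Phi}_2$. None of the paper's stated lemmas assert a commutation $\mathsf{Res}_{2,\overline{2}} \circ \mathsf{pr}_{a,b} = \mathsf{pr}_{a,b} \circ \mathsf{Res}_{2,\overline{2}}$ for $2 \leq a < b \leq \overline{2}$, and it does not follow from Definition~\ref{defpr} alone because both $\mathsf{Rect}$ and $\mathsf{pr}_{a,b}$ perform jeu-de-taquin slides that can interact. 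The paper's descending induction avoids having to prove this: at each step it invokes (the analogue of) Lemma~\ref{lem2} exactly once, on the single outermost promotion factor $\mathsf{pr}_{k+1,\overline{k}}$ or $\mathsf{pr}_{k,\overline{k}}$, and then appeals to the induction hypothesis for the inner band $[k+1,\overline{k+1}]$, rather than propagating a restriction operator through an entire composite. To complete your route you would need either to establish the commutation factor-by-factor (plausibly true, but a nontrivial jeu-de-taquin compatibility statement), or to reorganize so that the restriction is peeled off one promotion factor at a time — at which point you are essentially reproducing the paper's $k$-descent inside each step of your $n$-induction.
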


\begin{proof}
	Take an arbitrary 
	$T \in \mathsf{SST}_{2n}^{\text{$\widehat{\mathfrak{g}}$-dom}}(\lambda)$.
	We will prove 
	the following equation for $k=1,2,\ldots,n-1$
	by descending induction on $k$; 
	by Definition \ref{5.13},
	the assertion 
	$\mathsf{\Phi}(T) \in \mathsf{SST}_{2n}^{\text{$\mathfrak{k}$-hw}}(\lambda)$
	follows from the case $k=1$.
	\begin{center}
		\begin{tikzpicture}[x=5mm,y=5mm]
			\draw (0,0)--(15,0)--(15,1)--(0,1)--cycle;
			\draw (0,3)--(18,3)--(18,4)--(0,4)--cycle;
			\draw (0,4)--(0,5)--(19,5)--(19,4)--(18,4);
			\draw (0,1)--(0,3);
			\draw (15,1)--(16,1)--(16,2)--(17,2)--(17,3);
			\draw[dotted, line width=1pt] (8,1.7)--(8,2.3);
			\node (a) at (2.3,0.5) {$a_n - k + 1$};
			\node (b) at (2.5,3.5) {$a_{k+1} - k + 1$};
			\node (c) at (2.3,4.5) {$a_k - k + 1$};
			\node (d) at (12.9,0.5) {$a_n - k + 1$};
			\node (e) at (15.5,3.5) {$a_{k+1} - k + 1$};
			\node (f) at (16.9,4.5) {$a_k - k + 1$};
			\draw[dotted, line width=1pt] (4.7,0.5)--(5.3,0.5);
			\draw[dotted, line width=1pt] (9.9,0.5)--(10.5,0.5);
			\draw[dotted, line width=1pt] (5,3.5)--(5.6,3.5);
			\draw[dotted, line width=1pt] (12.4,3.5)--(13,3.5);
			\draw[dotted, line width=1pt] (4.7,4.5)--(5.3,4.5);
			\draw[dotted, line width=1pt] (13.9,4.5)--(14.5,4.5);
			\node at (-4,2.5) {$\mathsf{P}^{A\mathrm{II}}_{k,\overline{k}}\left(\mathsf{\Phi}_k\left(T|_{k,\overline{k}}\right)\right) = $};
			\node at (19.5,2) {.};
		\end{tikzpicture}
	\end{center}
	First,
	we prove the equation in the case $k=n-1$.
	We can easily verify the following: 
	\[
		\begin{aligned}
		 & a_{n-1} - (n-1) + 1 =
		 \left\{\,
		 \begin{aligned}
			 & n   &  & \text{$n$ : even}, \\
			 & n-1 &  & \text{$n$ : odd},
		 \end{aligned}
		 \right.&
		 & a_{n} - (n-1) + 1 =
		 \left\{\,
		 \begin{aligned}
			 & \overline{n}   &  & \text{$n$ : even}, \\
			 & \overline{n-1} &  & \text{$n$ : odd}.
		 \end{aligned}
		 \right.
	  \end{aligned}
	\]
	Since $T$ is $\widehat{\mathfrak{g}}$-dominant,
	$T|_{n-1,\overline{n-1}}$
	is $\widehat{\mathfrak{g}}_n$-dominant.
	Therefore, by Proposition \ref{n=2thm} (1), with $n-1$, $n$, $\overline{n}$, and $\overline{n-1}$ in place of $1$, $2$, $3$, and $4$, respectively, 
  we see that 
	\[
		\mathsf{P}^{A\mathrm{II}}_{n-1,\overline{n-1}}\left(\mathsf{\Phi}_{n-1}\left(T|_{n-1,\overline{n-1}}\right)\right)
		=
		\left\{\,
		\begin{aligned}
			 & \mathsf{P}^{A\mathrm{II}}_{n-1,\overline{n-1}} \left( \mathsf{pr}_{n-1,\overline{n-1}} (T|_{n-1,\overline{n-1}}) \right)                                  &  & \text{$n$ : even,} \\
			 & \mathsf{P}^{A\mathrm{II}}_{n-1,\overline{n-1}} \left( \mathsf{pr}_{n,\overline{n-1}} \circ \mathsf{pr}_{n,\overline{n}} (T|_{n-1,\overline{n-1}}) \right) &  & \text{$n$ : odd,}
		\end{aligned}
		\right.
	\]
	\begin{tikzpicture}[x=5mm,y=5mm]
		\draw (0,0)--(8,0)--(8,1)--(0,1)--cycle;
		\draw (0,1)--(0,2)--(10,2)--(10,1)--(8,1);
		\node at (0.5,0.5) {$\overline{n}$};
		\node at (0.5,1.5) {$n$};
		\node at (7.5,0.5) {$\overline{n}$};
		\node at (9.5,1.5) {$n$};
		\draw[dotted, line width=1pt] (1.2,0.5)--(1.8,0.5);
		\draw[dotted, line width=1pt] (6.2,0.5)--(6.8,0.5);
		\draw[dotted, line width=1pt] (1.2,1.5)--(1.8,1.5);
		\draw[dotted, line width=1pt] (8.2,1.5)--(8.8,1.5);
		
		\draw (0,-3)--(8,-3)--(8,-2)--(0,-2)--cycle;
		\draw (0,-2)--(0,-1)--(10,-1)--(10,-2)--(8,-2);
		\node at (1.2,-2.5) {$\overline{n-1}$};
		\node at (1.2,-1.5) {$n-1$};
		\node at (6.8,-2.5) {$\overline{n-1}$};
		\node at (8.8,-1.5) {$n-1$};
		\draw[dotted, line width=1pt] (2.5,-2.5)--(3.1,-2.5);
		\draw[dotted, line width=1pt] (4.9,-2.5)--(5.5,-2.5);
		\draw[dotted, line width=1pt] (2.5,-1.5)--(3.1,-1.5);
		\draw[dotted, line width=1pt] (6.9,-1.5)--(7.5,-1.5);
		\node at (-1,-0.5) {$= \left\{\begin{aligned} \\ \\ \\ \\ \\ \end{aligned}\right.$};
		\node at (12.5,1) {$n$ : even,};
		\node at (12.5,-2) {$n$ : odd,};
		\node at (-13,0) {};
	\end{tikzpicture} \\
	\begin{tikzpicture}[x=5mm,y=5mm]
		\draw (0,0)--(13.7,0)--(13.7,1)--(0,1)--cycle;
		\draw (0,1)--(0,2)--(15,2)--(15,1)--(13.7,1);
		\node at (3.1,0.5) {$a_n - (n-1) + 1$};
		\node at (3.5,1.5) {$a_{n-1} - (n-1) + 1$};
		\node at (10.5,0.5) {$a_n - (n-1) + 1$};
		\node at (11.5,1.5) {$a_{n-1} - (n-1) + 1$};
		\draw[dotted, line width=1pt] (6.5,0.5)--(7.1,0.5);
		\draw[dotted, line width=1pt] (7.2,1.5)--(7.8,1.5);
		
		\draw (0,-3)--(13.7,-3)--(13.7,-2)--(0,-2)--cycle;
		\draw (0,-2)--(0,-1)--(15,-1)--(15,-2)--(13.7,-2);
		\node at (3.1,-2.5) {$a_n - (n-1) + 1$};
		\node at (3.5,-1.5) {$a_{n-1} - (n-1) + 1$};
		\node at (10.5,-2.5) {$a_n - (n-1) + 1$};
		\node at (11.5,-1.5) {$a_{n-1} - (n-1) + 1$};
		\draw[dotted, line width=1pt] (6.5,-2.5)--(7.1,-2.5);
		\draw[dotted, line width=1pt] (7.2,-1.5)--(7.8,-1.5);
		\node at (-1,-0.5) {$= \left\{\begin{aligned} \\ \\ \\ \\ \\ \end{aligned}\right.$};
		\node at (17,1) {$n$ : even,};
		\node at (17,-2) {$n$ : odd.};
		\node at (-13,0) {};
	\end{tikzpicture} \\
	This completes the proof of the equation in the case $k=n-1$.
	
  Let $k \leq n-2$.
  The proof is divided into two cases according as $k$ is even or odd. 
	First, assume that $k$ is even.
	Note that the following equality holds by Lemma \ref{lem1}.
	\begin{multline}\label{7.20}
		\left.\mathsf{pr}_{k+3,\overline{k}} \circ \mathsf{pr}_{k+2,\overline{k+1}} \circ \mathsf{\Phi}_{k+2} \left( T|_{k,\overline{k}} \right) \right| _{k,k+3}
		\\
		= \left.\left( \mathsf{Res}_{k,k+1;\overline{k+1},\overline{k}}\,\,\mathsf{\Phi}_{k+2}\left(T|_{k,\overline{k}}\right) \right)\right| _{\overline{k+1} \to k+2,\,\,\overline{k} \to k+3}.
	\end{multline}
	Here,
	since $T$ is $\widehat{\mathfrak{g}}$-dominant, we have 
	$\mathsf{Res}_{k,k+3} \,\, \mathsf{\Phi}_k(T) = \mathsf{\Phi}_k(T)|_{k,k+3}$.
	Therefore, by equation (\ref{7.20}),
	we see that 
	\[
		\begin{aligned}
			\mathsf{Res}_{k,k+3} \,\, \mathsf{\Phi}_k(T|_{k,\overline{k}}) & = \mathsf{\Phi}_k(T|_{k,\overline{k}})|_{k,k+3}                                                                                                                                                                                                                                 \\
			                                           & = \left.\mathsf{pr}_{k+1,\overline{k}} \circ \mathsf{\Phi}_{k+1} \left( T|_{k,\overline{k}} \right) \right|_{k,k+3}                                                                                                                             \\
			                                           & = \left.\mathsf{pr}_{k+1,\overline{k}} \circ \mathsf{pr}_{k+1,\overline{k+1}} \circ \mathsf{\Phi}_{k+2} \left( T|_{k,\overline{k}} \right) \right|_{k,k+3}                                                                               \\
			                                           & = \left.\mathsf{pr}_{k+1,k+3} \circ \mathsf{pr}_{k+3,\overline{k}} \circ \mathsf{pr}_{k+1,k+2} \circ \mathsf{pr}_{k+2,\overline{k+1}} \circ \mathsf{\Phi}_{k+2} \left( T|_{k,\overline{k}} \right) \right|_{k,k+3}                \\
			                                           & = \left.\mathsf{pr}_{k+1,k+3} \circ \mathsf{pr}_{k+1,k+2} \circ \mathsf{pr}_{k+3,\overline{k}} \circ \mathsf{pr}_{k+2,\overline{k+1}} \circ \mathsf{\Phi}_{k+2} \left( T|_{k,\overline{k}} \right) \right|_{k,k+3}                \\
			                                           & = \mathsf{pr}_{k+1,k+3} \circ \mathsf{pr}_{k+1,k+2} \left(\left.\mathsf{pr}_{k+3,\overline{k}} \circ \mathsf{pr}_{k+2,\overline{k+1}} \circ \mathsf{\Phi}_{k+2} \left( T|_{k,\overline{k}} \right)\right|_{k,k+3}\right)          \\
			                                           & = \mathsf{pr}_{k+1,k+3} \circ \mathsf{pr}_{k+1,k+2} \left(\left.\left( \mathsf{Res}_{k,k+1;\overline{k+1},\overline{k}}\,\,\mathsf{\Phi}_{k+2}\left(T|_{k,\overline{k}}\right) \right)\right| _{\overline{k+1} \to k+2,\,\,\overline{k} \to k+3}\right) \\
			                                           & = \mathsf{pr}_{k+1,k+3} \circ \mathsf{pr}_{k+1,k+2} \left(\left.\left( \mathsf{Res}_{k,k+1;\overline{k+1},\overline{k}}\,\,T \right)\right| _{\overline{k+1} \to k+2,\,\,\overline{k} \to k+3}\right),
		\end{aligned}
	\]
	where the last equality follows since 
	$\mathsf{\Phi}_{k+2}(T|_{k,\overline{k}})|_{k,k+1;\overline{k+1},\overline{k}} = T|_{k,k+1;\overline{k+1},\overline{k}}$.
	Since $T$ is a $\widehat{\mathfrak{g}}$-dominant tableau,
	$\mathsf{Res}_{k,k+1;\overline{k+1},\overline{k}}\,\,T$ is $\widehat{\mathfrak{g}}_k$-dominant.
	Hence, we deduce that
	\begin{multline}
		\mathsf{P}^{A\mathrm{II}}_{k,k+3} \left(\mathsf{Res}_{k,k+3} \,\, \mathsf{\Phi}_k (T|_{k,\overline{k}})\right)
		\\ = \mathsf{P}^{A\mathrm{II}}_{k,k+3} \left(\mathsf{pr}_{k+1,k+3} \circ \mathsf{pr}_{k+1,k+2}\left(\left.\left(\mathsf{Res}_{k,k+1;\overline{k+1},\overline{k}}\,\,T\right)\right|_{\overline{k+1}\to k+2, \,\, \overline{k} \to k+3}\right)\right)
	\end{multline}
	\begin{tikzpicture}[x=5mm,y=5mm]
		\draw (0,0)--(8,0)--(8,1)--(0,1)--cycle;
		\draw (0,1)--(0,2)--(10,2)--(10,1)--(8,1);
		\node at (1.2,0.5) {$k+3$};
		\node at (0.5,1.5) {$k$};
		\node at (6.8,0.5) {$k+3$};
		\node at (9.5,1.5) {$k$};
		\draw[dotted, line width=1pt] (2.6,0.5)--(3.2,0.5);
		\draw[dotted, line width=1pt] (4.8,0.5)--(5.4,0.5);
		\draw[dotted, line width=1pt] (1.2,1.5)--(1.8,1.5);
		\draw[dotted, line width=1pt] (8.2,1.5)--(8.8,1.5);
		\node at (-0.8,1) {$=$};
		\node at (-7.3,1) {};
	\end{tikzpicture} \\
	
	\vspace{0.5pt}
	
	\begin{tikzpicture}[x=5mm,y=5mm]
		\draw (0,0)--(13.7,0)--(13.7,1)--(0,1)--cycle;
		\draw (0,1)--(0,2)--(15,2)--(15,1)--(13.7,1);
		\node at (2.5,0.5) {$a_{k+1} - k + 1$};
		\node at (2.1,1.5) {$a_k - k + 1$};
		\node at (11.2,0.5) {$a_{k+1} - k + 1$};
		\node at (12.9,1.5) {$a_k - k + 1$};
		\draw[dotted, line width=1pt] (5.2,0.5)--(5.8,0.5);
		\draw[dotted, line width=1pt] (8,0.5)--(8.6,0.5);
		\draw[dotted, line width=1pt] (4.4,1.5)--(5,1.5);
		\draw[dotted, line width=1pt] (9.9,1.5)--(10.5,1.5);
		\node at (-0.8,1) {$=$};
		\node at (-6.4,0) {};
		\node at (15.5,0.5) {,};
	\end{tikzpicture} \\
	where the second equality follows from Proposition \ref{n=2thm} (1).
	Also,
	the induction hypothesis implies that 
	\begin{center}
		\begin{tikzpicture}[x=5mm,y=5mm]
			\draw (0,0)--(15,0)--(15,1)--(0,1)--cycle;
			\draw (0,3)--(18,3)--(18,4)--(0,4)--cycle;
			\draw (0,4)--(0,5)--(19,5)--(19,4)--(18,4);
			\draw (0,1)--(0,3);
			\draw (15,1)--(16,1)--(16,2)--(17,2)--(17,3);
			\draw[dotted, line width=1pt] (8,1.7)--(8,2.3);
			\node (a) at (1.5,0.5) {$a_n - k$};
			\node (b) at (1.8,3.5) {$a_{k+2} - k$};
			\node (c) at (1.8,4.5) {$a_{k+1} - k$};
			\node (d) at (13.5,0.5) {$a_n - k$};
			\node (e) at (16.2,3.5) {$a_{k+2} - k$};
			\node (f) at (17.2,4.5) {$a_{k+1} - k$};
			\draw[dotted, line width=1pt] (3,0.5)--(3.6,0.5);
			\draw[dotted, line width=1pt] (11.4,0.5)--(12,0.5);
			\draw[dotted, line width=1pt] (3.7,3.5)--(4.3,3.5);
			\draw[dotted, line width=1pt] (3.7,4.5)--(4.3,4.5);
			\draw[dotted, line width=1pt] (13.7,3.5)--(14.3,3.5);
			\draw[dotted, line width=1pt] (14.7,4.5)--(15.3,4.5);
			\node at (-6,2.5) {$\mathsf{P}^{A\mathrm{II}}_{k+1,\overline{k+1}}\left(\mathsf{\Phi}_{k+1}\left(T|_{k+1,\overline{k+1}}\right)\right) = $};
			\node at (19.5,2) {.};
		\end{tikzpicture}
	\end{center}
	Since $\mathsf{Res}_{k+2,\overline{k}}\,\,\mathsf{\Phi}_k(T|_{k,\overline{k}})$
	is the tableau obtained from $\mathsf{\Phi}_{k+1}(T|_{k+1,\overline{k+1}})$
        by increasing all entries by $1$, 
	it follows that 
	\begin{center}
		\begin{tikzpicture}[x=5mm,y=5mm]
			\draw (0,0)--(15,0)--(15,1)--(0,1)--cycle;
			\draw (0,3)--(18,3)--(18,4)--(0,4)--cycle;
			\draw (0,4)--(0,5)--(19,5)--(19,4)--(18,4);
			\draw (0,1)--(0,3);
			\draw (15,1)--(16,1)--(16,2)--(17,2)--(17,3);
			\draw[dotted, line width=1pt] (8,1.7)--(8,2.3);
			\node (a) at (2.3,0.5) {$a_n - k + 1$};
			\node (b) at (2.5,3.5) {$a_{k+2} - k + 1$};
			\node (c) at (2.5,4.5) {$a_{k+1} - k + 1$};
			\node (d) at (12.9,0.5) {$a_n - k + 1$};
			\node (e) at (15.5,3.5) {$a_{k+2} - k + 1$};
			\node (f) at (16.5,4.5) {$a_{k+1} - k + 1$};
			\draw[dotted, line width=1pt] (9.8,0.5)--(10.4,0.5);
			\draw[dotted, line width=1pt] (4.8,0.5)--(5.4,0.5);
			\draw[dotted, line width=1pt] (5.3,3.5)--(5.9,3.5);
			\draw[dotted, line width=1pt] (5.3,4.5)--(5.9,4.5);
			\draw[dotted, line width=1pt] (12.1,3.5)--(12.7,3.5);
			\draw[dotted, line width=1pt] (13.1,4.5)--(13.7,4.5);
			\node at (-5.7,2.5) {$\mathsf{P}^{A\mathrm{II}}_{k+2,\overline{k}}\left(\mathsf{Res}_{k+2,\overline{k}}\,\,\mathsf{\Phi}_k\left(T|_{k,\overline{k}}\right)\right) = $};
			\node at (19.5,2) {.};
		\end{tikzpicture}
	\end{center}
	Hence, by Lemma \ref{kLGP} (1), with $k$, $k+1$, $\ldots$, $\overline{k}$
  in place of $1$, $2$, $\ldots$, $2n$, respectively, 
	the proof of the equation is completed in the case that $k$ is even.

	Next,
	assume that $k$ is odd.
	Note that
	the following equality holds by Lemma \ref{lem1}.
	\begin{multline}\label{7.22}
		\left.\mathsf{pr}_{k+3,\overline{k}} \circ \mathsf{pr}_{k+2,\overline{k+1}} \circ \mathsf{\Phi}_{k+2} \left(T|_{k,\overline{k}} \right) \right| _{k,k+3}
		\\ =
		\left. \left( \mathsf{Res}_{k,k+1;\overline{k+1},\overline{k}} \,\, \mathsf{\Phi}_{k+2}\left( T|_{k,\overline{k}} \right) \right) \right| _{\overline{k+1} \to k+2,\,\,\overline{k} \to k+3}.
	\end{multline}
	Here, since $T$ is $\widehat{\mathfrak{g}}$-dominant, we have 
	$\mathsf{Res}_{k,k+3} \,\, \mathsf{\Phi}_k(T) = \mathsf{\Phi}_k(T)|_{k,k+3}$. 
	Therefore, by equation (\ref{7.22}), we see that 
	\begin{equation}
		\begin{aligned}
			\mathsf{Res}_{k,k+3} \,\, \mathsf{\Phi}_k (T|_{k,\overline{k}})
			 & =\left.\mathsf{\Phi}_k\left(T|_{k,\overline{k}}\right)\right|_{k,k+3}                                                                                                                                                          \\
			 & =\left.\mathsf{pr}_{k,\overline{k}} \circ \mathsf{\Phi}_{k+1}\left(T|_{k,\overline{k}}\right) \right|_{k,k+3}                                                                                                           \\
			 & =\left.\mathsf{pr}_{k,k+3} \circ \mathsf{pr}_{k+3,\overline{k}} \circ \mathsf{\Phi}_{k+1}\left(T|_{k,\overline{k}}\right) \right|_{k,k+3}                                                                        \\
			 & =\mathsf{pr}_{k,k+3}\left(\left.\mathsf{pr}_{k+3,\overline{k}} \circ \mathsf{\Phi}_{k+1}\left(T|_{k,\overline{k}}\right) \right|_{k,k+3}\right)                                                                  \\
			 & =\mathsf{pr}_{k,k+3}\left(\left.\mathsf{pr}_{k+3,\overline{k}} \circ \mathsf{pr}_{k+2,\overline{k+1}} \circ \mathsf{\Phi}_{k+2}\left(T|_{k,\overline{k}}\right) \right|_{k,k+3}\right)                    \\
			 & =\mathsf{pr}_{k,k+3}\left(\left.\left(\mathsf{Res}_{k,k+1;\overline{k+1},\overline{k}}\,\,\mathsf{\Phi}_{k+2}\left(T|_{k,\overline{k}}\right) \right)\right|_{\overline{k+1}\to k+2, \,\, \overline{k} \to k+3}\right) \\
			 & =\mathsf{pr}_{k,k+3}\left(\left.\left(\mathsf{Res}_{k,k+1;\overline{k+1},\overline{k}}\,\,T\right)\right|_{\overline{k+1}\to k+2, \,\, \overline{k} \to k+3}\right),
		\end{aligned}
	\end{equation}
	where the last equality follows since 
	$\mathsf{\Phi}_{k+2}(T|_{k,\overline{k}})|_{k,k+1;\overline{k+1},\overline{k}} = T|_{k,k+1;\overline{k+1},\overline{k}}$.
	Since $T$ is $\widehat{\mathfrak{g}}$-dominant,
	$\mathsf{Res}_{k,k+1;\overline{k+1},\overline{k}}\,\,T$ is $\widehat{\mathfrak{g}}_k$-dominant.
	Hence, we deduce that 
	\[
		\mathsf{P}^{A\mathrm{II}}_{k,k+3} \left(\mathsf{Res}_{k,k+3} \,\, \mathsf{\Phi}_k (T|_{k,\overline{k}})\right) = \mathsf{P}^{A\mathrm{II}}_{k,k+3} \left(\mathsf{pr}_{k,k+3}\left(\left.\left(\mathsf{Res}_{k,k+1;\overline{k+1},\overline{k}}\,\,T\right)\right|_{\overline{k+1}\to k+2, \,\, \overline{k} \to k+3}\right)\right)
	\]
	\begin{tikzpicture}[x=5mm,y=5mm]
		\draw (0,0)--(8,0)--(8,1)--(0,1)--cycle;
		\draw (0,1)--(0,2)--(10,2)--(10,1)--(8,1);
		\node at (1.2,0.5) {$k+2$};
		\node at (1.2,1.5) {$k+1$};
		\node at (6.8,0.5) {$k+2$};
		\node at (8.8,1.5) {$k+1$};
		\draw[dotted, line width=1pt] (2.6,0.5)--(3.2,0.5);
		\draw[dotted, line width=1pt] (4.8,0.5)--(5.4,0.5);
		\draw[dotted, line width=1pt] (2.6,1.5)--(3.2,1.5);
		\draw[dotted, line width=1pt] (6.8,1.5)--(7.4,1.5);
		\node at (-0.8,1) {$=$};
		\node at (-11.6,1) {};
	\end{tikzpicture} \\
	
	\vspace{0.5pt}
	
	\begin{tikzpicture}[x=5mm,y=5mm]
		\draw (0,0)--(13.7,0)--(13.7,1)--(0,1)--cycle;
		\draw (0,1)--(0,2)--(15,2)--(15,1)--(13.7,1);
		\node at (2.5,0.5) {$a_{k+1} - k + 1$};
		\node at (2.1,1.5) {$a_k - k + 1$};
		\node at (11.2,0.5) {$a_{k+1} - k + 1$};
		\node at (12.9,1.5) {$a_k - k + 1$};
		\draw[dotted, line width=1pt] (5.2,0.5)--(5.8,0.5);
		\draw[dotted, line width=1pt] (8,0.5)--(8.6,0.5);
		\draw[dotted, line width=1pt] (4.4,1.5)--(5,1.5);
		\draw[dotted, line width=1pt] (9.9,1.5)--(10.5,1.5);
		\node at (-0.8,1) {$=$};
		\node at (-10.8,1) {};
		\node at (15.5,0.5) {,};
	\end{tikzpicture} \\
	where the second equality follows from Proposition \ref{n=2thm} (1).
	Also, by the same argument as in the case that $k$ is even, 
	we get 
	\begin{center}
		\begin{tikzpicture}[x=5mm,y=5mm]
			\draw (0,0)--(15,0)--(15,1)--(0,1)--cycle;
			\draw (0,3)--(18,3)--(18,4)--(0,4)--cycle;
			\draw (0,4)--(0,5)--(19,5)--(19,4)--(18,4);\
			\draw (0,1)--(0,3);
			\draw (15,1)--(16,1)--(16,2)--(17,2)--(17,3);
			\draw[dotted, line width=1pt] (8,1.7)--(8,2.3);
			\node (a) at (2.3,0.5) {$a_n - k + 1$};
			\node (b) at (2.5,3.5) {$a_{k+2} - k + 1$};
			\node (c) at (2.5,4.5) {$a_{k+1} - k + 1$};
			\node (d) at (12.9,0.5) {$a_n - k + 1$};
			\node (e) at (15.5,3.5) {$a_{k+2} - k + 1$};
			\node (f) at (16.5,4.5) {$a_{k+1} - k + 1$};
			\draw[dotted, line width=1pt] (9.8,0.5)--(10.4,0.5);
			\draw[dotted, line width=1pt] (4.8,0.5)--(5.4,0.5);
			\draw[dotted, line width=1pt] (5.3,3.5)--(5.9,3.5);
			\draw[dotted, line width=1pt] (5.3,4.5)--(5.9,4.5);
			\draw[dotted, line width=1pt] (12.1,3.5)--(12.7,3.5);
			\draw[dotted, line width=1pt] (13.1,4.5)--(13.7,4.5);
			\node at (-5.8,2.5) {$\mathsf{P}^{A\mathrm{II}}_{k+2,\overline{k}}\left(\mathsf{Res}_{k+2,\overline{k}}\,\,\mathsf{\Phi}_k\left(T|_{k,\overline{k}}\right)\right) = $};
			\node at (19.5,2) {.};
		\end{tikzpicture}
	\end{center}
	Hence, by Lemma \ref{kLGP} (1), with $k$, $k+1$, $\ldots$, $\overline{k}$ in place of $1$, $2$, $\ldots$, $2n$, respectively, 
        the proof of the equation is completed in the case that $k$ is odd. 
	This proves the proposition.
\end{proof}

\begin{prop}
	We have $\mathsf{\Psi} \left( \mathsf{SST}_{2n}^{\text{$\widehat{\mathfrak{g}}$-$\mathrm{dom}$}}(\lambda) \right) \subset \mathsf{SST}_{2n}^{\text{$\mathfrak{k}$-$\mathrm{lw}$}}(\lambda)$.
\end{prop}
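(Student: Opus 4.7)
The plan is to mirror the proof of the preceding proposition, swapping the roles of even and odd indices throughout, and invoking part (2) of Proposition \ref{n=2thm} and part (2) of Lemma \ref{kLGP} in place of their part (1) counterparts. Concretely, fix $T \in \mathsf{SST}_{2n}^{\text{$\widehat{\mathfrak{g}}$-$\mathrm{dom}$}}(\lambda)$ and prove, by descending induction on $k=n-1,n-2,\ldots,1$, that
\begin{equation*}
\mathsf{P}^{\mathrm{AII}}_{k,\overline{k}}\bigl(\mathsf{\Psi}_k(T|_{k,\overline{k}})\bigr)
\end{equation*}
is the ``$\mathfrak{k}$-lowest weight tableau'' of shape equal to that of $T|_{k,\overline{k}}$ filled according to the $b_j$ pattern (with entries suitably shifted so that the entry alphabet starts at $k$). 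The case $k=1$ then yields $\mathsf{\Psi}(T) \in \mathsf{SST}_{2n}^{\text{$\mathfrak{k}$-$\mathrm{lw}$}}(\lambda)$ in view of Definition \ref{5.13} and the characterization in Lemma \ref{kLGP}(2).

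For the base case $k=n-1$, since $T|_{n-1,\overline{n-1}}$ is $\widehat{\mathfrak{g}}_{n-1}$-dominant and $\mathsf{\Psi}_{n-1}$ restricted to this alphabet coincides with the map $\mathsf{\Psi}$ of the case $n=2$ (up to a constant shift of entries), Proposition \ref{n=2thm}(2) applies directly and gives the desired $\mathfrak{k}$-lowest weight pattern with entries $b_{n-1},b_n$ in the appropriate rows.

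For the inductive step, the key computation is the same rearrangement of promotion operators used in the $\mathsf{\Phi}$ case. Namely, assuming the statement for $\mathsf{\Psi}_{k+1}$, one expands
\begin{equation*}
\mathsf{Res}_{k,k+3}\,\mathsf{\Psi}_k(T|_{k,\overline{k}}) = \bigl.\mathsf{\Psi}_k(T|_{k,\overline{k}})\bigr|_{k,k+3},
\end{equation*}
valid since $T$ is $\widehat{\mathfrak{g}}$-dominant. Then, splitting the promotions that appear in $\mathsf{\Psi}_k$ according to the parity of $k$ and using parts (1) and (3) of Lemma \ref{prrel} together with Lemma \ref{lem1}, one rewrites the right-hand side as the image under a single promotion (or composition of two) applied to $(\mathsf{Res}_{k,k+1;\overline{k+1},\overline{k}}\,T)|_{\overline{k+1}\to k+2,\,\overline{k}\to k+3}$. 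Since this latter tableau is $\widehat{\mathfrak{g}}_k$-dominant (in the alphabet $\{k,k+1,k+2,k+3\}$), Proposition \ref{n=2thm}(2) applied to it yields the two-row $\mathfrak{k}$-lowest weight shape with entries $b_k,b_{k+1}$ after passing through $\mathsf{P}^{\mathrm{AII}}_{k,k+3}$. Lemma \ref{lem2} is invoked to identify $\mathsf{Res}_{k+2,\overline{k}}\,\mathsf{\Psi}_k(T|_{k,\overline{k}})$ with $\mathsf{\Psi}_{k+1}(T|_{k+1,\overline{k+1}})$ (up to a shift), so that the induction hypothesis delivers the required $\mathfrak{k}$-lowest weight shape on the alphabet $[k+2,\overline{k}]$. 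Combining the local pieces through Lemma \ref{kLGP}(2), applied with $k,k+1,\ldots,\overline{k}$ playing the role of $1,2,\ldots,2n$, completes the inductive step.

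The main obstacle, exactly as in the preceding proposition, will be verifying that the promotion-rearrangement identity analogous to (\ref{7.20})/(\ref{7.22}) goes through with the swapped parity convention of $\mathsf{\Psi}$. Concretely, one must check that for odd $k$ the decomposition $\mathsf{pr}_{k+1,\overline{k}} = \mathsf{pr}_{k+1,k+3}\circ\mathsf{pr}_{k+3,\overline{k}}$ and the commutation relations of Lemma \ref{prrel} conspire (after further factoring $\mathsf{pr}_{k+1,\overline{k+1}} = \mathsf{pr}_{k+1,k+2}\circ\mathsf{pr}_{k+2,\overline{k+1}}$) to reproduce the Lemma \ref{lem1} identity, while for even $k$ the single factor $\mathsf{pr}_{k,\overline{k}} = \mathsf{pr}_{k,k+3}\circ\mathsf{pr}_{k+3,\overline{k}}$ suffices. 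Once these algebraic identities are in place, the verification of the $\mathfrak{k}$-lowest weight shape is a direct consequence of the $n=2$ case and Lemma \ref{kLGP}(2), with no new combinatorial input required.
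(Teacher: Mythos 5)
Your proposal is correct and follows essentially the same route as the paper: a descending induction on $k$ with invariant $\mathsf{P}^{\mathrm{AII}}_{k,\overline{k}}\bigl(\mathsf{\Psi}_k(T|_{k,\overline{k}})\bigr)$ equal to the $b_j$-filled symplectic tableau, using Proposition \ref{n=2thm}(2) for the base case and each rank-two step, Lemma \ref{lem1} plus the relations of Lemma \ref{prrel} for the promotion rearrangement (with even/odd parities swapped relative to the $\mathsf{\Phi}$ case), and Lemma \ref{kLGP}(2) to glue the local data back into the global $\mathfrak{k}$-lowest weight condition. The only minor imprecision is that the paper does not actually invoke Lemma \ref{lem2} for the identification of $\mathsf{Res}_{k+2,\overline{k}}\,\mathsf{\Psi}_k(T|_{k,\overline{k}})$ with a shift of $\mathsf{\Psi}_{k+1}(T|_{k+1,\overline{k+1}})$; it instead uses the surrounding observation relating restriction and promotion operators directly, of which Lemma \ref{lem2} is only a special case.
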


\begin{proof}
	Take an arbitrary 
	$T \in \mathsf{SST}_{2n}^{\text{$\widehat{\mathfrak{g}}$-dom}}(\lambda)$.
	We will prove 
	the following equation for $k=1,2,\ldots,n-1$ 
	by descending induction on $k$;
	by Definition \ref{5.13},
	the assertion 
	$\mathsf{\Psi}(T) \in \mathsf{SST}_{2n}^{\text{$\mathfrak{k}$-lw}}(\lambda)$
	follows from the case $k=1$.
	\begin{center}
		\begin{tikzpicture}[x=5mm,y=5mm]
			\draw (0,0)--(15,0)--(15,1)--(0,1)--cycle;
			\draw (0,3)--(18,3)--(18,4)--(0,4)--cycle;
			\draw (0,4)--(0,5)--(19,5)--(19,4)--(18,4);
			\draw (0,1)--(0,3);
			\draw (15,1)--(16,1)--(16,2)--(17,2)--(17,3);
			\draw[dotted, line width=1pt] (8,1.7)--(8,2.3);
			\node (a) at (2.3,0.5) {$b_n - k + 1$};
			\node (b) at (2.5,3.5) {$b_{k+1} - k + 1$};
			\node (c) at (2.3,4.5) {$b_k - k + 1$};
			\node (d) at (12.9,0.5) {$b_n - k + 1$};
			\node (e) at (15.5,3.5) {$b_{k+1} - k + 1$};
			\node (f) at (16.9,4.5) {$b_k - k + 1$};
			\draw[dotted, line width=1pt] (4.7,0.5)--(5.3,0.5);
			\draw[dotted, line width=1pt] (9.9,0.5)--(10.5,0.5);
			\draw[dotted, line width=1pt] (5,3.5)--(5.6,3.5);
			\draw[dotted, line width=1pt] (12.4,3.5)--(13,3.5);
			\draw[dotted, line width=1pt] (4.7,4.5)--(5.3,4.5);
			\draw[dotted, line width=1pt] (13.9,4.5)--(14.5,4.5);
			\node at (-4,2.5) {$\mathsf{P}^{A\mathrm{II}}_{k,\overline{k}}\left(\mathsf{\Phi}_k\left(T|_{k,\overline{k}}\right)\right) = $};
			\node at (19.5,2) {.};
		\end{tikzpicture}
	\end{center}
	First,
	we prove the equation in the case $k=n-1$.
        We can easily verify the following: 
	\[
		\begin{aligned}
		 & b_{n-1} - (n-1) + 1 =
		 \left\{\,
		 \begin{aligned}
			 & n-1   &  & \text{$n$ : even}, \\
			 & n &  & \text{$n$ : odd},
		 \end{aligned}
		 \right.&
		 & b_{n} - (n-1) + 1 =
		 \left\{\,
		 \begin{aligned}
			 & \overline{n-1}   &  & \text{$n$ : even}, \\
			 & \overline{n} &  & \text{$n$ : odd}.
		 \end{aligned}
		 \right.
	  \end{aligned}
	\]
	Since $T$ is $\widehat{\mathfrak{g}}$-dominant,
	$T|_{n-1,n;\overline{n},\overline{n-1}}$
	is $\widehat{\mathfrak{g}}_n$-dominant.
	Therefore, by Proposition \ref{n=2thm} (2), with $n-1$, $n$, $\overline{n}$, and $\overline{n-1}$ in place of $1$, $2$, $3$, and $4$, respectively, 
we see that 
	\[
		\mathsf{P}^{A\mathrm{II}}_{n-1,\overline{n-1}}\left(\mathsf{\Psi}_{n-1}\left(T|_{n-1,\overline{n-1}}\right)\right)
		=
		\left\{\,
		\begin{aligned}
			 & \mathsf{P}^{A\mathrm{II}}_{n-1,\overline{n-1}} \left( \mathsf{pr}_{n,\overline{n-1}} \circ \mathsf{pr}_{n,\overline{n}} (T|_{n-1,\overline{n-1}}) \right)  &  & \text{$n$ : even,} \\
			 & \mathsf{P}^{A\mathrm{II}}_{n-1,\overline{n-1}} \left( \mathsf{pr}_{n-1,\overline{n-1}}  (T|_{n-1,\overline{n-1}}) \right) &  & \text{$n$ : odd,}
		\end{aligned}
		\right.
	\]
	\begin{tikzpicture}[x=5mm,y=5mm]
		\draw (0,0)--(8,0)--(8,1)--(0,1)--cycle;
		\draw (0,1)--(0,2)--(10,2)--(10,1)--(8,1);
		\node at (1.2,0.5) {$\overline{n-1}$};
		\node at (1.2,1.5) {$n-1$};
		\node at (6.8,0.5) {$\overline{n-1}$};
		\node at (8.8,1.5) {$n-1$};
		\draw[dotted, line width=1pt] (2.5,0.5)--(3.1,0.5);
		\draw[dotted, line width=1pt] (4.9,0.5)--(5.5,0.5);
		\draw[dotted, line width=1pt] (2.5,1.5)--(3.1,1.5);
		\draw[dotted, line width=1pt] (6.9,1.5)--(7.5,1.5);
		
		\draw (0,-3)--(8,-3)--(8,-2)--(0,-2)--cycle;
		\draw (0,-2)--(0,-1)--(10,-1)--(10,-2)--(8,-2);
		\node at (0.5,-2.5) {$\overline{n}$};
		\node at (0.5,-1.5) {$n$};
		\node at (7.5,-2.5) {$\overline{n}$};
		\node at (9.5,-1.5) {$n$};
		\draw[dotted, line width=1pt] (1.2,-2.5)--(1.8,-2.5);
		\draw[dotted, line width=1pt] (6.2,-2.5)--(6.8,-2.5);
		\draw[dotted, line width=1pt] (1.2,-1.5)--(1.8,-1.5);
		\draw[dotted, line width=1pt] (8.2,-1.5)--(8.8,-1.5);
		\node at (-1,-0.5) {$= \left\{\begin{aligned} \\ \\ \\ \\ \\ \end{aligned}\right.$};
		\node at (12.5,1) {$n$ : even,};
		\node at (12.5,-2) {$n$ : odd,};
		\node at (-13,0) {};
	\end{tikzpicture} \\
	\begin{tikzpicture}[x=5mm,y=5mm]
		\draw (0,0)--(13.7,0)--(13.7,1)--(0,1)--cycle;
		\draw (0,1)--(0,2)--(15,2)--(15,1)--(13.7,1);
		\node at (3.1,0.5) {$b_n - (n-1) + 1$};
		\node at (3.5,1.5) {$b_{n-1} - (n-1) + 1$};
		\node at (10.5,0.5) {$b_n - (n-1) + 1$};
		\node at (11.5,1.5) {$b_{n-1} - (n-1) + 1$};
		\draw[dotted, line width=1pt] (6.5,0.5)--(7.1,0.5);
		\draw[dotted, line width=1pt] (7.2,1.5)--(7.8,1.5);
		
		\draw (0,-3)--(13.7,-3)--(13.7,-2)--(0,-2)--cycle;
		\draw (0,-2)--(0,-1)--(15,-1)--(15,-2)--(13.7,-2);
		\node at (3.1,-2.5) {$b_n - (n-1) + 1$};
		\node at (3.5,-1.5) {$b_{n-1} - (n-1) + 1$};
		\node at (10.5,-2.5) {$b_n - (n-1) + 1$};
		\node at (11.5,-1.5) {$b_{n-1} - (n-1) + 1$};
		\draw[dotted, line width=1pt] (6.5,-2.5)--(7.1,-2.5);
		\draw[dotted, line width=1pt] (7.2,-1.5)--(7.8,-1.5);
		\node at (-1,-0.5) {$= \left\{\begin{aligned} \\ \\ \\ \\ \\ \end{aligned}\right.$};
		\node at (17,1) {$n$ : even,};
		\node at (17,-2) {$n$ : odd.};
		\node at (-13,0) {};
	\end{tikzpicture} \\
	This completes the proof of the equation in the case $k=n-1$.
	
  Let $k \leq n-2$.
  The proof is divided into two cases according as $k$ is even or odd. 
	First, assume that $k$ is even.
	Note that the following equality holds by Lemma \ref{lem1}.
	\begin{multline}
		\left.\mathsf{pr}_{k+3,\overline{k}} \circ \mathsf{pr}_{k+2,\overline{k+1}} \circ \mathsf{\Psi}_{k+2} \left( T|_{k,\overline{k}} \right) \right| _{k,k+3}
		\\
		= \left.\left( \mathsf{Res}_{k,k+1;\overline{k+1},\overline{k}}\,\,\mathsf{\Psi}_{k+2}\left(T|_{k,\overline{k}}\right) \right)\right| _{\overline{k+1} \to k+2,\,\,\overline{k} \to k+3}.
	\end{multline}
	Here, 
	since $T$ is $\widehat{\mathfrak{g}}$-dominant, we have 
	$\mathsf{Res}_{k,k+3} \,\, \mathsf{\Psi}_k(T) = \mathsf{\Psi}_k(T)|_{k,k+3}$. 
	Therefore, we see that 
	\begin{equation}
		\begin{aligned}
			\mathsf{Res}_{k,k+3} \,\, \mathsf{\Psi}_k(T|_{k,\overline{k}}) & = \mathsf{\Psi}_k(T|_{k,\overline{k}})|_{k,k+3}                                                                                                                                                                                                                                 \\
			                                           & = \left.\mathsf{pr}_{k,\overline{k}} \circ \mathsf{\Psi}_{k+1} \left( T|_{k,\overline{k}} \right) \right|_{k,k+3}                                                                                                                             \\
			                                           & = \left.\mathsf{pr}_{k,k+3} \circ \mathsf{pr}_{k+3,\overline{k}} \circ \mathsf{\Psi}_{k+1} \left( T|_{k,\overline{k}} \right) \right|_{k,k+3}                                                                               \\
			                                           & = \mathsf{pr}_{k,k+3} \left(\left.\mathsf{pr}_{k+3,\overline{k}} \circ \mathsf{\Psi}_{k+1} \left( T|_{k,\overline{k}} \right) \right|_{k,k+3}  \right)              \\
																								 & = \mathsf{pr}_{k,k+3} \left(\left.\mathsf{pr}_{k+3,\overline{k}} \circ \mathsf{pr}_{k+2,\overline{k+1}} \circ \mathsf{\Psi}_{k+2} \left( T|_{k,\overline{k}} \right) \right|_{k,k+3}  \right)              \\
																								 & = \mathsf{pr}_{k,k+3} \left(\left.\left( \mathsf{Res}_{k,k+1;\overline{k+1},\overline{k}}\,\,\mathsf{\Psi}_{k+2}\left(T|_{k,\overline{k}}\right)\right) \right|_{\overline{k+1} \to k+2,\,\,\overline{k} \to k+3}  \right)              \\
																								 & = \mathsf{pr}_{k,k+3} \left(\left.\left( \mathsf{Res}_{k,k+1;\overline{k+1},\overline{k}}\,\,T\right) \right|_{\overline{k+1} \to k+2,\,\,\overline{k} \to k+3}  \right),
		\end{aligned}
	\end{equation}
	where the last equality follows since $\mathsf{\Psi}_{k+2}(T|_{k,\overline{k}})|_{k,k+1;\overline{k+1},\overline{k}} = T|_{k,k+1;\overline{k+1},\overline{k}}$.
	Since $T$ is a $\widehat{\mathfrak{g}}$-dominant tableau,
	$\mathsf{Res}_{k,k+1;\overline{k+1},\overline{k}}\,\,T$ is $\widehat{\mathfrak{g}}_k$-dominant.
	Hence, we deduce that
	\begin{multline}
		\mathsf{P}^{A\mathrm{II}}_{k,k+3} \left(\mathsf{Res}_{k,k+3} \,\, \mathsf{\Psi}_k (T|_{k,\overline{k}})\right)
		\\ = \mathsf{P}^{A\mathrm{II}}_{k,k+3} \left(\mathsf{pr}_{k,k+3} \left(\left.\left(\mathsf{Res}_{k,k+1;\overline{k+1},\overline{k}}\,\,T\right)\right|_{\overline{k+1} \to k+2, \,\, \overline{k} \to k+3}\right)\right)
	\end{multline}
	\begin{tikzpicture}[x=5mm,y=5mm]
		\draw (0,0)--(8,0)--(8,1)--(0,1)--cycle;
		\draw (0,1)--(0,2)--(10,2)--(10,1)--(8,1);
		\node at (1.2,0.5) {$k+2$};
		\node at (1.2,1.5) {$k+1$};
		\node at (6.8,0.5) {$k+2$};
		\node at (8.8,1.5) {$k+1$};
		\draw[dotted, line width=1pt] (2.6,0.5)--(3.2,0.5);
		\draw[dotted, line width=1pt] (4.8,0.5)--(5.4,0.5);
		\draw[dotted, line width=1pt] (2.6,1.5)--(3.2,1.5);
		\draw[dotted, line width=1pt] (6.8,1.5)--(7.4,1.5);
		\node at (-0.8,1) {$=$};
		\node at (-12,1) {};
	\end{tikzpicture} \\
	
	\vspace{0.5pt}
	
	\begin{tikzpicture}[x=5mm,y=5mm]
		\draw (0,0)--(13.7,0)--(13.7,1)--(0,1)--cycle;
		\draw (0,1)--(0,2)--(15,2)--(15,1)--(13.7,1);
		\node at (2.5,0.5) {$b_{k+1} - k + 1$};
		\node at (2.1,1.5) {$b_k - k + 1$};
		\node at (11.2,0.5) {$b_{k+1} - k + 1$};
		\node at (12.9,1.5) {$b_k - k + 1$};
		\draw[dotted, line width=1pt] (5.2,0.5)--(5.8,0.5);
		\draw[dotted, line width=1pt] (8,0.5)--(8.6,0.5);
		\draw[dotted, line width=1pt] (4.4,1.5)--(5,1.5);
		\draw[dotted, line width=1pt] (9.9,1.5)--(10.5,1.5);
		\node at (-0.8,1) {$=$};
		\node at (-11.1,0) {};
		\node at (15.5,0.5) {,};
	\end{tikzpicture} \\
	where the second equality follows from Proposition \ref{n=2thm} (2).
	Also, 
	the induction hypothesis implies that 
	\begin{center}
		\begin{tikzpicture}[x=5mm,y=5mm]
			\draw (0,0)--(15,0)--(15,1)--(0,1)--cycle;
			\draw (0,3)--(18,3)--(18,4)--(0,4)--cycle;
			\draw (0,4)--(0,5)--(19,5)--(19,4)--(18,4);
			\draw (0,1)--(0,3);
			\draw (15,1)--(16,1)--(16,2)--(17,2)--(17,3);
			\draw[dotted, line width=1pt] (8,1.7)--(8,2.3);
			\node (a) at (1.5,0.5) {$b_n - k$};
			\node (b) at (1.8,3.5) {$b_{k+2} - k$};
			\node (c) at (1.8,4.5) {$b_{k+1} - k$};
			\node (d) at (13.5,0.5) {$b_n - k$};
			\node (e) at (16.2,3.5) {$b_{k+2} - k$};
			\node (f) at (17.2,4.5) {$b_{k+1} - k$};
			\draw[dotted, line width=1pt] (3,0.5)--(3.6,0.5);
			\draw[dotted, line width=1pt] (11.4,0.5)--(12,0.5);
			\draw[dotted, line width=1pt] (3.7,3.5)--(4.3,3.5);
			\draw[dotted, line width=1pt] (3.7,4.5)--(4.3,4.5);
			\draw[dotted, line width=1pt] (13.7,3.5)--(14.3,3.5);
			\draw[dotted, line width=1pt] (14.7,4.5)--(15.3,4.5);
			\node at (-6,2.5) {$\mathsf{P}^{A\mathrm{II}}_{k+1,\overline{k+1}}\left(\mathsf{\Psi}_{k+1}\left(T|_{k+1,\overline{k+1}}\right)\right) = $};
			\node at (19.5,2) {.};
		\end{tikzpicture}
	\end{center}
	 Since $\mathsf{Res}_{k+2,\overline{k}}\,\,\mathsf{\Psi}_k(T|_{k,\overline{k}})$
	is the tableau obtained from $\mathsf{\Psi}_{k+1}(T|_{k+1,\overline{k+1}})$
        by increasing all entries by $1$, 
	we get 
	\begin{center}
		\begin{tikzpicture}[x=5mm,y=5mm]
			\draw (0,0)--(15,0)--(15,1)--(0,1)--cycle;
			\draw (0,3)--(18,3)--(18,4)--(0,4)--cycle;
			\draw (0,4)--(0,5)--(19,5)--(19,4)--(18,4);
			\draw (0,1)--(0,3);
			\draw (15,1)--(16,1)--(16,2)--(17,2)--(17,3);
			\draw[dotted, line width=1pt] (8,1.7)--(8,2.3);
			\node (a) at (2.3,0.5) {$b_n - k + 1$};
			\node (b) at (2.5,3.5) {$b_{k+2} - k + 1$};
			\node (c) at (2.5,4.5) {$b_{k+1} - k + 1$};
			\node (d) at (12.9,0.5) {$b_n - k + 1$};
			\node (e) at (15.5,3.5) {$b_{k+2} - k + 1$};
			\node (f) at (16.5,4.5) {$b_{k+1} - k + 1$};
			\draw[dotted, line width=1pt] (9.8,0.5)--(10.4,0.5);
			\draw[dotted, line width=1pt] (4.8,0.5)--(5.4,0.5);
			\draw[dotted, line width=1pt] (5.3,3.5)--(5.9,3.5);
			\draw[dotted, line width=1pt] (5.3,4.5)--(5.9,4.5);
			\draw[dotted, line width=1pt] (12.1,3.5)--(12.7,3.5);
			\draw[dotted, line width=1pt] (13.1,4.5)--(13.7,4.5);
			\node at (-5.7,2.5) {$\mathsf{P}^{A\mathrm{II}}_{k+2,\overline{k}}\left(\mathsf{Res}_{k+2,\overline{k}}\,\,\mathsf{\Psi}_k\left(T|_{k,\overline{k}}\right)\right) = $};
			\node at (19.5,2) {.};
		\end{tikzpicture}
	\end{center}
	Hence, by Lemma \ref{kLGP} (2), 
	the proof of the equation is completed in the case that $k$ is even. 

	Next,
	assume that $k$ is odd.
	Note that
	\begin{multline}
		\left.\mathsf{pr}_{k+3,\overline{k}} \circ \mathsf{pr}_{k+2,\overline{k+1}} \circ \mathsf{\Psi}_{k+2} \left(T|_{k,\overline{k}} \right) \right| _{k,k+3}
		\\ =
		\left. \left( \mathsf{Res}_{k,k+1;\overline{k+1},\overline{k}} \,\, \mathsf{\Psi}_{k+2}\left( T|_{k,\overline{k}} \right) \right) \right| _{\overline{k+1} \to k+2,\,\,\overline{k} \to k+3}.
	\end{multline}
	Here, since $T$ is $\widehat{\mathfrak{g}}$-dominant, we have 
	$\mathsf{Res}_{k,k+3} \,\, \mathsf{\Psi}_k(T) = \mathsf{\Psi}_k(T)|_{k,k+3}$.
	Therefore, we see that 
	\[
		\begin{aligned}
			\mathsf{Res}_{k,k+3} \,\, \mathsf{\Psi}_k (T|_{k,\overline{k}})
			 & =\left.\mathsf{\Psi}_k\left(T|_{k,\overline{k}}\right)\right|_{k,k+3}                                                                                                                                                          \\
			 & =\left.\mathsf{pr}_{k+1,\overline{k}} \circ \mathsf{\Psi}_{k+1}\left(T|_{k,\overline{k}}\right) \right|_{k,k+3}                                                                                                           \\
			 & =\left.\mathsf{pr}_{k+1,\overline{k}} \circ \mathsf{pr}_{k+1,\overline{k+1}} \circ \mathsf{\Psi}_{k+2}\left(T|_{k,\overline{k}}\right) \right|_{k,k+3}                                                                        \\
		   & =\left.\mathsf{pr}_{k+1,k+3} \circ \mathsf{pr}_{k+3,\overline{k}} \circ \mathsf{pr}_{k+1,k+2} \circ \mathsf{pr}_{k+2,\overline{k+1}} \circ \mathsf{\Psi}_{k+2}\left(T|_{k,\overline{k}}\right) \right|_{k,k+3} \\
			 & =\mathsf{pr}_{k+1,k+3}\circ\mathsf{pr}_{k+1,k+2} \left(\mathsf{pr}_{k+3,\overline{k}} \circ \mathsf{pr}_{k+2,\overline{k+1}} \circ \mathsf{\Psi}_{k+2}(T|_{k,\overline{k}})|_{k,k+3} \right) \\
			 & =\mathsf{pr}_{k+1,k+3}\circ\mathsf{pr}_{k+1,k+2} \left(\left(\mathsf{Res}_{k,k+1;\overline{k+1},\overline{k}}\,\,\mathsf{\Psi}_{k+2}(T|_{k,\overline{k}})\right)|_{\overline{k+1}\to k+2, \,\, \overline{k} \to k+3}\right) \\
			 & =\mathsf{pr}_{k+1,k+3}\circ\mathsf{pr}_{k+1,k+2} \left(\left(\mathsf{Res}_{k,k+1;\overline{k+1},\overline{k}}\,\,T|_{k,\overline{k}}\right)|_{\overline{k+1}\to k+2, \,\, \overline{k} \to k+3} \right).
		\end{aligned}
	\]
	Since $T$ is $\widehat{\mathfrak{g}}$-dominant,
	$\mathsf{Res}_{k,k+1;\overline{k+1},\overline{k}}\,\,T$ is $\widehat{\mathfrak{g}}_k$-dominant.
	Hence, by Proposition \ref{n=2thm} (2), we deduce that 
	\[
		\mathsf{P}^{A\mathrm{II}}_{k,k+3} \left(\mathsf{Res}_{k,k+3} \,\, \mathsf{\Psi}_k (T|_{k,\overline{k}})\right) = \mathsf{P}^{A\mathrm{II}}_{k,k+3} \left(\mathsf{pr}_{k+1,k+3}\circ\mathsf{pr}_{k+1,k+2}\left(\left.\left(\mathsf{Res}_{k,k+1;\overline{k+1},\overline{k}}\,\,T\right)\right|_{\overline{k+1}\to k+2, \,\, \overline{k} \to k+3}\right)\right)
	\]
	\begin{tikzpicture}[x=5mm,y=5mm]
		\draw (0,0)--(8,0)--(8,1)--(0,1)--cycle;
		\draw (0,1)--(0,2)--(10,2)--(10,1)--(8,1);
		\node at (1.2,0.5) {$k+2$};
		\node at (0.5,1.5) {$k$};
		\node at (6.8,0.5) {$k+2$};
		\node at (9.5,1.5) {$k$};
		\draw[dotted, line width=1pt] (2.6,0.5)--(3.2,0.5);
		\draw[dotted, line width=1pt] (4.8,0.5)--(5.4,0.5);
		\draw[dotted, line width=1pt] (1.2,1.5)--(1.8,1.5);
		\draw[dotted, line width=1pt] (8.2,1.5)--(8.8,1.5);
		\node at (-0.8,1) {$=$};
		\node at (-10.6,1) {};
	\end{tikzpicture} \\
	
	\vspace{0.5pt}
	
	\begin{tikzpicture}[x=5mm,y=5mm]
		\draw (0,0)--(13.7,0)--(13.7,1)--(0,1)--cycle;
		\draw (0,1)--(0,2)--(15,2)--(15,1)--(13.7,1);
		\node at (2.5,0.5) {$b_{k+1} - k + 1$};
		\node at (2.1,1.5) {$b_k - k + 1$};
		\node at (11.2,0.5) {$b_{k+1} - k + 1$};
		\node at (12.9,1.5) {$b_k - k + 1$};
		\draw[dotted, line width=1pt] (5.2,0.5)--(5.8,0.5);
		\draw[dotted, line width=1pt] (8,0.5)--(8.6,0.5);
		\draw[dotted, line width=1pt] (4.4,1.5)--(5,1.5);
		\draw[dotted, line width=1pt] (9.9,1.5)--(10.5,1.5);
		\node at (-0.8,1) {$=$};
		\node at (-9.7,1) {};
		\node at (15.5,0.5) {.};
	\end{tikzpicture} \\
	Also, by the same argument as in the case that $k$ is even, 
	we obtain 
	\begin{center}
		\begin{tikzpicture}[x=5mm,y=5mm]
			\draw (0,0)--(15,0)--(15,1)--(0,1)--cycle;
			\draw (0,3)--(18,3)--(18,4)--(0,4)--cycle;
			\draw (0,4)--(0,5)--(19,5)--(19,4)--(18,4);\
			\draw (0,1)--(0,3);
			\draw (15,1)--(16,1)--(16,2)--(17,2)--(17,3);
			\draw[dotted, line width=1pt] (8,1.7)--(8,2.3);
			\node (a) at (2.3,0.5) {$b_n - k + 1$};
			\node (b) at (2.5,3.5) {$b_{k+2} - k + 1$};
			\node (c) at (2.5,4.5) {$b_{k+1} - k + 1$};
			\node (d) at (12.9,0.5) {$b_n - k + 1$};
			\node (e) at (15.5,3.5) {$b_{k+2} - k + 1$};
			\node (f) at (16.5,4.5) {$b_{k+1} - k + 1$};
			\draw[dotted, line width=1pt] (9.8,0.5)--(10.4,0.5);
			\draw[dotted, line width=1pt] (4.8,0.5)--(5.4,0.5);
			\draw[dotted, line width=1pt] (5.3,3.5)--(5.9,3.5);
			\draw[dotted, line width=1pt] (5.3,4.5)--(5.9,4.5);
			\draw[dotted, line width=1pt] (12.1,3.5)--(12.7,3.5);
			\draw[dotted, line width=1pt] (13.1,4.5)--(13.7,4.5);
			\node at (-5.8,2.5) {$\mathsf{P}^{A\mathrm{II}}_{k+2,\overline{k}}\left(\mathsf{Res}_{k+2,\overline{k}}\,\,\mathsf{\Psi}_k\left(T|_{k,\overline{k}}\right)\right) = $};
			\node at (19.5,2) {.};
		\end{tikzpicture}
	\end{center}
	Hence, by Lemma \ref{kLGP} (2), with $k$, $k+1$, $\ldots$, $\overline{k}$ in place of $1$, $2$, $\ldots$, $2n$, respectively, 
  the proof of the equation is completed in the case that $k$ is odd. 
	This proves the proposition. 
\end{proof}

\begin{prop} We have 
	\begin{equation}
		\begin{aligned}
			& \mathsf{\Phi} \left( \mathsf{SST}_{2n}^{\text{$\widehat{\mathfrak{g}}$-$\mathrm{dom}$}}(\lambda) \right) \supset \mathsf{SST}_{2n}^{\text{$\mathfrak{k}$-$\mathrm{hw}$}}(\lambda),&
			& \mathsf{\Psi} \left( \mathsf{SST}_{2n}^{\text{$\widehat{\mathfrak{g}}$-$\mathrm{dom}$}}(\lambda) \right) \supset \mathsf{SST}_{2n}^{\text{$\mathfrak{k}$-$\mathrm{lw}$}}(\lambda).
		\end{aligned}
	\end{equation}
\end{prop}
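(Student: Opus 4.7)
The plan is to prove both inclusions by induction on the rank $n$, with the base case $n=2$ supplied by Proposition~\ref{n=2thm}. Assuming $n \geq 3$ and that the inclusions hold for every rank strictly less than $n$, I will handle the $\mathsf{\Phi}$-case first; the $\mathsf{\Psi}$-case is entirely parallel, with $\mathfrak{k}$-hw replaced by $\mathfrak{k}$-lw, Lemma~\ref{kLGP}(1) by Lemma~\ref{kLGP}(2), and Lemma~\ref{characterization of k-highest} by Lemma~\ref{characterization of k-lowest}.

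Take $S \in \mathsf{SST}_{2n}^{\text{$\mathfrak{k}$-$\mathrm{hw}$}}(\lambda)$ and set $T := \mathsf{\Phi}^{-1}(S)$; this is well-defined because $\mathsf{\Phi}$ is a composition of bijective promotion operators by Lemma~\ref{prrel}. To show $T \in \mathsf{SST}_{2n}^{\text{$\widehat{\mathfrak{g}}$-$\mathrm{dom}$}}(\lambda)$, I will invoke the local-global principle of Lemma~\ref{gLGP}, reducing the task to verifying
\begin{enumerate}
\item[$(\mathrm{I})$] $\mathsf{Res}_{1,2;\overline{2},\overline{1}}(T)$ is $\widehat{\mathfrak{g}}_1$-dominant, and
\item[$(\mathrm{II})$] $T|_{2,\overline{2}}$ is $\widehat{\mathfrak{g}}_{\geq 2}$-dominant.
\end{enumerate}

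For $(\mathrm{II})$, the strategy is to descend to rank $n-1$. Let $T'$ denote the tableau obtained from $T|_{2,\overline{2}}$ by decreasing every entry by $1$; then $T'$ has entries in $[1,2n-2]$ and may be viewed as a semistandard tableau attached to the rank-$(n-1)$ analogue $\mathfrak{g}^{(n-1)} = \mathfrak{gl}_{2n-2}$, and under this reindexing $\widehat{\mathfrak{g}}_{\geq 2}$-dominance of $T|_{2,\overline{2}}$ matches full $\widehat{\mathfrak{g}}^{(n-1)}$-dominance of $T'$. Using Lemma~\ref{lem2} together with the commutation relations of Lemma~\ref{prrel}, the composition $\mathsf{\Phi}$ is shown to factor in such a way that $\mathsf{\Phi}^{(n-1)}(T')$, where $\mathsf{\Phi}^{(n-1)}$ denotes the analogue of $\mathsf{\Phi}$ at rank $n-1$, coincides with the $(-1)$-shift of a rank-$(n-1)$ tableau naturally attached to $S$ via $\mathsf{Res}_{3,\overline{1}}\circ\mathsf{pr}_{2,\overline{1}}^{-1}$. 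Lemma~\ref{kLGP}(1) applied with $J = \{3,4,\ldots,2n-1\}$ then identifies this tableau as $\mathfrak{k}^{(n-1)}$-highest weight at rank $n-1$, and the induction hypothesis yields the $\widehat{\mathfrak{g}}^{(n-1)}$-dominance of $T'$, establishing $(\mathrm{II})$. For $(\mathrm{I})$, the tableau $\mathsf{Res}_{1,2;\overline{2},\overline{1}}(T)$ lives in the local rank-$2$ setting of the $\mathfrak{sp}_4$-subalgebra $\widehat{\mathfrak{g}}_1$; combining Lemma~\ref{kLGP}(1) applied with $J = \{1,2,3\}$ with Proposition~\ref{n=2thm} applied in this local picture yields the required $\widehat{\mathfrak{g}}_1$-dominance.

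The hard part will be the precise identification in step $(\mathrm{II})$, namely verifying that $\mathsf{\Phi}^{(n-1)}(T')$ coincides with the shifted restriction of $S$. This amounts to commuting the outermost factor $\mathsf{pr}_{1,\overline{1}}$ in the factorization $\mathsf{\Phi} = \mathsf{pr}_{1,\overline{1}}\circ \mathsf{\Phi}_2$ past the operation of restricting to entries in $[2,\overline{2}]$ and shifting down, and matching the resulting composition with the analogous factorization at rank $n-1$. Lemmas~\ref{prrel} and~\ref{lem2} provide the essential commutation identities, and the required bookkeeping parallels the explicit computations carried out in the two forward-direction propositions; once this identification is secured, the remainder of the argument is the routine assembly of the local pieces through Lemmas~\ref{gLGP} and~\ref{kLGP}, completing the proof of Theorem~\ref{KeyProposition}.
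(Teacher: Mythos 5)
Your overall skeleton---induction on $n$, set $T := \mathsf{\Phi}^{-1}(S)$, and verify the two hypotheses (I) and (II) of Lemma~\ref{gLGP}---matches the paper exactly, and your handling of (I) via $\mathsf{Res}_{1,2;\overline{2},\overline{1}}(T) = \bigl.\mathsf{pr}_{1,4}^{-1}(\mathsf{Res}_{1,4}\,S)\bigr|_{3\to\overline{2},\,4\to\overline{1}}$, Lemma~\ref{kLGP}(1), and the rank-two Proposition~\ref{n=2thm} is sound.

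However, there is a genuine gap in your treatment of (II), and it is precisely the subtle point that explains why $\mathsf{\Psi}$ is introduced as an auxiliary map in the first place. You assert that under the $(-1)$-shift to rank $n-1$, $\mathsf{\Phi}_2$ becomes the rank-$(n-1)$ analogue $\mathsf{\Phi}^{(n-1)}$, and that the shifted restriction of $S$ is a $\mathfrak{k}^{(n-1)}$-\emph{highest} weight tableau. Neither is correct. Unwinding the recursion, $\mathsf{\Phi}_2$ is $\mathsf{\Psi}_1$ for $\mathfrak{gl}_{2(n-1)}$ with all promotion subscripts raised by one (not $\mathsf{\Phi}_1^{(n-1)}$): the even/odd cases in the definitions of $\mathsf{\Phi}_k$ and $\mathsf{\Psi}_k$ are dual, and shifting $k$ by one flips parity. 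Correspondingly, since $a_k - 1 = b_{k-1} + 1$, the tableau $\mathsf{P}^{\mathrm{AII}}_{2,\overline{2}}\bigl(\mathsf{pr}_{1,\overline{1}}^{-1}(S)\bigr)$, whose rows by Lemma~\ref{kLGP}(1) carry the entries $a_n-1,\dots,a_2-1$, is (after the shift) a $\mathfrak{k}^{(n-1)}$-\emph{lowest} weight tableau, not a highest weight one. So the induction hypothesis you need for the $\mathsf{\Phi}$-case at rank $n$ is the $\mathsf{\Psi}$-surjectivity at rank $n-1$, and dually the $\mathsf{\Psi}$-case at rank $n$ requires the $\mathsf{\Phi}$-surjectivity at rank $n-1$. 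The two inclusions therefore must be proved together by a joint induction; they are not ``entirely parallel'' cases that can be run independently, and an induction that tries to close each one by itself does not close. (A separate small slip: in the $\mathsf{\Phi}$-case one uses $\mathsf{pr}_{1,\overline{1}}^{-1}$, not $\mathsf{pr}_{2,\overline{1}}^{-1}$, to produce the shifted restriction; the latter is what appears in the $\mathsf{\Psi}$-case.)
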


\begin{proof}
	We prove the assertion of the proposition by induction on $n$.
	The assertion in the case $n=2$ follows from Proposition \ref{n=2thm}.
	Assume that $n \geq 3$.
	For the first inclusion, let us take
	$S \in \mathsf{SST}_{2n}^{\text{$\mathfrak{k}$-hw}}(\lambda)$.
	We will prove that
	$\mathsf{\Phi}^{-1}(S) \in \mathsf{SST}_{2n}^{\text{$\widehat{\mathfrak{g}}$-dom}}(\lambda)$.
	Note that
	\begin{equation}
		\left.\mathsf{pr}_{4,\overline{1}} \circ \mathsf{pr}_{3,\overline{2}} \left(\mathsf{\Phi}_3 \circ \mathsf{\Phi}^{-1}(S)\right) \right| _{1,4}
		=
		\left.\left( \mathsf{Res}_{1,2;\overline{2},\overline{1}}\,\,\mathsf{\Phi}_3 \circ \mathsf{\Phi}^{-1}(S) \right)\right| _{\overline{2} \to 3,\,\,\overline{1} \to 4}.
	\end{equation}
	From this equality, we see that 
	\begin{equation}
		\begin{aligned}
			\mathsf{Res}_{1,4}\,\,S & = \mathsf{Res}_{1,4}\,\,\mathsf{\Phi}\circ\mathsf{\Phi}^{-1}(S)                                                                                                                                         \\
			                               & = \left.\mathsf{\Phi} \circ \mathsf{\Phi}^{-1} (S)\right|_{1,4}                                                                                                                                         \\
			                               & = \left.\mathsf{pr}_{1,\overline{1}} \circ \mathsf{\Phi}_2 \circ \mathsf{\Phi}^{-1} (S) \right|_{1,4}                                                                                            \\
			                               & = \left.\mathsf{pr}_{1,4} \circ \mathsf{pr}_{4,\overline{1}} \circ \mathsf{pr}_{3,\overline{2}} \circ \mathsf{\Phi}_3 \circ \mathsf{\Phi}^{-1} (S)\right|_{1,4}                    \\
			                               & = \mathsf{pr}_{1,4} \left( \left. \mathsf{pr}_{4,\overline{1}} \circ \mathsf{pr}_{3,\overline{2}} \left( \mathsf{\Phi}_3 \circ \mathsf{\Phi}^{-1}(S) \right) \right|_{1,4} \right) \\
			                               & = \mathsf{pr}_{1,4} \left( \left.\left( \mathsf{Res}_{1,2;\overline{2},\overline{1}}\,\,\mathsf{\Phi}_3 \circ \mathsf{\Phi}^{-1}(S) \right)\right| _{\overline{2} \to 3,\,\,\overline{1} \to 4} \right) \\
			                               & = \mathsf{pr}_{1,4} \left( \left.\left( \mathsf{Res}_{1,2;\overline{2},\overline{1}}\,\,\mathsf{\Phi}^{-1}(S) \right)\right| _{\overline{2} \to 3,\,\,\overline{1} \to 4} \right).
		\end{aligned}
	\end{equation}
	Therefore, we get
	\begin{equation}
		\mathsf{Res}_{1,2;\overline{2},\overline{1}}\,\,\mathsf{\Phi}^{-1}(S)
		=
		\left.\mathsf{pr}_{1,4}^{-1} \left( \mathsf{Res}_{1,4}\,\,S \right)\right|_{3 \to \overline{2},\,\,4\to\overline{1}}.
	\end{equation}
	Since $S$ is a $\mathfrak{k}$-highest weight tableau,
	it follows from Lemma \ref{kLGP} (1) that 
	\begin{center}
		\begin{tikzpicture}[x=5mm,y=5mm]
			\draw (0,0)--(8,0)--(8,1)--(0,1)--cycle;
			\draw (0,1)--(0,2)--(10,2)--(10,1)--(8,1);
			\node at (0.5,0.5) {$3$};
			\node at (0.5,1.5) {$2$};
			\node at (7.5,0.5) {$3$};
			\node at (9.5,1.5) {$2$};
			\draw[dotted, line width=1pt] (1.2,0.5)--(1.8,0.5);
			\draw[dotted, line width=1pt] (6.2,0.5)--(6.8,0.5);
			\draw[dotted, line width=1pt] (1.2,1.5)--(1.8,1.5);
			\draw[dotted, line width=1pt] (8.2,1.5)--(8.8,1.5);
			\node at (-3.5,1) {$\mathsf{P}^{A\mathrm{II}}_{1,4} \left( \mathsf{Res}_{1,4}\,\, S \right)  =$};
			\node at (-4.45,1) {};
			\node at (10.5,0.5) {.};
		\end{tikzpicture}
	\end{center}
	Hence,
	$\mathsf{Res}_{1,2;\overline{2},\overline{1}}\,\,\mathsf{\Phi}^{-1}(S)$
	is $\widehat{\mathfrak{g}}_1$-dominant by Proposition \ref{n=2thm} (1).
	Also, we have 
	\begin{equation}
		\mathsf{\Phi}^{-1}(S)|_{2,\overline{2}}
		=
		\left.\mathsf{\Phi}_2^{-1} \circ \mathsf{pr}_{1,\overline{1}}^{-1} (S) \right| _{2,\overline{2}}
		=
		\mathsf{\Phi}_2^{-1} \left(\left.\mathsf{pr}_{1,\overline{1}}^{-1}(S)\right|_{2,\overline{2}}\right).
	\end{equation}
	Since $\mathsf{pr}_{1,\overline{1}}^{-1}(S)$ 
	is the tableau obtained from $\mathsf{Res}_{3,\overline{1}}\,\,S$
  by decreasing all entries by $1$, 
	we deduce that 
	\begin{center}
		\begin{tikzpicture}[x=5mm,y=5mm]
			\draw (0,0)--(15,0)--(15,1)--(0,1)--cycle;
			\draw (0,3)--(18,3)--(18,4)--(0,4)--cycle;
			\draw (0,4)--(0,5)--(19,5)--(19,4)--(18,4);
			\draw (0,1)--(0,3);
			\draw (15,1)--(16,1)--(16,2)--(17,2)--(17,3);
			\draw[dotted, line width=1pt] (8,1.7)--(8,2.3);
			\node (a) at (1.4,0.5) {$a_n - 1$};
			\node (b) at (1.4,3.5) {$a_3 - 1$};
			\node (c) at (1.4,4.5) {$a_2 - 1$};
			\node (d) at (13.5,0.5) {$a_n - 1$};
			\node (e) at (16.6,3.5) {$a_3 - 1$};
			\node (f) at (17.6,4.5) {$a_2 - 1$};
			\draw[dotted, line width=1pt] (3,0.5)--(3.6,0.5);
			\draw[dotted, line width=1pt] (11.3,0.5)--(11.9,0.5);
			\draw[dotted, line width=1pt] (3,3.5)--(3.6,3.5);
			\draw[dotted, line width=1pt] (14.4,3.5)--(15,3.5);
			\draw[dotted, line width=1pt] (3,4.5)--(3.6,4.5);
			\draw[dotted, line width=1pt] (15.4,4.5)--(16,4.5);
			\node at (-3.7,2.5) {$\mathsf{P}^{A\mathrm{II}}_{2,\overline{2}} \left( \mathsf{pr}_{1,\overline{1}}^{-1}(S) \right)  =$};
		\end{tikzpicture}
		
		\vspace{15pt}
		
		\begin{tikzpicture}[x=5mm,y=5mm]
			\draw (0,0)--(15,0)--(15,1)--(0,1)--cycle;
			\draw (0,3)--(18,3)--(18,4)--(0,4)--cycle;
			\draw (0,4)--(0,5)--(19,5)--(19,4)--(18,4);
			\draw (0,1)--(0,3);
			\draw (15,1)--(16,1)--(16,2)--(17,2)--(17,3);
			\draw[dotted, line width=1pt] (8,1.7)--(8,2.3);
			\node (a) at (1.8,0.5) {$b_{n-1} + 1$};
			\node (b) at (1.4,3.5) {$b_2 + 1$};
			\node (c) at (1.4,4.5) {$b_1 + 1$};
			\node (d) at (13.2,0.5) {$b_{n-1} + 1$};
			\node (e) at (16.6,3.5) {$b_2 + 1$};
			\node (f) at (17.6,4.5) {$b_1 + 1$};
			\draw[dotted, line width=1pt] (3.5,0.5)--(4.1,0.5);
			\draw[dotted, line width=1pt] (10.8,0.5)--(11.4,0.5);
			\draw[dotted, line width=1pt] (3,3.5)--(3.6,3.5);
			\draw[dotted, line width=1pt] (14.4,3.5)--(15,3.5);
			\draw[dotted, line width=1pt] (3,4.5)--(3.6,4.5);
			\draw[dotted, line width=1pt] (15.4,4.5)--(16,4.5);
			\node at (-1,2.5) {$=$};
			\node at (-7.6,2.5) {};
			\node at (19.5,2) {.};
		\end{tikzpicture}
	\end{center}
	Here, observe that 
	$\mathsf{\Phi}_2$
	is just 
	$\mathsf{\Psi}_1$ for 
	$\mathfrak{gl}_{2(n-1)}(\mathbb{C})$, with all the subscripts of promotion operators        increased by $1$. 
	From this observation, together with the induction hypothesis, we conclude that 
	$\mathsf{\Phi}^{-1}(S)|_{2,\overline{2}}$
	is a $\widehat{\mathfrak{g}}_{\geq2}$-dominant tableau.
	Hence it follows from Lemma \ref{gLGP} that 
	$\mathsf{\Phi}^{-1}(S) \in \mathsf{SST}_{2n}^{\text{$\widehat{\mathfrak{g}}$-dom}}(\lambda)$, as desired. 
	
	Next, for the second inclusion
	let us take an arbitrary 
	$S' \in \mathsf{SST}_{2n}^{\text{$\mathfrak{k}$-lw}}(\lambda)$.
	We will prove that $\mathsf{\Psi}^{-1}(S') \in \mathsf{SST}_{2n}^{\text{$\widehat{\mathfrak{g}}$-dom}}(\lambda)$.
	Note that
	\begin{equation}
		\left.\mathsf{pr}_{4,\overline{1}} \circ \mathsf{pr}_{3,\overline{2}} \left(\mathsf{\Psi}_3 \circ \mathsf{\Psi}^{-1}(S')\right) \right| _{1,4}
		=
		\left.\left( \mathsf{Res}_{1,2;\overline{2},\overline{1}}\,\,\mathsf{\Psi}_3 \circ \mathsf{\Phi}^{-1}(S') \right)\right| _{\overline{2} \to 3,\,\,\overline{1} \to 4}.
	\end{equation}
	From this equality, we see that 
	\begin{equation}
		\begin{aligned}
			\mathsf{Res}_{1,4}\,\,S' & = \mathsf{Res}_{1,4}\,\,\mathsf{\Psi}\circ\mathsf{\Psi}^{-1}(S')                                                                                                                                                                 \\
			                                & = \left.\mathsf{\Psi} \circ \mathsf{\Psi}^{-1} (S')\right|_{1,4}                                                                                                                                                                 \\
			                                & = \left.\mathsf{pr}_{2,\overline{1}} \circ \mathsf{\Psi}_2 \circ \mathsf{\Psi}^{-1} (S') \right|_{1,4}                                                                                                                    \\
			                                & = \left.\mathsf{pr}_{2,4} \circ \mathsf{pr}_{4,\overline{1}} \circ \mathsf{pr}_{2,\overline{2}} \circ \mathsf{\Psi}_3 \circ \mathsf{\Psi}^{-1} (S')\right|_{1,4}                                            \\
			                                & = \left.\mathsf{pr}_{2,4} \circ \mathsf{pr}_{4,\overline{1}} \circ \mathsf{pr}_{2,3} \circ \mathsf{pr}_{3,\overline{2}} \circ \mathsf{\Psi}_3 \circ \mathsf{\Psi}^{-1} (S')\right|_{1,4}                    \\
			                                & = \left.\mathsf{pr}_{2,4} \circ \mathsf{pr}_{2,3} \circ \mathsf{pr}_{4,\overline{1}} \circ \mathsf{pr}_{3,\overline{2}} \circ \mathsf{\Psi}_3 \circ \mathsf{\Psi}^{-1} (S')\right|_{1,4}                    \\
			                                & = \mathsf{pr}_{2,4} \circ \mathsf{pr}_{2,3} \left( \left. \mathsf{pr}_{4,\overline{1}} \circ \mathsf{pr}_{3,\overline{2}} \left( \mathsf{\Psi}_3 \circ \mathsf{\Psi}^{-1}(S') \right) \right|_{1,4} \right) \\
			                                & = \mathsf{pr}_{2,4} \circ \mathsf{pr}_{2,3} \left( \left.\left( \mathsf{Res}_{1,2;\overline{2},\overline{1}}\,\,\mathsf{\Psi}_3 \circ \mathsf{\Psi}^{-1}(S') \right)\right| _{\overline{2} \to 3,\,\,\overline{1} \to 4} \right) \\
			                                & = \mathsf{pr}_{2,4} \circ \mathsf{pr}_{2,3} \left( \left.\left( \mathsf{Res}_{1,2;\overline{2},\overline{1}}\,\,\mathsf{\Psi}^{-1}(S') \right)\right| _{\overline{2} \to 3,\,\,\overline{1} \to 4} \right).
		\end{aligned}
	\end{equation}
	Therefore, we obtain 
	\begin{equation}
		\mathsf{Res}_{1,2;\overline{2},\overline{1}}\,\,\mathsf{\Psi}^{-1}(S')
		=
		\left.\mathsf{pr}_{2,3}^{-1} \circ\mathsf{pr}_{2,4}^{-1} \left( \mathsf{Res}_{1,4}\,\,S' \right)\right|_{3 \to \overline{2},\,\,4\to\overline{1}}.
	\end{equation}
	Since
	$S'$ is a $\mathfrak{k}$-lowest weight tableau,
	it follows from Lemma \ref{kLGP} (2) that 
	\begin{center}
		\begin{tikzpicture}[x=5mm,y=5mm]
			\draw (0,0)--(8,0)--(8,1)--(0,1)--cycle;
			\draw (0,1)--(0,2)--(10,2)--(10,1)--(8,1);
			\node at (0.5,0.5) {$4$};
			\node at (0.5,1.5) {$1$};
			\node at (7.5,0.5) {$4$};
			\node at (9.5,1.5) {$1$};
			\draw[dotted, line width=1pt] (1.2,0.5)--(1.8,0.5);
			\draw[dotted, line width=1pt] (6.2,0.5)--(6.8,0.5);
			\draw[dotted, line width=1pt] (1.2,1.5)--(1.8,1.5);
			\draw[dotted, line width=1pt] (8.2,1.5)--(8.8,1.5);
			\node at (-3.5,1) {$\mathsf{P}^{A\mathrm{II}}_{1,4} \left( \mathsf{Res}_{1,4}\,\, S' \right)  =$};
			\node at (-4.45,1) {};
			\node at (10.5,0.5) {.};
		\end{tikzpicture}
	\end{center}
	Hence, 
	$\mathsf{Res}_{1,2;\overline{2},\overline{1}}\,\,\mathsf{\Psi}^{-1}(S')$
	is $\widehat{\mathfrak{g}}_1$-dominant by Proposition \ref{n=2thm} (2).
	Also, we have 
	\begin{equation}
		\mathsf{\Psi}^{-1}(S')|_{2,\overline{2}}
		=
		\left.\mathsf{\Psi}_2^{-1} \circ \mathsf{pr}_{2,\overline{1}}^{-1} (S') \right| _{2,\overline{2}}
		=
		\mathsf{\Psi}_2^{-1} \left(\left.\mathsf{pr}_{2,\overline{1}}^{-1}(S')\right|_{2,\overline{2}}\right).
	\end{equation}
	Since $\mathsf{pr}_{2,\overline{1}}^{-1}(S')$
	is the tableau obtained from $\mathsf{Res}_{3,\overline{1}}\,\,S'$ 
  by decreasing all entries by $1$, 
	we deduce that 
	\begin{center}
		\begin{tikzpicture}[x=5mm,y=5mm]
			\draw (0,0)--(15,0)--(15,1)--(0,1)--cycle;
			\draw (0,3)--(18,3)--(18,4)--(0,4)--cycle;
			\draw (0,4)--(0,5)--(19,5)--(19,4)--(18,4);
			\draw (0,1)--(0,3);
			\draw (15,1)--(16,1)--(16,2)--(17,2)--(17,3);
			\draw[dotted, line width=1pt] (8,1.7)--(8,2.3);
			\node (a) at (1.4,0.5) {$b_n - 1$};
			\node (b) at (1.4,3.5) {$b_3 - 1$};
			\node (c) at (1.4,4.5) {$b_2 - 1$};
			\node (d) at (13.5,0.5) {$b_n - 1$};
			\node (e) at (16.6,3.5) {$b_3 - 1$};
			\node (f) at (17.6,4.5) {$b_2 - 1$};
			\draw[dotted, line width=1pt] (3,0.5)--(3.6,0.5);
			\draw[dotted, line width=1pt] (11.3,0.5)--(11.9,0.5);
			\draw[dotted, line width=1pt] (3,3.5)--(3.6,3.5);
			\draw[dotted, line width=1pt] (14.4,3.5)--(15,3.5);
			\draw[dotted, line width=1pt] (3,4.5)--(3.6,4.5);
			\draw[dotted, line width=1pt] (15.4,4.5)--(16,4.5);
			\node at (-3.7,2.5) {$\mathsf{P}^{A\mathrm{II}}_{2,\overline{2}} \left( \mathsf{pr}_{2,\overline{1}}^{-1}(S') \right)  =$};
		\end{tikzpicture}
		
		\vspace{15pt}
		
		\begin{tikzpicture}[x=5mm,y=5mm]
			\draw (0,0)--(15,0)--(15,1)--(0,1)--cycle;
			\draw (0,3)--(18,3)--(18,4)--(0,4)--cycle;
			\draw (0,4)--(0,5)--(19,5)--(19,4)--(18,4);
			\draw (0,1)--(0,3);
			\draw (15,1)--(16,1)--(16,2)--(17,2)--(17,3);
			\draw[dotted, line width=1pt] (8,1.7)--(8,2.3);
			\node (a) at (1.8,0.5) {$a_{n-1} + 1$};
			\node (b) at (1.4,3.5) {$a_2 + 1$};
			\node (c) at (1.4,4.5) {$a_1 + 1$};
			\node (d) at (13.2,0.5) {$a_{n-1} + 1$};
			\node (e) at (16.6,3.5) {$a_2 + 1$};
			\node (f) at (17.6,4.5) {$a_1 + 1$};
			\draw[dotted, line width=1pt] (3.5,0.5)--(4.1,0.5);
			\draw[dotted, line width=1pt] (10.8,0.5)--(11.4,0.5);
			\draw[dotted, line width=1pt] (3,3.5)--(3.6,3.5);
			\draw[dotted, line width=1pt] (14.4,3.5)--(15,3.5);
			\draw[dotted, line width=1pt] (3,4.5)--(3.6,4.5);
			\draw[dotted, line width=1pt] (15.4,4.5)--(16,4.5);
			\node at (-1,2.5) {$=$};
			\node at (-7.6,2.5) {};
			\node at (19.5,2) {.};
		\end{tikzpicture}
	\end{center}
	From this, by the same argument as for $\mathsf{\Phi}$, we find that 
	$\mathsf{\Psi}^{-1}(S')|_{2,\overline{2}}$
	is $\widehat{\mathfrak{g}}_{\geq2}$-dominant. 
	Therefore, by Lemma \ref{gLGP}, we conclude that 
	$\mathsf{\Psi}^{-1}(S') \in \mathsf{SST}_{2n}^{\text{$\widehat{\mathfrak{g}}$-dom}}(\lambda)$, as desired. 
  This proves the proposition. 
\end{proof}

  Also,
	by equations (\ref{5.13a}) and (\ref{4.14}) 
	in the case that $\lambda=\varepsilon_1$,
	we can verify that for each $T \in \mathsf{SST}_{2n}(\lambda)$, equalities 
(\ref{5.19}) hold. 
Hence, we have established the following. 
\begin{prop}
	The statement of Theorem \ref{KeyProposition} also holds true for $n \geq3$.
\end{prop}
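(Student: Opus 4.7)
The plan is to follow exactly the two-stage strategy sketched at the end of Section 5.3: first establish Theorem~\ref{KeyProposition} in the base case $n=2$ by producing explicit combinatorial normal forms for all three sets of tableaux, and then reduce the general case $n \geq 3$ to the base case via a local-global principle. Accordingly, the bijections $\mathsf{\Phi}$ and $\mathsf{\Psi}$ will both be constructed as composites of the promotion operators $\mathsf{pr}_{a,b}$ introduced above, arranged in a nested fashion so that the restriction of $\mathsf{\Phi}$ (resp.\ $\mathsf{\Psi}$) to each of the rank-two ``slices'' $T|_{k,\overline{k}}$ reproduces the rank-two bijection. The weight identities $\mathsf{wt}_{\widehat{\mathfrak{g}}}(T) = \mathsf{wt}_{\mathfrak{k}}(\mathsf{\Phi}(T)) = \mathsf{wt}_{\mathfrak{k}}(\mathsf{\Psi}(T))$ are a bookkeeping check using equations \eqref{4.14} and \eqref{5.13a} applied to tableaux of shape $\varpi_1$, so this will reduce to the nontrivial assertion that $\mathsf{\Phi}$ and $\mathsf{\Psi}$ indeed restrict to bijections between the asserted subsets.

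For $n=2$, the plan is to characterize each of $\mathsf{SST}_4^{\widehat{\mathfrak{g}}\text{-}\mathrm{dom}}(\lambda)$, $\mathsf{SST}_4^{\mathfrak{k}\text{-}\mathrm{hw}}(\lambda)$, and $\mathsf{SST}_4^{\mathfrak{k}\text{-}\mathrm{lw}}(\lambda)$ by explicit row-fillings depending on two parameters each. For $\widehat{\mathfrak{g}}$-dominance this follows by tracking the path $\widehat{\pi}_T$ inside the fundamental chamber $\widehat{C}$, eliminating forbidden entries row-by-row. For the $\mathfrak{k}$-highest (resp.\ lowest) weight conditions, I would first translate Definition~\ref{5.13} into Kashiwara-operator conditions ($\widetilde{f}_1 S = \widetilde{e}_2 S = \widetilde{e}_3 S = 0$ together with $\varphi_2(\widetilde{e}_1^{\max}\widetilde{f}_3^{\max}S) \leq \varphi_2(S)$, and analogously for lowest weight) by induction on $|\lambda|$ using the recursive definition of $\mathsf{suc}$ together with Proposition~\ref{Pieri} and the tensor product rule~\eqref{4.13}; then I would read off the normal form from these constraints. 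Comparing normal forms makes it transparent that $\mathsf{\Phi} := \mathsf{pr}_{1,4}$ and $\mathsf{\Psi} := \mathsf{pr}_{2,4}\circ\mathsf{pr}_{2,3}$ implement the desired bijections.

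For $n \geq 3$, the plan is to define $\mathsf{\Phi}_k$ and $\mathsf{\Psi}_k$ by downward recursion on $k$, the operator $\mathsf{\Phi}_k$ acting essentially as $\mathsf{\Phi}_{k+1}$ followed by a single outer promotion $\mathsf{pr}_{k,\overline{k}}$ or $\mathsf{pr}_{k+1,\overline{k}}$ depending on the parity of $k$, and then set $\mathsf{\Phi} := \mathsf{\Phi}_1$, $\mathsf{\Psi} := \mathsf{\Psi}_1$. The required surjections onto $\mathsf{SST}_{2n}^{\mathfrak{k}\text{-}\mathrm{hw}}(\lambda)$ (and its lowest-weight analog) will be established by descending induction on $k$, proving at each stage that $\mathsf{P}^{\mathrm{AII}}_{k,\overline{k}}\bigl(\mathsf{\Phi}_k(T|_{k,\overline{k}})\bigr)$ has the expected ``staircase'' normal form from Figures~\ref{k-hw} and~\ref{k-lw}. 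The injective direction, showing $\mathsf{\Phi}^{-1}$ and $\mathsf{\Psi}^{-1}$ preserve $\widehat{\mathfrak{g}}$-dominance, will be obtained by inductive reduction to the $n=2$ case on the slice $\mathsf{Res}_{1,2;\overline{2},\overline{1}}$ and to the rank-$(n-1)$ case on $T|_{2,\overline{2}}$, assembled via the local-global principle for $\widehat{\mathfrak{g}}$-dominance (Lemma~\ref{gLGP}).

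The hard part will be the two local-global principles themselves, and especially the $\mathfrak{k}$-weight one (Lemma~\ref{kLGP}). The $\widehat{\mathfrak{g}}$-dominance principle is essentially a geometric statement about the paths, which I expect to handle by a case analysis of the jeu-de-taquin moves on pairs of ``barred'' entries, using that the intermediate region between $\boxed{i+1}$ and $\boxed{\overline{i+1}}$ is tightly constrained. The $\mathfrak{k}$-weight principle, by contrast, requires the compatibility identity
\[
\mathsf{P}^{\mathrm{AII}}_{J}\bigl(\mathsf{Res}_{J}\,\mathsf{P}^{\mathrm{AII}}(T)\bigr) = \mathsf{P}^{\mathrm{AII}}_{J}\bigl(\mathsf{Res}_{J}(T)\bigr)
\]
(Lemma~\ref{formula}), which is a purely combinatorial statement but admits no obvious direct proof; the strategy I envisage is to lift it to a representation-theoretic identity of $\mathbf{U}^{\imath}_J$-module homomorphisms, apply Schur's lemma to identify the two highest-weight isotypic components, and then take the crystal limit $q \to \infty$ using the construction of $\mathbf{B}^{\imath}(\mu)$ in \cite[Proposition~6.4.1]{Wat1}. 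Once Lemmas~\ref{formula} and~\ref{kLGP} are in place, the inductive proof of $\mathsf{\Phi}\bigl(\mathsf{SST}_{2n}^{\widehat{\mathfrak{g}}\text{-}\mathrm{dom}}(\lambda)\bigr) \subset \mathsf{SST}_{2n}^{\mathfrak{k}\text{-}\mathrm{hw}}(\lambda)$ reduces to the identity $\mathsf{pr}_{k+3,\overline{k}}\circ\mathsf{pr}_{k+2,\overline{k+1}}(T)|_{k,k+3} = (\mathsf{Res}_{k,k+1;\overline{k+1},\overline{k}}\,T)|_{\overline{k+1}\to k+2, \overline{k}\to k+3}$ (Lemma~\ref{lem1}), which follows from the braid-type relations for promotion operators in Lemma~\ref{prrel}.
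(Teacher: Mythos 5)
Your proposal matches the paper's proof of this proposition in all essential respects: the recursive construction of $\mathsf{\Phi}_k$ and $\mathsf{\Psi}_k$ as nested composites of promotion operators, the descending induction on $k$ establishing the staircase normal form of $\mathsf{P}^{\mathrm{AII}}_{k,\overline{k}}\bigl(\mathsf{\Phi}_k(T|_{k,\overline{k}})\bigr)$, the reduction of the reverse inclusion to the $n=2$ slice and the rank-$(n-1)$ case via the local-global principle of Lemma~\ref{gLGP}, the key compatibility identity of Lemma~\ref{formula} proved via Schur's lemma and the crystal limit, and the promotion-braid identity of Lemma~\ref{lem1}. (You say ``surjections onto'' for the step that in fact establishes the inclusion $\mathsf{\Phi}(\mathsf{SST}_{2n}^{\widehat{\mathfrak{g}}\text{-}\mathrm{dom}}(\lambda)) \subset \mathsf{SST}_{2n}^{\mathfrak{k}\text{-}\mathrm{hw}}(\lambda)$, with surjectivity coming only after the reverse inclusion, but this is a terminological slip, not a gap.)
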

Thus, the proof of the Naito--Sagaki conjecture is now complete.

\end{document}